\let\oldtocsection=\tocsection
\let\oldtocsubsection=\tocsubsection
\let\oldtocsubsubsection=\tocsubsubsection
\renewcommand{\tocsection}[2]{\hspace{0em}\oldtocsection{#1}{#2}}
\renewcommand{\tocsubsection}[2]{\hspace{1em}\oldtocsubsection{#1}{#2}}
\renewcommand{\tocsubsubsection}[2]{\hspace{2em}\oldtocsubsubsection{#1}{#2}}
\newtheorem{Theorem}{Theorem}[section]
\newtheorem{Corollary}[Theorem]{Corollary}
\newtheorem{Lemma}[Theorem]{Lemma}
\newtheorem{Proposition}[Theorem]{Proposition}
\newtheorem*{Question*}{Question}
\theoremstyle{remark}
\newtheorem{Remark}[Theorem]{Remark}
\theoremstyle{definition}
\newtheorem{Definition}{Definition}[section]
\newcommand{\p}[2][]{%
	\mathsf{p}_{#1}({#2})}
\newcommand{\s}[2][]{%
	\mathsf{s}_{#1}({#2})}
\newcommand{\N}{\mathbb{N}}
\newcommand{\Z}{\mathbb{Z}}
\newcommand{\Q}{\mathbb{Q}}
\newcommand{\F}{\mathbb{F}}
\newcommand{\R}{\mathbb{R}}
\newcommand{\C}{\mathbb{C}}
\newcommand{\HH}{\mathbb{H}}
\newcommand{\eps}{\varepsilon}
\newcommand{\cF}{\mathcal{F}}
\newcommand{\cI}{\mathcal{I}}
\newcommand{\cJ}{\mathcal{J}}
\newcommand{\cK}{\mathcal{K}}
\newcommand{\lf}{\lfloor}
\newcommand{\rf}{\rfloor}
\newcommand{\lc}{\lceil}
\newcommand{\rc}{\rceil}
\newcommand{\la}{\langle}
\newcommand{\ra}{\rangle}
\newcommand{\inv}[1]{\overline{#1}}
\renewcommand{\t}[1]{\widetilde{#1}}
\newcommand{\vphi}{\varphi}
\newcommand{\1}{\ding{172}}
\newcommand{\2}{\ding{173}}
\newcommand{\3}{\ding{174}}
\newcommand{\4}{\ding{175}}
\newcommand{\5}{\ding{176}}
\newcommand{\sph}{\mathrm{S}_{0,4}}
\newcommand{\simple}{\mathcal{S}(\sph)}
\newcommand{\groupe}{\pi_1(\sph)}
\newcommand{\slope}{\mathrm{Slope}}
\newcommand{\len}{\mathrm{Length}}
\newcommand{\mcg}{\mathrm{MCG}(\sph)}
\newcommand{\aut}{\mathrm{Aut}(\F_3)}
\newcommand{\out}{\mathrm{Out}(\F_3)}
\newcommand{\axis}{\mathrm{Axis}}
\newcommand{\isom}{\mathrm{Isom}(X)}
\definecolor{red}{RGB}{255,0,0}
\definecolor{blue}{RGB}{0,50,162}
\definecolor{green}{RGB}{0,132,0}
\definecolor{purple}{RGB}{80,20,122}
\definecolor{pink}{RGB}{187,0,165}
\definecolor{black}{RGB}{0,0,0}
\begin{document}

	\title[Simple-stable and Bowditch representations of the four-punctured sphere group]{ Simple-stable and Bowditch representations of the four-punctured sphere group in Gromov-hyperbolic spaces.}
	\author{Suzanne \textsc{Schlich}}
	
	\renewcommand{\labelitemi}{$\bullet$}

	\maketitle
	\begin{abstract} In this paper, we study representations from the four-punctured sphere group into isometry groups of Gromov-hyperbolic spaces. We prove that the set of simple-stable representations (in analogy with Minsky's notion of primitive-stability) and the set of Bowditch representations (inspired by Bowditch's work on the free group of rank two, generalized by Tan, Wong and Zhang) are equal. Along the way, we study the combinatorics of simple closed curves on the four-punctured sphere and prove a result which quantifies the redundancy of subwords of certain given lengths within simple words.
	\end{abstract}
	
	\tableofcontents 
	
	\newpage
	\section{Introduction}
	
	When $S$ is an orientable surface of finite type (possibly with boundary) with negative Euler characteristic, the Teichmüller space $\mathcal{T}(S)$ of $S$, which is the deformation space of hyperbolic structures on $S$ has been an object of huge interest in the last decades. One of its important features is its interaction with the mapping class group $\mathrm{MCG}(S)$ of the surface $S$, that is the group of orientation preserving homeomorphisms of the surface up to homotopy : Fricke's Theorem says that the mapping class group acts properly discontinuously on the Teichmüller space (see for example Theorem 12.2 in \cite{farb_primer_2012}). One can adopt a more algebraic point of view on this framework by  noticing that the Teichmüller space can be thought of as a space of representations from the fundamental group $\pi_1(S)$ into $\mathrm{PSL}(2,\R)$, the isometry group of $\HH^2$, by considering the holonomy of a given hyperbolic structure. More precisely, $\mathcal{T}(S)$ is a subset of the character variety $\chi(\pi_1(S),\mathrm{PSL}(2,\R))$ which is the quotient of the space of representation $\mathrm{Hom}(\pi_1(S),\mathrm{PSL}(2,\R))$ by the action of the group $\mathrm{PSL}(2,\R)$ by conjugacy (in fact, when considering character varieties, we usually work with the hausdorffization of this space, which is the biggest Hausdorff quotient). When $S$ is a closed surface, Goldman proved (\cite{goldman_topological_1988}) that $\mathcal{T}(S)$ correspond exactly to two components of the character variety (corresponding to the two possible orientations on the surface), consisting entirely of discrete and faithful representations. 
	The mapping class group $\mathrm{MCG}(S)$ is a subset of the outer automorphism group $\mathrm{Out}(\pi_1(S))=\mathrm{Aut}(\pi_1(S))/\mathrm{Inn}(\pi_1(S))$, which in the case of closed surfaces, is exactly an index two subgroup of the outer automorphism group. This result is known as the Dehn-Nielsen-Baer Theorem (see for example Chapter Eight in \cite{farb_primer_2012}). Thus, Fricke's Theorem tells us about  the action of the mapping class group (or of the outer automorphism group) on the $\mathrm{PSL}(2,\R)$ character variety. In general, the action of the mapping class group on the complement of the Teichmüller space is unknown. For closed surfaces, Goldman conjectured (\cite{goldman_topological_1988}) that this action should be ergodic. 
	
	When replacing $\mathrm{PSL}(2,\R)$ by other isometry groups  $\mathrm{Isom}(X)$ of Gromov-hyperbolic spaces $X$, there is a natural analogue of the Teichmüller space by considering convex-cocompact representations. Here we will say that a representation $\rho : \pi_1(S) \to \mathrm{Isom}(X)$ is convex-cocompact if its associated orbit map is a quasi-isometric embedding (see Definition \ref{def:quasi-iso-embbed}). When $X$ a rank-one symmetric space, this is equivalent to saying that there exists a non-empty convex subset of $X$, invariant by the action of $\rho(\pi_1(S))$, on which $\rho(\pi_1(S))$ acts properly discontinuous and cocompactly. It is a classical result that, as for the Teichmüller space, the set of convex-cocompact representations is open and that the mapping class group acts properly discontinuously on it. Determining if this domain is maximal or if there can exist other domains of discontinuity in the character variety is a central issue in the study of the action of the mapping class group. Bowditch on the one hand (\cite{bowditch_markoff_1998}), and Minsky on the other hand (\cite{minsky_dynamics_2013}), have proposed two different but somehow related approaches to construct open domain of discontinuity in the $\mathrm{PSL}(2,\C)$ character variety. While their work was focusing on representations of free groups (of rank two in Bowditch's work) in $\mathrm{PSL}(2,\C)$, they have natural analogues when considering representations of fundamental group of surfaces in isometry groups of metric spaces. We will discuss in detail Bowditch's and Minsky's approaches in section \ref{subsection:simple-stab} and \ref{subsec:Bowditch rep}.
	\\
	
	In this article, we will be interested in the specific case where $S$ is the four-punctured sphere, which we denote by $S_{0,4}$. We will consider representations in $\mathrm{Isom}(X)$, where $X$ is a $\delta$-hyperbolic, geodesic and visibility space. By \emph{$\delta$-hyperbolic} we refer to hyperbolicity in the sense of Gromov, with hyperbolicity constant $\delta$. Hence $X$ is in particular a metric space for which we denote $d$ the associated metric. The \emph{geodesicity} hypothesis means that there always exist a geodesic between two distinct points in $X$ and the \emph{visibility} hypothesis means that there always exist a bi-infinite geodesic between two distinct points in the Gromov-boundary of the hyperbolic space $X$. 
	
	The aim of this article is to compare the notions of simple-stability (Definition \ref{def:simple-stable} in section \ref{subsection:simple-stab}), denoted $\mathcal{SS}(\groupe,\isom)$, and of Bowditch representations (Definition \ref{def:bowditch rep} in section \ref{subsec:Bowditch rep}), denote $\mathcal{BQ}(\groupe,\isom)$, in the case of the four-punctured sphere group $S_{0,4}$. 
	
	\begin{Theorem}
		\label{sph-BQ=PS} Let $X$ be a $\delta$-hyperbolic, geodesic and visibility space. \\ 
		The sets of Bowditch representations and of simple-stable representations of $\groupe$ in $\mathrm{Isom}(X)$ are equal. 
		\begin{equation*}
			\mathcal{BQ}(\groupe,\isom)=\mathcal{SS}(\groupe,\isom).
		\end{equation*}
	\end{Theorem}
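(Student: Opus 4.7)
The plan is to prove the two inclusions $\mathcal{SS}(\groupe,\isom)\subseteq\mathcal{BQ}(\groupe,\isom)$ and $\mathcal{BQ}(\groupe,\isom)\subseteq\mathcal{SS}(\groupe,\isom)$ separately, with the second carrying the technical weight. For the first, starting from a simple-stable $\rho$ with a basepoint $x_0\in X$, I would use that, by definition, the orbit map $w\mapsto \rho(w)\cdot x_0$ is a $(K,C)$-quasi-isometric embedding on the set of simple words. Given any simple closed curve $\gamma$ with shortest word representative $w_\gamma$ of length $m$, the iterates $w_\gamma^n$ remain simple; applying the quasi-isometric inequality to them and letting $n\to\infty$ yields a lower bound $m/K$ on the stable translation length of $\rho(w_\gamma)$. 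This uniform positive bound, holding for \emph{every} simple closed curve (no exceptions), is strictly stronger than the Bowditch condition, which only requires boundedness away from zero outside a finite exceptional set, so $\rho\in\mathcal{BQ}(\groupe,\isom)$ follows at once.

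The converse $\mathcal{BQ}\subseteq\mathcal{SS}$ is the heart of the theorem. Given $\rho\in\mathcal{BQ}$, Bowditch's hypothesis provides a uniform $\varepsilon>0$ bounding the translation length of $\rho(w_\gamma)$ from below for every simple curve $\gamma$ outside some finite exceptional set $F$ of slopes. To verify that the orbit map is a quasi-isometric embedding on simple words, I would take an arbitrary simple word $w$ of length $n$ and exploit the combinatorial redundancy result announced in the abstract: the theorem should produce many subwords of a fixed uniform length $L$ inside $w$, each itself simple and corresponding to a curve outside $F$. Each such subword yields a loxodromic isometry with a quasi-axis along which translation is at least $\varepsilon$; using $\delta$-hyperbolicity, geodesicity and the visibility hypothesis on $X$, the plan is to fellow-travel between these successive quasi-axes and glue the local translation estimates into a genuine quasi-geodesic trajectory for the orbit of $w$, thereby producing the required linear lower bound on $d(\rho(w)\cdot x_0,x_0)$.

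The hardest step will be this gluing. Translation length bounds on subwords do not, by themselves, add up to a quasi-isometric embedding: the quasi-axes of successive loxodromic pieces could a priori meet at sharp angles, leading to destructive cancellation of displacements along the broken path $k\mapsto \rho(w_{\leq k})\cdot x_0$. The key point is that the subwords live inside a single simple word $w$ on $\sph$, so they are far from arbitrary and must satisfy strong combinatorial compatibility; I expect that the quantitative form of the redundancy theorem, combined with Gromov products and the thin-triangle condition, will force successive quasi-axes to fellow-travel along common segments whose lengths dominate the gluing error. The finitely many exceptional curves of $F$ should contribute only a bounded additive error absorbed into the constants of the quasi-isometric embedding, while the visibility assumption guarantees that the loxodromic axes and the geodesics interpolating between them behave as expected. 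Making this fellow-travel argument fully quantitative, calibrated against the exact constants produced by the redundancy theorem, is where I expect the bulk of the work to lie.
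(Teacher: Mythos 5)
Your overall architecture (easy inclusion $\mathcal{SS}\subseteq\mathcal{BQ}$, hard inclusion $\mathcal{BQ}\subseteq\mathcal{SS}$ driven by the redundancy theorem) matches the paper, but both halves contain real problems. For the easy inclusion: powers $\gamma^n$ of a simple closed curve are \emph{not} simple closed curves, so you cannot apply the simple-stability hypothesis to them as separate simple words; the correct (and standard) argument is that $1$ and $\gamma^n$ both lie on the single axis $L_\gamma$, and the quasi-isometric embedding of $\tau_\rho$ restricted to $L_\gamma$ already gives $d(\rho(\gamma^n)o,o)\geq \frac{1}{C}n\Vert\gamma\Vert-D$, whence the stable-length bound. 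More importantly, you have misread Definition \ref{def:bowditch rep}: a Bowditch representation here satisfies $\frac{1}{C}\Vert\gamma\Vert-D\leq l(\rho(\gamma))$ for \emph{all} simple $\gamma$ --- a lower bound \emph{linear} in $\Vert\gamma\Vert$ with no finite exceptional set. The ``finite exceptional set'' belongs to the trace conditions (BQ1)--(BQ2) of the $\mathrm{SL}(2,\C)$ literature, whose equivalence with linear growth is a separate theorem. This misreading propagates into your converse: a uniform $\varepsilon>0$ lower bound on translation lengths of bounded-length subwords is far weaker than what the hypothesis gives you, and, as you yourself concede, it cannot be summed into a quasi-isometric embedding.

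That concession is exactly where your proof fails, and the paper's strategy is designed to avoid it. The paper never glues positive translation estimates along successive quasi-axes. Instead it argues by contradiction: if $\rho$ is Bowditch but not simple-stable, one extracts a sequence $(\gamma_n)$ of simple words witnessing the failure, shows the continued-fraction coefficients of $\slope(\gamma_n)$ stay bounded (via the uniform quasi-geodesicity of words of the form $G^{n}D_1G^{-m}D_2$, Proposition \ref{Sphere-unif-quasi-geod} and Lemma \ref{D(G)neqG}), and then locates inside $\gamma_n$ an $\eps$-\emph{quasi-loop} --- a subword whose image barely displaces the basepoint --- of length comparable to a special length $l_i(\gamma_n)$. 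The Special-lengths Theorem \ref{magic-len-S_{0,4}} is then applied to this one \emph{arbitrary} subword (it need not be simple, and the theorem makes no such claim) to conclude that it and its inverse occupy a definite proportion of $\gamma_n$; iterating on the remainders yields an arbitrarily large proportion of $\gamma_n$ covered by disjoint quasi-loops, which forces $d(\rho(\gamma_n)o,o)<\frac{1}{C}\vert\gamma_n\vert$ and contradicts the Bowditch inequality. So the two concrete gaps in your plan are: (i) the redundancy theorem does not supply simple subwords lying outside an exceptional set, so your local loxodromic estimates have no source; and (ii) the fellow-travelling/gluing step you flag as the ``hardest'' is not merely hard but is replaced in the paper by an entirely different mechanism (quasi-loops plus contradiction), without which your argument does not close.
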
 
	
	A similar result was proved by the author in \cite{schlich_equivalence_2024} in the case of the free group of rank two. For the free group of rank two and $X=\HH^3$, this was first done by Lee and Xu (\cite{lee_bowditchs_2019}) and independently Series (\cite{series_primitive_2019},\cite{series_primitive_2020}). The proofs and the techniques developed here are independent of the work of Lee-Xu, and Series in $\HH^3$. \\
	Examples of spaces $X$ which satisfies the hypothesis of our Theorem include all rank one symmetric spaces (real hyperbolic spaces $\HH^n$, complex hyperbolic spaces $\HH^n_\C$, quaternionic hyperbolic spaces $\HH_H^n$ and the hyperbolic plane over the Cayley number $\HH_\mathbb{O}$, see \cite{knapp_lie_1996} for the classification) or even non proper spaces such as $\HH^{\infty}$ for example. \\
	
	We say that a closed curve on $S_{0,4}$ is \emph{simple} if there exists a representative in its free homotopy class which has no self-intersection, and which does not bound a disk or a once-punctured disk. We denote by $\simple$ the set of free homotopy classes of (unoriented) simple closed curves on $S_{0,4}$. Although this set is technically in bijective correspondence with a subset of $\groupe/\sim$, where $\sim$ identifies conjugate and inverse elements in $\groupe$, we will sometimes, with a slight abuse of notation, simply write $\gamma \in \simple$ to refer to a (cyclically reduced) word $\gamma$ in the fundamental group $\pi_1(\sph)$ representing a simple closed curve in $\simple$, thus identifying curves on the surface $\sph$ and representatives in the fundamental group. 
	
	\subsection{Simple-stability} \label{subsection:simple-stab}~ \\
	Let us denote by $\mathcal{C}(\groupe)$ the Cayley graph of the group $\groupe$ (recall that $\groupe \simeq \F_3$). This graph is endowed with a distance $d$ which comes from the word length. More precisely, when $\gamma \in \groupe$, we will denote $|\gamma|$ the reduced word length of $\gamma$ (in the set of generators used to define the Cayley graph) and for all $u,v \in \groupe$, $d(u,v)=|u^{-1}v|$. Recall that we also denote $d$ the distance in $X$, but in context there should be no ambiguity to which distance we refer. Note that the Cayley graph depends on the choice of a generating set for $\groupe$, but any two such graphs are quasi-isometric. Since in the following, we will only be interested in properties that are invariant under quasi-isometries, the notions we will define will not depend on this choice of generators and so, for simplicity we do not refer about the generating set. When $\gamma \in \groupe$, $\gamma$ has an axis in the Cayley graph $\mathcal{C}(\groupe)$, that is a $\gamma$-invariant geodesic in $\mathcal{C}(\groupe)$. We will use the notation $L_\gamma$ to refer to this geodesic. \\
	
	Fix a basepoint $o \in X$.
	To any representation $\rho : \groupe \to \mathrm{Isom}(X)$, we can associate an orbit map, which we denote by $\tau_\rho$, and which is a map from $\mathcal{C}(\groupe)$ to $X$ such that $\tau_\rho(1)=o$, $\tau_\rho$ is $\rho$-equivariant, and  $\tau_\rho$ sends the edges of the graph to hyperbolic segment in $X$. Note that under these assumptions we have $\tau_\rho(\gamma)=\rho(\gamma)o$, for all element $\gamma \in \groupe$. Moreover, since $\groupe$ is finitely generated, we automatically have, using the triangle inequality, that $\tau_\rho$ is Lipschitz-continuous. \\
	
	Let us recall the definition of a quasi-isometric embedding, which will be used in following : 
	\begin{Definition}\label{def:quasi-iso-embbed}
		Let $C>0$ and $D\geq 0$ be two constants. A map $f : (X,d_X) \to (Y,d_Y)$ between two metric spaces is a \emph{$(C,D)$-quasi-isometric embedding} if, for all $x,x' \in X$, we have :
		\begin{equation*}
			\frac{1}{C}d_X(x,x')-D \leq d_Y(f(x),f(x')) \leq Cd(x,x')+D.
		\end{equation*}
	\end{Definition}

	In \cite{minsky_dynamics_2013}, Minsky defined the notion of \emph{primitive-stability} for representations of free groups $\F_n$ into $\mathrm{PSL}(2,\C)$ by considering the set of primitive elements in $\F_n$ and requiring that the orbit map restricted to  geodesic corresponding to primitive elements are mapped, via the orbit map, to uniform quasi-geodesics. His definition can easily be adapted to other contexts, in particular to the case where the group is the fundamental group of a surface (closed or not), by replacing in the definition the set of primitive elements by the set of simple elements in the group (the elements in the group corresponding to simple closed curves on the surface). Hereafter we write the definition of simple-stable representations of the four-punctured sphere group, because this paper is dedicated to this particular surface, but the definition of course also makes sense for any other surfaces. 

	\begin{Definition}[Simple-stability] \label{def:simple-stable}Let $\rho : \groupe \to \isom$ be a representation. We say that $\rho$ is \emph{simple-stable} if there exist two constants $C>0$ and $D\geq 0$ such that for any simple closed curve $\gamma \in \simple$, the orbit map $\tau_\rho$ restricted to the geodesic $L_\gamma$ is a $(C,D)$-quasi-isometric embedding. \\
	We denote $\mathcal{SS}(\groupe),\isom)$ the set of simple-stable representations from $\groupe$ to $\isom$.
	\end{Definition}
	
	In particular, we immediately see from the definition that the set of simple-stable representations contains the set of convex-cocompact representations, as defined above in the introduction.
	
	Minsky proved (in \cite{minsky_dynamics_2013}), that in the character variety, the set of primitive-stable representations in $\mathrm{PSL}(2,\C)$ is open, $\mathrm{Out}(\F_n)$-invariant and that $\mathrm{Out}(\F_n)$ acts on it properly discontinuously. Moreover, he also proved that the set of primitive-stable representations strictly contains the set of convex-cocompact representations. In particular, this shows the existence of a non discrete primitive-stable representation in this setting. The openness still holds when considering simple-stable representations in the more general setting of $\delta$-hyperbolic spaces. For details, a proof is written by the author in section 4.4 of \cite{schlich_equivalence_2024}. Note that although the article \cite{schlich_equivalence_2024} focuses on the group $\F_2$, the proof of openness works exactly the same for other groups. Since the set of simple closed curves on the surface is invariant under the action of the mapping class group, we easily deduce that $\mathcal{SS}(\groupe,\isom)$ is $\mathrm{MCG}(S_{0,4})$-invariant. The fact that the action is properly discontinuous follows the same lines than in the case of primitive-stable representations.  
	Thus we have that $\mathcal{SS}(\groupe,\isom)$ is an open domain of discontinuity for the action of the mapping class group. \\
	
	Let us make a brief remark on the case of the once-punctured torus. Its fundamental group is the free group of rank two and it happens that there is a bijective correspondence between the set of simple closed curves on the once-punctured torus and the primitive elements in $\F_2$. Moreover, the mapping class group of the once-punctured torus is the outer automorphism group of $\F_2$. Thus, here the notions of primitive-stability in $\F_2$ and simple-stability of the once-punctured torus coincide (but this is the only punctured surface for which this happens).
	
	\subsection{Bowditch representations}    ~\\ 
	\label{subsec:Bowditch rep}
	Recall that we introduced above the word length $\vert \gamma \vert$ of an element $\gamma \in \groupe$. We will also consider its cyclically reduced word length, which we denote $\Vert \gamma \Vert$. 
	For $A$ an isometry of the space $X$, we define its displacement length as the quantity $l(A):=\underset{x\in X}\inf d(Ax,x)$. Note that both the cyclically reduced word length and the displacement length are conjugacy invariants. \\ 
	
	We now give the definition of a Bowditch representation, and we will explain the denomination  after. 
	
	\begin{Definition}[Bowditch representations]
		\label{def:bowditch rep}
		Let $\rho : \groupe \to \isom$ be a representation. We say that $\rho$ is a \emph{Bowditch representation} if there exist two constants $C>0$ and $D\geq 0$ such that for all simple closed curve $\gamma \in \simple$, we have $\displaystyle 	\frac{1}{C}\Vert \gamma \Vert -D \leq l(\rho(\gamma))$.\\
		We denote $\mathcal{BQ}(\groupe,\isom)$ the set of Bowditch representations from $\groupe$ to $\isom$. 
	\end{Definition}
	
	We will see in section \ref{sph-prop-Bowditch} that the additive constant actually plays no role, and that we get an equivalent definition by removing it. Also note that the other inequality ($\displaystyle l(\rho(\gamma)) \leq C' \Vert \gamma \Vert $) is always true for some constant $C'$, because the group $\groupe$ is finitely generated. \\
	
	It is quite straightforward from the definition that simple-stable representations are in particular Bowditch representations. A proof of this inclusion can be found in section 4.2 in \cite{schlich_equivalence_2024} (here the group which is studied is $\F_2$, but nothing relies on the specific structure of $\F_2$, so the proof also works for $\groupe$, or other fundamental groups of surfaces). Therefore the whole difficulty of Theorem \ref{sph-BQ=PS} is to prove that Bowditch representations are simple-stable. \\
	
	In \cite{bowditch_markoff_1998}, Bowditch studied representations of the free group of rank two $\F_2$ in $\mathrm{PSL}(2,\C)$. Let $\F_2=<a,b>$ be a free generating set for $\F_2$ and $\mathcal{P}(\F_2)$ denote the set of conjugacy class of primitive elements in $\F_2$. He introduced the three following conditions on a representation $\rho : \F_2 \to \mathrm{PSL}(2,\C)$ :
	\begin{enumerate}
		\item \label{cond:BQ2} $\forall \gamma \in \mathcal{P}(\F_2), \mathrm{Tr}(\rho(\gamma)) \in \C \backslash [-2,2]$
		\item \label{cond:BQ3} The set $\{ \gamma \in \mathcal{P}(\F_2) : \vert \mathrm{Tr}(\rho(\gamma)) \vert \leq 2 \} $ is finite. 
		\item \label{cond:BQ1} $\mathrm{Tr}(\rho([a,b]))=-2 $
	\end{enumerate} 
	Bowditch proved that the representations satisfying those conditions form an open domain of discontinuity in the character variety. His approach makes extensive use of the Farey triangulation of the hyperbolic plane and more specifically its dual tree, and uses the language of Markoff maps. His work was later generalized by Tan, Wong and Zhang in \cite{tan_generalized_2008}, who made an analogue study in the case where the value of the commutator is no more necessarily $-2$, hence removing the third condition above. Both Bowditch and Tan-Wong-Zhang proved a "Fibonacci-growth" property on these representations, which we can, combining their works, reformulate as follows~:
\begin{equation} \label{fibonacci}
	\rho \text{ satisfies the conditions  \eqref{cond:BQ2} and \eqref{cond:BQ3}} \iff \exists C>0, \forall \gamma \in \mathcal{P}(\F_2), \quad \frac{1}{C} \Vert \gamma \Vert \leq l(\rho(\gamma)).
\end{equation}
	This equivalence inspires the more general definition of a Bowditch representation (Definition \ref{def:bowditch rep}) in the more general setting of $\delta$-hyperbolic spaces and for fundamental groups of surface (possibly with boundary). Note that it is not clear from Definition \ref{def:bowditch rep} that this defines an open subset in the character variety, nor that the mapping class group acts properly discontinuously on it. However as mentioned in section \ref{subsection:simple-stab}, it is not hard to prove that the set of simple-stable representations is an open domain of discontinuity, we therefore obtain the following corollary :
	\begin{Corollary}
		The set $\mathcal{BQ}(\groupe,X)$ is an open domain of discontinuity for the action of $\mathrm{MCG}(\groupe)$ on $\chi(\groupe,\mathrm{Isom}(X))$. \\
	\end{Corollary}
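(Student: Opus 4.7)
The plan is to deduce this Corollary as a formal consequence of Theorem \ref{sph-BQ=PS} together with the properties of $\mathcal{SS}(\groupe,\isom)$ already recalled in Section \ref{subsection:simple-stab}. First, I would observe that both $\mathcal{SS}(\groupe,\isom)$ and $\mathcal{BQ}(\groupe,\isom)$ are conjugation-invariant subsets of $\mathrm{Hom}(\groupe,\isom)$: the Bowditch condition is phrased entirely in terms of the displacement length $l(\rho(\gamma))$, which is a conjugacy invariant of the isometry $\rho(\gamma)$, while simple-stability depends on the orbit map only up to the choice of basepoint $o$ (conjugating $\rho$ by an element of $\isom$ amounts to moving $o$, which changes the quasi-isometry constants but not the fact that one has a quasi-isometric embedding). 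Hence both sets descend to well-defined subsets of the character variety $\chi(\groupe,\isom)$, and Theorem \ref{sph-BQ=PS} gives the equality $\mathcal{BQ}(\groupe,\isom)=\mathcal{SS}(\groupe,\isom)$ at the level of the character variety.

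Next, I would invoke the three properties of $\mathcal{SS}(\groupe,\isom)$ already stated in Section \ref{subsection:simple-stab}: it is open in $\chi(\groupe,\isom)$ (openness of simple-stability in the $\delta$-hyperbolic setting is established in Section 4.4 of \cite{schlich_equivalence_2024}, with a proof that applies verbatim here since it does not use the specific structure of $\F_2$); it is $\mathrm{MCG}(\sph)$-invariant, because $\mathrm{MCG}(\sph)$ permutes the set $\simple$ of simple closed curves on $\sph$, so precomposing a simple-stable representation by an element of $\mathrm{MCG}(\sph)$ only reshuffles the collection of geodesics $L_\gamma$ that are sent to quasi-geodesics by the orbit map; and finally $\mathrm{MCG}(\sph)$ acts properly discontinuously on it, by the same argument as Minsky's in \cite{minsky_dynamics_2013} adapted to simple elements.

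Transporting these three properties across the equality $\mathcal{BQ}(\groupe,\isom)=\mathcal{SS}(\groupe,\isom)$ yields the Corollary immediately. There is no real obstacle: the whole content is packed into Theorem \ref{sph-BQ=PS}. The point worth emphasizing is that neither openness nor proper discontinuity are at all transparent from Definition \ref{def:bowditch rep} on its own, since the Bowditch condition only controls displacement lengths of simple elements and gives no direct information on the orbit map along the corresponding axes; it is precisely the identification with simple-stability that makes these dynamical properties accessible.
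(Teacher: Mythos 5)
Your proposal is correct and follows essentially the same route as the paper: the Corollary is obtained by combining Theorem \ref{sph-BQ=PS} with the facts, recalled in Section \ref{subsection:simple-stab}, that the set of simple-stable representations is open, $\mcg$-invariant, and acted on properly discontinuously by the mapping class group. Your additional remark that both conditions are conjugation-invariant, so that the sets descend to the character variety, is a reasonable (implicit in the paper) sanity check rather than a divergence in method.
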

	
	Maloni, Palesi and Tan studied in \cite{maloni_character_2015} the $\mathrm{SL}(2,\C)$-character variety of the four-punctured sphere together with the action of the mapping class group on it. They defined, in the same spirit as Bowditch and Tan-Wong-Zhang, a set of representation defined by conditions on the traces and proved that it forms an open domain of discontinuity. More precisely, their conditions on $S_{0,4}$ are as follows :
	\begin{enumerate} 
		\item[(BQ1)] 
		 $\forall \gamma \in \simple, \mathrm{Tr}(\rho(\gamma)) \in \C \backslash [-2,2]$
		\item[(BQ2)] The set $\{\gamma \in \simple : \vert \mathrm{Tr}(\rho(\gamma)) \vert \leq K(\mu) \} $ is finite. 
	\end{enumerate}
	Here, $K(\mu)$ is a constant which depends on $\mu=(\mathrm{Tr}(\rho(a)),\mathrm{Tr}(\rho(b)),\mathrm{Tr}(\rho(c)),\mathrm{Tr}(\rho(d)))\in \C^4$, where $a,b,c,d$ are the four boundary components of $S_{0,4}$.  In addition, Maloni, Palesi and Tan proved that the representations satisfying the above two conditions also have "Fibonacci growth", in other words that the equivalence \eqref{fibonacci} is again true in this context.  We can thus combine their work with our Theorem \ref{sph-BQ=PS} to obtain :
	 
	 \begin{Corollary} \label{cor:PSL(2,C)}
	 	Let $\rho : \groupe \to \mathrm{SL}(2,\C)$. The following conditions are equivalent :
	 	\begin{enumerate}
	 		\item $\rho$ satisfies conditions (BQ1) and (BQ2) of Maloni-Palesi-Tan 
	 		\item $\rho$ is simple-stable.
	 	\end{enumerate}
	 \end{Corollary}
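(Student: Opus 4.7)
The plan is to deduce this corollary as an immediate assembly of Theorem \ref{sph-BQ=PS} with the work of Maloni, Palesi and Tan in \cite{maloni_character_2015}.

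First I would verify that Theorem \ref{sph-BQ=PS} applies in this setting. The hyperbolic $3$-space $\HH^3$ is $\delta$-hyperbolic, geodesic and visibility, and any $\rho : \groupe \to \mathrm{SL}(2,\C)$ descends to a representation into $\mathrm{PSL}(2,\C) = \mathrm{Isom}^+(\HH^3)$; since the displacement length, the quasi-isometric embedding property of the orbit map, and the trace conditions (BQ1) and (BQ2) all factor through the kernel $\{\pm I\}$, both the Bowditch condition and the simple-stable condition are unambiguously defined for $\rho$, and Theorem \ref{sph-BQ=PS} directly identifies them.

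It then suffices to show that $\rho$ satisfies conditions (BQ1) and (BQ2) if and only if $\rho$ is a Bowditch representation in the sense of Definition \ref{def:bowditch rep}. But this is exactly the Fibonacci-growth equivalence \eqref{fibonacci} transposed to the surface $\sph$, which is precisely the content of Maloni-Palesi-Tan's theorem in \cite{maloni_character_2015}: I would therefore cite their result as a black box. The only bookkeeping step is the translation between the trace lower bound that naturally arises in the (BQ1)-(BQ2) framework and the linear displacement bound $\frac{1}{C}\Vert \gamma \Vert \leq l(\rho(\gamma))$ of Definition \ref{def:bowditch rep}; this passes through the classical relation between trace and translation length for loxodromic elements of $\mathrm{PSL}(2,\C)$, after which the additive constant $D$ may be absorbed into $C$ as noted in section \ref{sph-prop-Bowditch}.

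There is no genuine obstacle in this corollary: the deep work is carried out by Theorem \ref{sph-BQ=PS} on one side and by \cite{maloni_character_2015} on the other, and the corollary amounts to chaining these two equivalences. The only point to keep in mind is that (BQ1) and (BQ2) are phrased in terms of traces while Definition \ref{def:bowditch rep} is phrased in terms of displacement length; the equivalence of these two formulations for $\mathrm{SL}(2,\C)$-representations is classical and is all that is needed to concatenate the Maloni-Palesi-Tan result with Theorem \ref{sph-BQ=PS}.
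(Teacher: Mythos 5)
Your proposal is correct and matches the paper's own (implicit) argument: the corollary is obtained exactly by chaining the Maloni--Palesi--Tan equivalence between (BQ1)--(BQ2) and the Fibonacci-growth/Bowditch condition with Theorem \ref{sph-BQ=PS}. The extra remarks on passing from $\mathrm{SL}(2,\C)$ to $\mathrm{PSL}(2,\C)=\mathrm{Isom}^+(\HH^3)$ and on translating traces into translation lengths are the right bookkeeping steps and are consistent with what the paper takes for granted.
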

	 
	In an other direction, Maloni and Palesi also analysed the $\mathrm{SL}(2,\C)$ character variety of the three-holed projective plane in \cite{maloni_character_2020} and together with Yang they studied its $\mathrm{PGL}(2,\R)$ character variety in \cite{maloni_type-preserving_2021}.	

	\subsection{Redundancy of subwords of simple words in $S_{0,4}$ and strategy of the proof} ~\\
	We now explain what is the general strategy of the proof of Theorem \ref{sph-BQ=PS}. 

	\subsubsection{Redundancy of subwords of simple words}
 	First, we will develop a study of the combinatorics of simple closed curves on the four-punctured sphere. This is done in sections \ref{sec:combinatorics} and \ref{sec:redundancy}. In particular, we prove Theorem \ref{magic-len-S_{0,4}}, which quantifies, after defining some specific lengths, how much an arbitrary subword of one of these lengths repeats itself inside simple words of $\groupe$ (we will explain in more details what we mean by this).  \\
 	
 	We will identify the four-punctured sphere with a quotient of the plane $\R^2$ minus the lattice $\Z^2$ in order to see the simple closed curves as quotient of straighlines with rational slope in $\R^2$. In particular, this will highlight the (bijective) correspondence between rational numbers and equivalence classes of simple closed curves on the four-punctured sphere (Proposition \ref{essential->Q}). We will call these rationals the "slopes" associated to simple closed curves and their continued fractions expansions $[n_1,\dots,n_r]$ will be of particular interest. In particular, considering the truncated fractions expansion $[n_1,\dots,n_i]$ for all $1 \leq i \leq r$, we will define some "special lengths", denoted $l_i(\gamma)$ (see Definition \ref{def:magic-length}) and which will play a crucial role in our study and in Theorem \ref{magic-len-S_{0,4}}. In section \ref{construc-simple}, we will study the combinatorics of a simple closed curve $\gamma$ at different scales, each of them corresponding to a level $i$ in the continued fraction expansion of the slope of $\gamma$ (that is each integer $0 \leq i \leq r)$. In particular, Lemma \ref{general-form-gamma} will decompose a simple closed curve $\gamma$ seen as a word in $\groupe$, for each level $i$, as a concatenation of words which will only depend on the integers $n_1,\dots,n_i$. \\
 	
	Then, section \ref{sec:redundancy} will be dedicated to the statement and the proof of the "Special-lengths" Theorem \ref{magic-len-S_{0,4}}, which will be a major tool for our proof of Theorem \ref{sph-BQ=PS}, and might be of independent interest. Since Theorem \ref{magic-len-S_{0,4}} has a few technical assumptions, for the sake of clarity we give here a slightly weaker version (Theorem \ref{thm:magic-len-simplifié}) but which gets rid of some of the technicalities. In this theorem, we first fix a simple closed curve $\gamma$. Now recall that we have defined some "special lengths" associated to $\gamma$ : $l_1(\gamma),\dots,l_r(\gamma)$. We choose one of these lengths and an arbitrary subword of $\gamma$ of this specific length (in fact of this specific length minus five). Then we prove that this subword and its inverse occur a certain number of times in $\gamma$ which we can quantify uniformly in $\gamma$: the occurrences of this subword and its inverse in $\gamma$ occupy at least one thirtieth of the word $\gamma$.
	
	\begin{Theorem}[Special-lengths]\label{thm:magic-len-simplifié}
		Let $\gamma \in \simple$. Fix $10 \leq i \leq r(\gamma)$ such that $|\gamma| \geq 10 l_i(\gamma)$.  \\
		Let $w \in \F_3$ be any subword of $\gamma$ of length $l_i(\gamma)-5$. \\
		Then we can decompose the word $\gamma$ as a concatenation $\gamma=u_1 \hdots u_q$ such that there exists a subset $\cI \subset \{1,\hdots,q\}$ satisfying the following :
		\begin{enumerate}
			\item \label{change-letter-simplifié} For all $k \in \cI$, $u_k\in \{w,w^{-1}\}$.
			\item \label{proportion-simplifié} $\displaystyle \sum_{k\in \cI} |u_k| \geq \frac{1}{30} |\gamma|$.
		\end{enumerate}
	\end{Theorem}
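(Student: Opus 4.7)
The plan is to exploit the level-$i$ decomposition of $\gamma$ furnished by Lemma~\ref{general-form-gamma}, which writes $\gamma=v_{1}v_{2}\cdots v_{q}$ as a concatenation of words $v_{k}$ drawn from a very small palette whose shape is determined solely by the partial quotients $n_{1},\dots,n_{i}$. These blocks all have length comparable to $l_{i}(\gamma)$, so the hypothesis $|\gamma|\ge 10\, l_{i}(\gamma)$ guarantees $q\gtrsim 10$, i.e. the concatenation contains many blocks, and the hypothesis $i\ge 10$ guarantees enough built-in rigidity among the preceding levels to control how the blocks at level $i$ can be glued together.

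The first key observation is that, because $|w|=l_{i}(\gamma)-5$ is only slightly smaller than the length of a single level-$i$ block, any occurrence of $w$ inside $\gamma$ is contained in a window of at most two consecutive blocks $v_{j}v_{j+1}$ (with a borderline case involving three). Consequently $w$ is fully determined by the ordered pair of block types $(v_{j},v_{j+1})$ together with the offset at which $w$ begins inside $v_{j}$. Since the palette is small, the set of possible shapes of subwords of length $l_{i}(\gamma)-5$ is small as well; in particular, the given subword $w$ corresponds to one specific pair-plus-offset, and every other occurrence of $w$ or $w^{-1}$ in $\gamma$ corresponds either to the same pair and offset, or to its orientation reverse.

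Second, I would show that the specific two-block pattern supporting $w$ recurs a positive proportion of the time in the level-$i$ block sequence of $\gamma$. For this I would use the nested level-$(i{+}1)$ description: Lemma~\ref{general-form-gamma} describes exactly how each level-$(i{+}1)$ block breaks into level-$i$ blocks (essentially dictated by $n_{i+1}$ copies of one block type followed by one of the other), so that any transition $(v_{j},v_{j+1})$ allowed at level $i$ appears a definite fraction of the time along $\gamma$. Since the curve is unoriented one counts both $w$ and $w^{-1}$, which correspond to the pair and to its reverse; adding the two contributions and multiplying by the length $|w|\approx l_{i}(\gamma)$ of each occurrence yields a total length that is a constant proportion of $|\gamma|$. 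A careful bookkeeping of the involved ratios should produce the explicit constant $\tfrac{1}{30}$.

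The hard part will be precisely this quantitative bookkeeping. Three technical issues need to be addressed: (a) the palette at level $i$ can contain blocks of two different lengths, so $|w|/|v_{k}|$ is not exactly $1$ and one must compare the true length of each occurrence of $w$ to the length of the blocks it straddles; (b) one must handle boundary effects between the linear subword $w$ and the (essentially) cyclic structure of $\gamma$, which is where the assumptions $i\ge 10$ and $|\gamma|\ge 10\, l_{i}(\gamma)$ really earn their keep by absorbing a few "wasted" blocks at the extremities; and (c) one has to show that the particular pair $(v_{j},v_{j+1})$ in which $w$ sits, possibly together with its reverse, is not a rare transition at level $i$. Resolving (c) is the crux: it requires combining the level-$(i{+}1)$ combinatorial description with a case analysis on whether $w$ is centred inside a block, straddles two blocks, or sits near the boundary of a higher-level block, so as to guarantee uniformly that occurrences of $w$ or $w^{-1}$ cover at least $|\gamma|/30$.
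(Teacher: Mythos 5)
Your route is genuinely different from the paper's, and it contains a gap at exactly the point you identify as the crux, item (c); I do not think it can be closed within the block-counting framework you propose. For contrast: the paper does not count transitions between level-$i$ blocks at all. It transports $\gamma$ to a horizontal line in $\R^2$ equipped with the lattice $\Lambda=f_{\gamma,u,v}(\Z^2)$, shows (Lemma \ref{segment-in-square}) that the window of length $l_i(\gamma)-1$ around $w$ corresponds to a horizontal segment lying inside a single tile $S_*(x,u,v)$ or $T_*(x,u,v)$, and then proves (Lemmas \ref{inverses} and \ref{W_S}) that \emph{every} tile of the same type carries the same word, and every tile of the opposite type carries its inverse, up to two letters at each end. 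Since tiles of these two types recur once per roughly $l'_i(\gamma)\le 2l_i(\gamma)$ letters along the line (Lemma \ref{concat-W}), the proportion $1/30$ follows.

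The gap in your argument is that the two-block pattern supporting the \emph{given} occurrence of $w$ can be a rare transition, so counting recurrences of that pattern cannot yield a uniform constant. Concretely, take the fine version of the decomposition of Lemma \ref{general-form-gamma}: $\gamma$ is a concatenation of factors $(\gamma_i)^{m_1}\t{\delta_1}(\gamma_i)^{-m_2}\t{\delta_2}$ with $m_1,m_2\approx n_{i+1}/2$, so the blocks are $\gamma_i^{\pm1}$ (of length $2l_i(\gamma)$) together with $\t{\delta_1}^{\pm1},\t{\delta_2}^{\pm1}$. If $w$ straddles a $\t{\delta_1}$-junction, the block pattern supporting it occurs only once per period of length about $2l_{i+1}(\gamma)\approx 2n_{i+1}l_i(\gamma)$, so occurrences of \emph{that pattern} cover a proportion of order $1/n_{i+1}$ of $\gamma$, which is not bounded below. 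The theorem is still true for such $w$, but only because $w$ or $w^{-1}$ \emph{also} occurs at positions supported on entirely different block patterns --- for instance deep inside the runs $(\gamma_i)^{\pm m}$ --- and establishing this coincidence of letter-by-letter content across different block positions is precisely the content of the theorem. The level-$(i+1)$ gluing rules tell you the frequency of each transition, but provide no mechanism for identifying the word read across a $\t{\delta_1}$-junction with a word read inside a $\gamma_i$-run; in the paper that identification is exactly Lemma \ref{W_S} (all tiles of a given type, at any height, carry the same word up to boundary letters), and it is where the lattice geometry is indispensable. A secondary inaccuracy: the blocks of Lemma \ref{general-form-gamma} at level $i$ have length comparable to $l_{i+1}(\gamma)$, not $l_i(\gamma)$, so even the claim that every occurrence of $w$ meets at most two (or three) consecutive blocks needs the finer decomposition, in which the $\t{\delta_j}$ may be much shorter than $l_i(\gamma)-5$.
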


Our approach to proving this theorem will be geometric, and so in section \ref{lattices} we will first need to introduce some generalities about tilings of $\R^2$ adapted to a lattice of $\R^2$, set up our notations and state a few facts needed in the following. Then section \ref{farey} will provide a short reminder about Farey neighbors. 

The proof will start by associating to every simple closed curve $\gamma$ and every level $1 \leq i \leq r$ of the continued fraction expansion of the slope of $\gamma$ a lattice $\Lambda$ of $\R^2$ (section \ref{mapping}). This lattice will be particularly useful to understand the word $\gamma$ at the level $i$ : within this geometric framework one can "read" the word $\gamma$ in $\R^2$ by following an horizontal line and the subwords of lengths approximately $l_i(\gamma)$ will be easier to visualize and to understand (section \ref{reading-gamma} and \ref{subwords-square}). Finally, section \ref{proof-magic-len} will combine the results on lattices in $\R^2$ of section \ref{lattices} with our study of subwords of $\gamma$ of length approximately $l_i(\gamma)$  in section \ref{reading-gamma} and \ref{subwords-square} to prove Theorem \ref{magic-len-S_{0,4}}.

\subsubsection{Strategy of the proof of Theorem \ref{sph-BQ=PS}} 
Having established the combinatorial results in the group and in particular Theorem \ref{thm:magic-len-simplifié}, the proof will basically follow the same strategy than the proof of the corresponding theorem for the free group of rank two in \cite{schlich_equivalence_2024}. However, since the combinatorics of simple closed curves on the four-punctured sphere is different and more complicated than the combinatorics of primitive elements in $\F_2$, the statements and their proofs have to be adapted. For completeness, we present now the main steps and ideas of the rest of the proof.\\

In section \ref{sph-UQG}, we will first study a uniform quasi-geodesicity setting in section \ref{sph-first-ex-UQG} which will later, when combined with Lemma \ref{D(G)neqG}, appears as a local uniform quasi-geodesicity property of Bowditch representations. In Lemma \ref{simple->hyp} of section \ref{sph-prop-Bowditch}, we will show that the images of simple elements under a Bowditch representation are always hyperbolic. In particular, denoting $L_\gamma$ the axis of $\gamma$ in the Cayley graph of $\groupe$ and $\tau_\rho$ the orbit map of a Bowditch representation $\rho$, we deduce that $\tau_\rho(L_\gamma)$ is always a quasi-geodesic. 

In section \ref{sph-first-step}, we will prove Proposition \ref{BQ=>CTU-Sph}, which will be the main step for proving Theorem \ref{sph-BQ=PS}. It states that the simple leaves $L_\gamma$ are sent by the orbit map to uniform neighborhoods of the axes of the hyperbolic isometries $\rho(\gamma)$. To do so, we proceed by contradiction and to this purpose we introduce a sequence $(\gamma_n)_{n\in \N}$ of simple elements in $\groupe$ such that the images of the orbit map on the simple leaf $L_{\gamma_n}$ becomes further and further away from the axis of $\rho(\gamma_n)$ as $n$ increases. We will then study the continued fraction expansion of the slope of $\gamma_n$ which we denote by $[N^n_1,\dots,N^n_{r(n)}]$. The uniform quasi-geodesicity setting studied in section \ref{sph-first-ex-UQG}, together with Lemma \ref{D(G)neqG} will enable us to show that the sequences $(N^n_i)_{n\in\N}$ must be bounded for all $i$, and, as a consequence, the depth $r(n)$ of the continued fraction expansion will tend to infinity (Lemma \ref{forme-frac-continue-sph}). We will next introduce the notion of a $K$-excursion of the orbit map (section \ref{excursion-orbit-map-sph}) and notice that we can extract from the sequence $(\gamma_n)_{n\in \N}$ a sequence of $K_n$-excursions as large as we want ($K_n \to \infty$) in Lemma \ref{excursions-grandes-sph}. We will also introduce the notion of an $\varepsilon$-quasi-loop, and since large excursions will correspond to quasi-loop (Lemma \ref{excursion->QB-sph}), we will find a quasi-loop in each $\gamma_n$ (for large $n$). But we will be even more precise and, using the Special-lengths Theorem (\ref{magic-len-S_{0,4}}) together with the fact that once found a quasi-loop, we can find many others of smaller lengths "inside", we will find a uniformly bounded from below proportion of the word $\gamma_n$ consisting of disjoint quasi-loops (Lemma \ref{decoupe-restes-c-sph}). We will repeat our argument for the remainders in $\gamma_n$ that do not yet consist of disjoint quasi-loops in order to find an arbitrarily large proportion of the word $\gamma_n$ consisting of disjoint quasi-loops (Lemma \ref{trouve-gamma-sph}). To formalize this idea we will use a recursive argument in the proof. This mean that we will find an arbitrarily large proportion of the word $\gamma_n$ which does not displace the basepoint much, and this will be in contradiction with the Bowditch’s hypothesis.

At last, section \ref{second-step-proof-sph} will conclude the proof by showing that a Bowditch representation of $\groupe$ satisfying the conclusion of Proposition \ref{BQ=>CTU-Sph} is simple-stable by using similar techniques.
	
\subsection*{Acknowledgments} Part of this work was carried out during my PhD at the Université de Strasbourg. I would like to thank my thesis advisor François Guéritaud for his continuous support and guidance throughout this work. I would also like to
thank Louis Funar, Vincent Guirardel and Andrea Seppi for useful comments and remarks.
The author is funded by the European Union (ERC, GENERATE, 101124349). Views and opinions expressed are however those of the author only and do not necessarily reflect those of the European Union or the European Research Council Executive Agency. Neither the European Union nor the granting authority can be held responsible for them. \\

\section{Combinatorics of simple-closed curves on the four-punctured sphere} \label{sec:combinatorics}
	\subsection{The four-punctured sphere}
	Let $\sph$ be a (topological) four-punctured sphere, that is, a sphere with four distinct points removed. \\
	
	\begin{figure}[h]
		\centering

		\includegraphics[width=4cm]{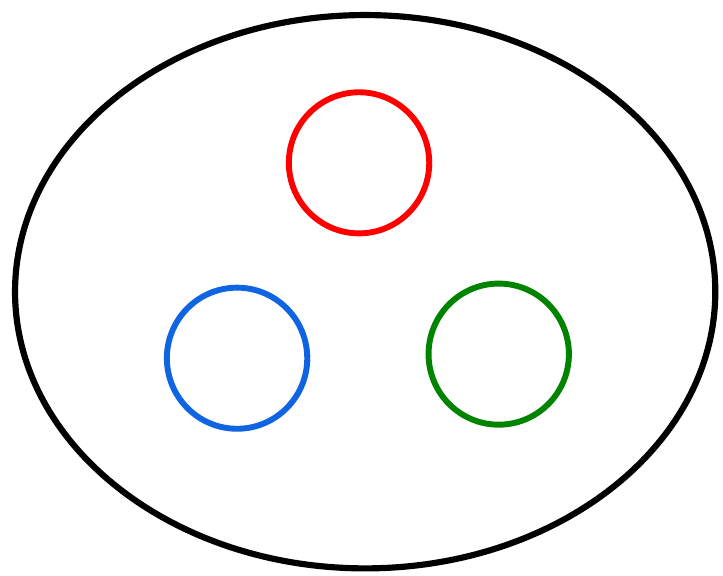} \hspace{2cm}
		\includegraphics[width=3cm]{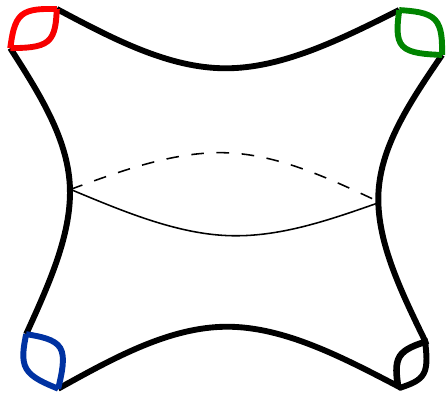}
		\caption{The four punctured-sphere $S_{0,4}$}
		\label{fig:sphere}
	\end{figure}
	
	We will use the following model for $\sph$, and see it as the quotient of the plane minus a lattice in the following way : 
	\begin{equation*}
		\sph \simeq (\R^2 \setminus \Z^2) / \la \{s_\lambda : \lambda \in \Z^2 \} \ra 
	\end{equation*}
	where $s_\lambda$ is the reflection of the plane across the point $\lambda$ (hence we have the formula $s_\lambda(u)=2\lambda-u$).
	Moreover, it is not hard to check that this model is equivalent to the following one : 
	\begin{equation*}
		\sph \simeq (\R^2 \setminus \Z^2) / \la 2\Z^2,\pm \ra 
	\end{equation*}
	The four punctures on the sphere are given by the four classes of points in $\Z^2$ given by the action of $2\Z^2$ on it. They are drawn in black, blue, green and red on figure \ref{fig:plane}. A fundamental domain is given in figure \ref{fig:triangle} by identifying the edges of the triangle as illustrated on the figure. We will refer to this triangle in the following as the \emph{fundamental triangle}. \\

	\begin{figure}[h!]
		\centering
		\includegraphics[width=5cm]{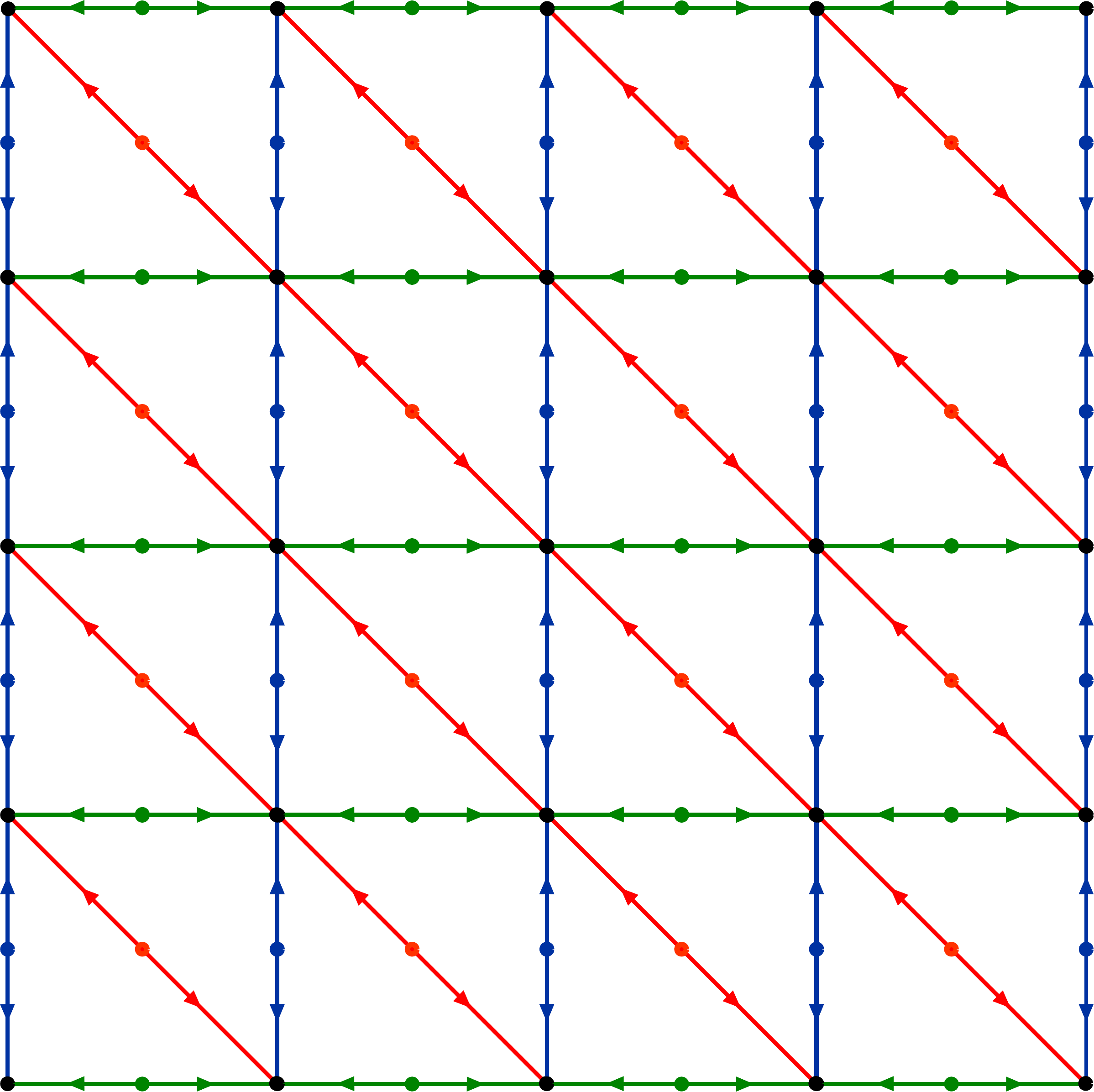}
		\caption{The sphere as the quotient \\ of the plane : $\sph \simeq (\R^2 \setminus \Z^2) / (2\Z^2,\pm)$}
		\label{fig:plane}
	\end{figure}

		\begin{figure}[h!]
		\centering
		\begin{subfigure}{0.45 \textwidth}
			\centering
			\includegraphics[width=2.5cm]{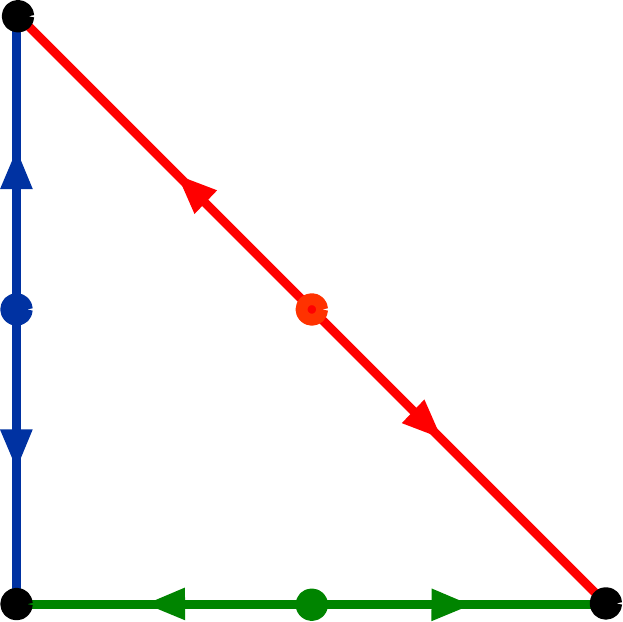}
			\caption{A fundamental domain}
			\label{fig:triangle}
		\end{subfigure}
		\begin{subfigure}{0.45 \textwidth}
			\centering
			\includegraphics[width=2.5cm]{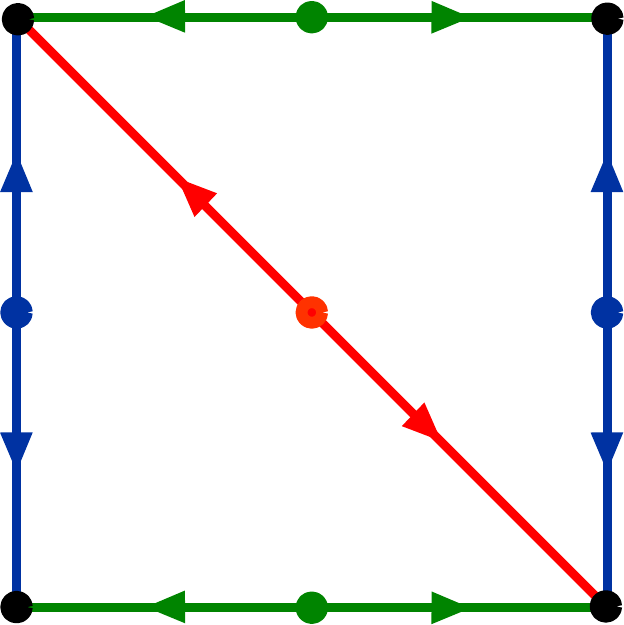}
			\caption{The sphere as the quotient of the square}
			\label{fig:square}
		\end{subfigure}
		\caption{Fundamental triangle and square for $\sph$}
		\label{fig:sphere-fun}
	\end{figure}
	
	Finally, we can also see the sphere $\sph$ as a quotient of the square $[0,2]^2 \setminus (\Z^2 \cap [0,2]^2)$ in $\R^2$ by identifying the opposite sides of the square and applying the reflection across the center point of the square~:~$(1,1)$ (see figure \ref{fig:square}). In this last description, we recover the classical fact that the four-punctured sphere is the quotient of the torus minus 4 points by the hyper-elliptic involution. 
	
	\subsection{Simple closed curves on the four-punctured sphere} \label{subsec:simple closed curves}
	The fundamental group of the four-punctured sphere is the free group of rank three $\F_3$. We fix once and for all the following free generating set : $\groupe = \F_3 = \la a,b,c \ra$, with $a$,$b$ and $c$ the curves corresponding to three of the four boundary components of $\sph$ as described in the picture \ref{fig:gen}.  Note that with this convention of orientation, the element $abc$ corresponds to the fourth boundary component (in black in the pictures). \\
	
	\begin{figure}[h]
		\centering
		\labellist
		\small\hair 2pt
		\pinlabel {$\color{green} a$} [c] at 300 150 
		\pinlabel {$\color{blue} b$} [c] at 40 130
		\pinlabel {$\color{red} c$} [c] at 95 215
		\pinlabel {$\color{green} a$} [c] at 665 50 
		\pinlabel {$\color{blue} b$} [c] at 525 80
		\pinlabel {$\color{red} c$} [c] at 565 150
		\pinlabel {$\color{green} a$} [c] at 1050 125 
		\pinlabel {$\color{blue} b$} [c] at 925 265
		\pinlabel {$\color{red} c$} [c] at 1030 300
		\endlabellist
		\includegraphics[width=3.5cm]{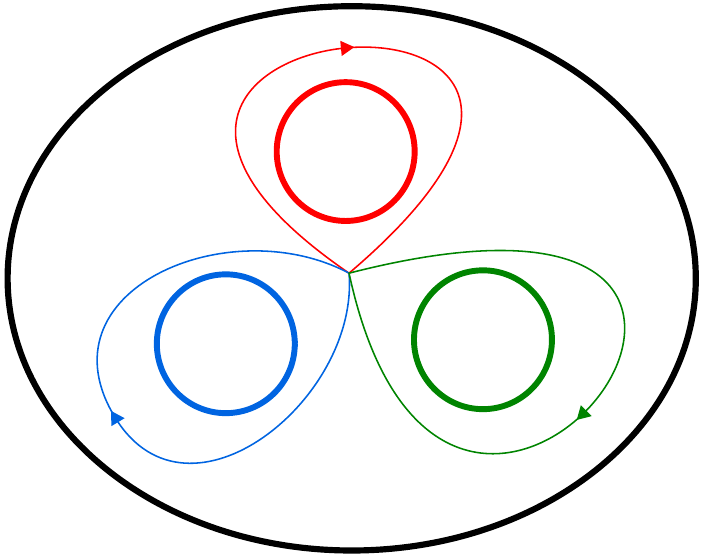} \hspace{1.05cm}
		\includegraphics[width=3cm]{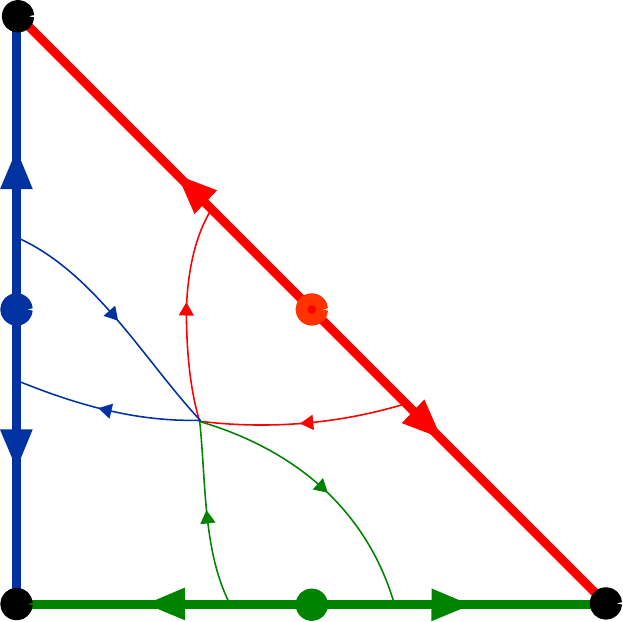}
		\includegraphics[width=3.5cm]{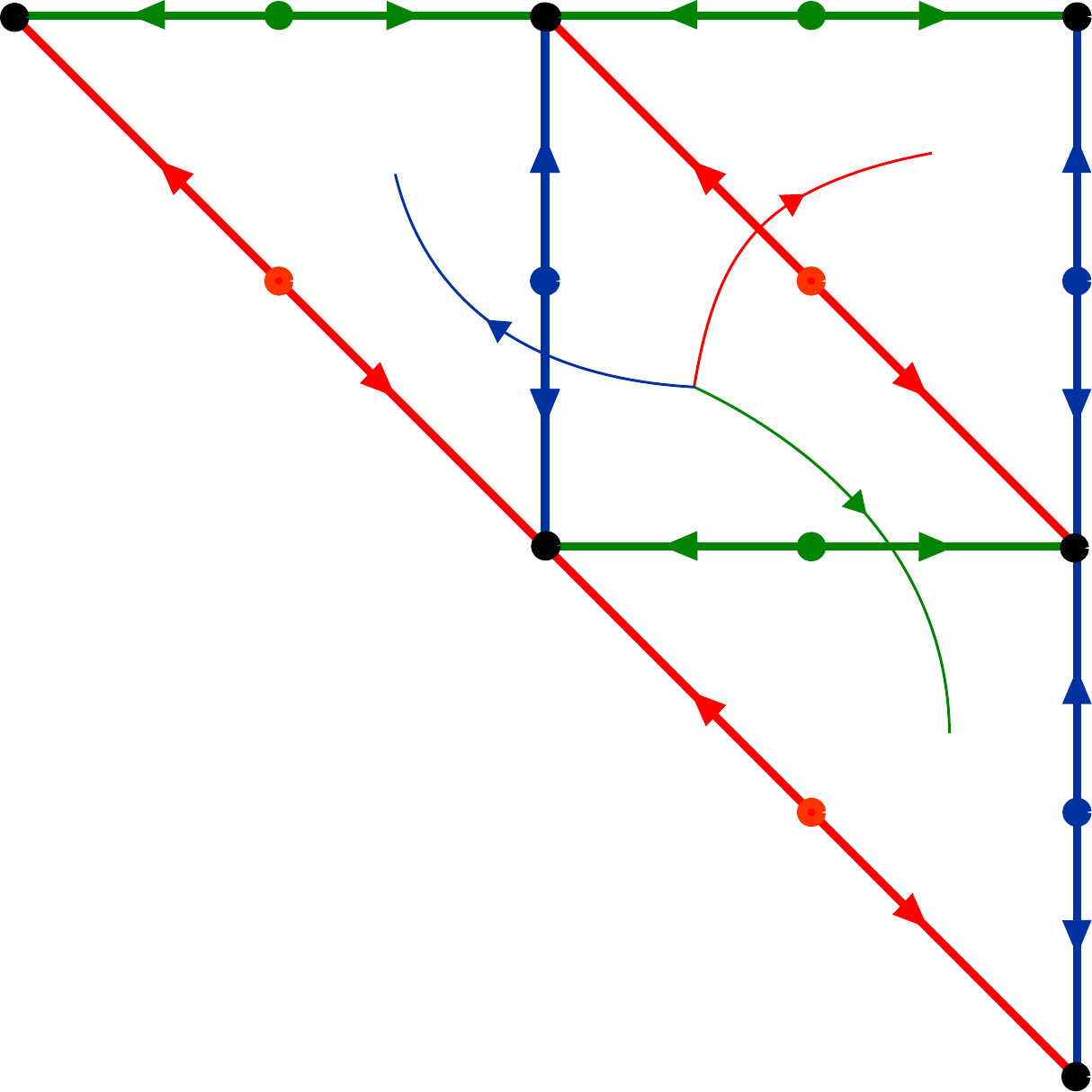}
		\caption{The three generators of $\groupe = \F_3 = \la a,b,c \ra $.}
		\label{fig:gen}
	\end{figure}
	
	Denote by $A,B$ and $C$ the three edges of the fundamental triangle respectively containing the punctures represented by the homotopy class $a,b$ and $c$. The puncture at the middle of each edge defines two half-edges. There is a total of six half-edges, on which we fix alternating transverse orientations (see figure \ref{fig:curve}). These transverse orientations are respected by the gluings. For every oriented curve $\gamma$ on $\sph$, we can write the corresponding word in the fundamental group (up to cyclic permutation) by following the curve $\gamma$ and writing the letter $a,b$ or $c$ respectively each time $\gamma$ crosses the edge $A,B$ or $C$, with power $\pm 1$ depending on whether or not the orientation of $\gamma$ at the intersection point agrees with the transverse orientation on the corresponding half-edge. \\
	
	\begin{figure}[h]
		\centering
		\labellist
		\small\hair 2pt
		\pinlabel {$\color{green} A$} [c] at 165 -10
		\pinlabel {$\color{blue} B$} [c] at -20 170
		\pinlabel {$\color{red} C$} [c] at 200 190
		\pinlabel {$\gamma$} [c] at 70 80
		\pinlabel {$c$} [c] at 90 265
		\pinlabel {$a$} [c] at 200 0
		\pinlabel {$c^{-1}$} [c] at 210 160
		\pinlabel {$b$} [c] at 5 140
		\pinlabel {$a$} [c] at 285 0
		\endlabellist
		\includegraphics[width=3cm]{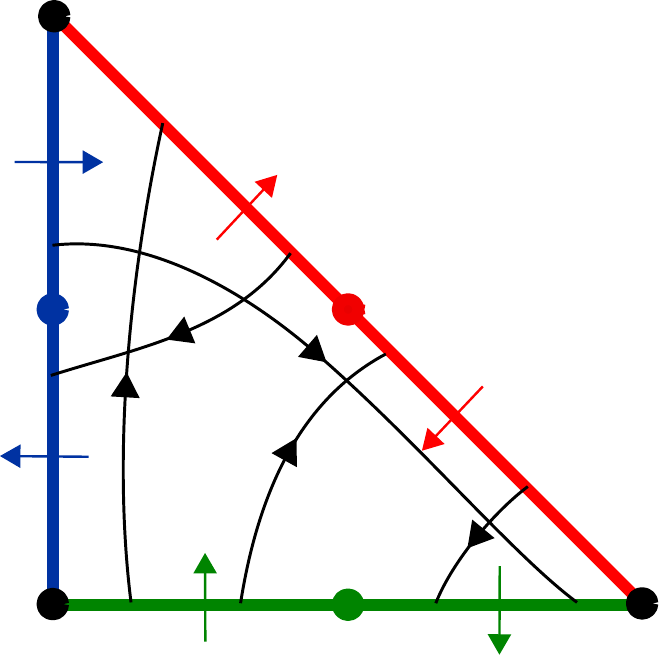}
		\caption{Reading the word $\gamma = cac^{-1}ba$ in the fundamental triangle}
		\label{fig:curve}
	\end{figure}
	
	Let us go back to the plane. We say that a point in $\Z^2$ is of \emph{type} $a,b,c$ or $abc$, if its class mod $2\Z^2$ corresponds to the puncture represented respectively by the homotopy class $a,b,c$ or $abc$ (hence a point is of type $a,b,c$ or $abc$, if its class mod $\Z^2$ is respectively $(1,0),(0,1),(1,1)$ or $(0,0)$). The points of type $a,b,c$ or $abc$ are exactly the lifts of the puncture represented respectively by $a,b,c$ or $abc$. In our pictures, the points of type $a$ are in green, the points of type $b$ in blue, the points of type $c$ in red and the points of type $abc$ in black. We will keep this convention in all the following. Now let us denote by $L_A$ the set of horizontal lines in the plane passing through a point of type $a$, $L_B$ the set of vertical lines in the plane passing through a point of type $b$, and $L_C$ the set of lines of slope $-1$ in the plane passing through a point of type $c$ (hence we have $L_A = \{ \{y = 2 \lambda \} \mid \lambda \in \Z \}$, $L_B = \{ \{x = 2 \lambda \} \mid \lambda \in \Z \}$ and $L_C = \{ \{y=-x+2\lambda \} \mid \lambda \in \Z \}$). Then through a point of type $a,b$ or $c$ passes exactly one line in $L_A\cup L_B \cup L_C$ whereas through a point of type $abc$ pass exactly three lines in $L_A\cup L_B \cup L_C$ : one in $L_A$, one in $L_B$ and one in $L_C$. We say that a line $l$ is of type $a,b$ or $c$ respectively if $l \in L_A, l \in L_B$ or $l \in L_C$. Two different lines of the same type never intersect whereas two lines of two different types always intersect in a point of $\Z^2$ of type $abc$. Moreover, each line in $L_A$, $L_B$ and $L_C$ is a union of segments (of the same lengths) with endpoints in $\Z^2$ and with no point of $\Z^2$ in the interior of the segments. The two endpoints of each segment are of two different types (one endpoint is of type $t \in \{a,b,c\}$, where $t$ is the type of the line, and the other endpoint is of type $abc$). Those segments correspond exactly to a half-edge in the fundamental triangle. Therefore, on each segment, we can put a transverse orientation which is just the lift of the orientation on the corresponding half-edge in the fundamental triangle. \\
	Now we can play in the plane the same game as in the fundamental triangle and write, for a curve $\gamma$ in $\sph$, the corresponding word in the fundamental group by following a lift $\t{\gamma}$ of $\gamma$ in $\R^2 \setminus \Z^2$ and record $a,b$ or $c$ each time $\t{\gamma}$ crosses a line in $L_A,L_B$ or $L_C$ and put a sign $\pm 1$ depending on whether or not the orientation of $\t{\gamma}$ at the intersection point coincides with the transverse orientation on the segment.   
	
	\begin{figure}[h!]
		\centering
		\labellist
		\small\hair 2pt
		\pinlabel {$\t{\gamma}$} [c] at 230 530
		\pinlabel {$c$} [c] at 160 200
		\pinlabel {$a$} [c] at 155 330
		\pinlabel {$c^{-1}$} [c] at 220 440
		\pinlabel {$b$} [c] at 335 500
		\pinlabel {$a$} [c] at 500 325
		\pinlabel {$\color{green} L_A$} [c] at 100 930
		\pinlabel {$\color{blue} L_B$} [c] at -35 800
		\pinlabel {$\color{red} L_C$} [c] at 210 770 
		\endlabellist
		\includegraphics[width=7cm]{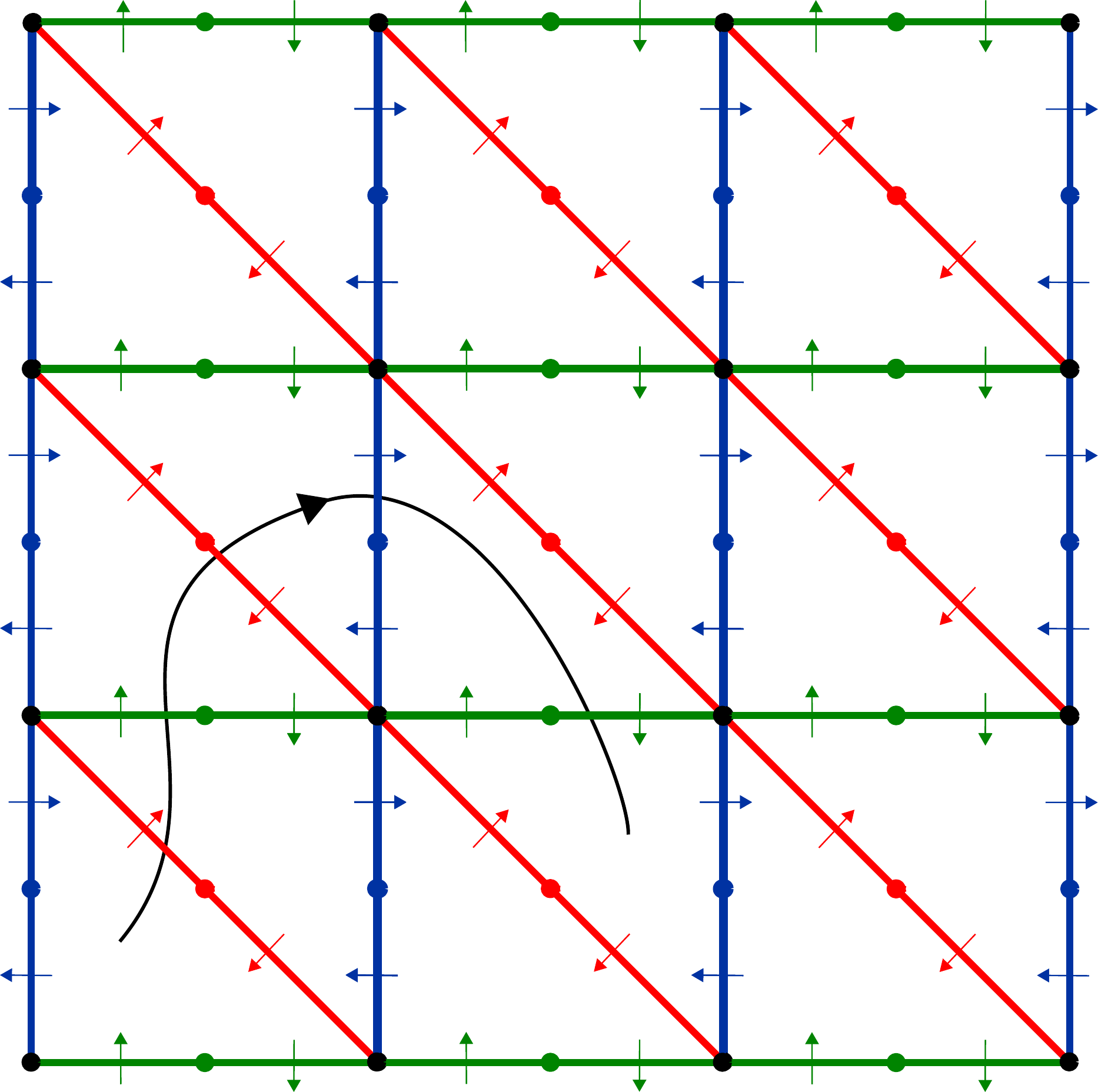}
		\caption{Reading the curve $\gamma=cac^{-1}ba$ in the plane using the three sets of parallel lines $L_A,L_B$ and $L_C$}
		\label{fig:curve_plane}
	\end{figure}
	
	In the fundamental triangle, curves on $\sph$ look like arcs joining the three edges $A,B$ and $C$ of the triangle. Notice that since the two half-edges of an edge are identified by a reflection, if an arc has an endpoint in one half-edge, this forces another arc to have one of its endpoints in the other half-edge of the same edge, and those two endpoints must be reflected from each other across the midpoint of the edge. An arc can possibly join the same edge, but in this case it joins two different halves of the same edge, otherwise this arc would be trivial and could be removed by a homotopy of the curve (corresponding to a cancellation such as $aa^{-1}$ in $\gamma$). \\
	
	Recall that, as defined in the introduction, we say that a closed curve $\gamma$ on $\sph$ is \emph{simple} if there exists a representative of $\gamma$ in its homotopy class which has no self-intersection and which does not bound a disk or a once-punctured disk, and we denote by $\simple$ the set of free homotopy classes of (unoriented) simple closed curves on $\sph$. First notice that every line with rational slope in the plane (which avoids $\Z^2$) gives a simple closed curve in the quotient. The converse is also true and is the main purpose of the following classical result :
	
	\begin{Proposition} \label{essential->Q}
		Let $\gamma$ be a simple closed curve on $\sph$. Then, after homotopy, $\gamma$ can be lifted to a line of rational slope in the plane. 
		Therefore, there exists a well defined map $\slope : \simple \to \Q \cup \infty$, which is a bijection.
	\end{Proposition}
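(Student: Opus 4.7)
My plan is to reduce the statement to the classical classification of simple closed curves on the torus, via the degree-two cover induced by the hyperelliptic involution. Write $T^2=\R^2/2\Z^2$ and $\sigma:u\mapsto -u$, which is orientation-preserving on $T^2$ with exactly four fixed points $F$ (the lifts of the four punctures of $\sph$); the quotient map $\pi:T^2\setminus F\to\sph$ is then an unbranched double cover. The key classical input is that every essential simple closed curve on $T^2$ is freely homotopic to a straight geodesic of a unique rational slope $p/q\in\Q\cup\{\infty\}$ (the primitive class it represents in $H_1(T^2;\Z)=\Z^2$), and every such slope is realized.

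Given $\gamma\in\simple$, the preimage $\pi^{-1}(\gamma)$ is a disjoint union of one or two simple closed curves on $T^2\setminus F$. I claim each component $\tilde\gamma$ is essential on $T^2$, and that all components share the same unoriented slope. The two crucial facts are (i) $\sigma$ fixes $F$ pointwise, and (ii) $\sigma_*=-\mathrm{id}$ on $H_1(T^2;\Z)$. If the preimage has two components $\tilde\gamma$ and $\tilde\gamma'=\sigma(\tilde\gamma)$, and $\tilde\gamma$ bounded a disk $D\subset T^2$, then $D'=\sigma(D)$ would bound $\tilde\gamma'$, and $D\cap D'=\emptyset$ (else one strictly contains the other, contradicting the area-preservation of $\sigma$); fact (i) gives $D\cap F=\sigma(D\cap F)=D'\cap F\subset D\cap D'=\emptyset$, so $\pi|_D$ is a homeomorphism onto a puncture-free disk of $\sph$ bounded by $\gamma$, contradicting $\gamma\in\simple$. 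If instead the preimage is a single $\sigma$-invariant component $\tilde\gamma$ and it is essential, then (ii) together with $\sigma(\tilde\gamma)=\tilde\gamma$ forces $\sigma|_{\tilde\gamma}$ to reverse the orientation of the circle, hence to be a reflection with two fixed points---which must lie in $F$, contradicting $\tilde\gamma\cap F=\emptyset$; and if it is null-homotopic, bounding a disk $D$, then $\sigma$ must preserve $D$ (its complement in $T^2$ is not a disk), so $\sigma|_{\bar D}$ is an orientation-preserving involution of a disk and therefore has a unique fixed point in $D\cap F$, making $\pi(D)$ a once-punctured disk on $\sph$ bounded by $\gamma$, again contradicting $\gamma\in\simple$. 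Hence each lift is essential on $T^2$ with a well-defined rational slope; fact (ii) implies all lifts share the same unoriented primitive class, and hence the same slope $p/q$. Lifting further to $\R^2$ and translating slightly to avoid $\Z^2$ if needed yields the desired straight line of slope $p/q$, and sets $\slope(\gamma):=p/q$.

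Well-definedness and injectivity of $\slope$ then follow: any free homotopy on $\sph$ between two rational-slope projections lifts---possibly after post-composing one endpoint with $\sigma$---to a free homotopy on $T^2\setminus F$, hence on $T^2$, and therefore preserves the homology class and in particular the slope. For surjectivity, given coprime $(p,q)\in\Z^2$, take any line $\ell\subset\R^2$ of slope $p/q$ disjoint from $\Z^2$; its image on $T^2$ is a simple closed geodesic, and its further image on $\sph$ is simple by the following elementary observation. A self-intersection on $\sph$ would require distinct points $x,-x$ both lying on the $T^2$-geodesic; lifting to $\R^2$, this forces two parallel lines $\ell+2\lambda_1$ and $-\ell+2\lambda_2$ to coincide, hence $\ell$ to be centrally symmetric about $\lambda_2-\lambda_1\in\Z^2$, hence to pass through this lattice point---contradicting our choice $\ell\cap\Z^2=\emptyset$.

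The main obstacle is the case analysis above ruling out every possibility other than a preimage consisting of two essential lifts; once this is dispatched, what remains is a direct translation of the classical torus classification, together with the parallel-lines observation above.
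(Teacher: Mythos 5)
Your reduction to the hyperelliptic double cover $T^2\setminus F\to\sph$ is attractive, and the case analysis ruling out null-homotopic or $\sigma$-invariant lifts is essentially sound. However, there is a genuine gap at the decisive step. After showing that each lift $\tilde\gamma$ is essential on $T^2$, you invoke the classification of simple closed curves on the torus to straighten it and then ``lift further to $\R^2$''. That classification only provides a free homotopy \emph{on $T^2$} from $\tilde\gamma$ to a straight geodesic, and such a homotopy may sweep across the four fixed points $F$; it therefore does not descend to a homotopy of $\gamma$ on $\sph$. What your argument controls is the class of $\tilde\gamma$ in $H_1(T^2;\Z)$, i.e.\ the slope, but this is far from determining the isotopy class of $\tilde\gamma$ in $T^2\setminus F$: the four-punctured torus carries infinitely many pairwise non-isotopic essential simple closed curves in each primitive homology class (modify a straight geodesic by a finger wrapping around a puncture). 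Consequently neither the first assertion of the proposition (that $\gamma$ is homotopic \emph{in $\sph$} to the projection of a straight line) nor the injectivity of $\slope$ (that two curves of equal slope are homotopic in $\sph$) is established; your ``injectivity'' paragraph proves only the easy converse, namely that homotopic curves have equal slope. The missing content is exactly what the paper's Fact 3 supplies --- a simple closed curve on $\sph$ is determined by its intersection numbers with the edges of the fundamental triangle, via the innermost-arc/unique-pairing argument --- or, in your language, an \emph{equivariant} straightening: one must show that the $\sigma$-invariant disjoint pair $(\tilde\gamma,\sigma(\tilde\gamma))$ is $\sigma$-equivariantly isotopic in $T^2\setminus F$ to the straight pair, for instance by analysing the two complementary annuli (each is $\sigma$-invariant and contains exactly two points of $F$, by an Euler-characteristic count) and then pinning down the isotopy class. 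That is real additional work which the proposal does not supply.

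Two smaller points. The assertion that a nontrivial orientation-preserving involution of a disk has a unique interior fixed point is classical (Ker\'ekj\'art\'o) but deserves a word; the count $\chi(D)=2\chi(D/\sigma)-k$ together with the fact that $D/\sigma$ has a single boundary circle forces $k=1$. And for surjectivity you should also check that the projected line is essential and non-peripheral in $\sph$ (which follows by running your own case analysis on its lift), not merely embedded.
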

	
	\begin{proof}
		Fix $\gamma$ a simple closed curve on $\sph$. Let us look at $\gamma$ as a collection of arcs in the fundamental triangle. This collection of arcs does not intersect, because the curve is assumed to be simple. By a slight abuse of notation, also denote by $\gamma$ the corresponding cyclically reduced word in the fundamental group. We are going to prove the following facts : 
		
		\begin{itemize}
			\item \textbf{Fact 1} : Every letter in $\gamma$ is isolated. \\
			It suffices to show that if a word contains the pattern $\ldots s^2 \dots$, with $s \in \{a,a^{-1},b,b^{-1},c,c^{-1} \}$, then the corresponding curve contains self-intersection. It is easy to see that the presence of $s^2$ in the word forces the existence of an arc $\beta$ from an edge $e$ of the fundamental triangle to itself, starting and ending in two different half-edges of $e$. Suppose $\beta$ is an innermost such arc. Then another arc (the next one or the previous one when following the curve) has an endpoint on $e$, in between the two endpoints of $\beta$. But since this new arc is not innermost, it has to "escape" the region bordered by $\beta \cup e$. This would create self-intersection. 
			\begin{figure}[h]
				\centering
				\labellist
				\small\hair 2pt
				\pinlabel{$s$} [c] at 220 0
				\pinlabel {$s$} [c] at 270 0
				\pinlabel{$\beta$} at 70 80
				\pinlabel{$\color{green} e$} at 170 -15 
				\endlabellist
				\includegraphics[width=3cm]{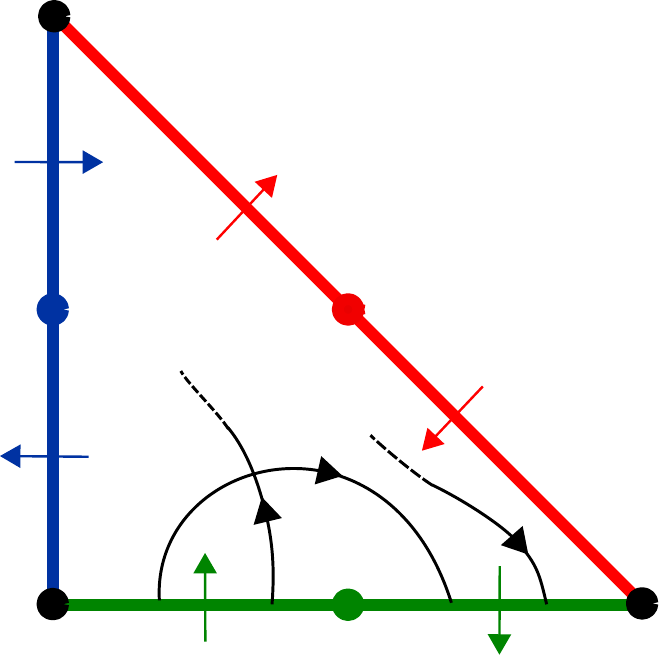}
				\caption{The pattern $\hdots s^2 \hdots$ forces intersection. (Here $e=A, s=a$).}
				\label{fig:simple1}
			\end{figure}
			
			\item \textbf{Fact 2} : There is an edge of the fundamental triangle such that every arc has an endpoint on it. \\
			By fact 1, there is no arc joining an edge to itself. Note that the only case which is not covered by the previous fact is when $\gamma$ has a single arc from an edge to itself, but in this case, this would mean that $\gamma$ is in fact $a,b$ or $c$, hence a boundary curve, which is not possible because $\gamma$ is a simple curve (and then it is supposed to be non-peripheral). In other words, all the arcs join different edges. Suppose that any two distinct edges are joined by at least an arc. Under this assumption, consider for every pair of distinct edges of the fundamental triangle the innermost arc joining those two edges, that is, the arc whose two endpoints are the closest to the intersection point between the two edges (which is the puncture $abc$). Then these three arcs combine together to form the boundary curve $abc$. We now deduce of this observation that these three arcs form a connected component of $\gamma$, which is taken simple (hence connected), so there is no other arc in $\gamma$. Therefore $\gamma=abc$ (up to cyclic permutation and inversion) which is not possible because $\gamma$ is supposed to be non-peripheral and $abc$ is a boundary curve. \\
			\begin{figure}[h]
				\centering
				\labellist
				\small\hair 2pt
				\pinlabel{$a$} [c] at 250 -10
				\pinlabel {$b$} [c] at -10 55
				\pinlabel {$c$} [c] at 70 255
				\endlabellist
				\includegraphics[width=3cm]{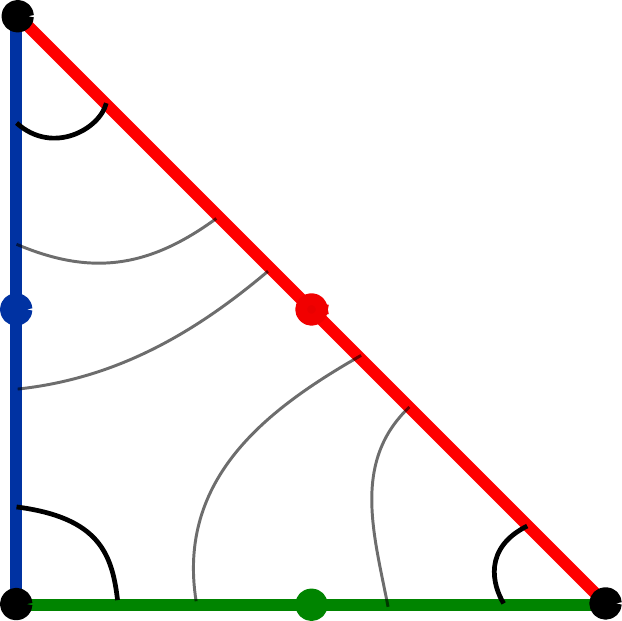}
				\caption{The three arcs in black together form the boundary component $abc$.}
				\label{fig:simple2}
			\end{figure}
			
			\item \textbf{Fact 3} : The curve $\gamma$ is uniquely determined by its intersection numbers with the three edges $A,B$ and $C$. \\
			Denote by $n_A,n_B$ and $n_C$ the numbers of intersection between the curve $\gamma$ and the three edges $A,B$ and $C$. First notice that since the two half-edges of an edge are identified, the three integers $n_A,n_B$ and $n_C$ must be even. Then, remark that these numbers are also the number of arcs having an endpoint on $A,B$ and $C$. By fact 2, we know that there exists an edge such that every arc has an endpoint on it. Hence we deduce that either $n_C=n_A+n_B$, or $n_B=n_A+n_C$ or $n_A=n_B+n_C$. Without loss of generality, let us now suppose that $n_C=n_A+n_B$. In this case, every intersection point on $C$ must be linked by an arc to an intersection point on $A\cup B$. There is only one way of pairing intersection points on $C$ and on $A\cup B$ without creating self-intersection, and this determines the curve $\gamma$. 
			\begin{figure}[h]
				\centering
				\labellist
				\small\hair 2pt
				\pinlabel {\textbf{Case $n_C=n_A+n_B$ with :}} [c] at 370 275
				\pinlabel{$n_A=4$} [c] at 350 225
				\pinlabel{$n_B=2$} [c] at 350 185
				\pinlabel{$n_C=6$} [c] at 350 145
				\pinlabel {$\color{green} A$} [c] at 140 -25
				\pinlabel {$\color{blue} B$} [c] at -30 150
				\pinlabel {$\color{red} C$} [c] at 180 180
				\endlabellist
				\includegraphics[width=3cm]{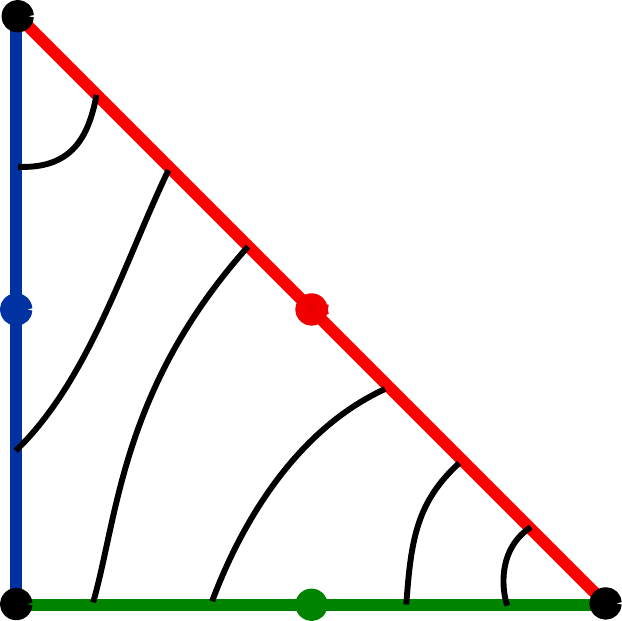}
				\caption{Pairing the endpoints on the edge $C$ with the endpoints on the edge $A\cup B$.}
				\label{fig:simple3}
			\end{figure}
			
			\item \textbf{Fact 4} : After homotopy, the curve $\gamma$ can be lifted to a simple closed curve on the torus $\R^2 / 2\Z^2$. \\
			Suppose that we are in the case where $n_C=n_A+n_B$ (exchange the role of $A,B$ and $C$ for the other cases). Now consider the triangle obtained as a reflection of the fundamental triangle across the center point of the edge $C$. The union of the two triangles forms a square with the opposite sides identified, thus a torus. Let us justify that any lift of the curve $\gamma$ in the torus is a simple closed curve on the torus. Choose a basepoint $x$ on the curve $\gamma$, then it has two lifts on the torus, one in each triangle, and let us choose one, $\t{x}$. Notice that by following the curve $\gamma$ from $x$, its lift $\t{\gamma}$ from $\t{x}$ changes triangle each time $\gamma$ meets an edge of the fundamental triangle. This happens exactly $\frac{n_A}{2}+\frac{n_B}{2}+\frac{n_C}{2}$ times, and since $\frac{n_A}{2}+\frac{n_B}{2}+\frac{n_C}{2}=n_C$ is even, this means that the endpoint of $\t{\gamma}$ lies in the same triangle as $\t{x}$, therefore is $\t{x}$. Thus the lift $\t{\gamma}$ is a closed curve, and it is simple because $\gamma$ is. Finally, the intersection numbers between the lift $\t{\gamma}$ of the curve $\gamma$ and the sides of the square are given by the numbers $\frac{n_A}{2},\frac{n_B}{2}$. Therefore, $\frac{n_A}{2}$ and $\frac{n_B}{2}$ are relatively prime and $\gamma$ can be lifted to a line of slope $\frac{n_A}{n_B}$ in the plane. This allows us to define the map $\slope$ from $\simple$ to $\Q \cup \infty$, such that $\slope(\gamma)=\frac{n_A}{n_B}$. See figure \ref{fig:simple4} on page \pageref{fig:simple4} for a picture of the two lifts in the torus. \\
		
			\begin{figure}[h!]
				\centering
				\begin{subfigure}{0.4 \textwidth}
					\centering
					\labellist
					\small\hair 2pt
					\pinlabel{$\displaystyle \frac{n_A}{2}=2$} [c] at 140 340
					\pinlabel{$\displaystyle \frac{n_B}{2}=1$} [c] at 360 170
					\pinlabel {$\color{green} A$} [c] at 140 -15
					\pinlabel {$\color{blue} B$} [c] at -30 150
					\pinlabel {$\color{red} C$} [c] at 180 180
					\pinlabel{$\t{x}$} at 115 145
					\pinlabel{$\t{\gamma}$} at 150 220
					\endlabellist
					\includegraphics[width=3cm]{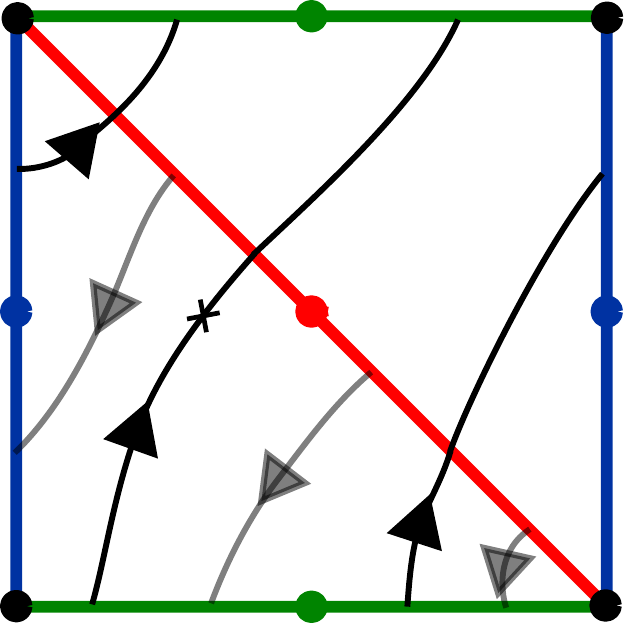}
				\end{subfigure} \hspace{1cm}
				\begin{subfigure}{0.4 \textwidth}
					\centering
					\labellist
					\small\hair 2pt
					\pinlabel{$\displaystyle \frac{n_A}{2}=2$} [c] at 140 340
					\pinlabel{$\displaystyle \frac{n_B}{2}=1$} [c] at 360 170
					\pinlabel {$\color{green} A$} [c] at 140 -15
					\pinlabel {$\color{blue} B$} [c] at -30 150
					\pinlabel {$\color{red} C$} [c] at 180 180
					\pinlabel{$\t{x}$} at 225 150
					\pinlabel{$\t{\gamma}$} at 220 200
					\endlabellist
					\includegraphics[width=3cm]{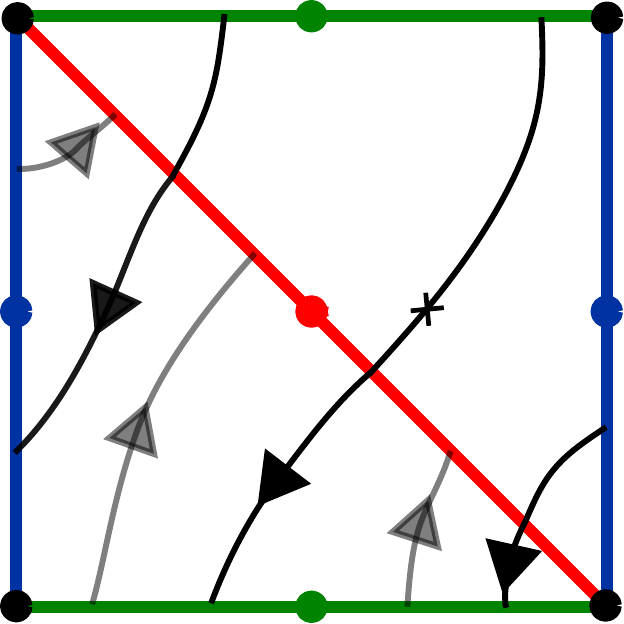}
				\end{subfigure}
				\caption{The two lifts (in black) of the curve $\gamma$ (in black and gray) in the torus.}
				\label{fig:simple4}
			\end{figure}
			
		\end{itemize}
	\end{proof}
	
	\begin{Remark} \label{link with F2} The proof of the previous proposition allows us to have a better understanding of the combinatorics of simple closed curves in $\sph$. In particular we deduce the following : \\
		Let $\gamma \in \simple$. Then, if $\gamma$ is cyclically reduced, one out of two letters in $\gamma$ is, up to inversion, always the same. If this letter is $c$ (resp. $a$, resp. $b$), then $\slope(\gamma) \in [0,\infty]$ (resp.  $\slope(\gamma) \in [-\infty,-1]$, resp. $\slope(\gamma) \in [-1,0]$). Moreover, consider the map $\vphi$ from $\F_3$ to $\F_2$ which delete the appearances of $c^\pm$ (resp. $a^\pm$, $b^\pm$) and forget the sign of the two other letters. More precisely, if $u = s_1 \hdots s_p$, with $s_i \in \{a,a^{-1},b,b^{-1},c,c^{-1} \}$, then $\vphi$ is such that $\vphi(u)=\vphi(s_1)\hdots \vphi(s_p)$, $\vphi(c^\pm)=1$ (resp. $\vphi(a^\pm)=1$, resp. $\vphi(b^\pm)=1$), $\vphi(a^\pm)=a$ and $\vphi(b^\pm)=b$ (resp. $\vphi(b^\pm)=b$ and $\vphi(c^\pm)=c$, resp. $\vphi(a^\pm)=a$ and $\vphi(c^\pm)=c)$. Then $\vphi(\gamma)$ is a primitive element of $\F_2$, and its slope as an element in $\F_2$ is the same as the slope of~$\gamma$~: $\slope(\gamma)=\slope(\vphi(\gamma))$.  Moreover, word length of $\vphi(\gamma)$ is half the word length of $\gamma$.
	\end{Remark}

	\begin{Definition}[Special-lengths $l_i(\gamma)$] \label{def:magic-length}
		Let $\gamma \in \simple$ be a cyclically reduced simple closed curve on $S_{0,4}$ and consider $\slope(\gamma)= [n_1,\dots, n_r]$ the continued fraction expansion of the slope of $\gamma$. For every $1 \leq i \leq r$, consider $\gamma_i  \in \simple$, respectively $\gamma'_i \in \simple$, a cyclically reduced simple closed curve of slope $[n_1,\dots,n_i]$, respectively of slope $[n_1,\dots,n_i+1]$. For $i=0$, consider $\gamma_0 \in \simple$ of slope 0 and $\gamma'_0$ of slope 1.\\
		Then, for $0\leq i \leq r$, we define $l_i(\gamma):=\frac{1}{2}\Vert \gamma_i \Vert$ and $l'_i(\gamma):=\frac{1}{2}\Vert \gamma'_i \Vert$. 
	\end{Definition}
	
		By the previous remark (Remark \ref{link with F2}), $l_i(\gamma)$ and $l'_i(\gamma)$ are exactly the lengths of  primitive words in $\F_2$ of slope $[n_1,\dots,n_i]$ and $[n_1,\dots,n_i+1]$. Moreover, we can provide the following formulas on the lengths $l'_i(\gamma),l_i(\gamma)$ (here after we simply denote $l_i,l'_i$ to reduce notations). For details, we refer the reader to section 2.1 of \cite{schlich_equivalence_2024} and in particular Remark 2.4.
		\begin{align}
		& \forall 1 \leq i \leq r, & &  l_i=(n_i-1)l_{i-1}+l'_{i-1}, & \label{rec-li}\\ 
		 &\forall 1 \leq i \leq r, & & l'_i=l_i + l_{i-1},& \label{rec-l'i}\\ 
		 &\forall 0 \leq i \leq r, & &  l_i \leq l'_i,& \label{li<l'i}\\
		 &\forall 1 \leq i \leq r,  & &  l_{i-1} \leq l_i, & \label{l{i-1<li}}\\
		 &\forall 1 \leq i \leq r,  & &  l'_i \leq 2l_i, & \label{l'i<2li}\\ 
		 & \forall 2 \leq i \leq r \text{ and for } i=1 \text{ with } n_1 \geq 1, & &  l'_i+1 \leq 2l_i,& \label{l'i+1<2li} \\
		 & \forall 0 \leq i \leq r, & & i \leq l_i. \label{i<li}
		\end{align}

	\begin{Remark} \label{change-basis}
		Notice that the map $\slope$ depends on the choice of a basis $\{a,b,c\}$ for $\groupe= \nolinebreak \F_3$. The choice of another basis leads to another map $\slope$. However, we can link the slope map of two different bases. Denote $\slope_{a,b,c}$ the slope map in the basis $\{a,b,c\}$. \\
		Let $\gamma \in \simple$ be such that $\slope_{a,b,c}(\gamma)=[n_1,\hdots,n_r]$. Fix $1 \leq i < r$ and let $a',b',c'$ be another basis of $\F_3$ such that $\slope_{a,b,c}(c'b')=[n_1,\hdots,n_i]$ and $\slope_{a,b,c}(c'a')=[n_1,\hdots,n_i+1]$. Then $\slope_{a',b',c'}(\gamma)=[n_{i+1},\hdots,n_r]$. 
	\end{Remark}
	
	The $\slope$ map only depends on the class of an element up to conjugacy and inversion. Therefore, for an element $\vphi$ in the mapping class group $\mcg$, even if the element $\vphi(\gamma)$ is defined only up to conjugacy, the rational $\slope(\vphi(\gamma))$ is well-defined. 
	
	\begin{Lemma} \label{exists-phi}
		Let $\F_3=\groupe=\la a,b,c \ra$. Let $\frac{p}{q}$ and $ \frac{p'}{q'}$ be two rational numbers such that $|pq'-p'q|= 1$. Then there exists a mapping class $\vphi \in \mcg$ such that $\slope(\vphi(cb))=\frac{p}{q}$ and $\slope(\vphi(ca))=\frac{p'}{q'}$. 
	\end{Lemma}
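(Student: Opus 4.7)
The plan is to identify the action of $\mcg$ on $\simple \simeq \Q \cup \{\infty\}$ (via Proposition \ref{essential->Q}) with the Möbius action of $\mathrm{PGL}(2,\Z)$, and then invoke the transitivity of this action on ordered edges of the Farey tessellation.

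First I would pin down the two reference slopes. Reading off the intersection numbers of $cb$ and $ca$ with the edges of the fundamental triangle yields $(n_A,n_B,n_C)=(0,1,1)$ for $cb$ and $(1,0,1)$ for $ca$, so by the formula $\slope=n_A/n_B$ from Fact 4 in the proof of Proposition \ref{essential->Q}, we obtain $\slope(cb)=0$ and $\slope(ca)=\infty$. The ordered pair $(0,\infty)$ is itself a Farey edge since $|0\cdot 0-1\cdot 1|=1$.

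Next, given $(p/q,p'/q')$ with $|pq'-p'q|=1$, I would form the matrix
\[
M=\begin{pmatrix} p' & p \\ q' & q \end{pmatrix}\in\mathrm{GL}(2,\Z),
\]
whose determinant is $p'q-pq'=\pm 1$. A direct computation of its Möbius action on $\Q\cup\{\infty\}$ shows that $M\cdot 0 = p/q$ and $M\cdot\infty = p'/q'$, so $M$ realizes, at the level of slopes, the movement we want. It then suffices to lift $[M]\in\mathrm{PGL}(2,\Z)$ to a mapping class via the classical surjection $\mcg\twoheadrightarrow\mathrm{PGL}(2,\Z)$. This surjection comes from the hyperelliptic double cover $T^2\setminus\{4\text{ pts}\}\to\sph$ combined with $\mathrm{MCG}(T^2)\simeq\mathrm{GL}(2,\Z)$: concretely, Dehn twists along $cb$ and $ca$ descend to parabolic Möbius transformations fixing $0$ and $\infty$ (which together generate a finite-index subgroup of $\mathrm{PSL}(2,\Z)$), while half-twists exchanging pairs of punctures of $\sph$ contribute the remaining generators, including elements of determinant $-1$. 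Any preimage $\vphi\in\mcg$ of $[M]$ then satisfies $\slope(\vphi(cb))=p/q$ and $\slope(\vphi(ca))=p'/q'$.

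The main (mild) obstacle is the case $\det M=-1$: when $\det M=1$ the matrix lies in $\mathrm{PSL}(2,\Z)$ and Dehn twists together with pure mapping classes suffice, but when $\det M=-1$ one must genuinely use a puncture-permuting (yet orientation-preserving) mapping class of $\sph$ whose slope-action has determinant $-1$. Verifying that such a mapping class exists and computing its slope action is the only nontrivial technical point; it can be bypassed altogether by instead proving the lemma by induction on the Farey-graph distance from $(0,\infty)$ to $(p/q,p'/q')$, applying a single Dehn twist at each step to reduce that distance.
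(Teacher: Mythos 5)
Your key computation is the same as the paper's: you identify $\slope(cb)=0$, $\slope(ca)=\infty$, and write down the matrix whose (M\"obius) action on slopes sends $(0,\infty)$ to $(p/q,p'/q')$ --- this is exactly the matrix $\begin{pmatrix} q & q' \\ p & p' \end{pmatrix}$ of the paper, read through its induced action on directions. Where you diverge is in how this matrix becomes a mapping class. The paper does not pass through the structure of $\mcg$ at all: since $M\in\mathrm{GL}(2,\Z)$ preserves $\Z^2$ and commutes with the action of $\langle 2\Z^2,\pm\rangle$, it descends \emph{directly} to a homeomorphism of the model $(\R^2\setminus\Z^2)/\langle 2\Z^2,\pm\rangle\simeq\sph$, and the effect on slopes is then immediate because simple closed curves are quotients of straight lines (Proposition~\ref{essential->Q}). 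This is self-contained and requires no knowledge of generators of $\mcg$ or of the isomorphism $\mcg\to\mathrm{PGL}(2,\Z)\ltimes(\Z/2)^2$. Your route --- citing the classical surjection $\mcg\twoheadrightarrow\mathrm{PGL}(2,\Z)$ compatible with the slope map --- is legitimate and would shorten the proof to a reference, at the cost of importing that compatibility statement rather than deriving it from the model the paper has already set up.

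Two corrections to your discussion of the realization step. First, the determinant~$-1$ case you single out as ``the only nontrivial technical point'' is vacuous: replacing the column $(q',p')$ by $(-q',-p')$ flips the determinant without changing either slope, so the realizing matrix can always be taken in $\mathrm{SL}(2,\Z)$; equivalently, $\mathrm{PSL}(2,\Z)$ already acts simply transitively on \emph{ordered} Farey edges, so the pair $(0,\infty)$ reaches every ordered pair of Farey neighbours without any determinant~$-1$ element. Second, your proposed fallback --- induction on Farey distance ``applying a single Dehn twist at each step'' --- does not work as stated: a Dehn twist on $\sph$ lifts to a product of two twists on the torus and hence acts on slopes as the \emph{square} of the corresponding parabolic, so the group generated by all Dehn twists acts on $\Q\cup\{\infty\}$ only through the level-two congruence subgroup $\Gamma(2)/\{\pm I\}$, which has index $6$ in $\mathrm{PSL}(2,\Z)$ and does not act transitively on Farey edges (e.g.\ the edge $\{0/1,1/1\}$ is not in the $\Gamma(2)$-orbit of $\{0/1,1/0\}$, since $\Gamma(2)$ preserves residues mod~$2$). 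You would need half-twists permuting punctures as well, at which point you are back to justifying the full surjection --- or, more simply, to the paper's direct construction.
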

	
	\begin{proof}
		First notice that the hypothesis $|pq'-p'q|= 1$ means that the two vectors $(q,p)$ and $(q',p')$ of $\Z^2$ form a basis of $\Z^2$. This implies that the matrix $M=\begin{pmatrix} q & q' \\ p & p' \end{pmatrix}$ belongs to $\mathrm{SL}^{\pm}(2,\Z)$, so $M$ preserve the lattice $\Z^2$ inside $\R^2$. The action of $(2\Z^2,\pm)$ on $\R^2 \setminus \Z^2$ is $M$ equivariant, so $M$ induces a homeomorphism of the quotient $(\R^2 \setminus \Z^2) / (2\Z^2,\pm)$, which is the sphere $\sph$. The line $l_x$ from $x$ to $x+(2,0)$ in $\R^2 \setminus \Z^2$ (here $x$ is an arbitrary point in $\R^2$ such that $l_x$ avoids the lattice $\Z^2$) induces a simple closed curve $\gamma_{\frac{0}{1}}$ in $\sph$ represented by $cb$ in $\groupe$. This line is sent by $M$ to the line from $M(x)$ to $M(x)+2(q,p)$ which induces a simple closed curve $\gamma_{\frac{p}{q}}$ in $\sph$ of slope~$\frac{p}{q}$. Similarly, the line from 
		$x$ to $x+(0,2)$ in $\R^2 \setminus \Z^2$ induces a simple closed curve $\gamma_{\frac{1}{0}}$ in $\sph$ represented by $ca$ in $\groupe$. This line is sent by $M$ to the line from $M(x)$ to $M(x)+2(q',p')$ which induces a simple closed curve $\gamma_{\frac{p'}{q'}}$ in $\sph$ of slope $\frac{p'}{q'}$. Hence $\vphi=M_*$ is in the mapping class group of $\sph$ and satisfies the conditions required by the lemma.
	\end{proof}

	\subsection{Constructing simple closed curves} 

	\label{construc-simple}
	In this section, we try to have a better understanding of the structure of the elements of $\simple$.
	\begin{Lemma} \label{slope-n1}
		Let $\gamma \in \simple$. Suppose that $\slope(\gamma)\geq0$. Denote $n=\lf \slope(\gamma) \rf \in \N$. Then, 
		\begin{itemize}
			\item If $n$ is even, $\gamma$ can be written (up to conjugation) as a concatenation of subwords of the form~:
			\begin{align*}
				w(\t{c},\t{b}) & =(ca)^{\frac{n}{2}}\t{c} (ca)^{-\frac{n}{2}}\t{b} \\
				w'(\t{c},\t{b},0) & =(ca)^{\frac{n}{2}}\t{c} (ca)^{-\frac{n}{2}-1}\t{b} \\
				w'(\t{c},\t{b},1) & =(ca)^{\frac{n}{2}+1}\t{c} (ca)^{-\frac{n}{2}}\t{b}
			\end{align*}
			where $\t{c} \in \{c,c^{-1} \}, \t{b} \in \{b,b^{-1}\}$.
			\item If $n$ is odd, $\gamma$ can be written (up to conjugation) as a concatenation of subwords of the form~:
			\begin{align*}
				w(\t{c},\t{b},0) & =(ca)^{\frac{n-1}{2}}\t{c}(ca)^{-\frac{n+1}{2}}\t{b} \\
				w(\t{c},\t{b},1) & =(ca)^{\frac{n+1}{2}}\t{c}(ca)^{-\frac{n-1}{2}}\t{b} \\
				w'(\t{c},\t{b}) & =(ca)^{\frac{n+1}{2}}\t{c}(ca)^{-\frac{n+1}{2}}\t{b}
			\end{align*}
			where $\t{c} \in \{c,c^{-1} \}, \t{b} \in \{b,b^{-1}\}$.    
		\end{itemize}
	\end{Lemma}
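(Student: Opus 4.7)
My plan is to use the geometric lift from Proposition \ref{essential->Q}: choose a representative of $\gamma$ that lifts to a straight line $L$ of slope $p/q = \slope(\gamma) \in [n, n+1)$ in $\R^2 \setminus \Z^2$, and read $\gamma$ as the sequence of signed crossings of $L$ with the three families $L_A$, $L_B$, $L_C$. I cut $L$ into successive \emph{blocks} between consecutive $L_B$-crossings; each block has horizontal width $2$ and vertical rise $2p/q \in [2n, 2n+2)$, and so contributes a subword of $\gamma$ ending in a letter $\tilde b \in \{b, b^{-1}\}$.

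Within one block, $L$ crosses either $n$ or $n+1$ horizontal lines of $L_A$. Combined with the alternation property from Remark \ref{link with F2} (every other letter in $\gamma$ is a $c^{\pm}$), this forces the sequence of \emph{types} in the block to be $c\,a\,c\,a\,\ldots\,c\,a\,c\,\tilde b$ of length $2n+2$ or $2n+4$, which exactly matches the combinatorial shape of the six proposed subwords. It then remains to show that the signed block takes the form $(ca)^{k_1}\tilde c(ca)^{-k_2}\tilde b$ for the specific pairs $(k_1,k_2)$ listed. Geometrically, a signed pair $ca$ records two consecutive crossings (first an $L_C$-crossing, then an $L_A$-crossing) traversed by $L$ in the direction prescribed by the transverse orientations on the two half-edges crossed; as $L$ advances monotonically through the block, such pairs line up as $ca$ until a distinguished $L_C$-crossing, the middle $\tilde c$, beyond which the interleaving flips and subsequent pairs assemble instead as $a^{-1}c^{-1}=(ca)^{-1}$.

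The precise split $(k_1, k_2)$ depends on where, inside the block, the indices $\mu,\nu$ labelling the lines $y=2\mu$ of $L_A$ and $x+y=2\nu$ of $L_C$ crossed by $L$ go out of step. For $n$ even, the symmetric configuration $k_1=k_2=n/2$ is forced in the short $(2n+2)$ block $w(\tilde c,\tilde b)$, while the extra $L_A$-crossing in a long $(2n+4)$ block sits either to the left or to the right of $\tilde c$, producing $w'(\tilde c,\tilde b,0)$ or $w'(\tilde c,\tilde b,1)$ respectively; the odd $n$ case is dual, since $n/2 \notin \Z$ forbids the symmetric split in a short block but realizes it in the long block $w'(\tilde c,\tilde b)$. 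The main obstacle I expect is the careful sign bookkeeping under the reflections $s_\lambda$, which reverse transverse orientations. A clean shortcut is to establish the unsigned, type-only skeleton first and then compose with the morphism $\vphi : \F_3 \to \F_2$ of Remark \ref{link with F2}, which sends any candidate block $(ca)^{k_1}\tilde c(ca)^{-k_2}\tilde b$ to $a^{k_1+k_2} b$; invoking the classical decomposition of primitive words of slope $p/q$ in $\F_2$ into blocks $a^n b$ and $a^{n+1} b$, one reduces the sign verification to checking a single explicit example per parity of $n$.
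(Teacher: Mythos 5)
Your proposal is correct and follows essentially the same route as the paper's proof: lift $\gamma$ to a straight line of slope $p/q\in[n,n+1)$, cut it into blocks between consecutive $b^{\pm1}$-crossings, observe that each block reads as $(ca)^{k_1}\t{c}(ca)^{-k_2}\t{b}$ with $k_1+k_2\in\{n,n+1\}$ and $|k_1-k_2|\le 1$, and then sort the possibilities by the parity of $n$. The one caveat is that your suggested shortcut via $\vphi$ forgets exactly the data at issue (the signs and the position of the middle $\t{c}$), so it cannot replace the direct geometric sign bookkeeping, which you also sketch and which is what the paper actually carries out.
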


	Before starting the proof, recall that we can see the sphere $\sph$ as the quotient of the square $[0,2]^2 \setminus (\Z^2 \cap [0,2]^2)$ by the reflection across the center point of the square $(1,1)$. Thus we can write the word in the fundamental group corresponding to a curve by following the curve and record an $a$ (resp. a $b$, resp. a $c$) each time it crosses a horizontal side (resp. a vertical side, resp. the diagonal from $(0,2)$ to $(2,0)$) of the square, with sign $\pm 1$ depending on whether or not the orientation of the line coincides with the transverse orientation on the corresponding half-side (resp. half-diagonal). If the curve is simple of slope $\frac{p}{q}$, Proposition \ref{essential->Q} (and its proof) shows us that it suffices to follow a straight line of slope $\frac{p}{q}$ in the quotient of the plane $\R^2$ by $2\Z^2$. 
	
	\begin{proof}
		First notice that is $\slope(\gamma)=0$, then up to inversion and conjugation, $\gamma=cb$, which is precisely the word $w(\t{c},\t{b})$ defined in the Lemma (here $n=0$).
		
		Now consider a line of slope $\frac{p}{q}=\slope(\gamma) > 0$ in the plane. The proof is illustrated on figure \ref{fig:line_square} on page \pageref{fig:line_square}. We can assume that this line avoids the points of the lattice $\Z^2$. In the quotient by $2\Z^2$, this line is a collection of parallel segments of slope $\frac{p}{q}$ joining two sides of the square. The identification between two opposite sides of the square gives an ordering on the segments.  Now we want to understand what can be read between two $b^{\pm 1}$. Thus look at a collection of successive segments such that the first segment starts on the left vertical side of the square, the last segment ends on the right vertical side of the square, and no other segment starts or ends on a vertical side. A picture is drawn on figure \ref{fig:line_square} on page \pageref{fig:line_square}. Since the slope of the line is $\frac{p}{q}$, between two $b$ we need to read $n$ or $n+1$ times the letter $a$ or $a^{-1}$, with $n=\lf \frac{p}{q} \rf$. Moreover, since the slope is positive, it imposes that  before the letter $a$ stands necessarily the letter $c$  and after the letter $a^{-1}$ stands necessarily the letter $c^{-1}$. Because of the ordering of the successive segments, we easily deduce that a subword of $\gamma$ read between two letters $b^\pm$ is necessarily of the form $(ca)^{n_1}\t{c}(ca)^{-n_2}\t{b}$, with $n_1+n_2 \in \{n,n+1\}$.  Finally, the intersection points between the curve and the horizontal side of the square are evenly spaced then we must also have $|n_1-n_2| \leq 1$. Thus :
		
		\begin{figure}[h!]
			\centering
			\begin{subfigure}{0.4 \textwidth}
				\centering
				\labellist
				\small\hair 2pt
				\pinlabel{$n_1=3$} [c] at 120 350
				\pinlabel{$\overbrace{\hspace{1.5cm}}$} [c] at 110 320
				\pinlabel{$n_2=3$} [c] at 240 350
				\pinlabel{$\overbrace{\hspace{1.5cm}}$} [c] at 240 320
				\pinlabel{$\scriptstyle ca$} at 75 280
				\pinlabel{$\scriptstyle ca$} [c] at 115 260
				\pinlabel{$\scriptstyle ca$} at 155 240
				\pinlabel{$\scriptstyle c^{-1}$} at 190 170
				\pinlabel{$\scriptstyle (ca)^{-1}$} at 190 80
				\pinlabel{$\scriptstyle (ca)^{-1}$} at 230 60
				\pinlabel{$\scriptstyle (ca)^{-1}$} at 270 40
				\pinlabel{$\scriptstyle b$} at 320 220
				\endlabellist
				\includegraphics[width=4cm]{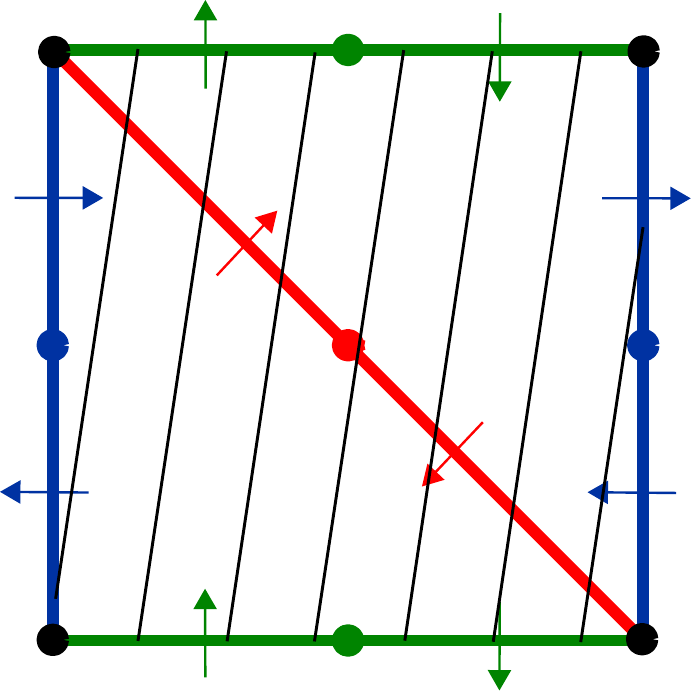}
				\caption{Case $n_1=n_2$,\\ we read $(ca)^3c^{-1}(ca)^{-3}b$}
				\label{fig:n1=n2}
			\end{subfigure}
			\begin{subfigure}{0.4 \textwidth}
				\centering
				\labellist
				\small\hair 2pt
				\pinlabel{$n_1=3$} [c] at 120 350
				\pinlabel{$\overbrace{\hspace{1.5cm}}$} [c] at 100 320
				\pinlabel{$n_2=4$} [c] at 240 350
				\pinlabel{$\overbrace{\hspace{1.9cm}}$} [c] at 240 320
				\pinlabel{$\scriptstyle ca$} at 70 290
				\pinlabel{$\scriptstyle ca$} [c] at 105 270
				\pinlabel{$\scriptstyle ca$} at 140 250
				\pinlabel{$\scriptstyle c$} at 170 185
				\pinlabel{$\scriptstyle (ca)^{-1}$} at 172 80
				\pinlabel{$\scriptstyle (ca)^{-1}$} at 210 65
				\pinlabel{$\scriptstyle (ca)^{-1}$} at 247 50
				\pinlabel{$\scriptstyle (ca)^{-1}$} at 280 35
				\pinlabel{$\scriptstyle b^{-1}$} at 330 120
				\endlabellist
				\includegraphics[width=4cm]{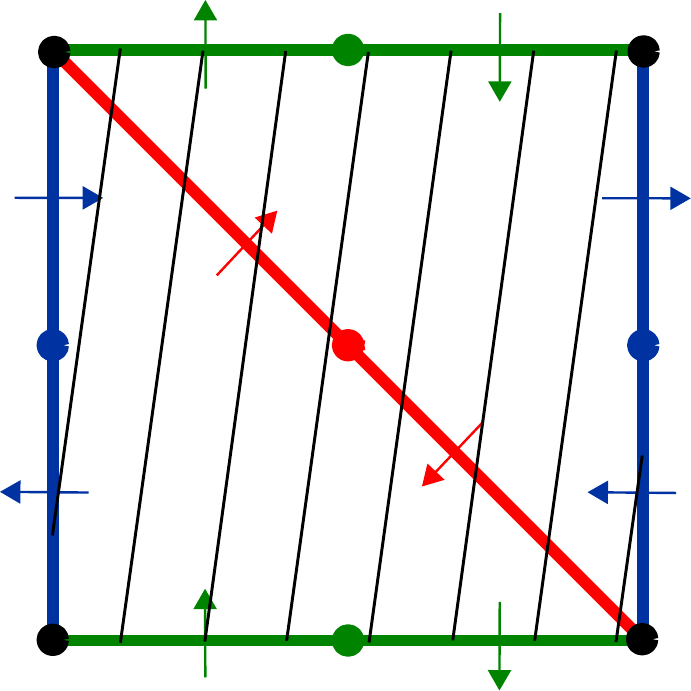}
				\caption{Case $n_2=n_1+1$,\\ we read $(ca)^3c(ca)^{-4}b^{-1}$}
				\label{fig:n2>n1}
			\end{subfigure}
			\caption{A line in the square between two intersections with the vertical side}
			\label{fig:line_square}
		\end{figure}
		
		\begin{itemize}
			\item If $n$ is even :
			\begin{itemize}
				\item If $n_1+n_2=n$, then necessarily $n_1=n_2=\frac{n}{2}$, so we recover the subword $w(\t{c},\t{b})$ defined in the lemma.
				\item If $n_1+n_2=n+1$, then necessarily $\{n_1,n_2\}=\{\frac{n}{2},\frac{n}{2}+1\}$, so we recover the subwords $w'(\t{c},\t{b},0)$ and $w'(\t{c},\t{b},1)$ defined in the lemma.\\
			\end{itemize}
			\item If $n$ is odd :
			\begin{itemize}
				\item If $n_1+n_2=n$, then necessarily $\{n_1,n_2\}=\{\frac{n-1}{2},\frac{n+1}{2}\}$, so we recover the subwords $w(\t{c},\t{b},0)$ and $w(\t{c},\t{b},1)$ defined in the lemma.
				\item If $n_1+n_2=n+1$, then necessarily $n_1=n_2=\frac{n+1}{2}$, so we recover the subword $w'(\t{c},\t{b})$ defined in the lemma.
			\end{itemize}
		\end{itemize}
	\end{proof}
	
	Let us now specify the previous result in the case where the slope is an integer.
	\begin{Lemma} \label{slope-[n]}
		Let $\gamma \in \simple$. Suppose that $\slope(\gamma) = [n]$, with $n \in \N$. Then (up to conjugation and inversion) :
		\begin{equation*}
			\gamma =  \left\{ \begin{array}{cc}
				(ca)^{\frac{n}{2}}c(ca)^{-\frac{n}{2}}b  & \text{ if $n$ is even} \\
				(ca)^{\frac{n+1}{2}}c^{-1}(ca)^{-\frac{n-1}{2}}b & \text{ if $n$ is odd}
			\end{array} \right.
		\end{equation*}
	\end{Lemma}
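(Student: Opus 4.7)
The plan is to combine the general decomposition from Lemma~\ref{slope-n1} with a length count so as to reduce $\gamma$ to a single ``short'' subword, and then to fix the signs by a direct geometric reading.

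First I apply Lemma~\ref{slope-n1} to $\gamma$. Since $\slope(\gamma)=[n]=n$, we have $\lf \slope(\gamma) \rf = n$, so $\gamma$ is (up to conjugation) a concatenation of subwords of the explicit form $w(\t c,\t b)$ of length $2n+2$ and $w'(\t c,\t b,\cdot)$ of length $2n+4$ when $n$ is even, with the analogous statement when $n$ is odd (the short subwords being $w(\t c,\t b,0)$ and $w(\t c,\t b,1)$, of length $2n+2$, and the long one being $w'(\t c,\t b)$, of length $2n+4$). Next I compute the total length. By Remark~\ref{link with F2}, the image $\vphi(\gamma)$ is a primitive of $\F_2$ of slope $n=n/1$, and therefore has word length $n+1$; consequently $\Vert \gamma \Vert = 2(n+1) = 2n+2$. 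Comparing with the subword lengths above forces $\gamma$ to consist of exactly one short subword: $\gamma = w(\t c,\t b)$ for $n$ even, and $\gamma = w(\t c,\t b, j)$ with $j \in \{0,1\}$ for $n$ odd.

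It remains to pin down the signs $\t c,\t b$ and, when $n$ is odd, the flag $j$. This is the main obstacle of the proof, though it is not conceptually deep. I exhibit the simple closed curve of slope $n$ as a straight line $y=nx+\eps$ in $\R^2 \setminus \Z^2$, for a suitable small $\eps>0$, and read off the letters at each crossing with $L_A \cup L_B \cup L_C$ using the transverse orientations fixed in subsection~\ref{subsec:simple closed curves}. The computation is a direct specialization of the one already illustrated in Figures~\ref{fig:n1=n2} and~\ref{fig:n2>n1}, applied to the unique ``slab'' between two successive crossings with vertical lines of $L_B$, and it produces exactly the formulas stated in the lemma. Since Proposition~\ref{essential->Q} identifies the simple closed curve from its slope uniquely, any other sign choice would necessarily correspond to a different curve and hence to a different slope, which rules out the remaining candidates.
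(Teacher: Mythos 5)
Your proof is correct and follows essentially the same route as the paper: apply Lemma~\ref{slope-n1}, show that $\gamma$ reduces to a single ``short'' building block, and then fix the remaining signs by reading the word off a straight line of slope $n$ in the square. The only (harmless) variations are that you isolate the single block by a total length count ($\Vert\gamma\Vert = 2n+2$ via Remark~\ref{link with F2}) where the paper counts the occurrences of $b^{\pm1}$ and $a^{\pm1}$, and that you settle the signs by computing the word of one explicit representative and invoking the bijectivity of $\slope$ from Proposition~\ref{essential->Q}, where the paper argues case by case on which half-edge each segment crosses.
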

	
	\begin{proof}
		Since the slope of $\gamma$ is an integer $n\in \N$, the letter $b$ (or $b^{-1}$) appears exactly once in the word $\gamma$. Thus, by Lemma \ref{slope-n1}, the word $\gamma$ (up to cyclic permutation) is one of the 24 words given in the statement of the lemma. Moreover, since the slope is $n$, the letter $a^{\pm 1}$ appears exactly $n$ times. Then $\gamma$ must be one of the following words : $w(\t{c},\t{b}),w(\t{c},\t{b},0),w(\t{c},\t{b},1)$.  
		\begin{itemize}
			\item Suppose that $n$ is even, then $\gamma$ is (up to permutation and inversion) of the form : $w(\t{c},\t{b})=(ca)^{\frac{n}{2}}\t{c}(ca)^{-\frac{n}{2}}\t{b}$. Up to taking the inverse, we can assume that the power on $b$ is $+1$. It remains to determine the power of the letter $c$ in the middle of the word.  Recall that we can see the curve $\gamma$ as a straight line of slope $n$ in the square $[0,2]$, that is, successive parallel segments evenly spaced of slope $n$. It is easy to check on a drawing that if the power on $b$ is $+1$ (which means that the first segment starts on the upper-half of the left vertical side of the square and that the last segment ends on the upper-half of the right vertical side of the square), then the ($\frac{n}{2}+1$)-th segment in the square must cross the diagonal in its first half. This means that the power on the letter $c$ in the middle is $+1$, hence $\gamma=(ca)^{\frac{n}{2}}c(ca)^{-\frac{n}{2}}b$ (up to permutation and inversion).
			
			\begin{figure}[h]
				\centering
				\labellist
				\small \hair 2pt
				\pinlabel{$\frac{n}{2}=2$} [c] at 105 350
				\pinlabel{$\overbrace{\hspace{1.4cm}}$} [c] at 90 320
				\pinlabel{$\frac{n}{2}=2$} [c] at 230 350
				\pinlabel{$\overbrace{\hspace{1.4cm}}$} [c] at 230 320
				\pinlabel{$c$} at 170 190
				\pinlabel{\1} at 190 250
				\pinlabel{\1 ($\frac{n}{2}+1$)-th segment} at 500 300
				\endlabellist
				\includegraphics[width=4cm]{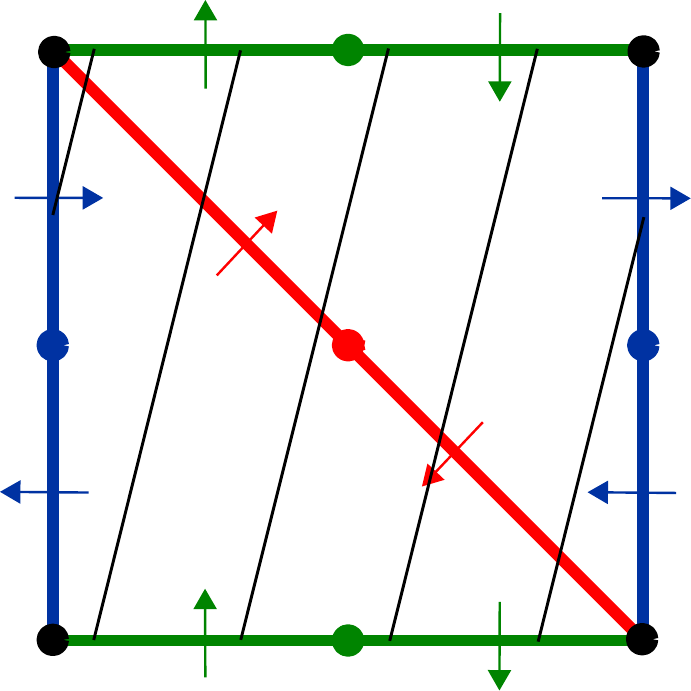}
				\caption{A line of slope $n=4$ in the square}
				\label{fig:slope-even}
			\end{figure}
			
			\item Suppose that $n$ is odd, then $\gamma$ is (up to permutation and inversion) of the form $w(\t{c},\t{b},0)$ or $w(\t{c},\t{b},1)$. Up to taking the inverse, we can assume that the power on $b$ is $+1$. It remains to check that the power on $(ca)$ is $\frac{n+1}{2}$ and that the power on the letter $c$ in the middle is $-1$. Notice that since the first segment hits the left vertical side of the square in its top half, then it forces the $(\frac{n+1}{2})$-th segment to hit the top horizontal side in its first half and the $(\frac{n+1}{2}+1)$-th segment to hit the diagonal in its second half. At the word level, this exactly means $\gamma=(ca)^{\frac{n+1}{2}}c^{-1}(ca)^{-\frac{n-1}{2}}b$ (up to permutation and inversion).
			
			\begin{figure}[h]
				\centering
				\labellist
				\small \hair 2pt
				\pinlabel{$\frac{n+1}{2}=3$} [c] at 115 350
				\pinlabel{$\overbrace{\hspace{1.8cm}}$} [c] at 105 320
				\pinlabel{$\frac{n-1}{2}=2$} [c] at 240 350
				\pinlabel{$\overbrace{\hspace{1.4cm}}$} [c] at 250 320
				\pinlabel{$c^{-1}$} at 205 160
				\pinlabel{\1} at 170 250
				\pinlabel{\1 ($\frac{n+1}{2}$)-th segment} at 520 300
				\pinlabel{\2} at 223 230
				\pinlabel{\2 ($\frac{n+1}{2}+1$)-th segment} at 540 260
				\pinlabel{$h$} at -10 150 
				\endlabellist
				\includegraphics[width=4cm]{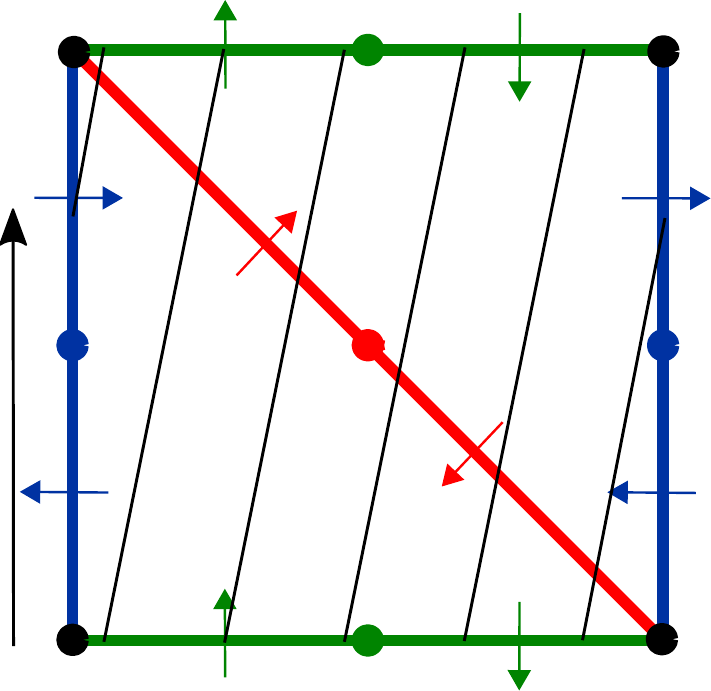}
				\caption{A line of slope $n=5$ in the square}
				\label{fig:slope-odd}
			\end{figure}
		\end{itemize}
	\end{proof}

	\begin{Corollary} \label{gamma^n-simple}
		Let $\gamma \in \pi_1(\sph)$ be a cyclically reduced element representing a simple closed curve on $\sph$. There exists two words $\delta_1,\delta_2 \in \groupe$ such that for all $n \in \N$, the element $\gamma^n\delta_1\gamma^{-n}\delta_2$ belongs to $\simple$.
	\end{Corollary}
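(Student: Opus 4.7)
The plan is to realize the family $\gamma^n \delta_1 \gamma^{-n} \delta_2$ as the orbit of a single simple closed curve $\delta$ under the powers of the Dehn twist $T_\gamma$ along $\gamma$. Since $T_\gamma$ lies in the mapping class group $\mcg$, every element $T_\gamma^n(\delta)$ is simple as soon as $\delta$ is, so if the word $\gamma^n \delta_1 \gamma^{-n} \delta_2$ represents $T_\gamma^n(\delta)$ (up to conjugation) for a fixed decomposition $\delta = \delta_1 \delta_2$ in $\groupe$, the corollary is immediate.

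To carry this out I first select $\delta \in \simple$ whose slope is a Farey neighbor of $\slope(\gamma) = p/q$. Lemma \ref{exists-phi} ensures that such a $\delta$ exists: given a Farey neighbor $p'/q'$ of $p/q$, the mapping class $\vphi$ produced by the lemma sends $ca$ or $cb$ to a simple closed curve of slope $p'/q'$. The standard computation of intersection number in the planar model (lines of slopes $p/q$ and $p'/q'$ in the torus $\R^2/2\Z^2$, downstairs on $\sph$) gives $i(\gamma, \delta) = 2|pq' - p'q| = 2$. Because every simple closed curve on $\sph$ is separating, $\hat{i}(\gamma, \delta) = 0$, so the two transverse intersection points of $\gamma$ and $\delta$ necessarily carry opposite signs.

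Next I split $\delta$ at these two intersection points: choosing a basepoint on $\delta$ and following $\delta$ once around, one crosses $\gamma$ first positively and then negatively (or the reverse), thereby cutting $\delta$ into two arcs. Closing them up via auxiliary arcs based at a fixed point, these arcs represent two elements $\delta_1, \delta_2 \in \groupe$ with $\delta = \delta_1 \delta_2$ up to conjugation. The standard formula describing a Dehn twist at the level of the fundamental group inserts $\gamma^{\pm 1}$ at each intersection point, with sign equal to the sign of the intersection. Iterating, the $n$-th power of $T_\gamma$ inserts $\gamma^n$ and $\gamma^{-n}$ respectively at the two intersection points, yielding
\begin{equation*}
T_\gamma^n(\delta) \;=\; \gamma^n \delta_1 \gamma^{-n} \delta_2
\end{equation*}
up to conjugacy in $\groupe$. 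Since $T_\gamma^n(\delta) \in \simple$, this concludes the proof.

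The one delicate point is the bookkeeping for the Dehn twist formula: one must fix a basepoint, an orientation of $\gamma$, and a consistent convention for the twist, and then check that the two insertions really come with opposite exponents. I would verify this by working out an explicit model case in the planar picture of $\sph$ — for instance $\gamma = cb$ of slope $0$ together with $\delta = ca$ of slope $\infty$ (which are Farey neighbors of minimal length) — and then transporting the picture by $\vphi \in \mcg$ to the general case, using that $T_{\vphi(\gamma)} = \vphi T_\gamma \vphi^{-1}$.
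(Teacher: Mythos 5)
Your argument is correct, and it is in fact the conceptual counterpart of what the paper does computationally: the family the paper produces, namely $\vphi\bigl((ca)^n c (ca)^{-n} b\bigr)$ with $\vphi$ given by Lemma \ref{exists-phi}, is exactly the orbit $T_\eta^n(\delta)$ of the Farey-neighbour curve $\delta=\vphi(cb)$ under the Dehn twist along $\eta=\vphi(ca)$, since the twist along the slope-$\infty$ curve acts on slopes by $s \mapsto s+2$ and $T_{\vphi(ca)}=\vphi T_{ca}\vphi^{-1}$. The difference is in the key step used to identify the resulting word: you invoke the surgery formula for a Dehn twist acting on $\pi_1$ (two intersection points of opposite sign, hence insertions of $\gamma^{n}$ and $\gamma^{-n}$), together with the facts that $i(\gamma,\delta)=2|pq'-p'q|=2$ and that all simple closed curves on $\sph$ are separating; the paper instead avoids the twist formula entirely by computing the integer-slope words explicitly in the planar model (Lemma \ref{slope-[n]}) and then pushing them forward by $\vphi$. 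Your route is shorter and more conceptual but imports an external standard fact whose sign and basepoint conventions genuinely need the verification you flag; the paper's route is self-contained within its combinatorial setup. One small point you should make explicit, which the paper handles in the opening lines of its proof: the twist formula naturally inserts \emph{conjugates} of $\gamma^{\pm n}$ rather than $\gamma^{\pm n}$ on the nose, and the basepoint makes everything only well defined up to global conjugacy; this is harmless because the connecting paths can be absorbed into $\delta_1,\delta_2$ and because it suffices to prove the statement for any representative of the slope class of $\gamma$ (conjugation and inversion of $\gamma$ only change $\delta_1,\delta_2$ by conjugation), but it should be said.
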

	
	\begin{proof}
		First notice that if the Lemma is true for some $\gamma$, it is still true for the inverse of $\gamma$ and for a conjugate of $\gamma$. Hence, since any two elements of $\F_3$ (induced by simple closed curves) with the same slope are equal up to conjugacy and inversion, it is sufficient to show the lemma for any element with the same slope as $\gamma$. \\  
		Let $\frac{p}{q}=\slope(\gamma)$. There exists another rational $\frac{p'}{q'}$ such that $|pq'-p'q|=\pm 1$. Now use Lemma \ref{exists-phi} to show the existence of a mapping class $\vphi \in \mcg$ such that $\slope(\vphi(ca))=\frac{p}{q}$ and $\slope(\vphi(cb))=\frac{p'}{q'}$. Choose a representative of $\vphi$ in $\mathrm{Homeo}(\sph)$ and with a slight abuse of notation still denote it by $\vphi$. Let $\eta=\vphi(ca)$. Then $\eta \in \simple$ and $\slope(\eta)=\slope(\gamma)$. Moreover, by Lemma \ref{slope-[n]}, we deduce that for all $n \in \N$, $(ca)^nc(ca)^{-n}b \in \simple$. Then, since $\vphi \in \mathrm{Homeo}(\sph)$, we also obtain that $\vphi((ca)^nc(ca)^{-n}b) \in \simple$ for all $n \in \N$. But we have $\vphi((ca)^nc(ca)^{-n}b)=\eta^n \vphi(c)\eta^{-n}\vphi(b)$ hence the corollary with $\delta_1=\vphi(c)$ and $\delta_2=\vphi(b)$.
	\end{proof}
	
	We now end this section by writing any element $\gamma$ in $\simple$ as a concatenation of "building blocks", each of them using some "approximation" $\gamma_i$ of $\gamma$. We can approximate $\gamma$ at different levels, indexed by the integer $i$, corresponding to the successive steps in the continued fraction expansion of the slope of $\gamma$.
	
	\begin{Lemma} \label{general-form-gamma}
		Let $i \geq 1$ and let $[n_1,\hdots,n_i]$ be the continued fraction expansion of a rational number. Then there exists a simple word $\gamma_i \in \simple$ of slope $[n_1,\hdots,n_i]$, and two words $\delta_1, \delta_2 \in \groupe$, such that every simple word $\gamma \in \simple$ with continued fraction expansion $[n_1,\hdots,n_r]$, with $r >i$, can be written as a (cyclic-permutation of a) concatenation of subwords of the form :
		\begin{equation*}
			(\gamma_i)^{m_1}\t{\delta_1} (\gamma_i)^{-m_2}\t{\delta_2}
		\end{equation*}
		with $m_1,m_2 \in \{ \lf \frac{n_{i+1}}{2} \rf, \lf \frac{n_{i+1}}{2} \rf +1 \}$ and $\t{\delta_1} \in \{\delta_1,\delta_1^{-1}\}, \t{\delta_2} \in \{\delta_2,\delta_2^{-1} \}$.\\
		Moreover, if $r=i+1$, that is $\slope(\gamma)=[n_1,\hdots,n_i,n_r]$, then (up to conjugation and inversion) :
		\begin{equation*}
			\gamma =  \left\{ \begin{array}{cc}
				(\gamma_i)^{\frac{n_r}{2}}\delta_1(\gamma_i)^{-\frac{n_r}{2}}\delta_2  & \text{ if $n_r$ is even} \\
				(\gamma_i)^{\frac{n_r+1}{2}}\delta_1^{-1}(\gamma_i)^{-\frac{n_r-1}{2}}\delta_2 & \text{ if $n_r$ is odd}
			\end{array} \right.
		\end{equation*}
	\end{Lemma}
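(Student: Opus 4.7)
The plan is to reduce the lemma to Lemmas \ref{slope-n1} and \ref{slope-[n]} by a change of basis in $\F_3 = \groupe$ realized by a mapping class of $\sph$. Since the two rationals $[n_1,\ldots,n_i]$ and $[n_1,\ldots,n_i+1]$ are Farey neighbors (a classical induction on $i$ via the matrix/mediant presentation of continued fractions), Lemma \ref{exists-phi} furnishes a mapping class $\vphi \in \mcg$ such that $\slope(\vphi(cb)) = [n_1,\ldots,n_i]$ and $\slope(\vphi(ca)) = [n_1,\ldots,n_i+1]$. By Remark \ref{change-basis}, the simple word $\vphi^{-1}(\gamma) \in \simple$ then has slope $[n_{i+1},\ldots,n_r] \geq 0$ in the original basis $\{a,b,c\}$, with integer part $n_{i+1}$.

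I would then apply Lemma \ref{slope-n1} to $\vphi^{-1}(\gamma)$ with $n := n_{i+1}$. This produces, up to cyclic permutation, a decomposition of $\vphi^{-1}(\gamma)$ into subwords of the form $(ca)^{m_1}\t{c}(ca)^{-m_2}\t{b}$ where the constraints $m_1+m_2 \in \{n_{i+1}, n_{i+1}+1\}$ and $|m_1-m_2|\leq 1$ given by that lemma force $m_1, m_2 \in \{\lfloor n_{i+1}/2 \rfloor, \lfloor n_{i+1}/2 \rfloor + 1\}$. Applying the automorphism $\vphi$ letter-by-letter translates this into the claimed decomposition of $\gamma$ into subwords $(\gamma_i)^{m_1}\t{\delta_1}(\gamma_i)^{-m_2}\t{\delta_2}$, upon setting
\begin{equation*}
\gamma_i := \vphi(ca), \quad \delta_1 := \vphi(c), \quad \delta_2 := \vphi(b),
\end{equation*}
all viewed as words in the original basis. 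These three data depend only on $\vphi$, hence only on the truncation $[n_1,\ldots,n_i]$, as required by the uniform nature of the statement.

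For the ``moreover'' clause in the case $r = i+1$, the slope of $\vphi^{-1}(\gamma)$ in the original basis is simply $[n_r]$, so I would invoke Lemma \ref{slope-[n]} instead of Lemma \ref{slope-n1}. Its two explicit closed forms (one for each parity of $n_r$) translate under $\vphi$ directly into the asserted expressions for $\gamma$, with the same $\gamma_i, \delta_1, \delta_2$ as above. The only real obstacle in the whole argument is the Farey-neighbor property of $[n_1,\ldots,n_i]$ and $[n_1,\ldots,n_i+1]$ that enables Lemma \ref{exists-phi} to produce $\vphi$; once this standard fact is in place, the rest of the proof is essentially bookkeeping to transport the decompositions of Lemmas \ref{slope-n1} and \ref{slope-[n]} through $\vphi$.
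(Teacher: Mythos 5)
Your argument is essentially the paper's own proof: both obtain $\vphi$ from Lemma \ref{exists-phi}, use Remark \ref{change-basis} to see that $\gamma$ has slope $[n_{i+1},\dots,n_r]$ in the new basis, apply Lemma \ref{slope-n1} (resp.\ Lemma \ref{slope-[n]} when $r=i+1$) there, and push the resulting blocks forward through $\vphi$ with $\gamma_i=\vphi(ca)$, $\delta_1=\vphi(c)$, $\delta_2=\vphi(b)$. The one discrepancy is your assignment $\slope(\vphi(ca))=[n_1,\dots,n_i+1]$ and $\slope(\vphi(cb))=[n_1,\dots,n_i]$, under which $\gamma_i=\vphi(ca)$ has slope $[n_1,\dots,n_i+1]$ rather than the slope $[n_1,\dots,n_i]$ demanded by the statement; the paper's proof takes the opposite assignment ($\slope(\vphi(ca))=[n_1,\dots,n_i]$), so you should swap the two slopes when invoking Lemma \ref{exists-phi}.
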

	
	\begin{proof}
		By permuting the three elements of the basis, $a,b,c$, we can assume that the slope of $\gamma$ is non-negative. \\ 
		
		We start by using Lemma \ref{exists-phi} to obtain the existence of a mapping class $\vphi \in \mcg$ such that $\slope(\vphi(ca))=[n_1,\hdots,n_i]$ and $\slope(\vphi_i(cb))=[n_1,\hdots,n_i+1]$. Choose a representative of $\vphi  \in \mcg \subset \out$ and with a slight abuse of notation still denote it by $\vphi \in \aut$. Denote $a'=\vphi_i(a),b'=\vphi_i(b),c'=\vphi_i(c)$ and $\gamma_i=\vphi(ca)$. Then $a',b',c'$ is a basis of $\F_3$ since $\vphi$ is an automorphism of $\F_3$ and in this new basis the slope of $\gamma$ is $[n_{i+1},\hdots,n_r]$ (see Remark \ref{change-basis}). Hence, by the Lemma \ref{slope-n1} applied to the new basis $\F_3=\la a',b',c' \ra$, the word $\gamma$ can be written as a concatenation of subwords of the form $(c'a')^{m_1} \t{c'} (c'a')^{-m_2} \t{b'}$, with $m_1,m_2 \in \{\frac{n_{i+1}}{2},\frac{n_{i+1}}{2}+1\}$ if $n_{i+1}$ is even, $m_1,m_2 \in \{\frac{n_{i+1}-1}{2},\frac{n_{i+1}+1}{2}\}$ if $n_{i+1}$ is odd. But we have :
		\begin{align*}
			(c'a')^{m_1} \t{c'} (c'a')^{-m_2} \t{b'} = (\vphi_i(ca))^{m_1}\t{\vphi_i(c)} (\vphi_i(ca))^{-m_2} \t{\vphi_i(b)} = (\gamma_i)^{m_1}\t{\vphi_i(c)} (\gamma_i)^{-m_2}\t{\vphi_i(b)}.
		\end{align*}
		Hence we obtain the first part of the lemma with $\delta_1=\vphi_i(c)$ and $\delta_2=\vphi_i(b)$. \\ 
		For the second part, now suppose that $r=i+1$, that is $\slope(\gamma)=[n_1,\hdots,n_i,n_r]$. Then, in the new basis $\F_3=\la a',b',c'\ra$, the slope of $\gamma$ is simply $[n_r]=n_r \in \N$ (again see Remark \ref{change-basis}). Thus we can use Lemma \ref{slope-[n]} to ensure that (up to permutation and inversion) :
		\begin{align*}
			\gamma & =  \left\{ \begin{array}{cc}
				(c'a')^{\frac{n_r}{2}}c'(c'a')^{-\frac{n_r}{2}}b'=(\vphi(ca))^{\frac{n_r}{2}}\vphi(c)(\vphi(ca))^{-\frac{n_r}{2}}\vphi(b) & \text{ if $n_r$ is even} \\
				(c'a')^{\frac{n_r+1}{2}}c'^{-1}(c'a')^{-\frac{n_r-1}{2}}b'=(\vphi(ca))^{\frac{n_r+1}{2}}\vphi(c)^{-1}(\vphi(ca))^{-\frac{n_r-1}{2}}\vphi(b) & \text{ if $n_r$ is odd}
			\end{array} \right. \\
			& = \left\{ \begin{array}{cc} (\gamma_i)^{\frac{n_r}{2}}\delta_1(\gamma_i)^{-\frac{n_r}{2}}\delta_2  & \text{ if $n_r$ is even.} \\
				(\gamma_i)^{\frac{n_r+1}{2}}\delta_1^{-1}(\gamma_i)^{-\frac{n_r-1}{2}}\delta_2 & \text{ if $n_r$ is odd.}
			\end{array} \right.
		\end{align*}
	\end{proof}
	
	\section{Study of the redundancy of subwords of simple words} \label{sec:redundancy}
	
	\subsection{Some generalities about lattices in $\R^2$} \label{lattices}
	In section \ref{magic-sphere}, we will prove the main result of this section (Theorem \ref{magic-len-S_{0,4}}), which studies the redundancy of subwords of some specific lengths in a simple word. We will adopt a geometric approach using lattices in $\R^2$. Thus, in this section, we need to set up some notations and to state a few facts about the geometry of lattices in $\R^2$ that will be useful for us in the following. 
	
	\subsubsection{Rectangles adapted to a basis of a lattice and tiling of $\R^2$}
	Let us fix $\Lambda$ a lattice in $\R^2$. We consider $\R^2$ both endowed with its usual euclidean structure and frame and with the lattice $\Lambda$. Let $(u,v)$ be a basis of $\Lambda$ and $x \in \Lambda$. \\
	In coordinates, write $x=(x_1,x_2), u=(u_1,u_2)$ and $v=(v_1,v_2)$. Let us assume that the basis $(u,v)$ satisfies $u_1>0,v_1>0$ and $u_2<0<v_2$. In particular this requires that $\displaystyle \slope(u)=\frac{u_2}{u_1} <0<\frac{v_2}{v_1}=\slope(v)$. Let us now define : 
	\begin{equation}\label{S}
		S(x,u,v)=[x_1,x_1+u_1+v_1]\times [x_2+u_2,x_2+v_2].
	\end{equation}
	
	\begin{figure}[h!]
		\centering
		\labellist
		\small \hair 2pt
		\pinlabel{$x$} at -10 60 
		\pinlabel{$x+u$} at 100 190 
		\pinlabel{$x+v$} at 200 -10 
		\pinlabel{$x+u+v$} at 310 120
		\pinlabel{$u$} at 95 45
		\pinlabel{$v$} at 55 100
		\endlabellist
		\includegraphics[width=4cm]{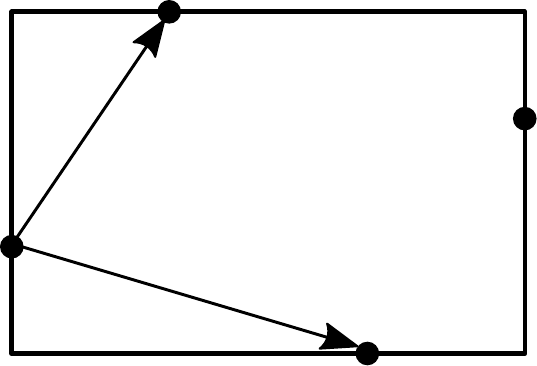}
		\caption{The rectangle $S(x,u,v)$.}
		\label{fig:S(x,u,v)}
	\end{figure}
	
	Thus, $S(x,u,v)$ is the only rectangle with horizontal and vertical sides containing on each of its sides exactly one point of the set $\{x,x+u,x+v,x+u+v \}$. Note the important fact that since $(u,v)$ is a basis of $\Lambda$, there is no other point of $\Lambda$ in $S(x,u,v)$. In particular, there is no point of~$\Lambda$ in the interior of $S(x,u,v)$. \\
	
	We will also need to consider the same square without its right vertical side. Then we choose the following notation :
	\begin{equation} \label{S*}
		S_*(x,u,v)=[x_1,x_1+u_1+v_1[\times [x_2+u_2,x_2+v_2].
	\end{equation}

	In the same way that we had previously defined the type of a point of $\Z^2$ as its equivalence class modulo $2\Z^2$, we can now define the \emph{type} of a point in $\Lambda$ as its equivalence class modulo $2\Lambda$. Hence they are $4$ types of points. Note that the four points $x,x+u,x+v$ and $x+u+v$ of the lattice $\Lambda$ are all of different types. If $t$ is a type, we define the $(u,v)$-\emph{opposite type} of $t$ as the type of the point $x+u+v$, where $x$ is any point of type $t$ (this definition does not depend on the choice of the point $x$ of type $t$). We denote by $\Lambda(t)$ the set of points in $\Lambda$ of type $t$. \\
	
	We now want to cover the plane $\R^2$ with the rectangles $S(x,u,v)$, for $x \in \Lambda(t) \cup \Lambda(\inv{t})$. The situation will not be the same according to the signs of $\slope(u+v)$ and $\slope(v-u)$. 
	When $\slope(u+v)$ and $\slope(v-u)$ are of the same sign, the rectangles $S(x,u,v)$, for $x \in \Lambda(t)\cup \Lambda(\inv{t})$ cover the whole $\R^2$. However, there is some overlap. This is the purpose of the following lemma :
	\begin{Lemma} \label{tile-same-sign}
		Assume that $\slope(u+v) \neq 0, \slope(v-u) \neq + \infty$ and $\slope(u+v)$ and $\slope(v-u)$ are of the same sign. Let $t$ be a type and $\inv{t}$ its $(u,v)$-opposite type. Then :
		\begin{equation*}
			\R^2=\underset{x \in \Lambda(t) \cup \Lambda(\inv{t})} \bigcup S(x,u,v)
		\end{equation*}
		Moreover,
		\begin{itemize}
			\item if $x,x' \in \Lambda$ are of $(u,v)$-opposite type, then $S(x,u,v) \cap S(x',u,v) \neq \emptyset$ if and only if $x'=x\pm (v-u)$ or $x'=x \pm (u+v)$ and in this case $S(x,u,v)$ and $S(x',u,v)$ only intersect along one of their sides. Thus, for all $x \in \Lambda(t),x'\in \Lambda(\inv{t})$, $S_*(x,u,v) \cap S_*(x',u,v)=\emptyset$.
			\item if $x,x' \in \Lambda$ are of the same type, then we distinguish according to the sign of $\slope(u+v)$ and $\slope(v-u)$ :
			\begin{itemize}
				\item If $\slope(u+v) > 0$ and $\slope(v-u) > 0$, then there exists $N \in \N$ such that : \\ 
				$S(x,u,v) \cap S(x',u,v) \neq \emptyset$ if and only if $x'=x \pm 2ku$, with $|k| \leq N$. In this case : 
				\begin{align*}
					S(x,u,v) \bigcap (\underset{1 \leq k \leq N} \bigcup S(x+2ku,u,v) )&  = [x_1+2u_1,x_1+u_1+v_1]\times [x_2+u_2,x_2+2u_2+v_2] \\
					S(x,u,v) \bigcap ( \underset{1 \leq k \leq N} \bigcup S(x-2ku,u,v) ) & = [x_1,x_1-u_1+v_1]\times [x_2-u_2,x_2+v_2]
				\end{align*}
				\item If $\slope(u+v) < 0$ and $\slope(v-u) < 0$, then there exists $N \in \N$ such that : \\
				$S(x,u,v) \cap S(x',u,v) \neq \emptyset$ if and only if $x'=x \pm 2kv$, with $|k| \leq N$. In this case : : 
				\begin{align*}
					S(x,u,v) \bigcap (\underset{1 \leq k \leq N} \bigcup S(x+2kv,u,v)) &  = [x_1+2v_1,x_1+u_1+v_1]\times [x_2+u_2+2v_2,x_2+v_2] \\
					S(x,u,v) \bigcap (\underset{1 \leq k \leq N} \bigcup S(x-2kv,u,v) ) & = [x_1,x_1+u_1-v_1]\times [x_2+u_2,x_2-v_2]
				\end{align*}
			\end{itemize} 
		\end{itemize}
	\end{Lemma}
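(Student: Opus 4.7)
The plan is to reduce everything to analyzing intersections of translates of the model rectangle $S_0 := S(0,u,v)$, using the translation equivariance $S(x,u,v) = x + S_0$. I will first observe that for $x, x' \in \Lambda(t) \cup \Lambda(\inv{t})$, the difference $w := x' - x$ lies in the index-two sublattice $\Lambda' := 2\Lambda \cup (u+v+2\Lambda) \subset \Lambda$. Writing $w = au+bv$ with $a, b \in \Z$, ``$x,x'$ of the same type'' corresponds to $a, b$ both even, while ``$x,x'$ of opposite type'' corresponds to $a, b$ both odd. The intersection $S_0 \cap (w+S_0)$ is a (possibly degenerate) axis-aligned rectangle, non-empty if and only if $|w_1| \leq u_1+v_1$ and $|w_2| \leq v_2 - u_2$.

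For the opposite-type case, I will show only four values of $w$ are admissible. If $a, b$ share a sign, then $|au_1+bv_1| = |a|u_1+|b|v_1 \geq u_1+v_1$, with equality iff $|a|=|b|=1$; this forces $w = \pm(u+v)$, and the resulting tight horizontal constraint makes the intersection degenerate to a vertical side of $S_0$. If $a, b$ have opposite signs, then since $u_2<0<v_2$ the two terms $au_2$ and $bv_2$ share a common sign, so $|au_2+bv_2| \geq v_2-u_2$ with equality iff $|a|=|b|=1$, forcing $w = \pm(v-u)$ and a horizontal-side intersection.

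For the same-type case, I write $a = 2m, b = 2n$. If $mn < 0$, the vertical inequality fails because $|2mu_2+2nv_2| = 2(|m||u_2|+|n|v_2) \geq 2(v_2-u_2)$; if $mn > 0$, the horizontal inequality fails analogously. Hence $w \in 2\Z u \cup 2\Z v$. The slope-sign hypothesis then selects the correct branch: in the positive case, $v_1 > u_1$ yields $2v_1 > u_1+v_1$, so $w = 2v$ is excluded and only $w = \pm 2ku$ remains, with the bound $N$ determined by the two inequalities; the negative case is symmetric under $u \leftrightarrow v$. The explicit overlap rectangles follow by direct substitution of $w$ into the general intersection formula.

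For the covering, I will use that the closed parallelograms $P(y,u,v) := y + \{su+tv : 0 \leq s, t \leq 1\}$ for $y \in \Lambda$ tile $\R^2$ and satisfy $P(y,u,v) \subset S(y,u,v)$. Consequently $\bigcup_{y \in \Lambda(t) \cup \Lambda(\inv{t})} S(y,u,v)$ already contains the half-tiling indexed by the coset $\Lambda(t) \cup \Lambda(\inv{t}) = \Lambda'$. The remaining parallelograms form the complementary coset $u + \Lambda'$; by $\Lambda'$-equivariance it suffices to verify that the single representative $u + P(0,u,v)$ is contained in $\bigcup_{x \in \Lambda'} S(x,u,v)$, which reduces to a coordinate-wise check. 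I expect this final step to be the main obstacle: one must distribute the four corners (and the interior) of $u+P$ among the neighboring rectangles $S(0,u,v)$, $S(2u,u,v)$, $S(u-v,u,v)$, and so on, and the slope-sign hypothesis is precisely what guarantees that the ``corner triangles'' of each $S$ lying outside its inscribed parallelogram get absorbed by a single well-chosen neighbor rather than opening a gap in the coverage.
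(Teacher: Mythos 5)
Your treatment of the intersection pattern is correct and complete, and it is essentially the paper's argument in a cleaner algebraic dress: the paper locates the admissible $x'$ inside the zone $\mathcal{Z}=[x_1-u_1-v_1,x_1+u_1+v_1]\times[x_2+u_2-v_2,x_2+v_2-u_2]$ and inspects lattice points by type, which is exactly your pair of inequalities $|w_1|\le u_1+v_1$, $|w_2|\le v_2-u_2$ combined with the parity of $(a,b)$ in $w=au+bv$. Two small omissions: the displayed formulas are for the \emph{unions} over $1\le k\le N$, so you need the (immediate) observation that $S(x,u,v)\cap S(x\pm 2ku,u,v)$ is nested decreasing in $k$, reducing the union to the $k=1$ term; and you never address the assertion $S_*(x,u,v)\cap S_*(x',u,v)=\emptyset$ for opposite types (be careful there: your own analysis shows that for $x'=x\pm(v-u)$ the common side is \emph{horizontal}, which $S_*$ does not remove, so that assertion really only follows for the $\pm(u+v)$ neighbours, which is all the paper's proof claims).

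The genuine gap is the covering statement $\R^2=\bigcup_{x\in\Lambda(t)\cup\Lambda(\inv{t})}S(x,u,v)$: your reduction (parallelogram tiling, then a single representative $u+P(0,u,v)$ modulo $\Lambda'$-equivariance) is sound, but you explicitly stop at "the main obstacle" without verifying it, and this is precisely the step where the same-sign hypothesis is consumed — in the opposite-sign case the analogous covering \emph{fails} and Lemma \ref{tile-opposite-sign} must add the rectangles $R(x,u,v)$. The step does go through, and the paper closes it by proving $S(x,u,v)\subset S(x-u,u,v)\cup S(x+u,u,v)\cup S(x-v,u,v)\cup S(x+v,u,v)$ for every $x$ (this suffices, since $x\pm u,x\pm v$ all lie in $\Lambda(t)\cup\Lambda(\inv{t})$ when $x$ does not, and it also covers your $P(u)\subset S(u)$). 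Concretely, in the case $\slope(u+v)>0$, $\slope(v-u)>0$ (so $u_1\le v_1$ and $u_2+v_2\ge 0$) the traces of the four neighbours on $S(x,u,v)$ are, in coordinates relative to $x$, the rectangles $[0,v_1]\times[0,v_2]$, $[0,u_1]\times[u_2,0]$, $[u_1,u_1+v_1]\times[u_2,u_2+v_2]$ and $[v_1,u_1+v_1]\times[u_2+v_2,v_2]$; a point $(p,q)$ avoiding the third satisfies $p<u_1$ or $q>u_2+v_2$, and each branch yields a contradiction with avoiding the others ($p<u_1\le v_1$ forces both $q<0$ and $q>0$; $q>u_2+v_2\ge 0$ forces both $p>v_1$ and $p<v_1$). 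Both inequalities $u_1\le v_1$ and $u_2+v_2\ge 0$ are used, confirming your guess about where the hypothesis enters; you should supply this verification rather than defer it.
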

	
	\begin{proof} 
		Since the fundamental quadrilateral $(x,x+u,x+v,x+u+v)$ of $\R^2 / \Lambda$ is contained in $S(x,u,v)$, then the rectangles $S(x,u,v)$, for $x \in \Lambda$, tile $\R^2$ (we mean that $ \displaystyle \R^2 = \underset{x \in \Lambda} \bigcup S(x,u,v)$). So we need to show that, if $x \in~\Lambda \setminus~\Lambda(t) \cup~\Lambda(\inv{t})$, we can cover $S(x,u,v)$ by a union of rectangles $S(y,u,v)$, with $y \in \Lambda(t) \cup \Lambda(\inv{t})$. It is easy to check that, when $\slope(u+v)$ and $\slope(v-u)$ are of the same sign, we have :
		\begin{equation*}
			S(x,u,v) \subset S(x-u,u,v) \cup S(x+u,u,v) \cup S(x+v,u,v) \cup S(x-v,u,v)
		\end{equation*}
		as represented in figure \ref{fig:tiling-same1} on page \pageref{fig:tiling-same1}.
		\begin{figure}[h!]
			\centering
			\labellist
			\small\hair 2pt
			\pinlabel{$x$} at 190 150 
			\pinlabel{$x-u$} at 130 220
			\pinlabel{$x+u$} at 290 140 
			\pinlabel{$x-v$} at -30 60
			\pinlabel{$x+v$} at 450 300
			\pinlabel{$S(x,u,v)$} at 340 200
			\pinlabel{$S(x-v,u,v)$} at 120 90
			\pinlabel{$S(x+v,u,v)$} at 550 320
			\pinlabel{$S(x-u,u,v)$} at 280 305
			\pinlabel{$S(x+u,u,v)$} at 385 90
			\endlabellist
			\includegraphics[width=6.7cm]{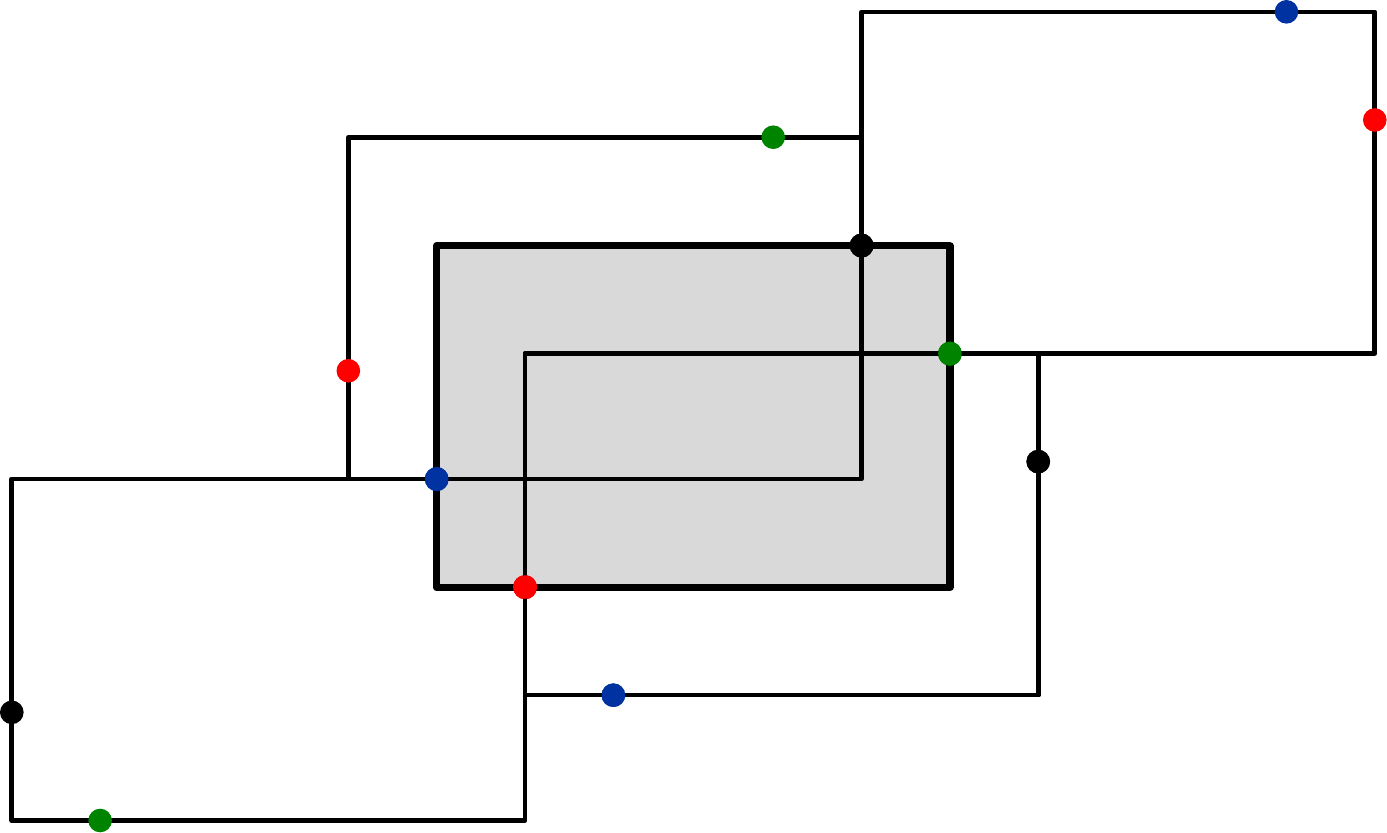}
			\caption{The rectangle $S(x,u,v)$ is covered by rectangles of type $t$ and $\inv{t}$.}
			\label{fig:tiling-same1}
		\end{figure}
		But notice that when $x \in \Lambda \setminus \Lambda(t) \cup \Lambda(\inv{t})$, the points $x+u,x-u,x+v$ and $x-v$ all four belong to $\Lambda(t)\cup \Lambda(\inv{t})$, hence the claim. \\ 
		
		Now, we investigate the intersection between rectangles. Let $x \in \Lambda$. Let us determine which rectangles intersects $S(x,u,v)$. For $S(y,u,v)$ to intersect $S(x,u,v)$, the point $y$ must be contained in $\mathcal{Z}=[x_1-u_1-v_1,x_1+u_1+v_1]\times [x_2+u_2-v_2,x_2+v_2-u_2]$. Let $t$ be the type of $x$. See figure \ref{fig:tiling-same2} on page \pageref{fig:tiling-same2}.
		
		\begin{figure}[h!]
			\centering
			\labellist
			\small \hair 2pt
			\pinlabel{$x$} at 350 257
			\pinlabel{$x-2u$} at 270 319
			\pinlabel{$x-4u$} at 206 380
			\pinlabel{$x+2u$} at 390 200
			\pinlabel{$x+4u$} at 450 142
			\pinlabel{$x+u+v$} at 675 365
			\pinlabel{$x+v-u$} at 615 435
			\pinlabel{$x-u-v$} at 70 155
			\pinlabel{$x+u-v$} at 130 80
			\pinlabel{$S(x,u,v)$} at 500 310
			\pinlabel{$\color{purple} \mathcal{Z}$} at 130 445
			\pinlabel{points of type $t$} at 1020 590
			\pinlabel{points of type $\inv{t}$} at 1020 553
			\endlabellist
			\includegraphics[width=12cm]{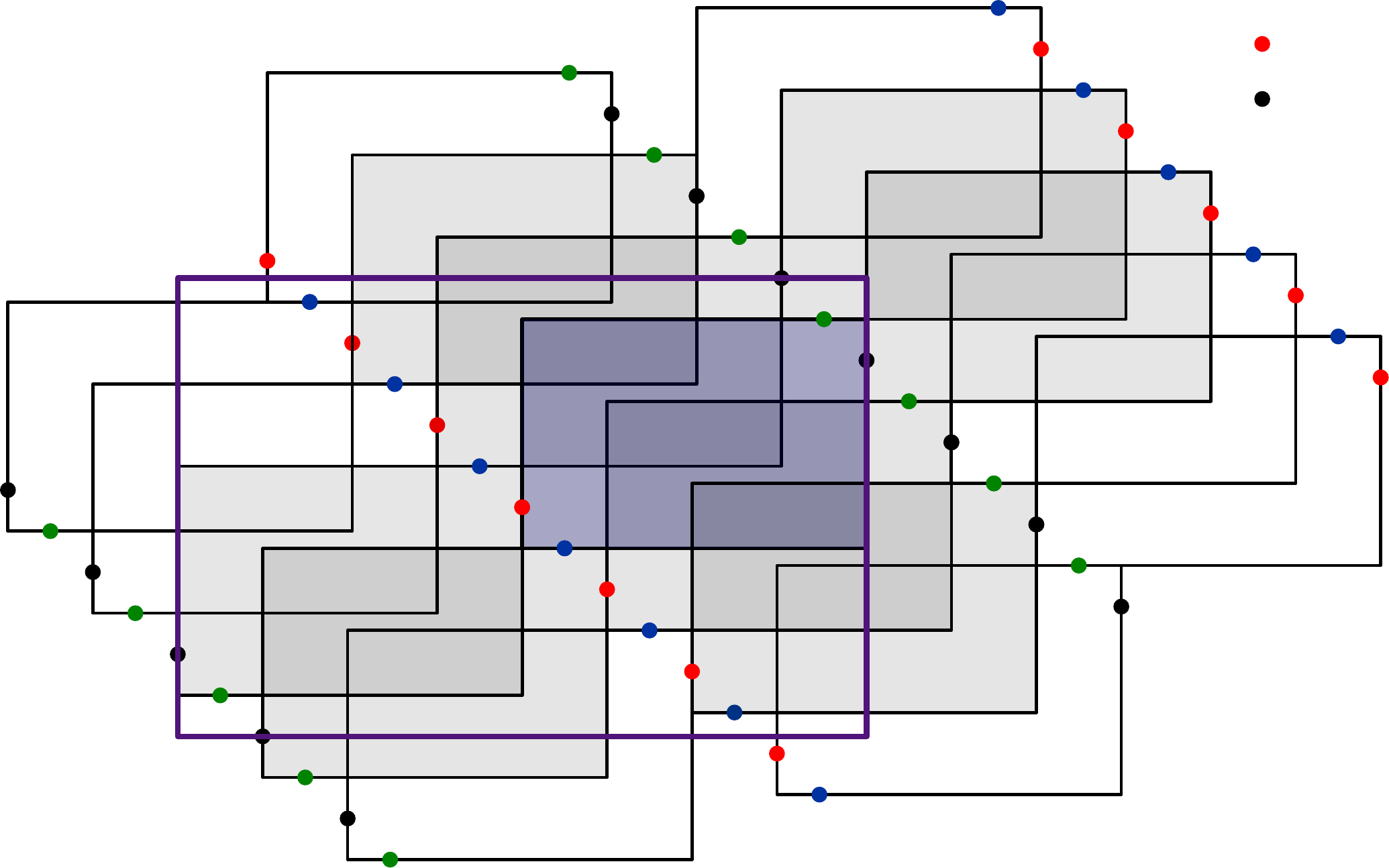}
			\caption{Rectangles of type $t$ and $\inv{t}$ (in gray) which intersect $S(x,u,v)$ (in the center). In this case, $\slope(u+v)>0$ and $N=2$.}
			\label{fig:tiling-same2}
		\end{figure}
		
		\begin{itemize}
			\item The only points of the lattice $\Lambda$ of type $\inv{t}$ which are contained in $\mathcal{Z}$ are $x+u+v,x-u-v,x+v-u$ and $x-v+u$. Moreover, the rectangle $S(x,u,v)$ intersect $S(x+u+v,u,v)$ only along one of their side, and ditto for $S(x-u-v,u,v)$, $S(x+v-u,u,v)$ and $S(x-v+u,u,v)$. \\ 
			Therefore $S_*(x,u,v) \cap S_*(x+u+v,u,v) =S_*(x,u,v) \cap S_*(x-u-v,u,v)=\emptyset$.
			\item The only points of the lattice $\Lambda$ of type $t$ which can be contained in $\mathcal{Z}$ are of the form $x+2ku$ and $x+2kv$, for $k \in \Z$. 
			\begin{itemize}
				\item If $\slope(u+v) > 0$ and $\slope(v-u) > 0$, then $0< u_1<v_1$ and $0<-u_2<v_2$. So we deduce that $x+2kv \in \mathcal{Z}$ if and only if $k=0$. Therefore we deduce the existence of an integer $N \in \N$ such that the only points of the lattice $\Lambda$ of type $t$ which are contained in $\mathcal{Z}$ are of the form $x+2ku$, with $|k|\leq N$. We also deduce that for $1 \leq k \leq N$, $S(x,u,v) \cap S(x+2ku,u,v) \subset S(x,u,v) \cap S(x+2u,u,v)$ and $S(x,u,v) \cap S(x-2ku,u,v) \subset S(x,u,v) \cap S(x-2u,u,v)$, hence the formula for the intersection.
				\item If $\slope(u+v)<0$ and $\slope(v-u)<0$, then $0<v_1<u_1$ and $0<v_2<-u_2$. So we do the same reasoning as in the previous case exchanging $u$ and $v$ to obtain the analogous result. \qedhere
			\end{itemize}
		\end{itemize}
	\end{proof}
	Let us denote : 
	\begin{align*}
		S^+(x,u,v) & = \left\{ \begin{array}{cc}
			S(x,u,v) \cap S(x+2u,u,v)  & \text{ if } \slope(u+v) \geq 0 \text{ and } \slope(v-u) \geq 0\\
			S(x,u,v) \cap S(x+2v,u,v)  & \text{ if } \slope(u+v) \leq 0 \text{ and } \slope(v-u) \leq 0 
		\end{array}\right. \\
		S^-(x,u,v) & = \left\{ \begin{array}{cc}
			S(x,u,v) \cap S(x-2u,u,v)  & \text{ if } \slope(u+v) \geq 0 \text{ and } \slope(v-u) \geq 0\\
			S(x,u,v) \cap S(x-2v,u,v)  & \text{ if } \slope(u+v) \leq 0 \text{ and } \slope(v-u) \leq 0 
		\end{array}\right. \\
		\text{ and } S^\pm (x,u,v) & = S^+(x,u,v) \cup S^-(x,u,v).
	\end{align*}
	\begin{figure}[!h!]
		\centering
		\labellist
		\pinlabel{$S(x,u,v)$} at 480 140
		\pinlabel{$\scriptstyle S^+(x,u,v)$} at 623 105
		\pinlabel{$\scriptstyle S^-(x,u,v)$} at 341 175
		\pinlabel{$x$} at 270 110 
		\pinlabel{$x+u$} at 430 55 
		\pinlabel{$x+2u$} at 520 45 
		\pinlabel{$x-2u$} at -45 180
		\pinlabel{$x+v$} at 560 225
		\pinlabel{$x+u+v$} at 750 175
		\pinlabel{$S(x-2u,u,v)$} at 160 210
		\pinlabel{$S(x+2u,u,v)$} at 800 65 
		\small \hair 2pt
		\endlabellist
		\includegraphics[width=11cm]{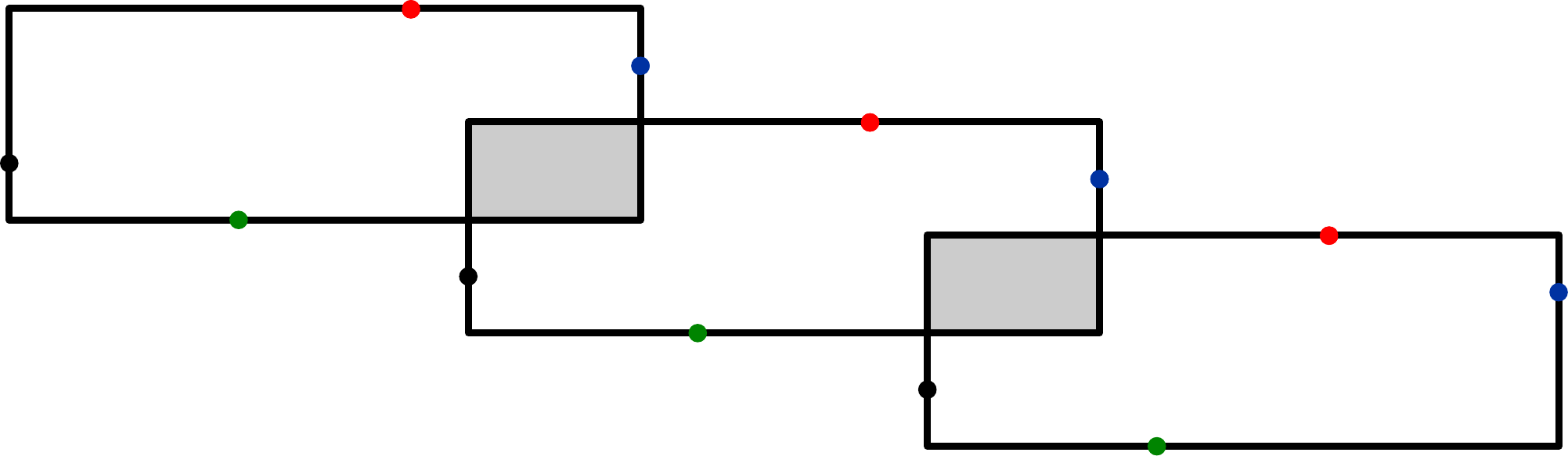}
		\caption{The rectangles $S(x,u,v),S(x-2u,u,v),S(x+2u,u,v)$ and their intersections $S^-(x,u,v)$ and $S^+(x,u,v)$.}
		\label{fig:Spm(x,u,v)}
	\end{figure}
	Hence $S^{\pm}(x,u,v)$ is the subset of the tile $S(x,u,v)$ in which the overlap occurs. \\ 
	
	Let us now deal with the case where $\slope(u+v)$ and $\slope(v-u)$ are of opposite signs. In this case, the rectangles $S(x,u,v)$ for $x \in \Lambda$ of type $t$ or $\inv{t}$ do not cover the whole plane $\R^2$. We have to add some "rest", which we define below and denote by $R(x,u,v)$ (see figure \ref{fig:tiling-opposite1} on page \pageref{fig:tiling-opposite1}).  
	
	\begin{equation}\label{R}
		R(x,u,v)  = \left\{ \begin{array}{ll}
			[x_1,x_1+u_1-v_1] \times [x_2-v_2,x_2+u_2]  & \text{ if } \slope(u+v)\geq 0 \text{ and } \slope(v-u) \leq 0 \\
			{[ x_1,x_1-u_1+v_1 ]} \times [x_2+v_2,x_2-u_2] & \text{ if } \slope(u+v)\leq 0 \text{ and } \slope(v-u) \geq 0
		\end{array}\right. 
	\end{equation}
	
	\begin{equation}\label{R*}
		R_*(x,u,v)  = \left\{ \begin{array}{ll}
			[x_1,x_1+u_1-v_1[ \times [x_2-v_2,x_2+u_2]  & \text{ if } \slope(u+v)\geq 0 \text{ and } \slope(v-u) \leq 0 \\
			{[ x_1,x_1-u_1+v_1 }[ \times [x_2+v_2,x_2-u_2] & \text{ if } \slope(u+v)\leq 0 \text{ and } \slope(v-u) \geq 0
		\end{array}\right. 
	\end{equation}
	
	In this context, there will be no overlap. Now we can state the corresponding lemma : 
	\begin{Lemma}
		\label{tile-opposite-sign}
		Assume that $\slope(u+v) \neq 0$, $\slope(v-u) \neq \infty$ and that $\slope(u+v)$ and $\slope(v-u)$ are of opposite signs. 
		Let $t$ be a type and $\inv{t}$ its $(u,v)$-opposite type. Then :
		\begin{equation*}
			\R^2 = \underset{x \in \Lambda(t)\cup \Lambda(\inv{t})} \bigcup (S(x,u,v) \cup R(x,u,v)).
		\end{equation*}
		Moreover, any two rectangles of this tiling can only intersect along one of their sides. 
	\end{Lemma}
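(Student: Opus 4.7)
The plan is to reduce to one case by the symmetry $u \leftrightarrow v$ (which swaps the two cases in the definition of $R(x,u,v)$), so that it suffices to treat $\slope(u+v) \geq 0$ and $\slope(v-u) \leq 0$, in which case $v_1 \leq u_1$ and $v_2 \geq -u_2 > 0$. Fix $x_0 \in \Lambda(t)$. For the covering claim, I would start from the tautological tiling $\R^2 = \bigcup_{y \in \Lambda} S(y, u, v)$ (which comes from the inclusion of the fundamental parallelogram $(y, y+u, y+v, y+u+v) \subset S(y,u,v)$) and then check that every unwanted tile $S(y, u, v)$, where $y$ belongs to one of the two cosets $x_0 + u + 2\Lambda$ or $x_0 + v + 2\Lambda$, is contained in a small union of neighboring type-$t$ or type-$\inv{t}$ tiles, augmented by their $R$-regions. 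This is a direct coordinate computation using the defining formulas~\eqref{S} and~\eqref{R}.

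For the non-overlap claim, by $2\Lambda$-translation invariance I can fix the first region to be $S(x_0, u, v)$ or $R(x_0, u, v)$ and let the second range over $S(y, u, v)$ and $R(y, u, v)$ for $y$ in the finite list of type-$t$ or type-$\inv{t}$ points within distance at most $|u|+|v|$ of $x_0$. In each case the intersection is a product of closed intervals read off directly from~\eqref{S} and~\eqref{R}, and one checks that at least one factor collapses to a single point, so the intersection is contained in a single side.

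An area check confirms the consistency of this picture: using $v_1 \leq u_1$ and $v_2 \geq -u_2$,
\begin{equation*}
|S(x_0,u,v)| + |R(x_0,u,v)| = (u_1+v_1)(v_2-u_2) + (u_1-v_1)(u_2+v_2) = 2(u_1 v_2 - u_2 v_1) = 2 \, \mathrm{vol}(\R^2/\Lambda),
\end{equation*}
so that twice this sum equals $\mathrm{vol}(\R^2/2\Lambda)$, exactly matching the fact that the tiling uses one $(S\cup R)$-piece per $2\Lambda$-coset in $\Lambda(t) \cup \Lambda(\inv{t})$ and that there are exactly two such cosets.

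The hard part will be the bookkeeping: compared with Lemma~\ref{tile-same-sign}, the opposite-signs configuration produces a longer list of neighbors against which one must compare (since each $S$ is now supplemented by an adjacent $R$), and each pairwise intersection has to be written out explicitly to verify that it collapses to a segment or less. Once this list is laid out, the verification is mechanical.
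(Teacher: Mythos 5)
Your proposal follows essentially the same route as the paper: start from the tautological tiling $\R^2=\bigcup_{y\in\Lambda}S(y,u,v)$, absorb each tile based at one of the two unwanted types into a union of neighbouring tiles $S(x\pm u,u,v)$, $S(x\pm v,u,v)$ of type $t$ or $\inv{t}$ together with one $R$-region, and then verify the non-overlap by a finite coordinate check, restricting the second base point to a bounded zone around the first and checking each pairwise intersection degenerates to a side. The area identity is a pleasant consistency check not present in the paper, but otherwise the argument is the same.
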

	
	\begin{proof}
		As in the proof of Lemma \ref{tile-same-sign}, first remark that the rectangles $S(x,u,v)$, for $x \in \Lambda$, tile $\R^2$ (we mean that $\R^2 = \underset{x \in \Lambda} \bigcup S(x,u,v)$). So we need to cover $S(x,u,v)$, when $x \notin \Lambda(t) \cup \Lambda(\inv{t})$, by a union of rectangles $S(y,u,v)$ and $R(y,u,v)$, with $y \in \Lambda(t) \cup \Lambda(\inv{t})$. It is easy to check that, when $\slope(u+v)$ and $\slope(v-u)$ are of opposite signs, we have : 
		\begin{equation*}
			S(x,u,v) \subset S(x+u,u,v) \cup S(x-u,u,v) \cup S(x+v,u,v) \cup S(x-v,u,v) \cup R(x+v,u,v) 
		\end{equation*}
		
		\begin{figure}[h]
			\centering
			\labellist
			\small\hair 2pt
			\pinlabel{$x$} at 160 160 
			\pinlabel{$x-u$} at -35 225
			\pinlabel{$x+u$} at 385 105 
			\pinlabel{$x-v$} at 60 60
			\pinlabel{$x+v$} at 285 300
			\pinlabel{$\scriptstyle R(x+v,u,v)$} at 298 200
			\pinlabel{$S(x-v,u,v)$} at 270 145
			\pinlabel{$S(x+v,u,v)$} at 340 255
			\pinlabel{$S(x-u,u,v)$} at 170 230
			\pinlabel{$S(x+u,u,v)$} at 430 180
			\endlabellist
			\includegraphics[width=9cm]{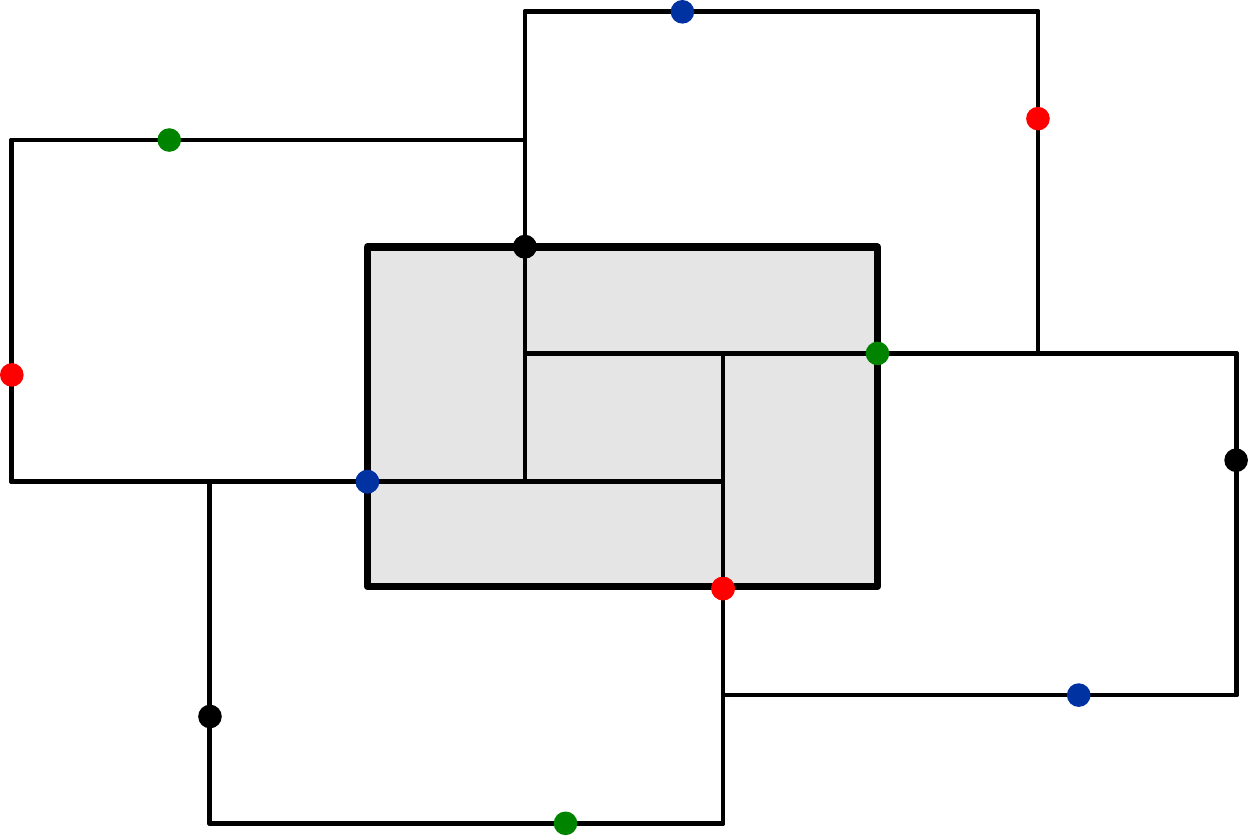}
			\caption{$S(x,u,v)$ is covered by rectangles $S(y,u,v)$ and $R(y,u,v)$ of type $t$ and $\inv{t}$}
			\label{fig:tiling-opposite1}
		\end{figure}
		We conclude by noting, as for the proof of Lemma \ref{tile-same-sign}, that when $x \in \Lambda \setminus (\Lambda(t) \cup \Lambda(\inv{t}))$, the points $x+u,x-u,x+v$ and $x-v$ all four belong to $\Lambda(t) \cup \Lambda(\inv{t})$, hence the claim. \\
		
		Now we investigate the intersection between rectangles. Let $x \in \Lambda$. As in the proof of Lemma \ref{tile-same-sign}, for $S(y,u,v)$ to intersect $S(x,u,v)$, we have that the point $y$ must belong to the rectangle $\mathcal{Z}=[x_1-u_1-v_1,x_1+u_1+v_1]\times [x_2+u_2-v_2,x_2+v_2-u_2]$. Let $t$ be the type of $x$.
		
		\begin{figure}[h]
			\centering
			\labellist
			\small \hair 2pt
			\pinlabel{$x$} at 360 265
			\pinlabel{$S(x,u,v)$} at 470 310
			\pinlabel{$S(x+u+v,u,v)$} at 720 370
			\pinlabel{$S(x-u-v,u,v)$} at 220 250
			\pinlabel{$S(x+v-u,u,v)$} at 370 475
			\pinlabel{$S(x-v+u,u,v)$} at 570 150
			\pinlabel{$\color{purple} \mathcal{Z}$} at 70 100
			\pinlabel{points of type $t$} at 970 605
			\pinlabel{points of type $\inv{t}$} at 970 568
			\pinlabel{\1} at 300 365
			\pinlabel{\2} at 545 425
			\pinlabel{\3} at 390 200
			\pinlabel{\4} at 640 260
			\pinlabel{\1 $R(x+v-u,u,v)$} at 1047 500
			\pinlabel{\2 $R(x+2v,u,v)$} at 1030 465
			\pinlabel{\3 $R(x,u,v)$} at 1000 430
			\pinlabel{\4 $R(x+u+v,u,v)$} at 1047 395
			\endlabellist
			\includegraphics[width=10cm]{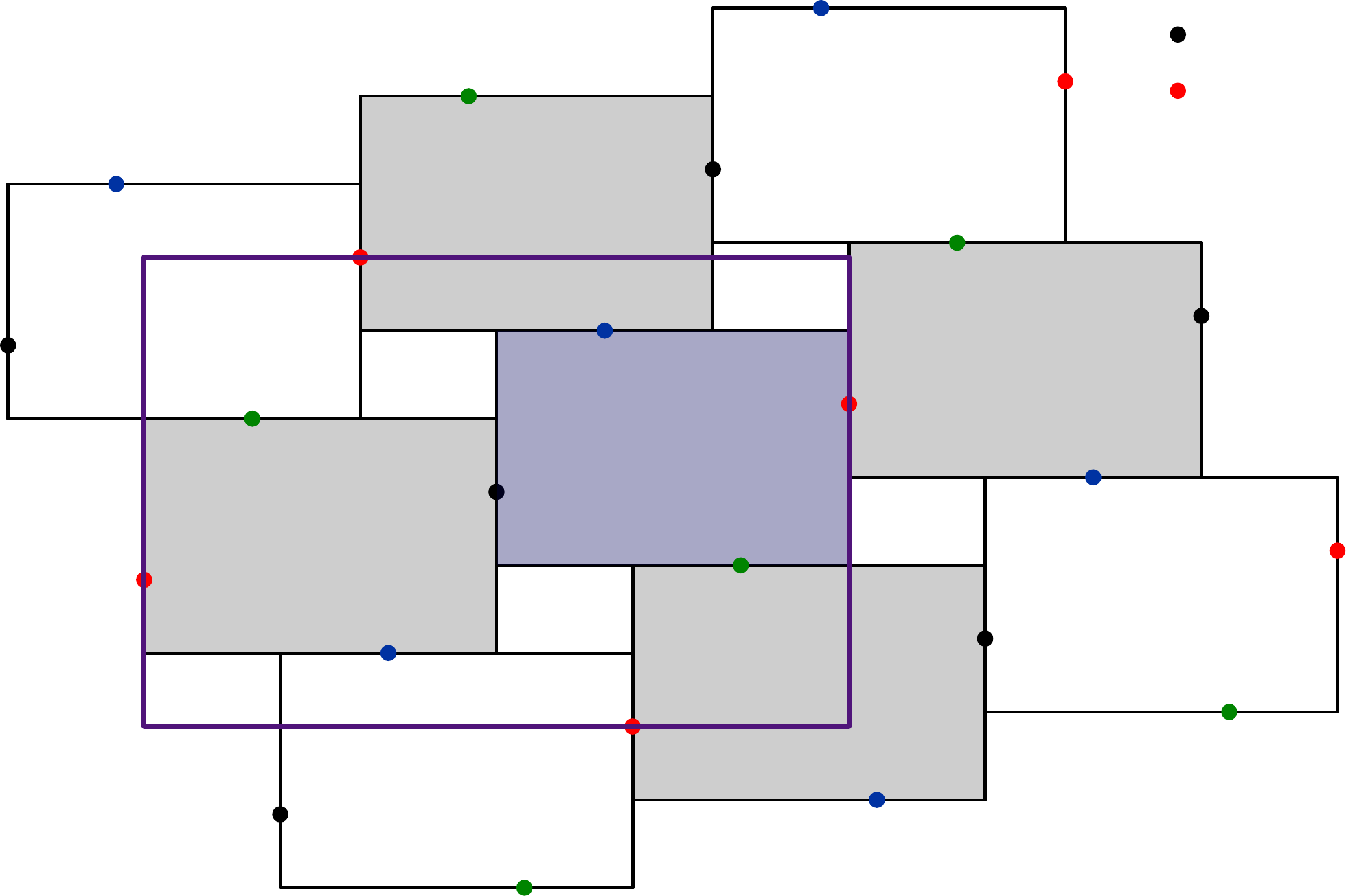}
			\caption{The tiling of $\R^2$ by tiles $S(y,u,v)$ and $R(y,u,v)$, with $y \in \Lambda(t) \cup \Lambda(\inv{t})$.}
			\label{fig:tiling-opposite2}
		\end{figure}
		
		\begin{itemize}
			\item Again as in the proof of Lemma \ref{tile-same-sign}, we note that the only points of the lattice $\Lambda$ of type $\inv{t}$ in $\mathcal{Z}$ are $x+u+v,x-u-v,x+v-u$ and $x-v+u$, and the rectangle $S(x,u,v)$ intersects $S(x+u+v,u,v)$ only along one of its sides, and ditto for $S(x-u-v,u,v)$, $S(x+v-u,u,v)$ and $S(x-v+u,u,v)$.
			\item Again as in the proof of Lemma \ref{tile-same-sign}, we note that the only points of the lattice $\Lambda$ of type $t$ in $\mathcal{Z}$ are of the form $x+2ku$ and $x+2kv$. But using our hypothesis on the slopes of $u+v$ and $v-u$, we obtain that either $0<u_1<v_1$ and $0<v_2<-u_2$ or $0<v_1<u_1$ and $0<-u_2<v_2$. From these inequalities we deduce that $x+2ku$ belongs to $\mathcal{Z}$ if and only if $k=0$ and $x+2kv$ belongs to $\mathcal{Z}$ if and only if $k=0$. Therefore $S(x,u,v)$ intersects no other rectangle $S(y,u,v)$ of type~$t$.
		\end{itemize} 
		At last, note that by construction, $R(x,u,v)$ intersects only $S(x,u,v), S(x-u-v,u,v),S(x-2v,u,v)$ and $S(x+u-v,u,v)$, and the intersection occurs only along one of their sides.

	\end{proof}

	Notice that since $(u,v)$ is a basis of $\Lambda$, so are $(u,u+v)$ and $(u+v,v)$. The next lemma, in the case where $\slope(u+v)$ and $\slope(v-u)$ are of opposite signs, aims at showing that an horizontal segment of length $l=\max(u_1,v_1)$ is necessarily included in some $S(x,u,v), S(x,u,u+v)$ or $S(x,u+v,v)$. Denote $(u'v') = \left\{ \begin{array}{cc}
		(u,u+v)    &  \text{ if } \slope(u+v) \geq 0  \\
		(u+v,v)    &  \text{ if } \slope(u+v) \leq 0
	\end{array} \right.$, \\
	$S'(x,u,v) = S(x,u',v') \qquad S_*'(x,u,v) = S_*(x,u',v') \qquad \text{ and } \quad S'^\pm(x,u,v) = S^\pm(x,u',v'). $ \\ 
	See figure \ref{fig:S'(x,u,v)} on page \pageref{fig:S'(x,u,v)} for $S'(x,u,v)$, in the case $\slope(u+v)\geq 0$ and $\slope(v-u)\leq 0$. \\
	
	\begin{figure}[h!]
		\centering
		\labellist
		\small\hair 2pt
		\pinlabel{$S'(x,u,v)$} at 560 280
		\pinlabel{$x$} at 325 280 
		\pinlabel{$\scriptstyle u$} at 400 250
		\pinlabel{$\scriptstyle v$} at 390 365 
		\pinlabel{$\scriptstyle u+v$} at 440 320 
		\pinlabel{$x+u$} at 560 220
		\pinlabel{$x+u+v$} at 670 360
		\pinlabel{$x+2u+v$} at 850 310
		\endlabellist
		\includegraphics[width=8cm]{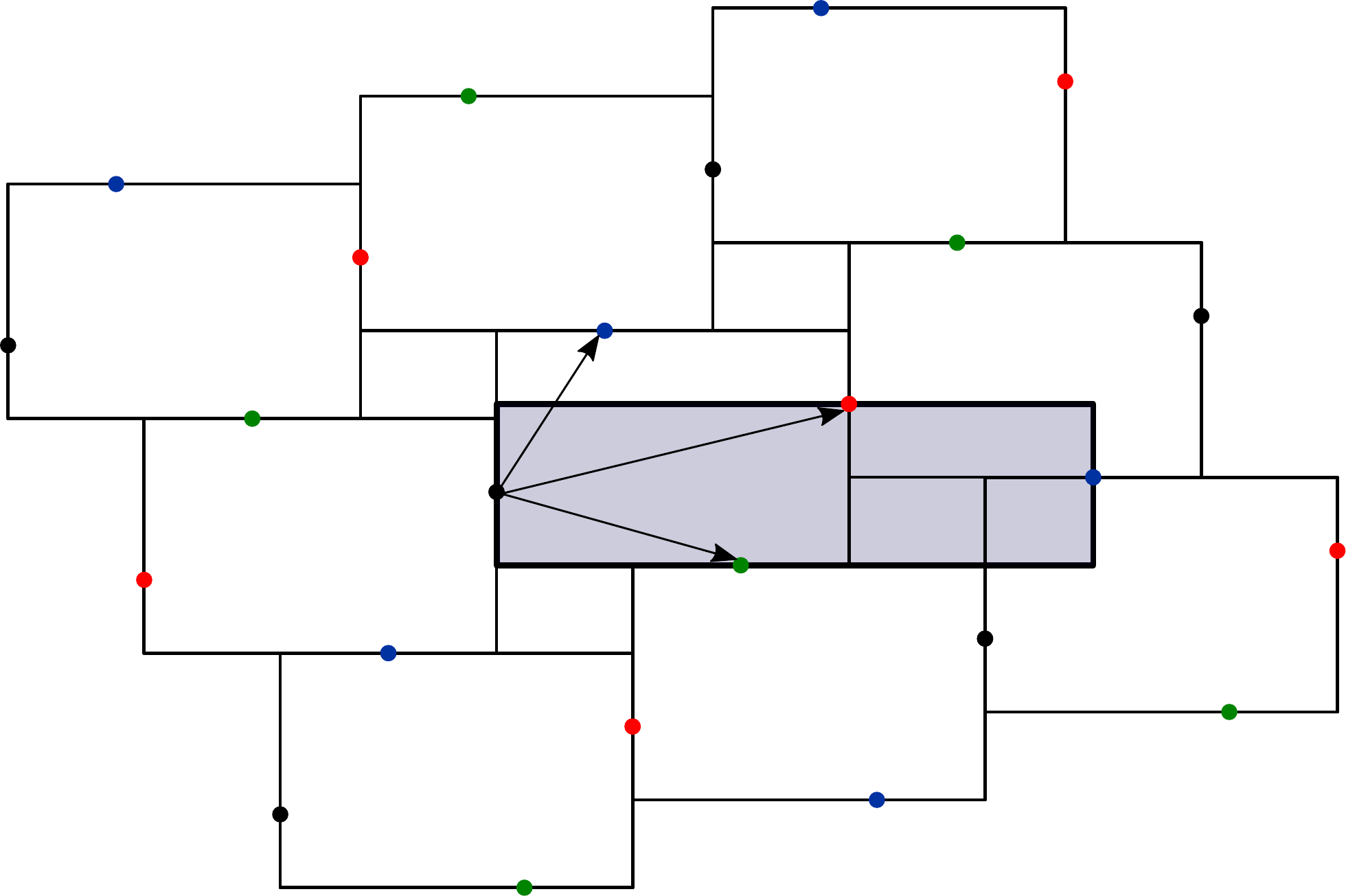}
		\caption{The rectangle $S'(x,u,v)$ in the case $\slope(u+v) \geq 0$ and $\slope(v-u)\leq 0.$}
		\label{fig:S'(x,u,v)}
	\end{figure}
	
	Now in this case ($\slope(u+v)$ and $\slope(v-u)$ of opposite signs), denote :
	
	\begin{equation}\label{T}
		T(x,u,v)  = \left\{ \begin{array}{ll}
			[x_1+v_1,x_1+2u_1] \times [x_2+u_2,x_2+u_2+v_2]  & \text{ if } \slope(u+v)\geq 0 \text{ and } \slope(v-u) \leq 0 \\
			{[ x_1+u_1,x_1+2v_1 ]} \times [x_2+u_2+v_2,x_2+v_2] & \text{ if } \slope(u+v)\leq 0 \text{ and } \slope(v-u) \geq 0
		\end{array}\right. 
	\end{equation}
	\begin{equation}\label{T*}
		T_*(x,u,v)  = \left\{ \begin{array}{ll}
			[x_1+v_1,x_1+2u_1[ \times [x_2+u_2,x_2+u_2+v_2]  & \text{ if } \slope(u+v)\geq 0 \text{ and } \slope(v-u) \leq 0 \\
			{[ x_1+u_1,x_1+2v_1 [} \times [x_2+u_2+v_2,x_2+v_2] & \text{ if } \slope(u+v)\leq 0 \text{ and } \slope(v-u) \geq 0
		\end{array}\right. 
	\end{equation}
	See figure \ref{fig:T(x,u,v)} on page \pageref{fig:T(x,u,v)}. Remark that we have : 
	\begin{align}
		T(x,u,v) & \subset S'(x,u,v) \label{T-in-S'}  \\ 
		T_*(x,u,v) & \subset S'_*(x,u,v) \label{T*-in-S'*} \\ 
		S'^\pm(x,u,v) & \subset S'(x,u,v) \setminus T(x,u,v) \label{S'pm-in-S'-minus-T} 
	\end{align}
	\begin{figure}[h!]
		\centering
		\labellist
		\pinlabel{$T(x,u,v)$} at 480 140
		\pinlabel{$\scriptstyle S'^{+}(x,u,v)$} at 623 105
		\pinlabel{$\scriptstyle S'^{-}(x,u,v)$} at 341 175
		\pinlabel{$x$} at 270 110 
		\pinlabel{$x+u$} at 400 60 
		\pinlabel{$x+2u$} at 520 45 
		\pinlabel{$x-2u$} at -45 180
		\pinlabel{$x+u+v$} at 590 220
		\pinlabel{$x+2u+v$} at 750 175
		\pinlabel{$S'(x-2u,u,v)$} at 160 210
		\pinlabel{$S'(x,u,v)$} at 500 270
		\pinlabel{$S'(x+2u,u,v)$} at 800 65 
		\small \hair 2pt
		\endlabellist
		\includegraphics[width=11cm]{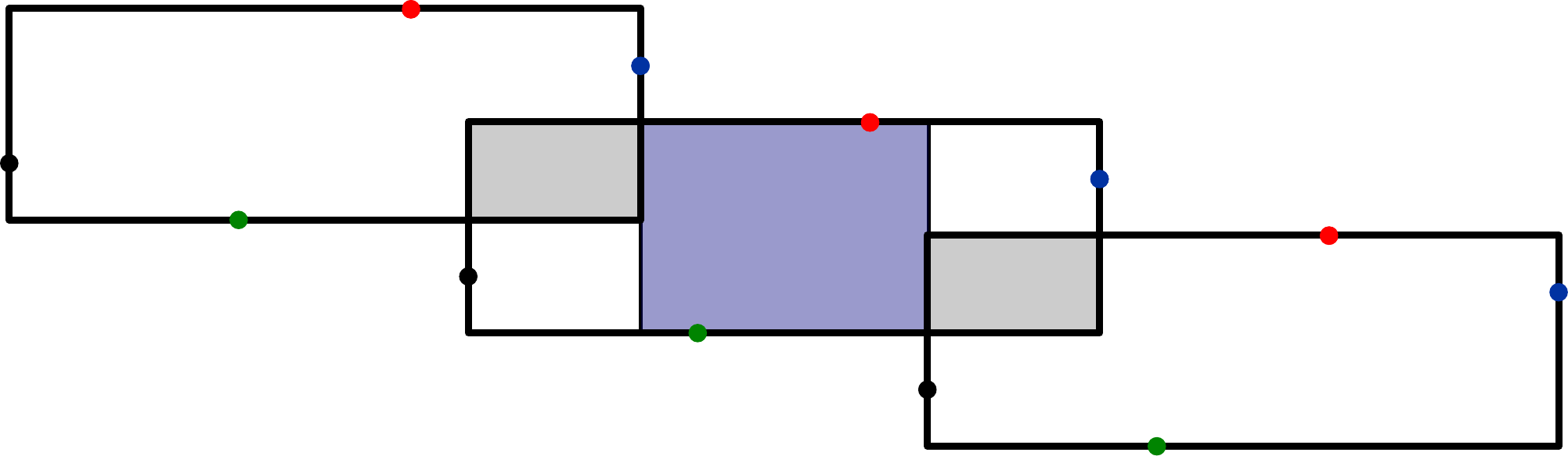}
		\caption{The rectangle $T(x,u,v)$ is included in $S'(x,u,v)$ and disjoint from $S'^\pm(x,u,v)$.}
		\label{fig:T(x,u,v)}
	\end{figure}
	
	We now deduce that the rectangles $T(x,u,v)$ are disjoints : 
	
	\begin{Lemma} \label{T-disjoint}
		Assume that $\slope(u+v)$ and $\slope(v-u)$ are of opposite signs. Let $t \in \Lambda/2\Lambda $ be a type and $\inv{t}$ its $(u,v)$-opposite type. Let $x,y \in \Lambda(t)\cup \Lambda(\inv{t})$. Then the rectangles $T(x,u,v)$ and $T(y,u,v)$ are disjoints : $T(x,u,v) \cap T(y,u,v) = \emptyset$. 
	\end{Lemma}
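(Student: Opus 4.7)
The plan is to reduce the pairwise disjointness of the rectangles $T(x,u,v)$ to the tiling properties of the larger rectangles $S'(x,u,v)=S(x,u',v')$, which have already been analyzed, for an arbitrary basis, by Lemma~\ref{tile-same-sign} and Lemma~\ref{tile-opposite-sign}. The crucial observation, recorded just before the statement, is the pair of inclusions $T(x,u,v) \subset S'(x,u,v)$ and $S'^\pm(x,u,v) \subset S'(x,u,v) \setminus T(x,u,v)$: each $T$-rectangle lives inside its $(u',v')$-tile and sits entirely outside the overlap zones $S'^\pm(x,u,v)$. Consequently, as soon as one shows that distinct $(u',v')$-tiles indexed by points of $\Lambda(t)\cup\Lambda(\inv{t})$ can only intersect inside $S'^\pm$-regions, the disjointness of the $T$-rectangles follows immediately.

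The second step is to determine which of the two tiling lemmas applies to the auxiliary basis $(u',v')$ by computing the signs of $\slope(u'+v')$ and $\slope(v'-u')$. Concentrate on the first branch of the definition, where $\slope(u+v)\geq 0$ and $\slope(v-u)\leq 0$, so $(u',v')=(u,u+v)$; the other branch is handled symmetrically by exchanging the roles of $u$ and $v$. In this case $\slope(v'-u')=\slope(v)>0$, and $\slope(u'+v')=\slope(2u+v)$ has the sign of $2u_2+v_2$. This splits the argument into two sub-cases: if $2u_2+v_2>0$, the two slopes are of the same sign and Lemma~\ref{tile-same-sign} applies to $(u',v')$; if $2u_2+v_2<0$, they are of opposite signs and Lemma~\ref{tile-opposite-sign} applies. (The borderline value $2u_2+v_2=0$ can be treated by an immediate perturbation or by direct inspection.)

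In the first sub-case, Lemma~\ref{tile-same-sign} exhibits every intersection of two $(u',v')$-tiles indexed by the relevant pair of types as being contained in the corresponding $S'^\pm$-regions; since every $T(x,u,v)$ is disjoint from every $S'^\pm$, the $T$-rectangles are pairwise disjoint. In the second sub-case, Lemma~\ref{tile-opposite-sign} gives a genuine tiling of $\R^2$ by $(u',v')$-tiles together with the ``rest'' rectangles, with no overlap beyond common boundary segments; combined with $T\subset S'$, this forces the $T$-rectangles to be pairwise disjoint up to measure-zero boundary pieces. The point that I expect to demand the most care is lining up the type $\inv{t}$, which the statement defines via the $(u,v)$-opposite, with the notion of opposite type relevant to the auxiliary basis $(u',v')$: it is precisely the construction of $T(x,u,v)$ as the ``central'' portion of $S'(x,u,v)$ avoiding every $S'^\pm$ that makes this bookkeeping compatible and lets the two tiling lemmas deliver the conclusion.
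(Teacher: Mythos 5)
Your argument is essentially the paper's own proof: both rest on the inclusions $T(x,u,v)\subset S'(x,u,v)$ (equation \eqref{T-in-S'}) and $S'^\pm(x,u,v)\subset S'(x,u,v)\setminus T(x,u,v)$ (equation \eqref{S'pm-in-S'-minus-T}), reducing the disjointness of the $T$-rectangles to the fact that distinct $S'$-tiles indexed by $\Lambda(t)\cup\Lambda(\inv{t})$ can only meet inside the overlap zones $S'^\pm$. The paper asserts this last containment in a single line, whereas you make explicit the case analysis on the sign of $\slope(u'+v')$ and the type bookkeeping for the auxiliary basis $(u',v')$ — a reasonable elaboration of the same route rather than a different one.
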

	
	\begin{proof}
		First notice that since $T(x,u,v) \subset S'(x,u,v)$ and $T(y,u,v) \subset S'(y,u,v)$ (see \eqref{T-in-S'}), intersection between $T(x,u,v)$ and $T(y,u,v)$ can only occur when $S'(x,u,v)$ and $S'(y,u,v)$ intersect and then $T(x,u,v) \cap T(y,u,v) \subset S'(x,u,v) \cap S'(y,u,v)$. But $S'(x,u,v) \cap S'(y,u,v) \subset S'^{\pm}(x,u,v)$, so by \eqref{S'pm-in-S'-minus-T}, $T(x,u,v) \cap T(y,u,v) \subset S'(x,u,v)\setminus T(x,u,v)$, which implies that $T(x,u,v) \cap T(y,u,v)$ is empty.
	\end{proof}
	
	The last lemma of this section shows that every horizontal segment of some prescribe length is included in a rectangle $S(x,u,v)$ or $T(x,u,v)$.  
	\begin{Lemma} \label{segment-in-square}
		Assume that $\slope(u+v)$ and $\slope(v-u)$ are of opposite signs. Let $l=\max(u_1,v_1)$ (thus $l=u_1$ if $\slope(u+v)\geq 0$ and $l=v_1$ if $\slope(u+v)\leq 0$) and $I$ be an horizontal segment of length $l$ in $\R^2$ which does not intersect the lattice $\Lambda$. 
		
		Then, there exists $x \in \Lambda$ such that $I \subset S_*(x,u,v)$ or $I \subset T_*(x,u,v)$.
	\end{Lemma}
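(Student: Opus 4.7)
My plan is to locate the left endpoint $(a, y_0)$ of $I$ within the tiling of $\R^2$ by $S_*$ and $R_*$ rectangles provided by Lemma \ref{tile-opposite-sign}, and then exhibit a lattice point $x \in \Lambda$ for which $I \subset S_*(x, u, v)$ or $I \subset T_*(x, u, v)$. Without loss of generality I assume $\slope(u+v) \geq 0$ and $\slope(v-u) \leq 0$, so that $l = u_1$, $v_1 < u_1$, and $-u_2 < v_2$.

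Suppose first that $(a, y_0) \in R_*(y, u, v)$ for some $y \in \Lambda(t) \cup \Lambda(\inv{t})$. Then $a \in [y_1, y_1 + u_1 - v_1)$ and $y_0 \in [y_2 - v_2, y_2 + u_2]$. A direct computation using the definition \eqref{T*} gives
\begin{equation*}
T_*(y - v, u, v) = [y_1, y_1 + 2u_1 - v_1) \times [y_2 + u_2 - v_2, y_2 + u_2],
\end{equation*}
and the above inequalities for $a$ and $y_0$, together with $u_2 < 0$, immediately yield $I \subset T_*(y - v, u, v)$.

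Suppose instead that $(a, y_0) \in S_*(y, u, v)$. I partition the horizontal range $[y_1, y_1 + u_1 + v_1)$ of this tile into three strips of widths $v_1$, $u_1 - v_1$, $v_1$ (left, middle, right), and the vertical range $[y_2 + u_2, y_2 + v_2]$ into the lower strip $[y_2 + u_2, y_2 + u_2 + v_2]$ (which coincides with the vertical range of $T(y, u, v)$) and the upper strip $(y_2 + u_2 + v_2, y_2 + v_2]$. In each of the resulting sub-cases I choose $x \in \{y, y+u, y+v\} \subset \Lambda$ appropriately: a position in the left strip always gives $I \subset S_*(y, u, v)$; middle and lower gives $I \subset T_*(y, u, v)$; right and lower gives $I \subset S_*(y+u, u, v)$; and the two upper-strip sub-cases further split according to whether $a < y_1 + 2v_1$ (giving $I \subset S_*(y+v, u, v)$) or $a \geq y_1 + 2v_1$ (giving $I \subset T_*(y+v, u, v)$).

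The main obstacle is this bookkeeping: each sub-case reduces to a direct verification of the defining inequalities, but one must confirm that every position of $(a, y_0)$ within the $S_*$-tile is covered. The checks rely on two key length comparisons: the horizontal width $2u_1 - v_1$ of $T$ exceeds $l = u_1$, and the vertical range of $T$ fits inside the lower portion of the vertical range of $S$ because $v_2 \geq -u_2$. Together these ensure that the $T$-tiles centered at lattice points fill precisely those configurations where no single $S_*$-tile can contain $I$.
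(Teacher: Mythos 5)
Your proof is correct and follows essentially the same route as the paper: locate the left endpoint of $I$ in the tiling of Lemma \ref{tile-opposite-sign} and run the same five-zone case analysis on its position inside an $S_*$-tile (left strip; middle/lower; right/lower; upper split at $y_1+2v_1$), with the $R_*$ case landing in $T_*(y-v,u,v)$ exactly as in the paper. The only cosmetic difference is that you verify the $R_*$ case by direct computation of $T_*(y-v,u,v)$ rather than via the inclusion $R_*(x,u,v)\subset S_*(x-v,u,v)$, and the key inequalities you invoke ($2u_1-v_1\geq u_1$ and $u_2+v_2\geq 0$) are the right ones.
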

	
	\begin{proof} We write the proof in the case where $\slope(u+v)\geq 0$. Thus $\slope(v-u) \leq 0$ and $l=u_1$. The other case is identical (up to a reflection across the horizontal axis). \\ 
		Let $y=(y_1,y_2)$ be the point in $\R^2$ such that $I$ is the segment which joins $y$ to $(y_1+u_1,y_2)$. 
		Then, by Lemma \ref{tile-opposite-sign}, there exists a point $x \in \Lambda$ such that $y \in S_*(x,u,v)$ or $y \in R_*(x,u,v)$. 
		
		\begin{itemize}
			\item If $y \in S(x,u,v)$, then we have $x_1 \leq y_1 < x_1+u_1+v_1$ and $x_2+u_2 \leq y_2 \leq x_2 + v_2$. We distinguish according to the zone to which $y$ belong, represented in the figure \ref{fig:segmentI} on page \pageref{fig:segmentI}.
			The proof for each case is illustrated on figure \ref{fig:proofI}, page \pageref{fig:proofI}. 
			
			\begin{figure}[!h]
				\centering
				\labellist
				\small\hair 2pt
				\pinlabel{\1} at 40 140
				\pinlabel{\2} at 210 110
				\pinlabel{\3} at 130 110
				\pinlabel{\4} at 120 195
				\pinlabel{\5} at 205 195
				\pinlabel{$x$} at -10 110
				\endlabellist
				\includegraphics[width=5cm]{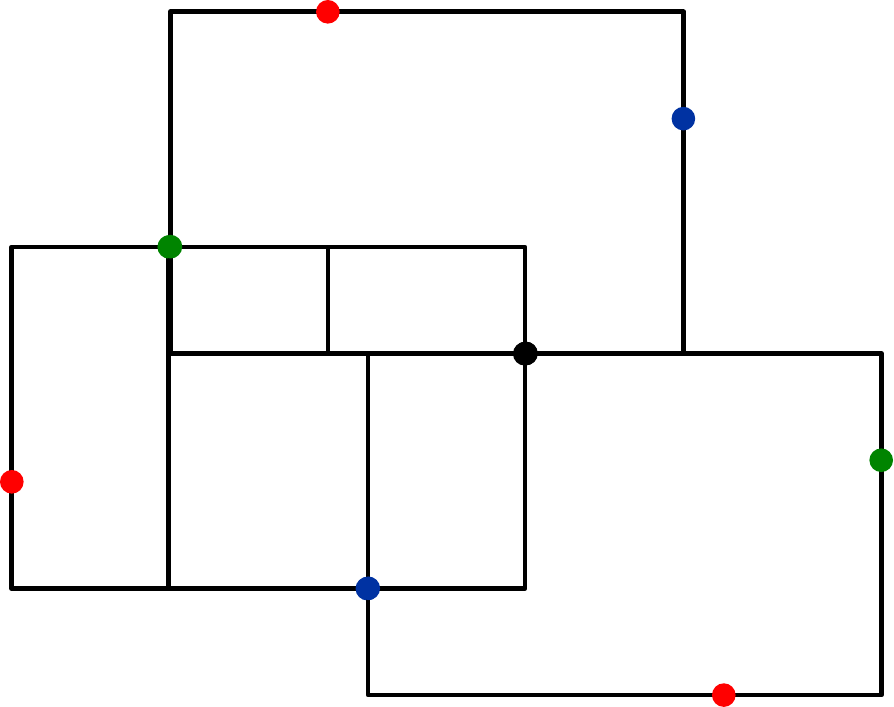}
				\caption{The five cases in the proof of Lemma \ref{segment-in-square}.}
				\label{fig:segmentI}
			\end{figure}
			
				\begin{figure}[!h!]
				\centering
				\begin{subfigure}{0.4 \textwidth}
					\centering
					\labellist
					\small\hair 2pt
					\pinlabel{$\color{purple} I$} at 130 150 
					\pinlabel{$x$} at -15 110
					\pinlabel{$S(x,u,v)$} at -60 230 
					\endlabellist
					\includegraphics[width=5cm]{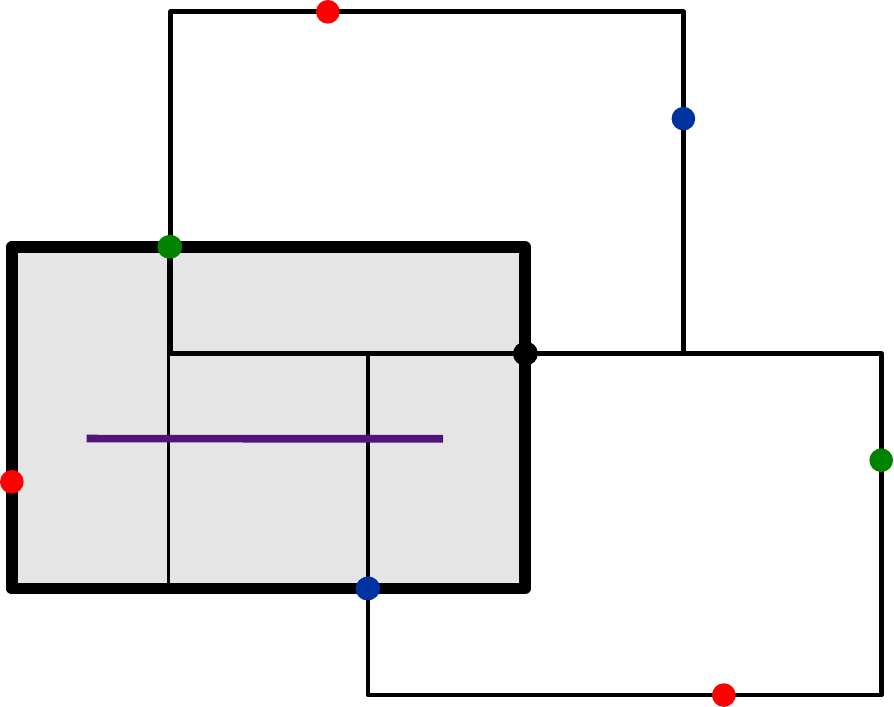}
					\caption{Case \1}
				\end{subfigure}
				\begin{subfigure}{0.4 \textwidth}
					\centering
					\labellist
					\small\hair 2pt
					\pinlabel{$\color{purple} I$} at 300 135 
					\pinlabel{$x$} at -15 110
					\pinlabel{$S(x+u,u,v)$} at 520 30 
					\endlabellist
					\includegraphics[width=5cm]{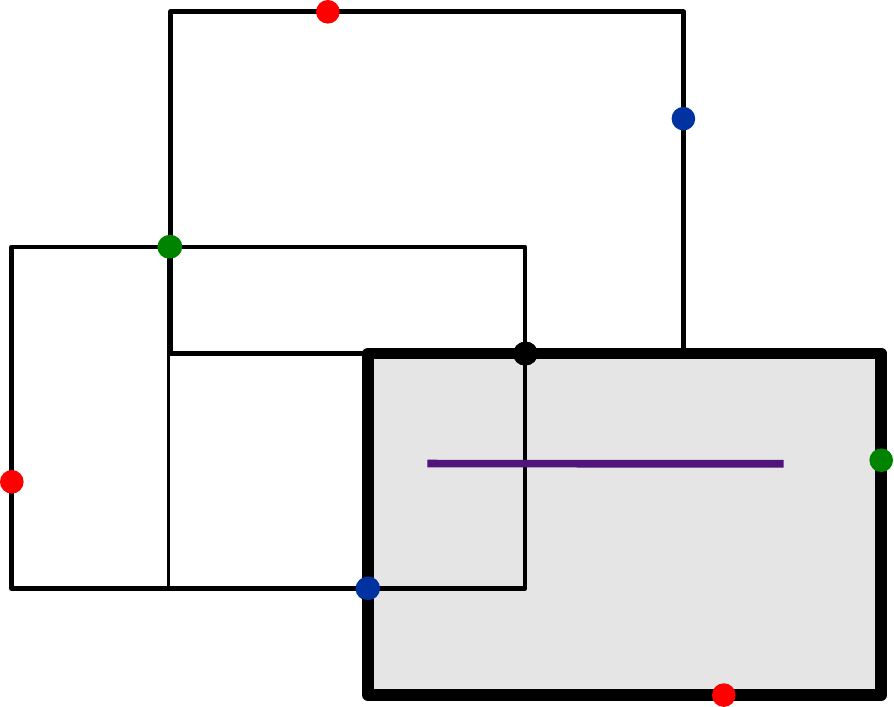}
					\caption{Case \2}
				\end{subfigure} \\ 
				\begin{subfigure}{\textwidth}
					\centering
					\labellist
					\small\hair 2pt
					\pinlabel{$\color{purple} I$} at 560 190 
					\pinlabel{$x$} at 330 160
					\pinlabel{$S'(x,u,v)$} at 840 230 
					\pinlabel{$S'(x-2u,u,v)$} at 120 350
					\pinlabel{$S'(x+2u,u,v)$} at 1020 140
					\endlabellist
					\includegraphics[width=13cm]{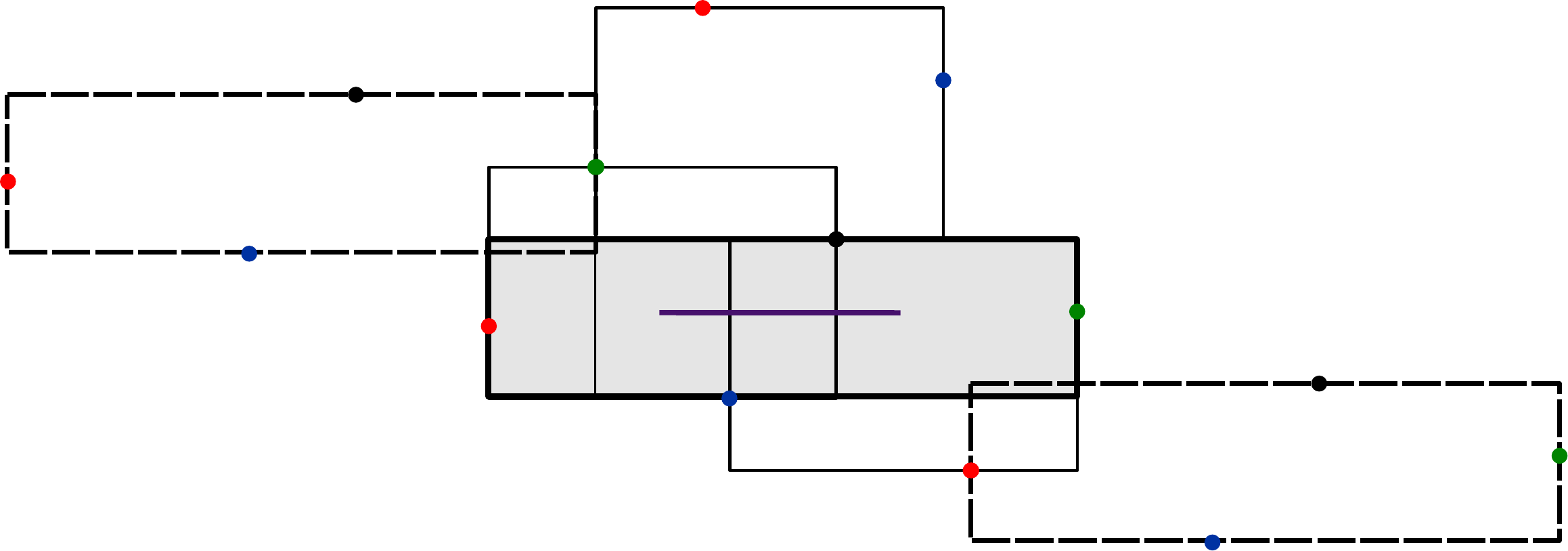}
					\caption{Case \3}
				\end{subfigure} \\ \vspace{0.15cm}
				\begin{subfigure}{0.25 \textwidth}
					\centering
					\labellist
					\small\hair 2pt
					\pinlabel{$\color{purple} I$} at 200 220 
					\pinlabel{$x$} at -15 110
					\pinlabel{$S(x+v,u,v)$} at -15 350 
					\endlabellist
					\includegraphics[width=4.5cm]{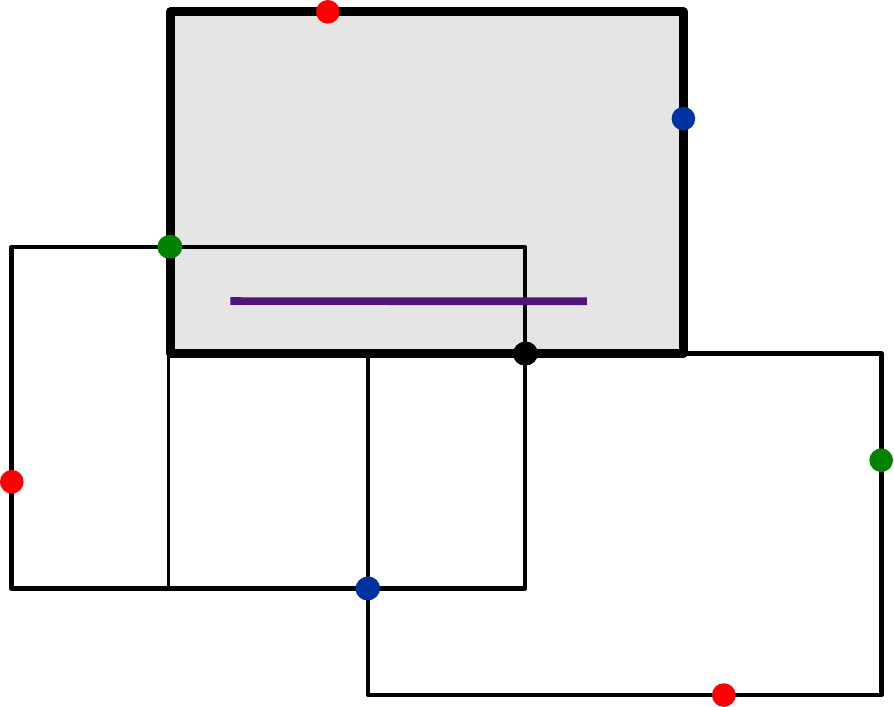}
					\caption{Case \4}
				\end{subfigure}
				\begin{subfigure}{0.7 \textwidth}
					\centering
					\labellist
					\small\hair 2pt
					\pinlabel{$\color{purple} I$} at 560 220 
					\pinlabel{$x$} at 250 110
					\pinlabel{$S'(x+v,u,v)$} at 870 280 
					\endlabellist
					\includegraphics[width=12cm]{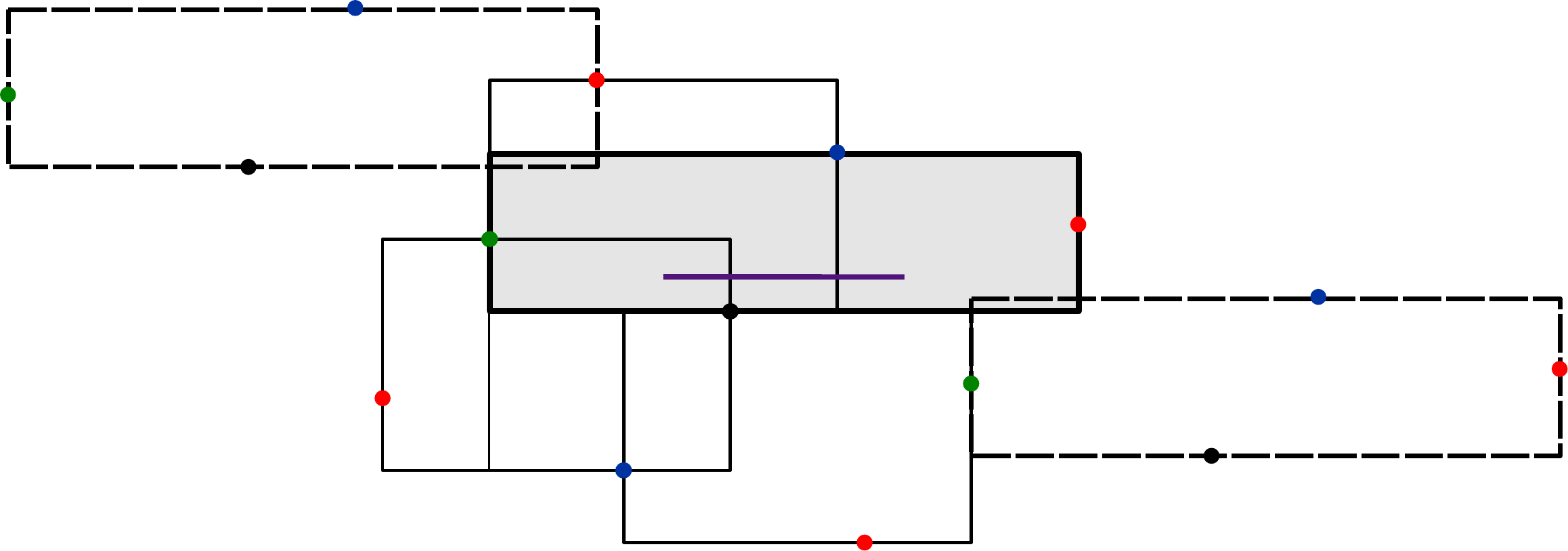}
					\caption{Case \5}
				\end{subfigure}
				\caption{Illustration of the proof of Lemma \ref{segment-in-square} : The segment $I$ is included in a rectangle.}
				\label{fig:proofI}
			\end{figure}

			\begin{enumerate}
				\item If $y \in \text{\1}$, that is $x_1 \leq y_1 < x_1+v_1$, then $x_1+u_1 \leq y_1+u_1 < x_1+u_1+v_1$, and so $I \subset S_*(x,u,v)$.
				\item If $y \in \text{\2}$, that is $x_1+u_1 \leq y_1 < x_1+u_1+v_1 $ and $x_2+u_2 \leq y_2 \leq x_2+u_2+v_2$, then notice that $y \in S_*(x+u,u,v)$ with $x_1+u_1 \leq y_1 < x_1+u_1+v_1 $, so by the previous case we have $I \subset S_*(x+u,u,v)$. 
				\item If $y \in \text{\3}$, that is $x_1+v_1 \leq y_1 < x_1+u_1$ and $x_2+u_2 \leq y_2 \leq x_2+u_2+v_2$, then we have $x_1+u_1+v_1 \leq y_1+u_1 < x_1 +2u_1$, and then we deduce that $I \subset T_*(x,u,v)$. 
				\item If $y \in \text{\4}$ or $y \in \text{\5}$, that is $x_1+v_1 \leq y_1 < x_1+u_1+v_1$ and $x_2+u_2+v_2 \leq y_2 \leq x_2+v_2$, then notice that $y \in S_*(x+v,u,v)$ with $x_2+u_2+v_2 \leq y_2 \leq x_2+v_2$ and then, if $y \in \text{\4}$, that is $y_1 < x_1+2v_1$, we use the first case to deduce that $I \subset S_*(x+v,u,v)$, and if $y \in \text{\5}$, that is $x_1+2v_1 \leq y_1 < x_1+u_1+v_1$, then we use the third case to deduce that $I \subset T_*(x+v,u,v)$ with $I \cap S^\pm(x+v,u,u+v)$ is at most a point.
			\end{enumerate}
			Hence we have covered every case. 
			
			\item If $y \in R_*(x,u,v)$, recall that $R_*(x,u,v) \subset S_*(x-v,u,v)$, so $y \in S_*(x-v,u,v)$ and then we can use what we have previously done. (We can even be a bit more precise : if $y \in R_*(x,u,v)$, then $y \in S_*(x-v,u,v)$ with $x_1 \leq y_1 < x_1+u_1-v_1$ and $x_2-v_2 \leq y_2 \leq x_2+u_2$, so we are in the third case : $I \subset T_*(x-v,u,v)$). \qedhere
		\end{itemize}
	\end{proof}
	
	\subsubsection{Segments at height $h$ in the rectangles $S(x,u,v)$ and $R(x,u,v)$}
	\label{subsubsec:segments-rectangle}
	
	This section is intended to apply the "tilings" obtained above to decompose an horizontal segment into sub-segments. \\ 
	
	Let $0 < h < 1$ and consider the horizontal segment at height $h$ in $S(x,u,v)$ (that is the segment from the point $(x_1,x_2+u_2+h)$ to the point $(x_1+u_1+v_1,x_2+u_2+h)$). Denote it by $I_S(x,u,v,h)$ and note that $I_S(x,u,v,h)$ is a segment of length $u_1+v_1$. Similarly, let $0 < h < u_2+v_2$ and consider the horizontal segment at height $h$ in $R(x,u,v)$ (that is the segment from the point $(x_1,x_2-v_2+h)$ to the point $(x_1+u_1+v_1,x_2-v_2+h))$. Denote it by $I_R(x,u,v,h)$ and note that $I_R(x,u,v,h)$ is a segment of length $|u_1-v_1|$. Finally, we also define the \emph{type} of a segment $I_S(x,u,v,h)$ (resp. $I_R(x,u,v,h)$) as the type of the point $x$. See figure \ref{fig:I_S&I_R} on page \pageref{fig:I_S&I_R}. \\
	
	\begin{figure}[!h!]
		\centering
		\begin{subfigure}{0.4 \textwidth}
			\centering
			\labellist
			\small\hair 2pt
			\pinlabel{$h$} at -10 65
			\pinlabel{$I_S(x,u,v,h)$} at 150 120
			\pinlabel{$S(x,u,v)$} at 280 190
			\pinlabel{$x$} at 55 55
			\pinlabel{$x+u$} at 230 -10
			\pinlabel{$x+v$} at 140 185
			\pinlabel{$x+u+v$} at 340 120
			\endlabellist
			\includegraphics[width=3cm]{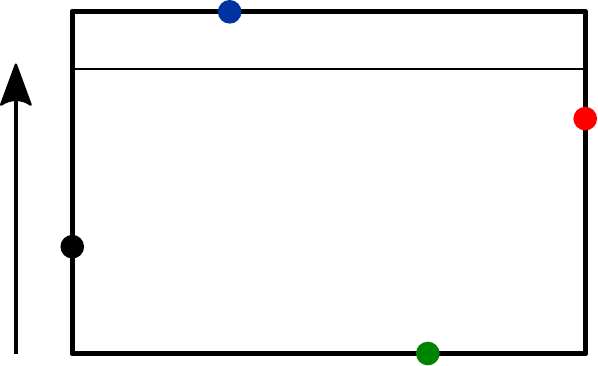}
			\caption{Segment of heigth $h$ in $S(x,u,v)$}
			\label{fig:I_S}
		\end{subfigure}
		\begin{subfigure}{0.4 \textwidth}
			\centering
			\labellist
			\small\hair 2pt
			\pinlabel{$h$} at 40 80
			\pinlabel{$I_R(x,u,v,h)$} at 145 80
			\pinlabel{$R(x,u,v)$} at 180 135
			\pinlabel{$x$} at 60 170
			\pinlabel{$x+u$} at 290 120
			\pinlabel{$x-v$} at -30 60
			\pinlabel{$x+u-v$} at 230 10 
			\endlabellist
			\includegraphics[width=3cm]{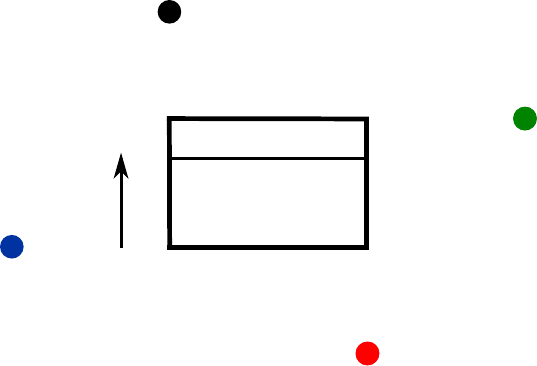}
			\caption{Segment of heigth $h$ in $R(x,u,v)$}
			\label{fig:I_R}
		\end{subfigure}
		\caption{Segment at height $h$, $I_S(x,u,v,h)$ and $I_R(x,u,v,h)$}
		\label{fig:I_S&I_R}
	\end{figure}
	
	We now decompose any horizontal segment $I$ of any length using the tiling in the case where $\slope(u+v)$ and $\slope(v-u)$ are of opposite signs.
	\begin{Lemma} \label{intervals} Assume that $\slope(u+v) \neq 0$, $\slope(v-u) \neq \infty$ and that $\slope(u+v)$ and $\slope(v-u)$ are of opposite signs. Let $t$ be a type and let $I$ be a horizontal segment which does not intersect the lattice $\Lambda$. \\
		There exist $p \in \N$ such that for all $0 \leq j \leq p+1$, there exists a point $x_j \in \Lambda(t)\cup \Lambda(\inv{t})$ and a real $0<h_j<1$ such that,
		\begin{equation*}
			I=I_0 \cup I_1 \cup \hdots \cup I_p \cup I_{p+1}
		\end{equation*}
		with :
		\begin{itemize}
			\item Either $I_0 \subset I_S(x_0,u,v,h_0)$ or $I_0 \subset I_R(x_0,u,v,h_0)$, and either $I_{p+1} \subset I_S(x_{p+1},u,v,h_{p+1})$ or $I_{p+1} \subset I_R(x_{p+1},u,v,h_{p+1})$.
			\item For $1 \leq j \leq p$, either $I_j=I_S(x_j,u,v,h_j)$ or $I_j=I_R(x_j,u,v,h_j)$.
			\item If $I_j=I_R(x_j,u,v,h_j)$, then $I_{j+1}=I_S(x_{j+1},u,v,h_{j+1})$ with $x_{j+1}=\left\{ \begin{array}{cc}
				x_j+u-v   & \text{ if } \slope(u+v)>0 \\
				x_j-u+v   & \text{ if } \slope(u+v)<0
			\end{array} \right.$. 
			\item If $I_j=I_S(x_j,u,v,h_j)$ and $I_{j+1}=I_R(x_{j+1},u,v,h_{j+1})$, then $x_{j+1}=x_j+u+v$.
			\item If $I_j=I_S(x_j,u,v,h_j)$ and $I_{j+1}=I_S(x_{j+1},u,v,h_{j+1})$, then $x_{j+1}=x_j+u+v$.
		\end{itemize}
		Moreover, suppose $j<j'$, then $I_j \cap I_{j'} \neq \emptyset$ if and only if $j'=j+1$ and in this case $I_j\cap I_{j+1}$ is a point (which is the endpoint of $I_j$ and the starting point of $I_{j+1}$). 
	\end{Lemma}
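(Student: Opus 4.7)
The plan is to apply directly the tiling provided by Lemma \ref{tile-opposite-sign}: the plane $\R^2$ is covered by the rectangles $S(y,u,v)$ and $R(y,u,v)$ for $y \in \Lambda(t) \cup \Lambda(\inv{t})$, whose interiors are pairwise disjoint and whose boundaries meet only along shared sides. Since $I$ is a horizontal segment avoiding $\Lambda$, its intersection with each rectangle of the tiling is either empty, a non-degenerate horizontal subsegment of some full-width slice $I_S(\cdot,u,v,h)$ or $I_R(\cdot,u,v,h)$, or at worst a single endpoint shared with an adjacent piece. Enumerating from left to right the rectangles whose intersection with $I$ has positive length yields a finite sequence $R_0,R_1,\dots,R_{p+1}$ which defines the claimed decomposition $I=I_0 \cup I_1 \cup \cdots \cup I_{p+1}$.

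For $1 \leq j \leq p$, the segment $I$ fully crosses $R_j$ from its left vertical side to its right vertical side, so $I_j$ coincides exactly with a full-width horizontal slice, i.e.\ with $I_S(x_j,u,v,h_j)$ or $I_R(x_j,u,v,h_j)$ for appropriate $x_j$ and height $h_j$. The end pieces $I_0$ and $I_{p+1}$ are merely subsegments of such slices, since $I$ may start or terminate strictly inside the corresponding rectangle, which accounts for the inclusions (rather than equalities) in the first item of the statement.

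The main technical content is to verify the three transition rules between consecutive pieces, which I would carry out case by case according to whether $R_j$ is of type $S$ or $R$. I would treat the case $\slope(u+v) > 0$ (the case $\slope(u+v) < 0$ being symmetric by reflection across the horizontal axis). Using the explicit formulas \eqref{S}--\eqref{R*}, the right vertical side of $S(x_j,u,v)$ lies at abscissa $x_{j,1}+u_1+v_1$ with $y$-range $[x_{j,2}+u_2,x_{j,2}+v_2]$; comparing with the left vertical sides of the neighboring tiles in figure \ref{fig:tiling-opposite2}, this segment is the shared edge of $S(x_j,u,v)$ with exactly two rectangles of the opposite $(u,v)$-type, namely $S(x_j+u+v,u,v)$ and $R(x_j+u+v,u,v)$, glued along complementary $y$-ranges. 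This simultaneously produces the $I_S \to I_S$ and $I_S \to I_R$ transitions with common shift $x_{j+1}=x_j+u+v$. In the same vein, the right vertical side of $R(x_j,u,v)$ at abscissa $x_{j,1}+u_1-v_1$ meets a unique $S$-rectangle along its full $y$-range, which a direct computation identifies as $S(x_j+u-v,u,v)$; this yields the $I_R \to I_S$ transition. The bookkeeping of these adjacency patterns is the genuine work in the proof, but it reduces to reading off the figures produced in Lemma \ref{tile-opposite-sign}.

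Finally, the claim that $I_j \cap I_{j'} = \emptyset$ for $j' > j+1$ and that $I_j \cap I_{j+1}$ is a single point is immediate from the disjointness of the open rectangles of the tiling together with the fact that their closures meet only along edges; horizontal segments in consecutive tiles therefore share only the point where $I$ crosses the common vertical side.
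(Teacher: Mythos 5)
Your proposal is correct and follows essentially the same route as the paper: both deduce the decomposition by intersecting $I$ with the tiling of Lemma \ref{tile-opposite-sign} and then read off the transition rules from the adjacency pattern of the tiles (your explicit side-by-side computation of the shared vertical edges is just a more detailed version of the paper's observation that each $R(x,u,v)$ is surrounded by $S$-tiles).
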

	
	\begin{figure}[h]
		\centering
		\labellist
		\small\hair 2pt
		\pinlabel{$x_0$} at -15 220
		\pinlabel{$x_1$} at 235 285
		\pinlabel{$x_2$} at 475 345
		\pinlabel{$x_3$} at 575 180
		\pinlabel{$x_4$} at 820 240
		\pinlabel{$h_0$} at 90 210
		\pinlabel{$h_1$} at 295 243
		\pinlabel{$h_2$} at 550 243
		\pinlabel{$h_3$} at 665 190
		\pinlabel{$h_4$} at 880 220
		\pinlabel{$\color{purple} I_0$} at 150 273
		\pinlabel{$\color{pink} I_1$} at 380 273
		\pinlabel{$\color{purple} I_2$} at 550 273
		\pinlabel{$\color{pink} I_3$} at 710 273
		\pinlabel{$\color{purple} I_4$} at 940 273
		\endlabellist
		\includegraphics[width=13cm]{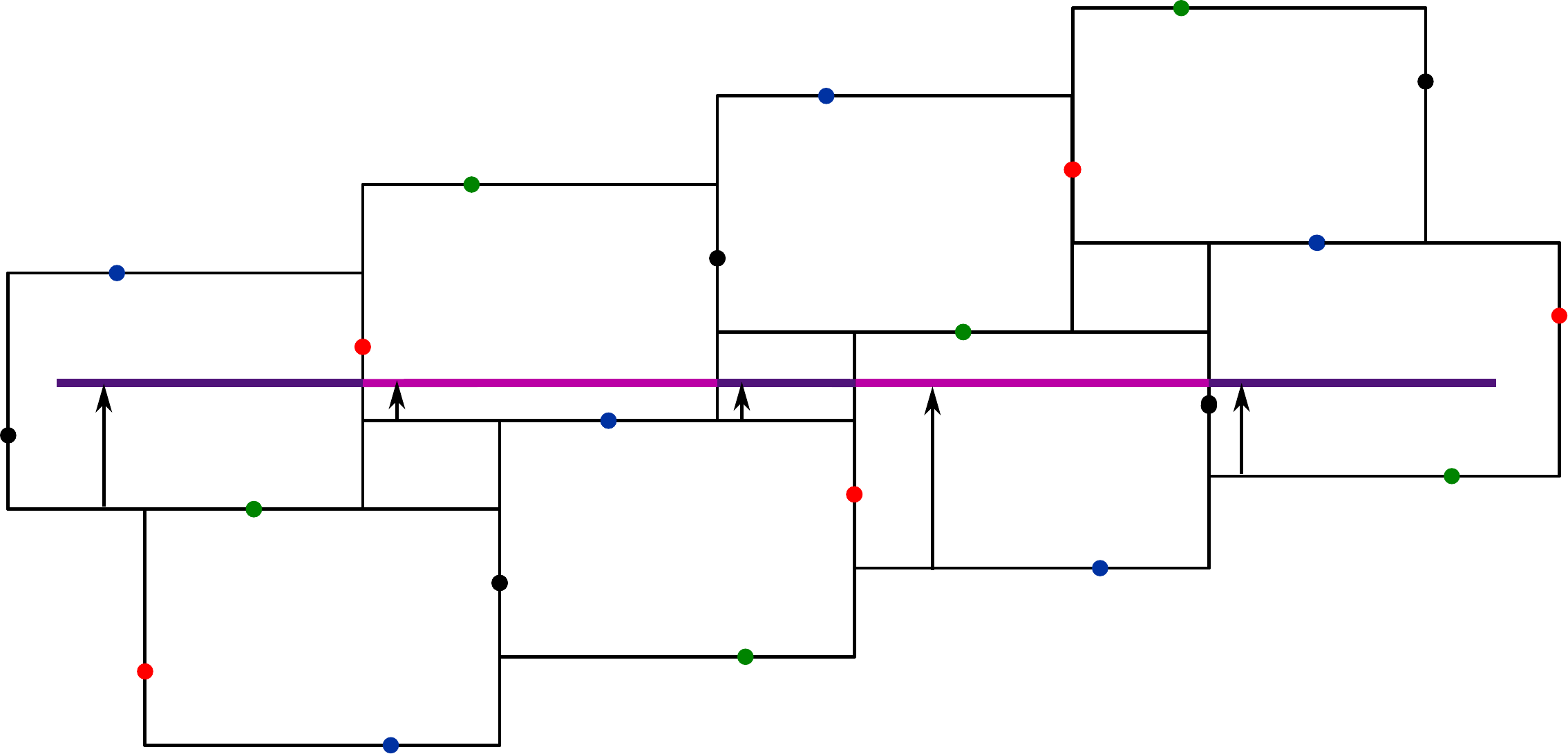}
		\caption{Decomposition of a horizontal segment}
		\label{fig:segment-opposite}
	\end{figure}
	
	See figure \ref{fig:segment-opposite} on page \pageref{fig:segment-opposite} for an illustration of Lemma \ref{intervals}.
	\begin{proof}
		This lemma follows directly from Lemma \ref{tile-opposite-sign}. Lemma \ref{tile-opposite-sign} gives a tiling of $\R^2$ using the rectangles $S(x,u,v)$ and $R(x,u,v)$ for $x \in \Lambda(t) \cup \Lambda(\inv{t})$. Then we consider the intersection of $I$ with this tiling, and this gives the decomposition $I=I_0 \cup I_1 \cup \hdots \cup I_p \cup I_p$. \\ 
		The third point comes from the fact that a rectangle $R(x,u,v)$ is surrounded by four rectangle of the form $S(y,u,v)$, so a segment of the form $I_R(x,u,v,h)$ cannot be followed by a segment of the form $I_R(x',u,v,h')$.
	\end{proof}
	
	We easily deduce from Lemma \ref{intervals} the following properties :
	
	\begin{Corollary} \label{coro-interval}
		With the notations of the previous Lemma, we have :
		\begin{enumerate}
			\item $\# \left\{j \in \{1,\hdots,p\} \mid I_j=I_S(x_j,u,v,h_j) \right\} \geq \# \{j \in \{1,\hdots,p\} \mid I_j=I_R(x_j,u,v,h_j)\}$
			\item The type of $x_j$ is alternating. Precisely : if 
			$x_0$ is of type $t$, then $x_j$ is of type $t$ if $j$ is even  and of type $\inv{t}$ if $j$ is odd, and if $x_0$ is of type $\inv{t}$, then $x_j$ is of type $\inv{t}$ of $j$ is even and of type $t$ if $j$ is odd. 
		\end{enumerate}
	\end{Corollary}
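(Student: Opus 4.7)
Both assertions are direct combinatorial consequences of the transition rules spelled out in Lemma~\ref{intervals}; no new geometric input is needed.

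For assertion~(2), I would observe that in each of the three transition rules of Lemma~\ref{intervals} the displacement $x_{j+1}-x_j$ is one of $u+v$, $u-v$ or $-u+v$. Since $-u\equiv u$ and $-v\equiv v$ modulo $2\Lambda$, all three displacements are congruent to $u+v$ mod $2\Lambda$. This means that $x_{j+1}$ always belongs to the $(u,v)$-opposite type of $x_j$. An immediate induction on $j$, starting from the type of $x_0$, then produces the alternation pattern stated: $x_j$ has the same type as $x_0$ when $j$ is even, and the $(u,v)$-opposite type when $j$ is odd.

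For assertion~(1), the key ingredient is the first transition rule of Lemma~\ref{intervals}, which forces $I_{j+1}=I_S$ whenever $I_j=I_R$. In particular, the sequence $I_1,\ldots,I_p$ never contains two consecutive $I_R$'s. The plan is then to construct an injection from the set of indices $j$ with $I_j=I_R$ into the set of indices with $I_j=I_S$, the obvious candidate being $j\mapsto j+1$: the target is automatically of $I_S$-type by the transition rule, and distinct sources produce distinct targets. The delicate point, which I expect to be the main obstacle, is the boundary handling at $j=p$: if $I_p=I_R$, then the natural image $p+1$ falls outside $\{1,\ldots,p\}$. In that situation one must reroute the pairings near the end of the sequence, exploiting the fact that $I_{p-1}$ is necessarily of $I_S$-type, so that the injection remains valued in $\{1,\ldots,p\}$. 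Once this boundary bookkeeping is carried out, the resulting injection gives the desired inequality $\#\{j\in\{1,\ldots,p\}\mid I_j=I_S\}\geq\#\{j\in\{1,\ldots,p\}\mid I_j=I_R\}$.
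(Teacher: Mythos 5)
Your argument for assertion (2) is correct and is surely the intended one (the paper offers nothing beyond ``we easily deduce''): every transition of Lemma \ref{intervals} displaces the anchor by $u+v$, $u-v$ or $v-u$, all congruent to $u+v$ modulo $2\Lambda$, so each step passes to the $(u,v)$-opposite type and induction gives the alternation.

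For assertion (1), the boundary problem you flag is not a bookkeeping nuisance that can be rerouted away: it is a genuine failure of the injection, and in fact the only combinatorial input available from Lemma \ref{intervals} --- that no two consecutive indices in $\{1,\hdots,p\}$ carry an $R$-segment --- yields only $\#\{j : I_j=I_S\}\geq \#\{j : I_j=I_R\}-1$, with equality exactly when $p$ is odd and the sequence alternates $R,S,R,\hdots,S,R$. In that case your proposed repair $p\mapsto p-1$ collides with the image of $p-2$ under $j\mapsto j+1$, and no injection exists at all. This alternating configuration is geometrically realizable. Take for instance $u=(3,-1)$, $v=(1,4)$, so that $\slope(u)<0<\slope(v)$, $\slope(u+v)=3/4>0$ and $\slope(v-u)=-5/2<0$, and let $t$ be the type of the origin. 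Following the transition rules along the horizontal line at height $1/2$, the successive tiles anchored at $(10,1),(14,4),(16,-1),(20,2),(22,-3)$ are of types $S,R,S,R,S$ and cover $[10,26]$; the segment $I=[12,23]\times\{1/2\}$ then decomposes with $p=3$, $I_1,I_3$ of $R$-type and only $I_2$ of $S$-type, so the stated inequality fails by one. Your plan therefore cannot be completed as written; the honest conclusion is either to weaken point (1) to $\#S\geq \#R-1$ or to supply an additional geometric restriction ruling out the configuration above. Note that this is inconsequential for the rest of the paper: only point (2) of the corollary is cited later (in Lemma \ref{concat-W}), and in the proof of Theorem \ref{magic-len-S_{0,4}} the author derives directly, from ``$R$ is always followed by $S$'', only the weaker bound $\#\cK\leq \#\cJ+\#\inv{\cJ}+1$, which does follow from your no-two-consecutive-$R$ observation.
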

	
	Finally, we decompose any horizontal segment $I$ using the tiling in the case where $\slope(u+v)$ and $\slope(v-u)$ are of the same sign.
	\begin{Lemma} \label{int-same-sign}Assume that $\slope(u+v) \neq 0$, $\slope(v-u) \neq \infty$ and that $\slope(u+v)$ and $\slope(v-u)$ are of the same sign. \\
		Let $t$ be a type and let $I$ be a horizontal segment which does not intersect the lattice $\Lambda$. \\ 
		There exist $p \in \N$ such that for all $0 \leq j \leq p+1$, there exists a point $x_j \in \Lambda(t)\cup \Lambda(\inv{t})$ and a real $0<h_j<1$ such that :
		\begin{equation*}
			I=I_0 \cup I_1 \cup \hdots \cup I_p \cup I_{p+1}
		\end{equation*}
		with 
		\begin{itemize}
			\item $I_0 \subset I_S(x_0,u,v,h_0)$ and $I_{p+1} \subset I_S(x_{p+1},u,v,h_{p+1})$.
			\item For all $1 \leq j \leq p$, $I_j=I_S(x_j,u,v,h_j)$.
			\item Either $x_{j+1}=x_j+u+v$, and in this case $I_j \cap I_{j+1}$ is a point (which is the right endpoint of $I_j$ and the left endpoint of $I_{j+1}$),
			\item Or $x_{j+1}= \left\{ \begin{array}{cc}
				x_j+2u & \text{ if } \slope(u+v)>0 \\
				x_j+2v & \text{ if } \slope(u+v)<0 
			\end{array} \right. $, and in this case $I_j \cap I_{j+1} = I_j \cap S^+(x_j,u,v)$.
		\end{itemize}
	\end{Lemma}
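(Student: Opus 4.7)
The plan is to mirror the proof of Lemma \ref{intervals} in the same-sign setting, substituting Lemma \ref{tile-same-sign} for Lemma \ref{tile-opposite-sign}. First, I fix the type $t$ and apply Lemma \ref{tile-same-sign} to obtain the covering $\R^2 = \bigcup_{x \in \Lambda(t) \cup \Lambda(\inv{t})} S(x,u,v)$. Unlike the opposite-sign case, no auxiliary rectangles $R(x,u,v)$ are needed to fill gaps, but tiles of the same type now genuinely overlap along the regions $S^{\pm}$, which is exactly where the new alternative $x_{j+1} = x_j + 2u$ (or $x_j + 2v$) will come from.

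Traversing $I$ from left to right, I record the sequence of tiles that the horizontal line passes through. For the initial piece $I_0$, I pick any rectangle of type $t$ or $\inv{t}$ containing the left endpoint of $I$, which exists by the covering property, and analogously for $I_{p+1}$ at the right endpoint. For an intermediate index $1 \leq j \leq p$, I choose $x_j$ so that $I_j = I_S(x_j, u, v, h_j)$ is the full horizontal chord across $S(x_j, u, v)$, and define $x_{j+1}$ according to how this chord exits $S(x_j, u, v)$ on the right. By Lemma \ref{tile-same-sign}, exactly two scenarios arise: either the chord exits through the right vertical side into the opposite-type tile $S(x_j + u + v, u, v)$, in which case $x_{j+1} = x_j + u + v$ and $I_j \cap I_{j+1}$ reduces to the common endpoint; or the chord first enters the overlap region $S^+(x_j, u, v)$, which by Lemma \ref{tile-same-sign} equals $S(x_j, u, v) \cap S(x_j + 2u, u, v)$ when $\slope(u+v) > 0$ and $S(x_j, u, v) \cap S(x_j + 2v, u, v)$ when $\slope(u+v) < 0$, giving $x_{j+1} = x_j + 2u$ (respectively $x_j + 2v$) and $I_j \cap I_{j+1} = I_j \cap S^+(x_j, u, v)$, a non-degenerate horizontal segment.

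The main technical point is exhaustiveness of these two scenarios. Using the explicit formula for $S^+$ in Lemma \ref{tile-same-sign}, the overlap region occupies the lower-right corner (when $\slope(u+v) > 0$) or the upper-right corner (when $\slope(u+v) < 0$) of $S(x_j, u, v)$, and whether the chord at height $h_j$ meets $S^+$ is determined by a direct comparison between $h_j$ and the vertical extent of $S^+$; these two alternatives account for all the ways a horizontal segment can leave the rectangle on the right. The borderline height at which both scenarios could a priori apply cannot occur under the hypothesis $I \cap \Lambda = \emptyset$, since it would force the horizontal line to pass through the lattice point $x_j + 2u + v$ (respectively $x_j + u + 2v$). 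Finally, since each transition advances the right endpoint of the current chord by either $u_1 + v_1$ or $2u_1$ (respectively $2v_1$), both strictly positive, the recursion terminates after finitely many steps and produces the announced decomposition $I = I_0 \cup I_1 \cup \cdots \cup I_{p+1}$.
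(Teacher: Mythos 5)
Your proof is correct and takes essentially the same route as the paper, which simply intersects $I$ with the covering of Lemma \ref{tile-same-sign} and reads off the decomposition; the paper's own argument is only two sentences long, so your case analysis of how a chord exits a tile on the right (through the right vertical side into $S(x_j+u+v,u,v)$, versus through the overlap $S^+(x_j,u,v)$ into $S(x_j+2u,u,v)$ or $S(x_j+2v,u,v)$) supplies details the paper leaves implicit. A minor quibble: the borderline height can in fact occur when $I$ stops short of the lattice point $x_j+2u+v$ (respectively $x_j+u+2v$), but this degenerate terminal case is harmless and is not addressed in the paper's proof either.
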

	See figure \ref{fig:segment-same} on page \pageref{fig:segment-same} for an illustration of Lemma \ref{int-same-sign}.
	\begin{figure}[h]
		\centering
		\labellist
		\small\hair 2pt
		\pinlabel{$x_0$} at 60 165
		\pinlabel{$x_1$} at 310 225
		\pinlabel{$x_2$} at 560 285
		\pinlabel{$x_3$} at 640 190
		\pinlabel{$x_4$} at 890 220
		\pinlabel{$x_5$} at 1190 295
		\pinlabel{$h_0$} at 180 188
		\pinlabel{$h_1$} at 380 215
		\pinlabel{$h_2$} at 630 243
		\pinlabel{$h_3$} at 740 190
		\pinlabel{$h_4$} at 960 220
		\pinlabel{$h_5$} at 1190 230
		\pinlabel{$ \color{purple} I_0$} at 230 293
		\pinlabel{$\color{pink} I_1$} at 470 293
		\pinlabel{$\overbrace{\hspace{2.7cm}}$ } at 715 270
		\pinlabel{$\color{purple} I_2$} at 715 305
		\pinlabel{$\underbrace{\hspace{2.7cm}}$} at 790 245
		\pinlabel{$\color{pink} I_3$} at 790 210
		\pinlabel{$\color{purple} I_4$} at 1050 275
		\pinlabel{$\color{pink} I_5$} at 1290 280
		\endlabellist
		\includegraphics[width=\textwidth]{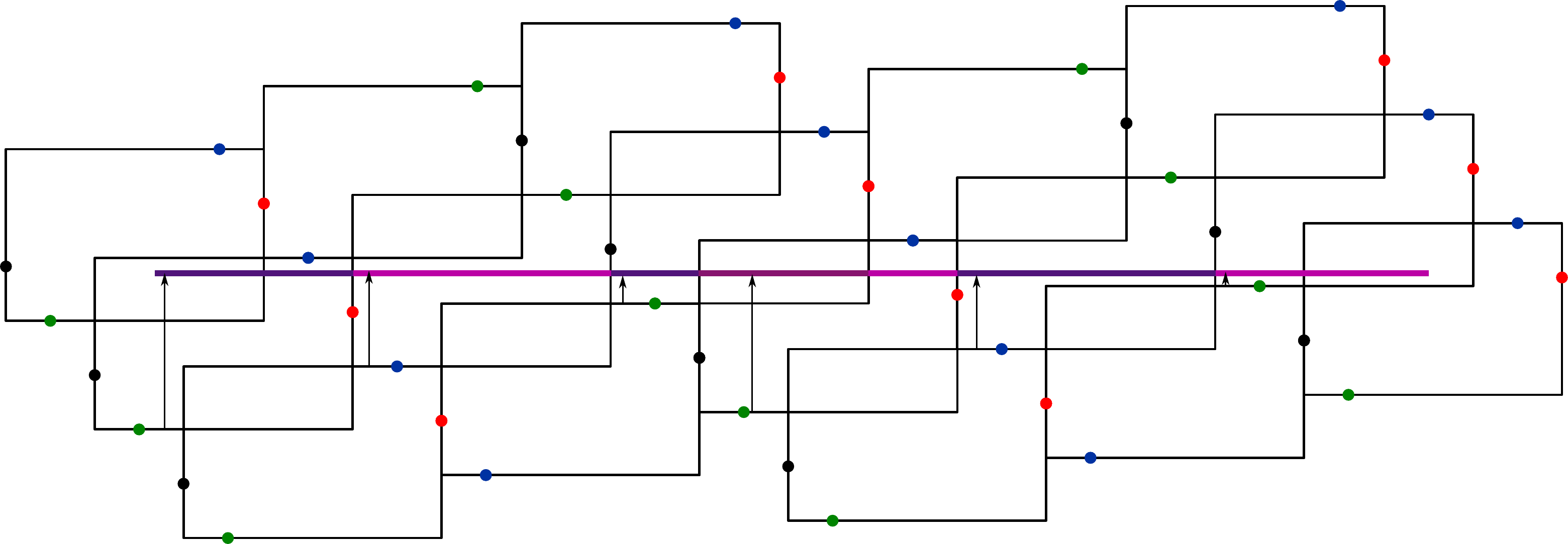}
		\caption{Decomposition of an horizontal segment}
		\label{fig:segment-same}
	\end{figure}
	
	\begin{proof}
		This Lemma follows directly from Lemma \ref{tile-same-sign}. Lemma \ref{tile-same-sign} gives a "tiling" of $\R^2$ (the term is improper in this context because the tiles can intersect in their interiors) using the rectangles $S(x,u,v)$ for $x \in \Lambda(t) \cup \Lambda(\inv{t})$. Then we consider the intersection of $I$ with this tiling, and this gives the desired decomposition $I=I_0 \cup I_1 \cup \hdots \cup I_p \cup I_{p+1}$. 
		
	\end{proof}
	
	\subsection{A few reminders about \emph{Farey neighbours}} 
	\label{farey}
	In the next section, we will in particular consider a lattice in $\R^2$ defined using the continued fraction expansion of a rational corresponding to a slope of a curve. Then we now need to say a few things about fractions, \emph{Farey neightbours} and the link with continued fraction expansion. \\ 
	
	In this section, we will consider fractions $\displaystyle \frac{p}{q}$, and we will always assume that they are written as irreducible fractions (which means that $p$ and $q$ are coprime) with non-negative denominator. By convention, we also consider the fraction $\displaystyle \frac{1}{0}$ which we will sometimes refer to as $+ \infty$. \\
	
	We say that two distinct fractions $\displaystyle \frac{p}{q}$ and $\displaystyle \frac{p'}{q'}$, are \emph{Farey neighbours} whenever $|pq'-pq|=1$. This is equivalent to saying that the two vectors $(q,p)$ and $(q',p')$ form a basis of $\Z^2$. In this case, we denote by $\displaystyle \frac{p}{q}\oplus \frac{p'}{q'}$ the \emph{Farey sum} of the two fractions $\displaystyle \frac{p}{q}$ and $\displaystyle \frac{p'}{q'}$, which is defined by $\displaystyle \frac{p}{q}\oplus \frac{p'}{q'}=\frac{p+p'}{q+q'}$ (note that this last fraction is automatically irreducible with positive denominator). The fractions $\displaystyle \frac{p}{q}$ and $\displaystyle \frac{p}{q} \oplus \frac{p'}{q'}$ are also Farey neighbours, and so are $\displaystyle \frac{p'}{q'}$ and $\displaystyle \frac{p}{q}\oplus \frac{p'}{q'}$. Suppose that $qq' \neq 0$ and $\displaystyle \frac{p}{q} \leq \frac{p'}{q'}$. Then we have :
	\begin{equation*}
		\frac{p}{q} \leq \frac{p}{q} \oplus \frac{p'}{q'} \leq \frac{p'}{q'}.
	\end{equation*}
	
	Now define $\displaystyle \frac{p}{q}\ominus \frac{p'}{q'}=\frac{p-p'}{q-q'}$. The fractions $\displaystyle \frac{p}{q}\ominus \frac{p'}{q'}$ and $\displaystyle \frac{p}{q}$ are Farey neighbours, and so are $\displaystyle \frac{p}{q}\ominus \frac{p'}{q'}$ and $\displaystyle \frac{p'}{q'}$. Note that with this definition $\ominus$ is commutative. \\
	We have either $\displaystyle (\frac{p}{q}\ominus \frac{p'}{q'})\oplus \frac{p'}{q'} = \frac{p}{q}$ or $\displaystyle (\frac{p'}{q'} \ominus \frac{p}{q}) \oplus \frac{p}{q} = \frac{p'}{q'}$. \\ ~ \\
	
	\textbf{Link with continued fraction expansion.} \\
	Let $n_1 \in \Z, n_2, \hdots, n_r \in \N^*$ with $n_r \geq 2$. Denote $\displaystyle \frac{p}{q} = [n_1,\hdots,n_r]$, $\displaystyle \frac{p_0}{q_0}=\frac{1}{0}$ and for all $1 \leq i \leq r$, $\displaystyle \frac{p_i}{q_i}=[n_1,\hdots,n_i]$. Then, it is not hard to check that for all $1 \leq i \leq r, \quad \displaystyle \frac{p_i}{q_i}$ and $\displaystyle \frac{p_{i-1}}{q_{i-1}}$ are Farey-neighbours. We can compute their Farey sum and Farey difference and we obtain : 
	\begin{align*}
		\frac{p_i}{q_i} \oplus \frac{p_{i-1}}{q_{i-1}}  & = [n_1,\hdots,n_i+1] \\
		\frac{p_i}{q_i} \ominus \frac{p_{i-1}}{q_{i-1}}  & = \left\{ \begin{array}{cc}
			[n_1,\hdots,n_i-1]  &  \text{ if } n_i \geq 2 \text{ or } i=1  \\
			{[n_1, \hdots, n_{i-2}]}  & \text{ if } n_i=1 \text{ and  } i \geq 3 \\ 
			\frac{1}{0}=\infty  & \text{ if } n_i=1 \text{ and } i=2
		\end{array} [n_1,\hdots,n_i-1] \right.
	\end{align*}
	Notice that in this case we always have $\displaystyle (\frac{p_i}{q_i}\ominus \frac{p_{i-1}}{q_{i-1}})\oplus \frac{p_{i-1}}{q_{i-1}} = \frac{p_i}{q_i}$. \\
	Let us make two observations : 
	\begin{align}
		\frac{p_i}{q_i} & \oplus \frac{p_{i-1}}{q_{i-1}} \neq \frac{p}{q}  \label{sum-farey} \\
		\frac{p_i}{q_i} & \ominus \frac{p_{i-1}}{q_{i-1}} = -1 \text{ if and only if } i=1 \text{ and } n_i=0 \label{diff-farey}
	\end{align}
	Moreover, since $\displaystyle \frac{p}{q} = [n_1,\hdots,n_r]$, we either have $\displaystyle [n_1,\hdots,n_i+1] \leq [n_1,\hdots,n_r] \leq [n_1,\hdots,n_i]$ or $\displaystyle [n_1,\hdots,n_i] \leq [n_1,\hdots,n_r] \leq [n_1,\hdots,n_i+1]$, that is :
	\begin{align*}
		\text{ either } \qquad \frac{p_i}{q_i}\oplus \frac{p_{i-1}}{q_{i-1}} \leq \frac{p}{q} \leq \frac{p_i}{q_i} \qquad
		\text{ or } \qquad \frac{p_i}{q_i} \leq \frac{p}{q} \leq \frac{p_i}{q_i}\oplus \frac{p_{i-1}}{q_{i-1}}.
	\end{align*}
	\begin{itemize}
		\item If $\displaystyle \frac{p_i}{q_i}\oplus \frac{p_{i-1}}{q_{i-1}} \leq \frac{p}{q} \leq \frac{p_i}{q_i}$, this forces $\displaystyle \frac{p_{i-1}}{q_{i-1}} \leq \frac{p_i}{q_i}\oplus \frac{p_{i-1}}{q_{i-1}} \leq \frac{p_i}{q_i}$ and so $\displaystyle \frac{p_i}{q_i} \ominus \frac{p_{i-1}}{q_{i-1}} \geq \frac{p_{i}}{q_{i}} \geq \frac{p}{q}$. \\
		Hence $\displaystyle \frac{p_i}{q_i} \oplus \frac{p_{i-1}}{q_{i-1}} \leq \frac{p}{q} \leq \frac{p_i}{q_i}\ominus \frac{p_{i-1}}{q_{i-1}}$.
		\item If $\displaystyle \frac{p_i}{q_i}\leq \frac{p}{q} \leq \frac{p_i}{q_i}\oplus \frac{p_{i-1}}{q_{i-1}} $, this forces $\displaystyle \frac{p_{i}}{q_{i}} \leq \frac{p_i}{q_i}\oplus \frac{p_{i-1}}{q_{i-1}} \leq \frac{p_{i-1}}{q_{i-1}}$ and so $\displaystyle \frac{p_i}{q_i} \ominus \frac{p_{i-1}}{q_{i-1}} \leq \frac{p_{i}}{q_{i}} \leq \frac{p}{q}$. \\
		Hence $\displaystyle \frac{p_i}{q_i} \ominus \frac{p_{i-1}}{q_{i-1}} \leq \frac{p}{q} \leq \frac{p_i}{q_i}\oplus \frac{p_{i-1}}{q_{i-1}}$.
	\end{itemize}
	Thus, in every case, the rationals $\displaystyle \frac{p_i}{q_i} \ominus \frac{p_{i-1}}{q_{i-1}}$ and $\displaystyle \frac{p_i}{q_i}\oplus \frac{p_{i-1}}{q_{i-1}}$ are on either side of $\displaystyle \frac{p}{q}$.
	
	\begin{figure}[h]
		\centering
		\labellist
		\small \hair 2pt
		\pinlabel{$\displaystyle \frac{p_{i-1}}{q_{i-1}}$} at 18 -7
		\pinlabel{$\displaystyle \frac{p_i}{q_i} \oplus \frac{p_{i-1}}{q_{i-1}}$} at 42 -7
		\pinlabel{$\displaystyle \frac{p}{q}$} at 58 -7
		\pinlabel{$\displaystyle \frac{p_i}{q_i}$} at 76 -7 
		\pinlabel{$\displaystyle \frac{p_i}{q_i}\ominus \frac{p_{i-1}}{q_{i-1}}$} at 107 -7
		\pinlabel{$\displaystyle \frac{p_{i-1}}{q_{i-1}}$} at 236 -7
		\pinlabel{$\displaystyle \frac{p_i}{q_i} \oplus \frac{p_{i-1}}{q_{i-1}}$} at 215 -7
		\pinlabel{$\displaystyle \frac{p}{q}$} at 196 -7
		\pinlabel{$\displaystyle \frac{p_i}{q_i}$} at 178 -7 
		\pinlabel{$\displaystyle \frac{p_i}{q_i}\ominus \frac{p_{i-1}}{q_{i-1}}$} at 153 -7
		\endlabellist
		\includegraphics[width=7cm]{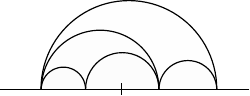} \qquad
		\includegraphics[width=7cm]{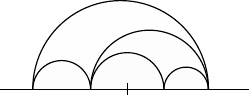}
		\vspace{1cm}
		\caption{The rational $\displaystyle \frac{p}{q}$ and its approximants $\displaystyle \frac{p_i}{q_i}, \frac{p_{i-1}}{q_{i-1}}, \frac{p_i}{q_i} \oplus \frac{p_{i-1}}{q_{i-1}}$ and $\displaystyle \frac{p_i}{q_i}\ominus \frac{p_{i-1}}{q_{i-1}}$ ordered on the real line.}
		\label{fig:Farey}
	\end{figure}
	
	\newpage
	\subsection{Special-lengths Theorem} 
	\label{magic-sphere}
	
	For $\gamma \in \simple$ a simple closed curve on $S_{0,4}$ of slope $[n_1(\gamma),\dots,n_{r(\gamma)}(\gamma)]$, recall that we have defined the lengths $l_i(\gamma)$ and $l'_i(\gamma)$ in Definition \ref{def:magic-length}. These lengths will play an important role in our study of the redundancy of arbitrary subwords of simple words. Indeed, the following theorem, to which this section is dedicated, says that  these lengths  are "special" in some sense :  we prove that the subwords of $\gamma \in \simple$ of these specific lengths (in fact precisely $l_i(\gamma)-5$) are redundant in the sense that they and their inverses can be found many other times in the word $\gamma$, and their appearances can be quantified by a uniform constant. 
	
	\begin{Theorem}[Special-lengths]\label{magic-len-S_{0,4}}
		Set $\alpha=\frac{1}{30}$. Let $\gamma \in \simple$. Consider $\slope(\gamma)=[n_1(\gamma),\hdots,n_{r(\gamma)}(\gamma)]$ the continued fraction expansion of $\slope(\gamma)$. Fix $0 \leq i \leq r(\gamma)$ such that $l_i(\gamma) \geq 10$ (in particular, note that by inequality \eqref{i<li}, this condition is fulfilled whenever $i \geq 10$). Let $W \in \F_3$ be a subword of (a cyclic-permutation of) $\gamma$ (or its inverse) such that $|W| \geq 3(l'_i(\gamma)+l_i(\gamma)+1)$ and $w \in \F_3$ be a subword of (a cyclic-permutation of) $\gamma$ (or its inverse) of length $l_i(\gamma)-5$.\\
		Then we can decompose the word~$W$ as a concatenation $W=u_1 \hdots u_q$ such that there exists a subset $\cI \subset \{1,\hdots,q\}$ satisfying the following :
		\begin{enumerate}
			\item \label{change-letter} For all $k \in \cI$, $u_k\in \{w,w^{-1}\}$.
			\item \label{proportion} $\displaystyle \sum_{k\in \cI} |u_k| \geq \alpha |W|$.
		\end{enumerate}
	\end{Theorem}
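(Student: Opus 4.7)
The plan is to exploit the geometric interpretation of $\gamma$ as the image of a horizontal line of slope $p/q$ in $\R^2$, and then decompose long horizontal subsegments via the tilings developed in Section~\ref{lattices}. After permuting the basis as in Remark~\ref{change-basis}, I may assume $\slope(\gamma) = p/q \geq 0$ with continued fraction expansion $[n_1, \dots, n_r]$, so that I can associate to level $i$ a lattice $\Lambda \subset \R^2$ with basis $(u,v)$ proportional to the Farey approximants $(q_{i-1},p_{i-1})$ and $(q_i,p_i)$, oriented so that $u_1, v_1 > 0$ and $u_2 < 0 < v_2$. The Farey identity $|p_{i-1} q_i - p_i q_{i-1}| = 1$ from Section~\ref{farey} guarantees that $(u,v)$ generates the relevant sublattice of $\Z^2$ (up to the doubling coming from the $2\Z^2$ quotient defining $\sph$). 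The horizontal extent of an $S$-tile is then precisely the scale at which subwords of length $\approx l_i$ live: a horizontal segment whose length equals the horizontal width of $S_*(x,u,v)$ encodes exactly $l_i$ consecutive letters of $\gamma$, and shortening by five letters forces the segment for $w$ to fit strictly inside a single tile.

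With this dictionary, Lemma~\ref{segment-in-square} places the horizontal segment $I_w$ encoding $w$ inside a single $S_*(x_w,u,v)$ or $T_*(x_w,u,v)$ at a well-defined height $h_w$. The crucial invariance is that the word read along any horizontal segment at height $h_w$ inside an $S$-tile depends only on $h_w$ together with the type of the corner modulo $2\Lambda$: the three families $L_A$, $L_B$, $L_C$ are $2\Z^2$-invariant, and the alternating transverse orientations intertwine with the corner-type swap by inverting the recorded word. Consequently, every other $S$-segment at height $h_w$ whose corner has type $t_w$ reads $w$, while those whose corner has the $(u,v)$-opposite type $\inv{t_w}$ read $w^{-1}$.

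Next I apply Lemma~\ref{intervals} (opposite-sign case) or Lemma~\ref{int-same-sign} (same-sign case) to the long horizontal segment $I_W$ encoding $W$, producing a decomposition $I_W = I_0 \cup I_1 \cup \dots \cup I_{p+1}$ whose interior pieces are full $I_S(x_j,u,v,h_j)$ or $I_R(x_j,u,v,h_j)$ segments. Since the slope $p/q$ is fixed, the heights $h_j$ evolve by an explicit additive rule dictated by which translation $u+v$, $\pm(u-v)$, or $\pm 2u$/$\pm 2v$ carries one tile to the next, so a positive, explicitly computable fraction of the $S$-pieces share the height $h_w$. Corollary~\ref{coro-interval}$(1)$ ensures that $R$-pieces never outnumber $S$-pieces, and item~$(2)$ of the same corollary forces the corner types to alternate, so a definite proportion of the $S$-pieces at height $h_w$ carry the matching corner type; by the invariance observation, they read $w$ or $w^{-1}$. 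The hypothesis $|W| \geq 3(l'_i + l_i + 1)$ is calibrated so that the boundary pieces $I_0, I_{p+1}$ contribute at most a bounded fraction of $|W|$, leaving enough interior tiles; combining this with the inequality $l'_i \leq 2 l_i$ from \eqref{l'i<2li} yields the quantitative bound $\sum_{k \in \cI} |u_k| \geq \frac{1}{30} |W|$.

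The main obstacle is the case analysis between the three tiling configurations (same-sign versus opposite-sign for $\slope(u+v)$ and $\slope(v-u)$, together with the parity of $n_i$ that decides whether $I_w$ sits in an $S$- or a $T$-tile), and the careful bookkeeping of which $S$-pieces match $w$ versus $w^{-1}$ and which are wasted because their height or corner type is wrong. A secondary difficulty is converting horizontal lengths to word lengths cleanly, since the ratio of letters per unit horizontal distance depends on $p/q$ rather than on the level $i$ at which we tile; the factor $3$ in $3(l'_i + l_i + 1)$ and the constant $\alpha = 1/30$ both emerge from a worst-case analysis of this conversion combined with the alternation and boundary losses identified above.
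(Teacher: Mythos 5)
Your overall geometric strategy matches the paper's: renormalize via the level-$i$ Farey basis, tile $\R^2$ with the rectangles of Section \ref{lattices}, read $W$ along a horizontal segment, and count the tiles in which $w$ or $w^{-1}$ reappears. But there is a genuine gap at the central step. You propose to recover copies of $w$ only in those $S$-tiles whose segment sits \emph{at the same height} $h_w$ as the segment encoding $w$, asserting that ``a positive, explicitly computable fraction of the $S$-pieces share the height $h_w$.'' This is not true at the scale of $W$: the hypothesis only guarantees $|W|\geq 3(l'_i(\gamma)+l_i(\gamma)+1)$, so the decomposition of $I_W$ may contain as few as one or two interior tiles, and since consecutive corners differ by $u+v$ or $v-u$ (whose vertical components are nonzero), the heights $h_j$ are generically pairwise distinct over that range --- the first return to a given height typically only occurs after traversing on the order of $|\gamma|/l_i(\gamma)$ tiles. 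You may therefore find \emph{no} tile at height $h_w$ besides the original one. The correct mechanism is different and essential: Lemma \ref{W_S} shows that the word read in a tile of fixed type is independent of the height except for its first two and last two letters, so one pads $w$ on each side by two letters to form $w'=pws$ of length $l_i(\gamma)-1$ (this is precisely why the theorem takes $|w|=l_i(\gamma)-5$, a choice your argument never exploits), and then $w$ survives in \emph{every} interior tile of type $t$ and $w^{-1}$ in every tile of type $\inv{t}$, at whatever height. Without this height-independence plus padding, the bound $\sum_{k\in\cI}|u_k|\geq \alpha|W|$ cannot be extracted.

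A second, smaller issue: in the configuration where $\slope(u'+v')$ and $\slope(v'-u')$ have the same sign, the tiles $S(x,u',v')$ overlap in their interiors, so the occurrences of $w$ you collect need not be disjoint and cannot directly be assembled into a concatenation $W=u_1\cdots u_q$. You flag the case analysis as an obstacle but do not resolve this; the resolution is to locate each occurrence inside the smaller rectangles $T(x,u,v)$, which Lemma \ref{T-disjoint} shows are pairwise disjoint. Note also that at the first level the signs of $\slope(u+v)$ and $\slope(v-u)$ are automatically opposite (by the Farey straddling argument at the end of Section \ref{farey}); the same-sign configuration only arises after passing to $(u',v')$ when the segment for $w$ lands in a $T$-tile.
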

	
	We will prove Theorem \ref{magic-len-S_{0,4}} in section \ref{proof-magic-len}. For this purpose, given a simple closed curve $\gamma \in \simple$ and a fixed integer $0 \leq i \leq r(\gamma)$, we will define in section \ref{mapping} a lattice $\Lambda$ of $\R^2$ together with a basis $(u,v)$ of $\Lambda$ which will be particularly convenient for studying subwords of $\gamma$ of length $l_i(\gamma)$. This will allow us to "read" the word $\gamma$ by following a horizontal line in $\R^2$ (section \ref{reading-gamma}) and then we will consider for this specific choice of $\Lambda$ and $(u,v)$ the tiles $S(x,u,v)$ of $\R^2$ defined in section \ref{lattices} to study some specific subwords of $\gamma$ : the one read in the rectangle $S(x,u,v)$, which will be of lengths approximately $l'_i(\gamma)$ (section \ref{subwords-square}).
	
	\subsubsection{A tiling of $\R^2$}  
	\label{mapping}
	In this section, we associate to a simple closed curve $\gamma \in \simple$ and an integer $1 \leq i \leq r(\gamma)$ a lattice $\Lambda$ of $\R^2$. \\ 
	
	Fix $\gamma \in \simple$ a simple closed curve. Let us assume that $\slope(\gamma) \geq 0$ and write $[n_1,\hdots,n_r]=\slope(\gamma)$ the continued fraction expansion of $\slope(\gamma)$. For $1 \leq i < r$, we write $\displaystyle \frac{p_i}{q_i}=[n_1,\hdots,n_i]$ and for $i=0$, $\displaystyle \frac{p_{0}}{q_{0}}=\frac{1}{0}$. Now we fix $1 \leq i <r$ for all this section and if $i=1$, we assume that $n_i \neq 0$. \\
	Denote $u=(q_i,p_i) \in \Z^2$ and $v=(q_{i-1},p_{i-1}) \in \Z^2$. Notice that, because of the assumption $i<r$, we have $\slope(u) \neq \slope(\gamma)$ and $\slope(v) \neq \slope(\gamma)$. After possibly exchanging $u$ and $v$, suppose that $\slope(u) <\slope(v)$. Then, we have :
	\begin{enumerate}
		\item $\slope(u) < \slope(\gamma) < \slope(v)$.
		\item $(u,v)$ form a basis of $\Z^2$
	\end{enumerate}
	Moreover, using the equalities \eqref{sum-farey} and \eqref{diff-farey} of section \ref{farey} and the hypothesis $(i,n_i) \neq (1,0)$, we have :
	\begin{enumerate}[resume]
		\item $\displaystyle \slope(u+v)= \frac{p_i}{q_i}\oplus \frac{p_{i-1}}{q_{i-1}} \neq \slope(\gamma)$
		\item $\displaystyle \slope(v-u)=\frac{p_i}{q_i}\ominus \frac{p_{i-1}}{q_{i-1}} \neq -1$ \\
	\end{enumerate}
	
	Let us write as irreducible fractions :
	\begin{align*} 
		\frac{p}{q} =\slope(\gamma), \quad \frac{p_i}{q_i}= \slope(u), \quad \frac{p'_i}{q'_i}= \slope(v)
	\end{align*}
	
	\textbf{The map $f_{\gamma,u,v}$}. ~ \\
	Consider $f_{\gamma,u,v}$ an element of $\mathrm{GL}(2,\R)$ which sends the lines of slope $\slope(\gamma)$ in $\R^2$ to the lines of slope $0$ and the lines of slope $-1$ in $\R^2$ to the lines of slope $+\infty$. Choose $f_{\gamma,u,v}$ such that $f_{\gamma,u,v}$ preserves the orientation, and scale it so that the abscissa of the vector $f_{\gamma,u,v}(u+v)$ is $1$ and the ordinate of the vector $f_{\gamma,u,v}(v-u)$ is also $1$ (such a map always exists). Let us give $f_{\gamma,u,v}$ explicitly. The map $f_{\gamma,u,v}$ is the product of a diagonal matrix $D$ with a matrix $M$ which sends lines of slope $-1$ to lines of slope $+ \infty$ and lines of slope $\displaystyle \frac{p}{q}$ to lines of slope $0$. Therefore for $M$ we can choose the inverse of the matrix $\begin{pmatrix}
		q & -1 \\ p & 1
	\end{pmatrix}$ which gives $ \displaystyle
	M=\frac{1}{p+q}\begin{pmatrix}
		1 & 1 \\ -p & q 
	\end{pmatrix}
	$. Then $ \displaystyle Mu=\frac{1}{p+q} \begin{pmatrix}
		p_i+q_i \\ -pq_i+qp_i
	\end{pmatrix}$ and $ \displaystyle Mv=\frac{1}{p+q} \begin{pmatrix}
		p'_i+q'_i \\ -pq'_i+qp'_i
	\end{pmatrix}$
	and in order to get the right rescaling we need to impose : $\displaystyle D = \displaystyle (p+q) \begin{pmatrix}
		\frac{1}{p_i+q_i+p'_i+q'_i} & 0 \\ 0 & \frac{1}{p(q_i-q'_i)-q(p_i-p'_i)}
	\end{pmatrix} $. Hence we obtain : 
	\begin{equation}
		f_{\gamma,u,v}=\begin{pmatrix}
			\frac{1}{p_i+q_i+p'_i+q'_i} & \frac{1}{p_i+q_i+p'_i+q'_i} \\
			\frac{-p}{p(q_i-q'_i)-q(p_i-p'_i)} & \frac{q}{p(q_i-q'_i)-q(p_i-p'_i)}
		\end{pmatrix}
	\end{equation}
	Also note that $(p_i+q_i,p'_i+q'_i)\in \{ (l_i(\gamma),l_{i-1}(\gamma)),(l_{i-1}(\gamma),l_i(\gamma)) \}$ and then $p_i+q_i+p'_i+q'_i=l_i(\gamma)+l_{i-1}(\gamma)=l'_i(\gamma)$. Now we can compute $f_{\gamma,u,v}(u)$ and $f_{\gamma,u,v}(v)$ : 
	\begin{align*}
		f_{\gamma,u,v}(u) & = \left( \frac{p_i+q_i}{p_i+q_i+p'_i+q'_i},\frac{-pq_i+qp_i}{p(q_i-q'_i)-q(p_i-p'_i)} \right) \\
		f_{\gamma,u,v}(v) & = \left( \frac{p'_i+q'_i}{p_i+q_i+p'_i+q'_i},\frac{-pq'_i+qp'_i}{p(q_i-q'_i)-q(p_i-p'_i)} \right)
	\end{align*}
	We deduce : 
	\begin{equation} \label{max-u-v}
		\max((f_{\gamma,u,v}(u))_1,(f_{\gamma,u,v}(v))_1) = \frac{\max(p_i+q_i,p'_i+q'_i)}{p_i+q_i+p'_i+q'_i}=\frac{\max(l_i(\gamma),l_{i-1}(\gamma))}{l'_i(\gamma)}=\frac{l_i(\gamma)}{l'_i(\gamma)}
	\end{equation}
	
	Since we have $\slope(u) < \slope(\gamma) < \slope(v)$, we deduce that $\slope(f_{\gamma,u,v}(u)) < 0 < \slope(f_{\gamma,u,v}(v))$. We had also noticed that $\slope(u+v) \neq \slope(\gamma)$ and $\slope(v-u)\neq -1$, hence we can also deduce that 
	\begin{align}
		\label{slope(u+v)}
		\slope(f_{\gamma,u,v}(u+v)) & \neq 0 \\
		\slope(f_{\gamma,u,v}(v-u)) & \neq \infty
	\end{align}
	Note also that the slope of $f_{\gamma,u,v}(u+v)$ and $f_{\gamma,u,v}(v-u)$ are of opposite signs. Indeed, before applying the map $f_{\gamma,u,v}$, the slope of $u+v$ is $\displaystyle \frac{p_i}{q_i}\oplus \frac{p_{i-1}}{q_{i-1}}$ and the slope of $v-u$ is $\displaystyle \frac{p_i}{q_i}\ominus \frac{p_{i-1}}{q_{i-1}}$. By the result of the end of section \ref{farey}, the rationals $\displaystyle \frac{p_i}{q_i}\oplus \frac{p_{i-1}}{q_{i-1}}$ and $\displaystyle \frac{p_i}{q_i}\ominus \frac{p_{i-1}}{q_{i-1}}$ are on either side of $\displaystyle \frac{p}{q}$, (with $\displaystyle \frac{p}{q}$ the slope of $\gamma$), hence after applying the map $f_{\gamma,u,v}$, we deduce that these two slopes are of opposite signs, since the slope $\displaystyle \frac{p}{q}$ is sent to $0$ by $f_{\gamma,u,v}$. \\
	
	The map $f_{\gamma,u,v}$ sends the lattice $\Z^2$ to another lattice in $\R^2$, denote it by $\Lambda=f_{\gamma,u,v}(\Z^2)$. Hence $2\Lambda=f_{\gamma,u,v}(2\Z^2)$. Recall that we had defined the type of a point in $\Z^2$, we can then also define the \emph{type} of a point in $\Lambda$ as the type of the corresponding point in $\Z^2$. We obtain four types of points in $\Lambda$ given by the four classes of the elements of $\Lambda$ mod $2\Lambda$. We had also defined three sets of lines~: $L_A,L_B$ and $L_C$ so we can look at their images by $f_{\gamma,u,v}$. Since the lines in $L_C$ have slope $-1$, the lines in $f_{\gamma,u,v}(L_C)$ have slope $+ \infty$. Also note that a line of slope $\slope(\gamma)$ has for image a line of slope $0$. Thus the lines in $f_{\gamma,u,v}(L_A)$ have a negative slope and the lines in $f_{\gamma,u,v}(L_B)$ have a positive slope (recall that the lines in $L_A$ have slope $0$ and the lines in $L_B$ have slope $+ \infty$). Two non parallel lines in $f_{\gamma,u,v}(L_A)\cup f_{\gamma,u,v}(L_B) \cup f_{\gamma,u,v}(L_C)$ always intersect in a point of $\Lambda$. \\
	
	In order to simplify the notation, from now on we simply denote by $L_A,L_B,L_C,u,v$ the images of $L_A,L_B,L_C,u,v$ by $f_{\gamma,u,v}$. \\
	
	Now that the lattice $\Lambda$ is fixed as well as the basis $(u,v)$ of $\Lambda$ satisfying $\slope(u) < 0 < \slope(v)$, we can use the notations of section \ref{lattices} to talk about the rectangle $S(x,u,v)$ and $R(x,u,v)$, for a point $x \in \Lambda$.
	Note that, by the choice of the scaling of $f_{\gamma,u,v}$, the lengths of the sides of $S(x,u,v)$ are $1$. 
	
	\subsubsection{Reading the word $\gamma$ and its subwords}
	\label{reading-gamma}
	
	We can still read the word corresponding to $\gamma$ in the fundamental group $\groupe$ using the lattice $\Lambda$ and the three new sets of lines $L_A, L_B$ and $L_C$ : Follow a line $l_0$ of slope $0$ in $\R^2$ which avoids the lattice $\Lambda$ and record $a,b$ or $c$ each time the line $l_0$ crosses a line $l$ of $L_A,L_B$ or $L_C$ with power $\pm 1$ depending on the transverse orientation of $l$ at the intersection point with $l_0$. To obtain the full word $\gamma$ (more precisely a cyclic permutation of $\gamma$ or its inverse), we have to follow $l_0$ between two points which are identified by an element of $2\mathcal{P}(\Lambda)$, that is between two points $x$ and $y$ on $l_0$ such that $y=x+2\lambda$, with $\lambda \in \mathcal{P}(\Lambda)$ (here $\mathcal{P}(\Lambda)$ denotes the set of primitive elements in $\Lambda$, that is the elements in $\Lambda$ which can be completed to a basis of $\Lambda$). More generally, by following $l_0$ along a segment $I \subset [x,y] \subset l_0$, we read a subword of $\gamma$ (or a subword of a cyclic-permutation of $\gamma$ or its inverse). We denote by $W(I)$ this subword. We are now interested in the link between the length of the segment $I$ and the word length of the word $W(I)$ we read by following $I$. More precisely, we need the following lemma : 
	\begin{Lemma}
		\label{comp-word-interval}
		Let $W$ be a subword of $\gamma$ (or more generally a subword of a cyclic permutation of $\gamma$ or its inverse). There exists a horizontal segment $I$ in the plane $\R^2$ such that we read $W$ by following $I$ and, denoting $l(I)$ the length of the segment $I$ and $|W|$ the word length of $W$, we have~:
		\begin{equation*}
			|W|-3 \leq l(I)l'_i(\gamma) \leq |W|+1
		\end{equation*}
	\end{Lemma}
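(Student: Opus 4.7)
The plan is to set up a density argument: the horizontal density of crossings with lines in $L_A \cup L_B \cup L_C$ (in the transformed picture) equals exactly $l'_i(\gamma)$, and the word length $|W|$ is precisely the number of these crossings along $I$.

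First I would establish the density. By the explicit formula for $f_{\gamma,u,v}$ obtained in section \ref{mapping} and the scaling choice forcing $u_1+v_1=1$, one has $u_1 = (p_i+q_i)/l'_i(\gamma)$ and $v_1 = (p'_i+q'_i)/l'_i(\gamma)$, so that $u_1+v_1 = l_i(\gamma)/l'_i(\gamma) + l_{i-1}(\gamma)/l'_i(\gamma) = 1$. For the simple closed curve $\gamma_i$, which lifts to a line of slope $\slope(u)$, a full period corresponds (in the new coordinates) to a horizontal translation by $2u_1$, along which one reads the word $\gamma_i$ of length $2l_i(\gamma)$. The density of crossings per unit horizontal length is therefore exactly $l_i(\gamma)/u_1 = l'_i(\gamma)$. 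Alternatively, using the primitive vector $(q,p)$ in direction $\slope(\gamma)$, the first component of $f_{\gamma,u,v}(2(q,p))$ equals $2(p+q)/l'_i(\gamma)=\|\gamma\|/l'_i(\gamma)$, confirming the same density.

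Second, I would split the crossings into the three families $L_A$, $L_B$, $L_C$, which after applying $f_{\gamma,u,v}$ are three families of parallel, equally spaced lines (of negative, positive, and vertical slope respectively). Let $d_A, d_B, d_C$ denote their respective horizontal densities, with $d_A+d_B+d_C = l'_i(\gamma)$. For a horizontal segment of length $L$ disjoint from $\Lambda$, the number $N_X$ of lines of family $X$ it crosses satisfies $|N_X - L\,d_X| < 1$. Summing over the three families yields $|L\,l'_i(\gamma)-|W|| < 3$.

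Third, I would choose the horizontal segment $I$ cleverly to obtain the asymmetric bound. Given that $W$ is read along some horizontal line, pick $I$ as the segment whose left endpoint is placed just after the previous crossing (relative to the first letter of $W$) and whose right endpoint coincides with the last crossing of $W$; then $I$ carries exactly the $|W|$ crossings corresponding to $W$, and extending $I$ to the right by any positive amount would meet an additional crossing in some family. This yields the upper bound $L\,l'_i(\gamma) \leq |W|+1$, while the lower bound $|W|-3 \leq L\,l'_i(\gamma)$ follows from the three families each possibly contributing one ``missed'' crossing at the endpoints, as in step two.

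The only delicate point is step three: tracking boundary effects across the three families separately to produce the asymmetric $-3$ versus $+1$ bound rather than the symmetric $\pm 3$ estimate. Steps one and two are direct computations from the definition of $f_{\gamma,u,v}$ and the fact that each $L_X$ consists of equally spaced parallel lines.
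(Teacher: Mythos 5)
Your steps one and two are sound and match the geometric setup of the paper's proof (the total horizontal density of crossings with $L_A\cup L_B\cup L_C$ is indeed $l'_i(\gamma)$, and the per-family count for equally spaced parallel lines gives $|N_X - l(I)\,d_X|\leq 1$). But the symmetric estimate $\bigl|\,l(I)\,l'_i(\gamma)-|W|\,\bigr|\leq 3$ that this yields is strictly weaker than the statement, and step three, which is supposed to upgrade the upper bound to $|W|+1$, does not work as written. Anchoring the left endpoint of $I$ ``just after the previous crossing'' and the right endpoint at the last crossing of $W$ does not force $l(I)\,l'_i(\gamma)\leq |W|+1$: the merged sequence of crossings from the three families is not equally spaced, and one can arrange (say $W$ begins with a $c^{\pm1}$-crossing and ends with an $a^{\pm1}$- or $b^{\pm1}$-crossing sitting near the right end of its gap, while the crossing preceding $W$ sits near the left end of its gap) so that the distance from the previous crossing to the last crossing of $W$ is close to $(|W|+2)/l'_i(\gamma)$. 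The claim that ``extending $I$ to the right by any positive amount would meet an additional crossing'' is also false as stated and in any case does not produce the bound.

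The missing ingredient is the alternation structure that the paper exploits: since $\slope(\gamma)\geq 0$, every other letter of $\gamma$ is $c^{\pm1}$, so between two consecutive crossings with $L_C$ (which \emph{are} equally spaced, with spacing $E_C=2/l'_i(\gamma)$) the segment meets exactly one line of $L_A\cup L_B$. One should therefore count only the $n_C$ crossings with $L_C$, note that $|W|\in\{2n_C,2n_C\pm1\}$, and choose $I$ anchored to the $L_C$-crossings so that $l(I)\leq n_C E_C$, whence $l(I)\,l'_i(\gamma)\leq 2n_C\leq |W|+1$ in every parity case. This sharp $+1$ is not cosmetic: it is what later gives $l(I_{w'})\leq l_i(\gamma)/l'_i(\gamma)=\max(u_1,v_1)$ for $|w'|=l_i(\gamma)-1$ in the proof of Theorem \ref{magic-len-S_{0,4}}, so a $+3$ bound would force a renormalization of the lengths used there.
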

	Remember that the integer $i$ which appears in this Lemma \ref{comp-word-interval} has been fixed at the beginning of section \ref{mapping}. Everything that has been defined using $i$, therefore depends on it : the map $f_{\gamma,u,v}$, the lattice $\Lambda$, the three sets of lines $L_A,L_B$ and $L_C$, etc... We have chosen not to index these objects by $i$ in order to reduce the amount of notation.
	\begin{proof}
		Let us start by computing the spacing between two consecutive lines in $L_C$, and denote it by $E_C$. Recall that $L_C$ consists of parallel lines of slope $+ \infty$ which are the images of the lines of equations $y=-x+2k, k\in \Z$, under the map $f_{\gamma,u,v}$. Then, since the point $(x,y)=(\frac{2q}{p+q},\frac{2p}{p+q})$ belongs both to the line of equation $y=-x+2$ and to the line of equation $y=\frac{p}{q}x$, its image by $f_{\gamma,u,v}$ belongs both to the line of equation $y=0$ and to the line of equation $x=E_C$. We compute $f_{\gamma,u,v}(x,y)$ and we obtain $(\frac{2}{l'_i(\gamma)},0)$. Hence $E_C=\frac{2}{l'_i(\gamma)}$. \\
		Now, since $W$ is a subword of $\gamma$, we can read it somewhere following a subsegment $I$ of $l_0$, that is a horizontal segment. 
		Now remember that since $\slope(\gamma) \geq 0$, the letters of $\gamma$ alternate between letters in $\{c,c^{-1}\}$ and letters in $\{a,a^{-1},b,b^{-1}\}$. This means that $I$ crosses exactly one line in $L_A \cup L_B$ between two lines in $L_C$. Denote $n_C$ the number of intersection points between $I$ and $L_C$, or equivalently the number of $c$ and $c^{-1}$ in $W$. 
		\begin{itemize}
			\item     If $|W|$ is even, then $|W|=2n_C$, and we can choose $I$ such that :
			\begin{equation*}
				(n_C-1)E_C \leq l(I) \leq n_CE_C, \quad \text{ which is equivalent to } \quad (|W|-2)E_C \leq 2l(I) \leq |W|E_C. 
			\end{equation*}
			\item If $|W|$ is odd and $|W|=2n_C+1$, then we can choose $I$
			such that :
			\begin{equation*}
				(n_C-1)E_C \leq l(I) \leq (n_C+1)E_C, \quad \text{ which is equivalent to } \quad (|W|-3)E_C \leq 2l(I) \leq (|W|+1)E_C.
			\end{equation*}
			\item If $|W|$ is odd and $|W|=2n_C-1$, then we can choose $I$ such that :
			\begin{equation*}
				l(I)=n_CE_C, \quad \text{ which is equivalent to } \quad 2l(I)=(|W|+1)E_C.
			\end{equation*}
		\end{itemize}
		Therefore, in every cases we have 
		\begin{equation*}
			(|W|-3)E_C \leq 2l(I) \leq (|W|+1)E_C
		\end{equation*}
		and using the previously calculated value of $E_C$, we obtain the required inequality. 
	\end{proof}
	
	\subsubsection{Subwords associated to a square}
	\label{subwords-square}
	By the previous construction, following the horizontal line $l_0$ along the segment $I$ gives (a cyclic permutation of) the word $\gamma$ (or its inverse). Hence by following a subsegment of $I$, we obtain a subword of $\gamma$. We will now look at the subwords of $\gamma$ that we obtain by crossing a square $S(x,u,v)$ and a rectangle $R(x,u,v)$, that is by following the intervals $I_S(x,u,v,h)$ and $I_R(x,u,v,h)$ defined in section \ref{subsubsec:segments-rectangle}. \\
	
	Now assume that the real $h$ ($\in (0,1)$) is such that the segment $I_S(x,u,v,h)$ (respectively $I_R(x,u,v,h)$) does not intersect the lattice $\Lambda$ and therefore, let us define $W_S(x,u,v,h)$ (respectively $W_R(x,u,v,h)$) to be the subword of $\gamma$ read when following $I_S(x,u,v,h)$ (respectively $I_R(x,u,v,h)$). There is a small ambiguity that we must resolve : if one of the endpoints of $I_S(x,u,v,h)$ (resp. $I_R(x,u,v,h)$) is an intersection point of $I_S(x,u,v,h)$ (resp. $I_R(x,u,v,h)$) with a line in $L_A \cup L_B \cup L_C$, then we record the corresponding letter in $W_S(x,u,v,h)$ (resp. $W_S(x,u,v,h)$). Now let us make a simple but important observation~: because of the invariance of our setting by the action of $2\Lambda$, if $x$ and $x'$ are two points of $\Lambda$ of the same type, then $W_S(x,u,v,h)=W_S(x',u,v,h)$ and $W_R(x,u,v,h)=W_R(x',u,v,h)$. Hence the subword of $\gamma$ read in a square or a rectangle only depends on the type of the square and the height~$h$. So if $t \in \Lambda / 2\Lambda$ is the type of a point, we will write $W_S(t,u,v,h)$ and $W_R(t,u,v,h)$ without ambiguity. \\
	
	The following lemma states that the subwords of $\gamma$ read in some square (resp. rectangle) are the inverses of the subwords read in a square (resp. rectangle) of opposite type : 
	
	\begin{Lemma} \label{inverses}
		Let $t \in \Lambda / 2\Lambda$ be a type and $\inv{t}$ its $(u,v)$-opposite type. Let $0<h<1$ and $0 < h' < u_2+v_2$ such that if $x$ is a point of $\Lambda$ of type $\inv{t}$, then $I_S(x,u,v,h)$ and $I_R(x,u,v,h')$ does not intersect the lattice $\Lambda$. Then $I_S(x+u+v,u,v,1-h)$ and $I_R(x+u+v,u,v,u_2+v_2-h')$ does not intersect the lattice $\Lambda$ and ~:
		\begin{align*}      
			W_S(\inv{t},u,v,h)& =W_S(t,u,v,1-h)^{-1} \\
			W_R(\inv{t},u,v,h')& =W_R(t,u,v,u_2+v_2-h')^{-1}
		\end{align*}
	\end{Lemma}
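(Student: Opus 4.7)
The key idea is to use the central reflection $s_x : y \mapsto 2x - y$ through the lattice point $x \in \Lambda$ of type $\inv{t}$. Since $x \in \Lambda$, one has $s_x = T_{2x} \circ (-\mathrm{id}) \in \la 2\Lambda, \pm \ra$, so $s_x$ is a deck transformation of the covering $\R^2 \setminus \Lambda \to \sph$. In particular, $s_x$ preserves the lattice $\Lambda$, permutes the lines of $L_A \cup L_B \cup L_C$ while respecting their types and (lifted) transverse orientations, and descends to the identity on $\sph$. Consequently, for any horizontal segment $I \subset \R^2 \setminus \Lambda$, the word recorded along $I$ read left-to-right agrees with the word recorded along $s_x(I)$ traversed in the direction induced by $s_x$; since $s_x$ reverses the left-right order of horizontal segments, reading $s_x(I)$ left-to-right yields the inverse word.

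The next step is to explicitly compute $s_x$ on the endpoints of the relevant segments. Using the normalizations $u_1 + v_1 = 1$ and $v_2 - u_2 = 1$ built into the scaling of $f_{\gamma,u,v}$ (so that $S(x,u,v)$ is a unit square), a direct endpoint check shows
\begin{equation*}
s_x\bigl(I_S(x,u,v,h)\bigr) = I_S(x - u - v,\, u, v,\, 1-h),
\end{equation*}
with left and right endpoints swapped. An analogous verification for the $R$-segments, splitting into the two sign cases, gives $s_x\bigl(I_R(x,u,v,h')\bigr) = I_R(x + v - u,\, u, v,\, u_2+v_2-h')$ when $\slope(u+v) \geq 0$, and $s_x\bigl(I_R(x,u,v,h')\bigr) = I_R(x + u - v,\, u, v,\, u_2+v_2-h')$ when $\slope(u+v)\leq 0$.

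In each case, the base-point of the image segment differs from $x + u + v$ by an element of $2\Lambda$ (namely $2(u+v)$, $2u$, or $2v$), hence has type $t$. Since the words $W_S(y,u,v,\cdot)$ and $W_R(y,u,v,\cdot)$ depend only on the type of $y$, reading the images yields $W_S(t,u,v,1-h) = W_S(x+u+v,u,v,1-h)$ and $W_R(t,u,v,u_2+v_2-h') = W_R(x+u+v,u,v,u_2+v_2-h')$. Combining this with the orientation-reversal produces the two displayed identities. The lattice-avoidance of $I_S(x+u+v,u,v,1-h)$ and $I_R(x+u+v,u,v,u_2+v_2-h')$ then follows at once from the avoidance of the originals together with the fact that $s_x$ and translations by $2\Lambda$ both preserve $\Lambda$.

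The only mild obstacle is the coordinate bookkeeping across the two sign cases for $I_R$, and the verification that $-\mathrm{id}$ respects the transverse orientations on $L_A \cup L_B \cup L_C$; both are immediate consequences of the definition of $\sph$ as the quotient $(\R^2 \setminus \Lambda)/\la 2\Lambda, \pm\ra$, since any element of $\la 2\Lambda, \pm \ra$ necessarily maps the pulled-back transverse-orientation data to itself.
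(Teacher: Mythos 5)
Your proof is correct and follows essentially the same argument as the paper: a point reflection through a lattice point of $\Lambda$ is a deck transformation of the covering, so it carries the segment in the type-$\inv{t}$ tile to a segment at the complementary height in a type-$t$ tile while reversing the reading direction, which produces the inverse word. The only (cosmetic) difference is that you reflect through $x$ itself and then identify the image tiles' base points with type $t$ modulo $2\Lambda$, whereas the paper reflects through $x+u+v$ (resp.\ $x+u$) to land directly in $S(x+u+v,u,v)$ (resp.\ $R(x+u+v,u,v)$).
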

	
	\begin{proof}
		
		\begin{figure}[h]
			\centering
			\labellist
			\small \hair 2pt
			\pinlabel{$x$} at -10 70
			\pinlabel{$ \color{purple} I_S(x,u,v,h)$} at 130 160 
			\pinlabel{$\color{purple} I_S(x+u+v,u,v,1-h)$} at 340 140 
			\pinlabel{$h$} at 50 100
			\pinlabel{$1-h$} at 270 105
			\endlabellist
			\includegraphics[width=6cm]{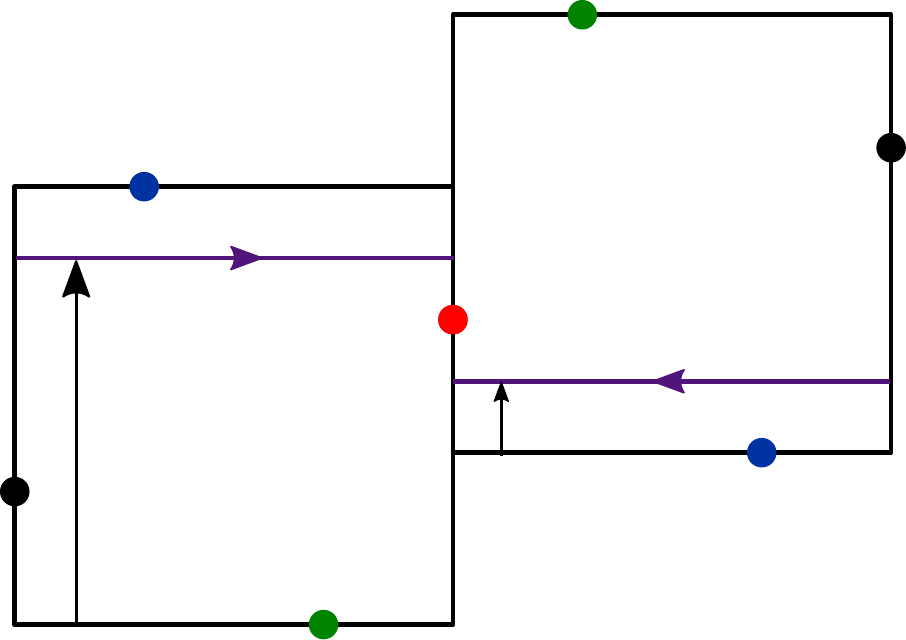}
			\caption{Proof of Lemma \ref{inverses}}
			\label{fig:word-inv}
		\end{figure}
		Let $x$ be a point of $\Lambda$ of type $\inv{t}$. Consider the reflection of the square $S(x,u,v)$ (resp. rectangle $R(x,u,v)$) across the point $x+u+v$ (resp. $x+u$). This gives the square $S(x+u+v,u,v)$ (resp. rectangle $R(x+u+v,u,v))$, and then this square (resp. rectangle) is of type $t$. Now consider the segment $I_S(x,u,v,h)$ in $S(x,u,v)$ (resp. $I_R(x,u,v,h')$ in $R(x,u,v)$) with its orientation from left to right. Then its reflection across the point $x+u+v$ (resp. $x+u$) is the segment $I_S(x+u+v,u,v,1-h)$ in $S(x+u+v,u,v)$ (resp. $I_R(x+u+v,u,v,u_2+v_2-h')$ in $R(x+u+v,u,v)$) with the reverse orientation : from right to left. Since our setting is invariant under a reflection across a point of the lattice $\Lambda$, we deduce that the subword read by following $I_S(x,u,v,h)$ (resp. $I_R(x,u,v,h')$) from left to right is the same as the subword read by following $I_S(x+u+v,u,v,1-h)$ (resp. $I_R(x+u+v,u,v,u_2+v_2-h')$) from right to left, thus it is the inverse of the subword read by following $I_S(x+u+v,u,v,1-h)$ (resp. $I_R(x+u+v,u,v,u_2+v_2-h')$) from left to right. Hence the lemma. 
	\end{proof}
	
	Now we want to investigate the dependence on the height $h$ of the word $W_S(t,u,v,h)$. Precisely, we want to show that, up to changing a few letters at the beginning and at the end of the word $W_S(t,u,v,h)$, it does not depend on the height $h$.
	\begin{Lemma} \label{W_S}
		Let $t \in \Lambda/2\Lambda$ be a type and $h,h'$ be two heights. Then, up to adding a letter at the beginning of $W_S(t,u,v,h)$, or removing the first letter of $W_S(t,u,v,h)$, or changing the two first letters of $W_S(t,u,v,h)$, and up to adding a letter at the end of $W_S(t,u,v,h)$, or removing the last letter of $W_S(t,u,v,h)$, or changing the two last letters of $W_S(t,u,v,h)$, we have :
		\begin{equation*}
			W_S(t,u,v,h)=W_S(t,u,v,h')
		\end{equation*}
	\end{Lemma}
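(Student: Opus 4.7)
The approach is to understand how $W_S(t,u,v,h)$ evolves continuously with $h$ and to isolate the finitely many heights at which it can change. The crucial geometric fact I use throughout is that $S(x,u,v)$ contains no lattice point in its open interior; therefore two lines of $L_A\cup L_B\cup L_C$ of different types (which meet only at lattice points) never intersect inside the open square, while lines of the same type are parallel and never intersect at all.

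First, I would establish an \emph{order-preservation principle}: as $h$ varies, each crossing of $I_S(x,u,v,h)$ with a line of $L_A\cup L_B\cup L_C$ moves continuously -- crossings with type-$C$ lines keep a fixed abscissa, those with type-$B$ lines slide right as $h$ grows, and those with type-$A$ lines slide left. By the observation above, two such crossings cannot swap strictly inside the segment, so their left-to-right order is preserved and $W_S(t,u,v,h)$ is locally constant in $h$. Any change therefore occurs only when a crossing enters or leaves through an endpoint of the segment, i.e.\ when the left endpoint $(x_1, x_2+u_2+h)$ or the right endpoint $(x_1+u_1+v_1, x_2+u_2+h)$ lies on a line of $L_A\cup L_B\cup L_C$. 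Among these, the only heights in $(0,1)$ at which an endpoint hits a lattice point are $h=-u_2$ (left endpoint at $x$) and $h=v_2$ (right endpoint at $x+u+v$).

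Next I would perform a case analysis of the jumps on each edge; by symmetry I describe only the left one. Using the explicit formula for $f_{\gamma,u,v}$ and that $(u,v)$ is a basis of $\Lambda$, one shows that the spacing between consecutive crossings of any vertical line with type-$A$ (respectively type-$B$) lines is at least $2$; since the left edge has length $1$, it admits at most one crossing of each of $L_A$ and $L_B$, besides possibly coinciding with a type-$C$ line when $x$ has type $c$ or $abc$. Then, according to the type of $x$, the jump at $h=-u_2$ either does nothing, adds or removes a single letter, or (when $x$ has type $abc$) swaps a type-$A$ letter with a type-$B$ letter next to a persistent type-$C$ letter; any non-lattice type-$A$ or type-$B$ crossing of the left edge contributes at most one further added or removed letter. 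Summing up, the beginning of the word changes by at most one of: nothing, adding a letter, removing the first letter, or replacing the first two. The same analysis at the right endpoint with $h=v_2$ and $x+u+v$ yields the corresponding bound at the end of the word.

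The main difficulty is step three: establishing the vertical spacing lower bound using the arithmetic of the Farey neighbours $\tfrac{p_i}{q_i}, \tfrac{p_{i-1}}{q_{i-1}}$ encoded in $(u,v)$, and then carefully tracing the letter modifications at a type-$abc$ boundary lattice point, which is where the ``change the first (or last) two letters'' alternative naturally arises.
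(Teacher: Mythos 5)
Your overall strategy coincides with the paper's: the word $W_S(t,u,v,h)$ can only change when a crossing of $I_S(x,u,v,h)$ with a line of $L_A\cup L_B\cup L_C$ enters or leaves through one of the two vertical edges of $S(x,u,v)$ (since no two such lines meet in the open interior of the square), and your case analysis according to the type of $x$ (resp.\ $x+u+v$) at the heights $h=-u_2$ (resp.\ $h=v_2$) is exactly the paper's. The gap lies in your treatment of possible \emph{non-lattice} crossings of the vertical edges. You propose to bound their number by a claim that consecutive crossings of a vertical line with lines of $L_A$ (resp.\ $L_B$) are spaced at least $2$ apart; this spacing depends on the arithmetic of $p,q,p_i,q_i$ and is not established. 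More importantly, even granted, it does not close the argument: it still permits, for instance, one letter appearing at the jump height $-u_2$ and a second letter appearing at a further non-lattice crossing of the same edge, i.e.\ two letters added at the beginning of the word, which the conclusion of the lemma does not allow.

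The fact you are missing is that such non-lattice crossings do not exist at all. The left (resp.\ right) vertical edge of $S(x,u,v)$ lies on a vertical line through a lattice point; in the original coordinates this is a line of slope $-1$ through a point of $\Z^2$, and such a line meets every line of $L_A\cup L_B$ (images of the horizontal and vertical lines $\{y=2\lambda\}$, $\{x=2\lambda\}$) in a point of $\Z^2$, while a line of $L_C$ meeting the edge must contain it. Hence every line of $L_A\cup L_B\cup L_C$ crossing the left (resp.\ right) edge passes through $x$ (resp.\ $x+u+v$), the unique lattice point on that edge, and the only jump heights are $-u_2$ and $v_2$. With this observation your case analysis at the two lattice points is complete, and no spacing estimate is needed.
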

	
	\begin{proof}
		Let $x$ be a point of type $t$ in $\Lambda$. We will work in the square $S(x,u,v)$. We want to understand the intersection of lines in $L_A \cup L_B \cup L_C$ which occurs in the square $S(x,u,v)$. Let us prove the following fundamental observations~:
		\begin{enumerate}
			\item If two lines of $L_A \cup L_B \cup L_C$ intersect in $S(x,u,v)$, then they intersect in $\{x,x+u,x+v,x+u+v\}$. 
			\item If a line of $L_A \cup L_B \cup L_C$ crosses the left side of the square $S(x,u,v)$, then it passes through the point $x$.
			\item If a line of $L_A \cup L_B \cup L_C$ crosses the right side of the square $S(x,u,v)$, then it passes through the point $x+u+v$.
		\end{enumerate}
		\begin{proof}[Proof of the observations]
			\begin{enumerate}
				\item Two lines of $L_A \cup L_B \cup L_C$ can only intersect in a point of the lattice $\Lambda$. But recall that the only points of the lattice $\Lambda$ contained in $S(x,u,v)$ are the four points $x,x+u,x+v$ and $x+u+v$. Hence the first observation. 
				\item Consider $L_{\infty}$ the set of vertical lines (of slope $\infty$) which passes through a point of $\Lambda$. Of course $L_C \subset L_{\infty}$. Now note that if $l\in L_{\infty}$ and $l' \in L_A \cup L_B$, then the intersection point between $l$ and $l'$ belongs to the lattice $\Lambda$ : $l\cap l' \subset \Lambda$. Since $x \in \Lambda$, we deduce that the left vertical side of the square $S(x,u,v)$ is included in a line $l \in L_{\infty}$. Therefore any line of $L_A \cup L_B$ which crosses the left side of the square $S(x,u,v)$ intersects it in a point of $\Lambda$. We conclude by recalling that $x$ is the only point of the lattice $\Lambda$ on the left vertical side of $S(x,u,v)$. To finish the proof, note that if a line of $L_C$ crosses the left side of the square $S(x,u,v)$, it is trivial that it passes through the point $x$, since in that case the line would be vertical and hence would contained the left side of the square $S(x,u,v)$.
				\item The proof is exactly the same as for the previous point, since the point $x+u+v$ of the lattice $\Lambda$ belongs to the right side of $S(x,u,v)$ and is the only point of the lattice $\Lambda$ on the right side of $S(x,u,v)$.
			\end{enumerate}
		\end{proof}
		From these three observations we can deduce the Lemma \ref{W_S}. Let us first draw a picture. Figure \ref{fig:W_S-full} on page \pageref{fig:W_S-full} shows the square $S(x,u,v)$ together with the lines of $L_A \cup L_B \cup L_C$ that cross it. Lines in $L_A$ are in green in the figure : they have negative slope and are regularly spaced. Lines in $L_B$ are in blue in the figure : they have positive slope and are regularly spaced. Lines in $L_C$ are in blue in the figure : they are vertical and regularly spaced. Note that between two lines in $L_A \cup L_B$ there is always exactly one line in $L_C$. \\
		
			\begin{figure}[h]
			\centering
			\labellist
			\small \hair 2pt
			\pinlabel{$-u_2$} at -10 35
			\pinlabel{$v_2$} at 265 80
			\pinlabel{$x$} at 10 85
			\pinlabel{$x+u$} at 170 -5
			\pinlabel{$x+v$} at 90 245
			\pinlabel{$x+u+v$} at 270 175
			\pinlabel{$\color{purple} W_S(t,u,v,h_1)$} at 130 205
			\pinlabel{$\color{purple} W_S(t,u,v,h_2)$} at 130 130
			\pinlabel{$\color{purple} W_S(t,u,v,h_3)$} at 130 55
			\pinlabel{$\color{purple}W_S(t,u,v,h_1) \color{black} =  c^{-1}b^{-1}\,(\t{c}b^{-1}\t{c}\t{b}\t{c}a\t{c}\t{b})\,c$.} at 377 195 
			\pinlabel{$\color{purple} W_S(t,u,v,h_2) \color{black} = c^{-1}b^{-1}\,(\t{c}b^{-1}\t{c}\t{b}\t{c}a\t{c}\t{b})\,c^{-1}$.} at 385 115 
			\pinlabel{$\color{purple} W_S(t,u,v,h_3) \color{black} =ba\,(\t{c}b^{-1}\t{c}\t{b}\t{c}a\t{c}\t{b})\,c^{-1}$.} at 375 45
			\endlabellist
			\includegraphics[width=6cm]{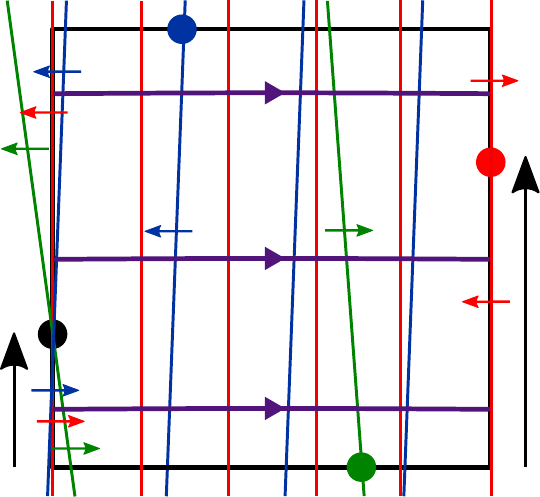} \hspace{3cm}
			\caption{Proof of Lemma \ref{W_S}. Reading the subword $W_S(t,u,v,h)$ in the square $S(x,u,v)$. \\ Lines in $L_A$ are in green, lines in $L_B$ in blue and lines in $L_C$ in red. We draw the case where $x$ is of type $abc$, $x+u$ of type $a$, $x+v$ of type $b$ and $x+u+v$ of type $c$. Up to changing the two first letters and inverting the last one, the word $W(t,u,v,h)$ does not depend on the height $h$.}
			\label{fig:W_S-full}
		\end{figure}
		
		Consider two heights $h$ and $h'$ and $I_S(x,u,v,h)$ and $I_S(x,u,v,h')$ the corresponding segments in $S(x,u,v)$. Let $l$ and $l'$ be two lines of $L_A \cup L_B \cup L_C$ which cross the square $S(x,u,v)$ but cross neither the left side nor the right side of $S(x,u,v)$. Then $l$ and $l'$ do not intersect in the interior of the square $S(x,u,v)$. Therefore, by following the segments $I_S(x,u,v,h)$ and $I_S(x,u,v,h')$ from left to right, we cross $l$ and $l'$ in the same order. Moreover, there is no point of the lattice $\Lambda$ in the intersection of $l$ and the interior of $S(x,u,v)$. So the segments $I_S(x,u,v,h)$ and $I_S(x,u,v,h')$ cross the line $l$ with the same orientation. We deduce that the only differences between the words $W(t,u,v,h)$ and $W(t,u,v,h')$ must occur at the beginning and at the end of these words (and correspond to lines of $L_A \cup L_B \cup L_C$ which passes through $x$ and $x+u+v$). Indeed, suppose $h>-u_2$ and $h'<-u_2$, and let us distinguish according to the type of $x$, which is $t$ :
		\begin{itemize}
			\item If $t=a$, then there is exactly one line of $L_A \cup L_B \cup L_C$ passing through $x$ and it is a line of $L_A$. Since the slope of the lines in $L_A$ is negative, we deduce that we must add a letter at the beginning of $W_S(x,u,v,h)$ (which will be $a^{-1}$) to obtain the same beginning as the word $W_S(x,u,v,h')$. 
			\item If $t=b$, then there is exactly one line of $L_A \cup L_B \cup L_C$ passing through $x$ and it is a line of $L_B$. Since the slope of the lines in $L_B$ is positive, we deduce that we must add a letter at the beginning of $W_S(x,u,v,h')$ (which will be $b$) to obtain the same beginning as the word $W_S(x,u,v,h)$. 
			\item If $t=c$, then there is exactly one line of $L_A \cup L_B \cup L_C$ passing through $x$ and it is a line of $L_C$. Since the slope of the lines in $L_C$ is infinite, we deduce that we must change the sign of the first letter of $W_S(x,u,v,h')$ (which will be $c^{-1}$) to obtain the same beginning as the word $W_S(x,u,v,h)$. 
			\item If $t=abc$, then there is exactly three lines of $L_A \cup L_B \cup L_C$ passing through $x$ : one in $L_A$, one in $L_B$ and one in $L_C$. Then we deduce that after changing the sign of the first letter of $W_S(x,u,v,h')$ (which will be $c$) and changing the second letter of $W_S(x,u,v,h')$ (from $a$ to $b^{-1}$), we recover the beginning of the word $W_S(x,u,v,h)$. 
		\end{itemize}
		Now suppose that $h>v_2$ and $h'<v_2$. By making the same distinction as before but this time on the type of $x+u+v$ (which is $\inv{t}$), we deduce as above that after possibly adding or removing a letter at the beginning of $W_S(t,u,v,h)$, or after possibly changing the two last letters of $W_S(t,u,v,h)$, the end of the word $W_S(t,u,v,h)$ is the same as the end of the word $W_S(t,u,v,h')$.    
	\end{proof}
	
	\begin{Remark}
	In fact, our poof gives a more precise version of Lemma \ref{W_S} by actually computing the first and last letters of $W_S(u,v,h)$ depending on the type of $t$ and $\inv{t}$ (and of course on the height $h$), but we don't need it here. 
	\end{Remark}

	Now we would like to write (a cyclic permutation of) $\gamma$ (or its inverse) as a concatenation of words of the form $W_S(x,u,v,h),W_R(x,u,v,h)$. However, there is a small technical issue to fix here. Since the rectangles $S(x,u,v)$ and $S(x+u+v,u,v)$ intersect along one of their vertical sides (the right vertical side of $S(x,u,v)$ and the left vertical side of $S(x+u+v,u,v)$, the word read by following a horizontal line in $S(x,u,v) \cup S(x+u+v,u,v)$ could not be the concatenation of $W_S(x,u,v,h)$ then $W_S(x+u+v,u,v,h')$ (for the corresponding heights $h$ and $h'$). In order to resolve this, let us consider $I^*_S(x,u,v,h)$ (respectively $I^*_R(x,u,v,h)$) the segment at height $h$ in $S_*(x,u,v)$ (respectively in $R_*(x,u,v)$) and $W^*_S(x,u,v,h)$ (resp. $W^*_R(x,u,v,h)$) the word read by following $I^*_S(x,u,v,h)$ (resp. $I^*_R(x,u,v,h)$). Recall that $S_*(x,u,v)$ and $R_*(x,u,v)$ have been defined respectively in \eqref{S*} and \eqref{R*}. Of course we also have that $W^*_S(x,u,v,h)$ (resp. $W^*_R(x,u,v,h)$) does not depend on the type of the point $x$ and thus we write $W^*_S(t,u,v,h)$ (resp. $W^*_R(t,u,v,h)$) for $t$ a type without ambiguity. Similarly, we consider $^*S(x,u,v)$ (resp.$^*R(x,u,v)$) the rectangle obtained by deleting the left vertical side of $S(x,u,v)$ (resp. $R(x,u,v)$) and we write $^*I_S(x,u,v,h)$ (resp.$^*I_R(x,u,v,h)$) for the segment at height $h$ in $^*S(x,u,v)$ (resp.$^*R(x,u,v)$) and $^*W_S(x,u,v,h)$ (resp. $^*W_R(x,u,v,h)$) the word read by following $^*I_S(x,u,v,h)$ (resp. $^*I_R(x,u,v,h)$).\\
	
	Now let us specify the lengths of subwords read in rectangles :
	\begin{Lemma} \label{length-W_S-W_R} Let $t$ be a type and $0<h<1$ a height. We have the following inequalities :
		\begin{enumerate} 
			\item \label{WS-l'i}
			$l'_i(\gamma)-1 \leq \len(W^*_S(t,u,v,h))\leq l'_i(\gamma)+1$ \quad and \quad
			$l'_i(\gamma)-1 \leq \len(^*W_S(t,u,v,h))\leq l'_i(\gamma)+1$ 
			
			\item $\len(W^*_R(t,u,v,h)) \leq l'_i(\gamma)+1$ \quad and \quad
			$\len(^*W_R(t,u,v,h)) \leq l'_i(\gamma)+1$
		\end{enumerate}
		Denote $
			(u',v') = \left\{ \begin{array}{cc}
				(u,u+v)  & \text{ if } \slope(u+v)\geq 0  \\
				(u+v,v)  &  \text{ if } \slope(u+v) \leq 0
			\end{array} \right. 
	$.
		\begin{enumerate}[resume]
			\item \label{len-W'_S} $l'_i(\gamma)+l_i(\gamma)-1 \leq \len(W^*_S(t,u',v',h))\leq l'_i(\gamma)+l_i(\gamma)+1$  and \\			$l'_i(\gamma)+l_i(\gamma)-1 \leq \len(^*W_S(t,u',v',h))\leq l'_i(\gamma)+l_i(\gamma)+1$
			\item \label{len-W'_R} $\len(W^*_R(t,u',v',h)) \leq  l'_i(\gamma)+l_i(\gamma)+1$ and \\ 
			$\len(^*W_R(t,u',v',h)) \leq  l'_i(\gamma)+l_i(\gamma)+1$
		\end{enumerate}
	\end{Lemma}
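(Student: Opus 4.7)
My plan is to apply Lemma \ref{comp-word-interval} to each of the horizontal segments, using the scaling of $f_{\gamma,u,v}$ to compute their exact horizontal extents. Since $f_{\gamma,u,v}$ is normalized so that $u_1+v_1=1$, the segments $I^*_S(x,u,v,h)$ and $^*I_S(x,u,v,h)$ have horizontal length exactly $1$, independently of $h$. Lemma \ref{comp-word-interval} gives a bound of the form $l'_i(\gamma) - 3 \leq |W^*_S| \leq l'_i(\gamma) + 1$ as a first pass. To sharpen the lower bound to $-1$, I would refine the counting argument: both endpoints of the segment lie on vertical lines of $L_C$ (those through $x$ and $x+u+v$, both in $\Lambda$), the segment is half-open on the right, and the letters of $\gamma$ alternate between $\{c^{\pm 1}\}$ and $\{a^{\pm 1}, b^{\pm 1}\}$ (Remark \ref{link with F2}); explicit counting of $L_C$-crossings using the spacing $E_C = 2/l'_i(\gamma)$ (computed inside the proof of Lemma \ref{comp-word-interval}) then gives the sharp bound. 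The argument for $^*W_S$ is identical.

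For the second claim, concerning $W^*_R$, the horizontal extent of $R(x,u,v)$ equals $|u_1 - v_1| = (l_i(\gamma) - l_{i-1}(\gamma))/l'_i(\gamma)$, by the formulas for $u_1, v_1$ displayed before \eqref{max-u-v}. Lemma \ref{comp-word-interval} then yields $|W^*_R| \leq l_i(\gamma) - l_{i-1}(\gamma) + 3$, which is $\leq l_i(\gamma) + l_{i-1}(\gamma) + 1 = l'_i(\gamma) + 1$ by \eqref{rec-l'i} as soon as $l_{i-1}(\gamma) \geq 1$; this holds since $i \geq 1$ forces $l_{i-1}(\gamma) \geq l_0(\gamma) = 1$.

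For items \ref{len-W'_S} and \ref{len-W'_R}, note that in both cases $(u',v') \in \{(u, u+v), (u+v, v)\}$ one computes $u'_1 + v'_1 \in \{2u_1+v_1,\, u_1+2v_1\} = \{u_1+1,\, 1+v_1\}$. The definition of $(u',v')$ is calibrated (via the sign of $\slope(u+v)$) precisely so that the extra horizontal contribution equals the one in $\{u_1, v_1\}$ corresponding to $l_i(\gamma)/l'_i(\gamma)$ rather than $l_{i-1}(\gamma)/l'_i(\gamma)$, yielding $(u'_1+v'_1) \cdot l'_i(\gamma) = l'_i(\gamma) + l_i(\gamma)$. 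Lemma \ref{comp-word-interval}, combined with the same endpoint sharpening for item \ref{len-W'_S}, then gives the claimed bounds; item \ref{len-W'_R} follows since $R(x,u',v')$ has horizontal extent at most that of $S(x,u',v')$.

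The main technical obstacle is the sharpening from the generic $\pm 3$ bound given by Lemma \ref{comp-word-interval} down to $\pm 1$ for items \ref{WS-l'i} and \ref{len-W'_S}; this requires exploiting both endpoints of the half-open segments being on $L_C$-lines together with the alternation of letters in $\gamma$. A secondary point is verifying that the case-split defining $(u',v')$ consistently picks the basis whose extra horizontal contribution is $l_i(\gamma)/l'_i(\gamma)$ and not $l_{i-1}(\gamma)/l'_i(\gamma)$.
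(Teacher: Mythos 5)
Your handling of items (2) and (4) and of the calibration of $(u',v')$ is essentially correct: $u_1+v_1=1$ by the normalization of $f_{\gamma,u,v}$, $\max(u_1,v_1)=l_i(\gamma)/l'_i(\gamma)$ by \eqref{max-u-v}, and $\slope(u+v)\geq 0$ does force $u_1>v_1$ (because $\slope(v-u)\leq 0$ while $v_2-u_2>0$), so indeed $(u'_1+v'_1)\,l'_i(\gamma)=l'_i(\gamma)+l_i(\gamma)$. The genuine gap is in the sharp two-sided bounds of items (1) and (3). First, Lemma \ref{comp-word-interval} goes from a word to a \emph{well-chosen} segment; applied in reverse to the fixed segment $I^*_S(x,u,v,h)$ of length $1$, the crossing count only gives $l'_i(\gamma)-3\leq \len(W^*_S(t,u,v,h))\leq l'_i(\gamma)+3$, so both inequalities need sharpening, not just the lower one. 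Second, the premise on which your sharpening rests --- that both endpoints of the segment lie on vertical lines of $L_C$ --- is false in general: lines of $L_C$ pass only through lattice points of type $c$ or $abc$, and since $u+v\not\equiv(0,0)\bmod 2$ the pair $\{x,\,x+u+v\}$ can be of types $\{a,b\}$, in which case neither endpoint lies on a line of $L_C$. Even when it does, the first and last letters of $W_S(t,u,v,h)$ genuinely vary with $h$ (this is exactly the content of Lemma \ref{W_S}), so a crossing count alone leaves an ambiguity of about $2$ at each end and cannot by itself reach $\pm 1$.

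The missing ingredient, which is how the paper closes this gap, is an \emph{exact} count over a full period: reading along the segment from a point $y$ on the left side of $S(x,u,v)$ to $y+2(u+v)$ produces the word $W^*_S(t,u,v,h)\,W^*_S(\inv{t},u,v,h)$, which is a cyclically reduced representative of the simple closed curve of slope $\frac{p_i}{q_i}\oplus\frac{p_{i-1}}{q_{i-1}}=[n_1,\ldots,n_i+1]$ and hence has length exactly $2l'_i(\gamma)$. Combining this identity with the point-reflection symmetry of Lemma \ref{inverses} (relating the word at height $h$ in a tile of type $t$ to the inverse of the word at height $1-h$ in a tile of type $\inv{t}$) and with the height-comparison of Lemma \ref{W_S} across the three bands $(0,\min(-u_2,v_2))$, $(\min(-u_2,v_2),\max(-u_2,v_2))$, $(\max(-u_2,v_2),1)$ pins each factor to within $1$ of $l'_i(\gamma)$; item (3) follows by the same argument in $S(x,u',v')$, whose full period has slope $\frac{p_i}{q_i}\oplus 2\frac{p_{i-1}}{q_{i-1}}$ and length $2(l'_i(\gamma)+l_i(\gamma))$. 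For item (2) your estimate does go through, though the paper obtains it more directly from the inclusion $R(x,u,v)\subset S(x-v,u,v)$ (or $S(x-u,u,v)$), which exhibits $W^*_R$ as a subword of a word already bounded by item (1).
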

	
	\begin{proof}
		\begin{enumerate}
			\item Let $\max(-u_2,v_2) <h_1 <1$, $\min(-u_2,v_2) <h_2<\max(-u_2,v_2)$ and $0<h_3<\min(-u_2,v_2)$ be three heights. Note that the words $W_S(t,u,v,h_j), W^*_S(t,u,v,h_j)$ and $^*W_S(t,u,v,h_j)$ depend on $j$ but not on the specific height $h_j$. Then in order to simplify the notation, let us denote (see figure \ref{fig:length1} on page \pageref{fig:length1}):

				\begin{align*}
				W_j(t)&=W_S(t,u,v,h_j) & ^{*}W_j(t)&={}^{*}W_S(t,u,v,h_j) & 
				W^*_j(t)&=W^*_S(t,u,v,h_j) \\ 
				 W_j(\inv{t})&=W_S(\inv{t},u,v,h_j)
				 & ^{*}W_j(\inv{t})&={}^{*}W_S(\inv{t},u,v,h_j) & W^*_j(\inv{t})&=W^*_S(\inv{t},u,v,h_j)
			\end{align*}

			\begin{figure}[h]
				\centering
				\labellist
				\small \hair 2pt
				\pinlabel{$W_1(t)$} at 110 180
				\pinlabel{$W_2(t)$} at 110 110
				\pinlabel{$W_3(t)$} at 110 40
				\pinlabel{$W_1(\inv{t})$} at 320 270
				\pinlabel{$W_2(\inv{t})$} at 320 190
				\pinlabel{$W_3(\inv{t})$} at 320 120
				\endlabellist
				\includegraphics[width=6cm]{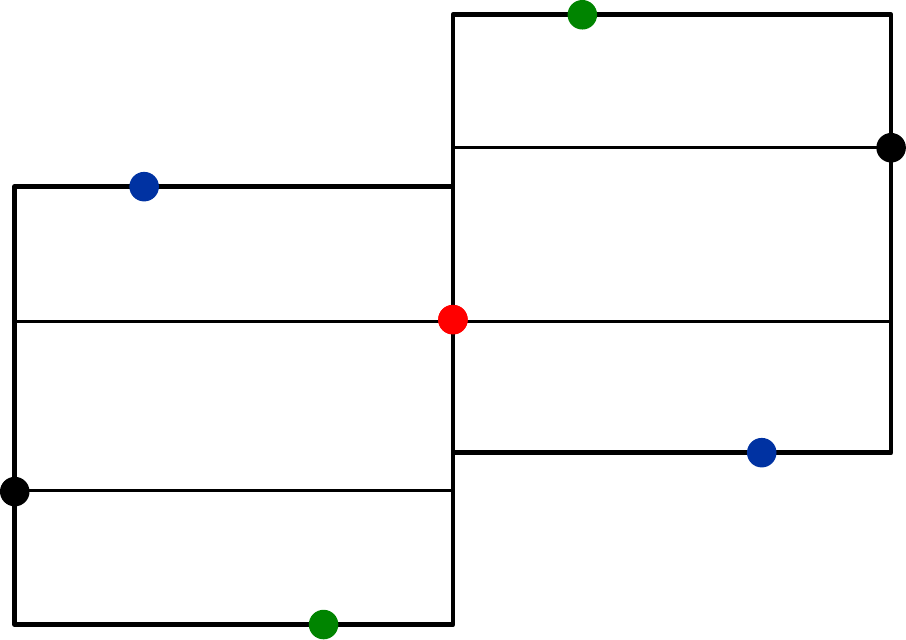}
				\caption{The six words $W_1(t),W_2(t),W_3(t),W_1(\inv{t}),W_2(\inv{t}),W_3(\inv{t})$. }
				\label{fig:length1}
			\end{figure}
			We have, using the property on inverses of Lemma \ref{inverses} : 
			\begin{equation} \label{inv-etoile}
				\begin{aligned} 
					W_1(t)^{-1}&=W_3(\inv{t}) \qquad & W_2(t)^{-1}&=W_2(\inv{t}) \qquad & W_3(t)^{-1}&=W_1(\inv{t}) \\
					W^*_1(t)^{-1}&={}^*W_3(\inv{t}) \qquad & W^*_2(t)^{-1}&={}^*W_2(\inv{t}) \qquad & W^*_3(t)^{-1}&={}^*W_1(\inv{t}) \\
					^*W_1(t)^{-1}&=W^*_3(\inv{t}) \qquad & ^*W_2(t)^{-1}&=W^*_2(\inv{t}) \qquad & ^*W_3(t)^{-1}&=W^*_1(\inv{t})
				\end{aligned}
			\end{equation}
			
			Let us now state and justify the following facts about the lengths of these words :
			\begin{enumerate}
				\item \label{*W-W*} $ \vert ^*W_j(t) \vert - \vert W^*_j(t) \vert \in \{-1,0,1\} $ for all $j \in \{1,2,3\}$. 
				\item \label{W1-W2} $ \vert W_1(t) -W_2(t) \vert = \vert W^*_1(t)-W^*_2(t) \vert = \vert ^*W_1(t) -^*W_2(t) \vert \in \{-1,0,1\},$ \\ 
				$ \vert W_3(t) -W_2(t) \vert = \vert W^*_3(t)-W^*_2(t) \vert = \vert ^*W_3(t)-^*W_2(t) \vert \in \{-1,0,1\}.$
				\item \label{W1*-*W3} $ \vert W^*_1(t)\vert +\vert ^*W_3(t) \vert =2l'_i(\gamma)$, \\
				$ \vert ^*W_1(t) \vert +\vert W^*_3(t)\vert =2l'_i(\gamma)$.
			\end{enumerate}
			
			\begin{proof}
				\begin{enumerate}
					\item This is a trivial consequence of the fact that $ ^*W_j(t) $ and $W^*_j(t)$ are both obtained from $W_j(t)$ by possibly deleting a letter (at the beginning or at the end).
					\item  Elaborating on the ideas of the proof of Lemma \ref{W_S}, we deduce that the words $W_1(t)$ and $W_2(t)$ can only differ by a few letters (at most 2) at their beginning (in the case where $\slope(u+v)<0$) or at their end (in the case where $\slope(u+v)>0$). This difference comes from the line(s) of $L_A \cup L_B \cup L_C$ which passes through the point $x$ (in the case $\slope(u+v)<0$) or $x+u+v$ (in the case $\slope(u+v)>0$) of the lattice $\Lambda$. In other words, the change depends on the type of $x$ (in the case $\slope(u+v)<0$) or in the type of $x+ u+v$ (in the case $\slope(u+v)>0$). Suppose now for simplicity that we are in the case $\slope(u+v)<0$ ; the other case is similar. If the type of $x$ is $abc$ or $c$, then  $\vert W_1(t) \vert =\vert W_2(t) \vert$. If the type of $x$ is $a$ or $b$, then the lengths of $W_1(t)$ and $W_2(t)$ differ by exactly 1. Finally note that the difference of these lengths does not depend on whether or not we consider the possible first or last letter of the word, that is whether or not we put a star $^*$ at the beginning or at the end of the word $W_j(t)$. 
					
					The proof of the second equality, when replacing $W_1(t)$ by $W_3(t)$, is the same.
					
					\item 	The key point is that the words $W^*_1(t)W^*_1(\inv{t})$ and $W^*_3(t)W^*_3(\inv{t})$ correspond to a simple closed curve of slope $\displaystyle \frac{p_i}{q_i}\oplus \frac{p_{i-1}}{q_{i-1}}$. Indeed, consider as in the figure \ref{fig:length} on page \pageref{fig:length} the segment $I$ from $y$ to $y+2(u+v)$, where $y$ is a point on the left vertical side of $S(x,u,v)$, with $x$ of type $t$, and such that $y_2 >x_2$.

		\begin{figure}[h]
						\centering
						\labellist
						\small\hair 2pt
						\pinlabel{$y$} at -10 100
						\pinlabel{$y+2(u+v)$} at 510 260
						\pinlabel{$ $} at -10 30
						\pinlabel{$\color{purple} W^*_1(t)$} at 670 195
						\pinlabel{$\color{purple} W^*_1(\inv{t})$} at 880 275
						\pinlabel{$\color{purple} I$} at 260 210
						\endlabellist
						\includegraphics[width=5cm]{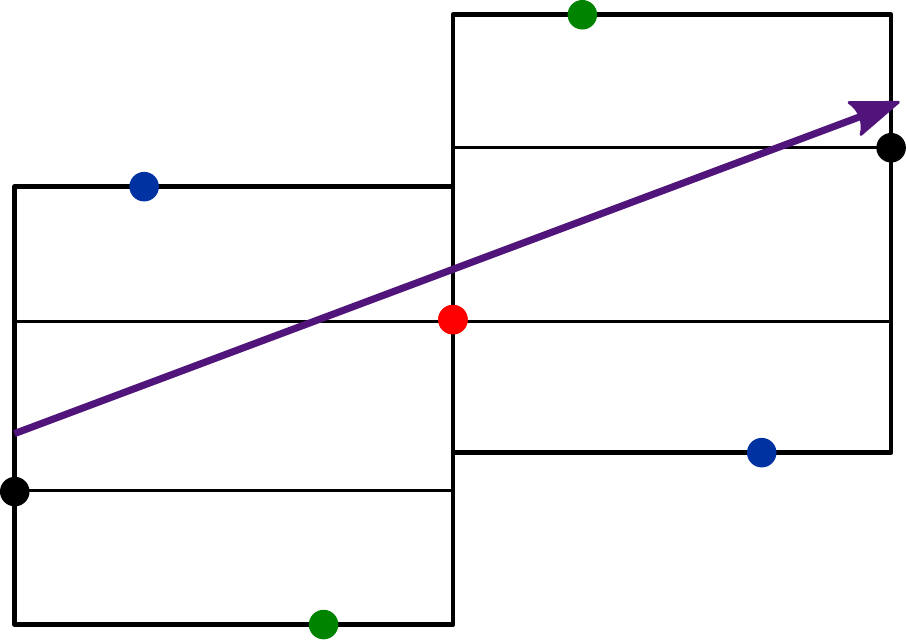} \hspace{1.5cm}
						\includegraphics[width=5cm]{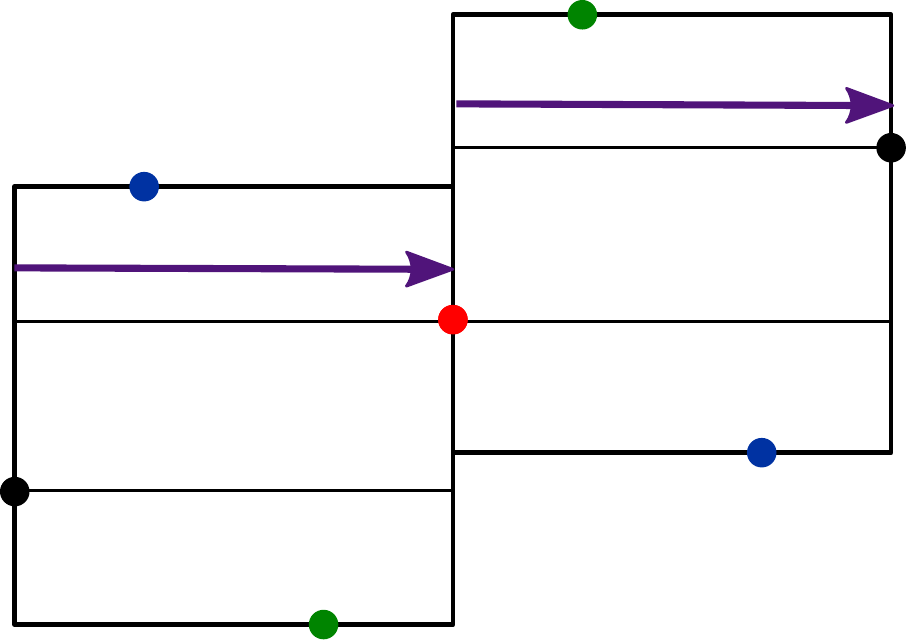}
						\caption{Reading the word of slope $\displaystyle \frac{p_i}{q_i}\oplus \frac{p_{i-1}}{q_{i-1}}$}
						\label{fig:length}
		\end{figure}

Then in the quotient by $(2\Lambda, \pm)$, this segment is a simple closed curve of slope $\slope(f^{-1}_{\gamma,u,v}(u))\oplus\slope(f^{-1}_{\gamma,u,v}(v))$. Moreover, by following the segment $I$, we read the word $W^*_1(t)W^*_1(\inv{t})$. Do the same but choose $y$ so that $y_2 <x_2$ in order to read the word $W^*_3(t)W^*_3(\inv{t})$. 

Since $W^*_1(t)W^*_1(\inv{t})$ and $W^*_3(t)W^*_3(\inv{t})$ are of slope $\displaystyle \frac{p_i}{q_i}\oplus \frac{p_{i-1}}{q_{i-1}}$, then they are of length $2l'_i(\gamma)$. From this we obtain the following equalities : 
		\begin{align}\label{W+Winv}
						\vert W^*_1(t) \vert + \vert W^*_1(\inv{t})\vert =2l'_i(\gamma)  \\ 
						\vert W^*_3(t) \vert + \vert W^*_3(\inv{t})\vert =2l'_i(\gamma)
		\end{align}
	and we conclude using the equalities \eqref{inv-etoile}.
  \end{enumerate}

			\end{proof}

			Now let us prove point \ref{WS-l'i} of Lemma \ref{length-W_S-W_R}. From \eqref{*W-W*} and \eqref{W1-W2}, we deduce $\vert  \,\, \vert ^*W_3(t) \vert - \vert W^*_1(t) \vert \, \,  \vert \leq 3 $ and $\vert  \,\, \vert W^*_3(t) \vert - \vert ^*W_1(t) \vert \, \,  \vert \leq 3$. Now using also the equalities \eqref{W1*-*W3}, we obtain the desired inequalities for $j=1,3$ :
			\begin{equation}
				l'_i(\gamma)-1 \leq \vert ^*W_1(t) \vert, \vert W^*_1(t) \vert, \vert ^*W_3(t) \vert, \vert W^*_3(t) \vert \leq l'_i(\gamma)+1. 
			\end{equation}
			Using these equalities together with \eqref{W1-W2}, we also have :
			\begin{equation}
					l'_i(\gamma)-2 \leq \vert ^*W_2(t) \vert, \vert W^*_2(t) \vert \leq l'_i(\gamma)+2. 
			\end{equation}
			Now suppose that $\vert W^*_2(t)\vert = l'_i(\gamma)-2$, then by \eqref{W1-W2} we have $\vert W^*_1\vert = l'_i(\gamma)-1$, so by \eqref{W1*-*W3}, $\vert ^*W_3(t) \vert = l'_i(\gamma)+1$. But then we would have $\vert W^*_2(t)\vert -\vert W^*_3(t) \vert = \vert W^*_2(t)\vert - \vert ^*W_3(t) \vert + \vert ^*W_3(t) \vert  -\vert W^*_3(t) \vert = -3 + \vert ^*W_3(t) \vert  -\vert W^*_3(t) \vert \leq -2 $ (using \eqref{*W-W*}), which contradicts \eqref{W1-W2}. \\
			The other cases ($\vert W^*_2(t)\vert = l'_i(\gamma)+2$, $\vert ^*W_2(t)\vert = l'_i(\gamma)-2$, $\vert ^*W_2(t)\vert = l'_i(\gamma)+2$) are similar.

			\item We use the fact that $R(x,u,v) \subset \left\{ \begin{array}{cc}
				S(x-v,u,v)  & \text{ if } \slope(u+v) \geq 0 \\
				S(x-u,u,v)  & \text{ if } \slope(u+v) \leq 0
			\end{array} \right.$. \\
			Then $W^*_R(x,u,v,h)$ is a subword of $W^*_S(x-v,u,v,h')$ or $W^*_S(x-u,u,v,h')$ for some height~$h'$. But by the previous point $\len(W^*_S(x-v,u,v,h'))\leq l'_i(\gamma)+1$ and $\len(W^*_S(x-u,u,v,h'))\leq l'_i(\gamma)+1$, (for all $h'$). Hence the second inequality.

			\item The proof is the same as above, working this time in the rectangle $S(x,u',v')=S'(x,u,v)$ and using the fact that the slope of $\displaystyle f^{-1}_{\gamma,u,v}(u') + f^{-1}_{\gamma,u,v}(v')$ is $ \displaystyle \frac{p_i}{q_i}\oplus 2 \frac{p_{i-1}}{q_{i-1}}$ (and $l'_i(\gamma)+l_i(\gamma)=p_i+q_i+2p_{i-1}+2q_{i-1}$).
			\item Same as above, using the previous point.
		\end{enumerate}
	\end{proof}
	
	At last, we end this section by giving a decomposition of $\gamma$, or more generally of any subword of $\gamma$, as a concatenation of words read in $S(x,u,v)$ and $R(x,u,v)$.
	
	\begin{Lemma} \label{concat-W}
		Let $t \in \Lambda/2\Lambda$ be a type and $\inv{t}$ its $(u,v)$-opposite type. Let $W$ be a subword of (a cyclic-permutation of) $\gamma$ or its inverse. \\
		There exists an integer $p \in \N$ such that for all $0 \leq j \leq p+1$, there exists a real $0<h_j<1$ and a type~$t_j \in \{t,\inv{t} \} $ such that we have the following decomposition of $W$ :
		\begin{equation*}
			W=\s{W_0} W_1  \hdots W_p \p{W_{p+1}}
		\end{equation*}
		with, for all $0 \leq j \leq p+1$, either $W_j=W^*_S(t_j,u,v,h_j)$ or $W_j=W^*_R(t_j,u,v,h_j)$, and $\s{W_0}$ is a suffix of $W_0$, $\p{W_{p+1}}$ is a prefix of $W_{p+1}$. Moreover :
		\begin{enumerate}
			\item \label{cond1} If $W_j=W^*_R(t_j,u,v,h_j)$, then $W_{j+1}=W^*_S(t_{j+1},u,v,h_{j+1})$.
			\item \label{cond2} The types $t_j$ are alternating. Precisely : if 
			$t_0=t$, then $t_j = \left\{ \begin{array}{cc}
				t  & \text{ if $j$ is even }  \\
				\inv{t}  & \text{ if $j$ is odd }
			\end{array} \right.$, \\
			and if $t_0=\inv{t}$, then $t_j = \left\{ \begin{array}{cc}
				\inv{t}  & \text{ if $j$ is even }  \\
				t  & \text{ if $j$ is odd }
			\end{array} \right.$. 
		\end{enumerate}
	\end{Lemma}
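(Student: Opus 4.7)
The plan is to translate the geometric decomposition provided by Lemma \ref{intervals} directly into a word-level decomposition, leveraging the setup from section \ref{mapping} and the definitions of the starred words from section \ref{subwords-square}. The key observation is that in our setting the slopes of $u+v$ and $v-u$ are of opposite signs, as noted around equation \eqref{slope(u+v)}, so the hypothesis of Lemma \ref{intervals} is satisfied.

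First I would realize $W$ geometrically: by the construction recalled in section \ref{reading-gamma}, there exists a horizontal line $l_0 \subset \R^2$ avoiding the lattice $\Lambda$ along which one reads a cyclic permutation of $\gamma$ (or its inverse), and a subsegment $I \subset l_0$ along which one reads exactly $W$. After a negligible vertical perturbation of $l_0$ (which does not alter the word read), I can arrange that the endpoints of $I$ avoid every line of $L_A \cup L_B \cup L_C$.

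Next I would apply Lemma \ref{intervals} to $I$ with the given type $t$. This yields an integer $p \in \N$ and a decomposition $I = I_0 \cup I_1 \cup \cdots \cup I_p \cup I_{p+1}$ with points $x_j \in \Lambda(t) \cup \Lambda(\inv{t})$ and heights $0 < h_j < 1$, such that each interior piece $I_j$ ($1 \le j \le p$) equals either $I_S(x_j,u,v,h_j)$ or $I_R(x_j,u,v,h_j)$, while $I_0$ and $I_{p+1}$ are each contained in such a segment and meet its right, respectively left, endpoint. Letting $t_j$ denote the type of $x_j$, I would set $W_j := W^*_S(t_j,u,v,h_j)$ or $W^*_R(t_j,u,v,h_j)$ accordingly; by the type-invariance remark preceding Lemma \ref{inverses}, these words depend only on $t_j$ and $h_j$, not on $x_j$ itself. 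Reading along $I_j$ with its right endpoint excluded is exactly $W_j$ for $1 \le j \le p$; reading along $I_0$ gives a suffix $\s{W_0}$ of $W_0$, and reading along $I_{p+1}$ gives a prefix $\p{W_{p+1}}$ of $W_{p+1}$.

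Since the starred versions $W^*_S$, $W^*_R$ were introduced in section \ref{subwords-square} precisely to exclude the right vertical side of each rectangle, the concatenation $\s{W_0}\,W_1\cdots W_p\,\p{W_{p+1}}$ assembles into $W$ with every letter of $W$ counted exactly once, which is the desired identity. Condition \eqref{cond1} is then an immediate rewriting of the third bullet of Lemma \ref{intervals} (a rectangle $R(x,u,v)$ is surrounded by four rectangles of type $S$, so an $R$-piece cannot be followed by another $R$-piece), and condition \eqref{cond2} is exactly the alternating-types statement of the second item of Corollary \ref{coro-interval}. I do not expect any serious obstacle here: the only bookkeeping point is to align the star conventions on vertical sides so that shared edges between consecutive rectangles are counted once, but this alignment is built into the definitions of $W^*_S$ and $W^*_R$ given at the end of section \ref{subwords-square}.
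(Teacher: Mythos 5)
Your proposal is correct and follows essentially the same route as the paper: realize $W$ as the word read along a horizontal segment $I$, apply Lemma \ref{intervals} to decompose $I$, translate each piece $I_j$ into $W^*_S(t_j,u,v,h_j)$ or $W^*_R(t_j,u,v,h_j)$, and deduce conditions \eqref{cond1} and \eqref{cond2} from the third point of Lemma \ref{intervals} and the second point of Corollary \ref{coro-interval} respectively. The extra care you take with the star conventions on shared vertical sides is a sensible elaboration of the same argument.
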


	\begin{proof}
		There exists a horizontal segment $I$ in $\R^2$ such that the word we read following $I$ is $W$. Now the proof is direct by applying Lemma \ref{intervals} to $I$. Since the segments $I_j$ of this lemma can only intersect in a point, we deduce that $W$ is the concatenation of the words $W_j$, where $W_j$ is the word read by following $I_j$, that is $W^*_S(t_j,u,v,h_j)$ or $W^*_R(t_j,u,v,h_j)$.  
		\begin{itemize}
			\item The first point of Lemma \ref{concat-W} comes from the third point of Lemma \ref{intervals}.
			\item The second point of Lemma \ref{concat-W} comes from the second point of Corollary \ref{coro-interval}.
		\end{itemize}
	\end{proof} 
	
	\subsubsection{Proof of Theorem \ref{magic-len-S_{0,4}} (special-lengths)}
	\label{proof-magic-len}
	\begin{proof}[Proof of Theorem \ref{magic-len-S_{0,4}}] ~ \\
		
		First note that without loss of generality, we can assume that $W$ is a subword of a cyclic-permutation of $\gamma$ : indeed, if it is a subword of a cyclic-permutation of $\gamma^{-1}$, just take the inverse of the concatenation obtained for $W^{-1}$ (which is a subword of a cyclic-permutation of $\gamma$). We can also assume that $w$ is a subword of a cyclic-permutation of $\gamma$ : indeed, if $w$ is a subword of a cyclic-permutation of $\gamma^{-1}$, $w^{-1}$ is a subword of a cyclic-permutation of $\gamma$ of the same length, and thus the same decomposition for $W$ holds. \\
		
		Also remark that if $i=0$, then $l_i(\gamma)=1<10$ and if $i=1$ and $n_1 = 0$, then $l_i(\gamma)=1<10$. Moreover, if $i=r(\gamma)$, then we have $|W|\geq 3(l'_{r(\gamma)}(\gamma)+l_{r(\gamma)}(\gamma)+1)>2l_{r(\gamma)}(\gamma)=|\gamma|$ and this is impossible since $W$ is supposed to be a subword of (a cyclic-permutation of) $\gamma$ (or its inverse). \\
		
		Therefore let us fix $1 \leq i < r(\gamma)$, such that if $i=1$, then $n_1 \neq 0$, and assume that $l_i(\gamma)\geq 10$. \\ 
		
		Fix $W$ a subword of (a cyclic permutation of) $\gamma$ (or its inverse) of length $|W| \geq 3(l'_i(\gamma)+l_i(\gamma)+1)$ and $w$ a subword of (a cyclic permutation of) $\gamma$ (or its inverse) of length $|w|=l_i(\gamma)-5$. \\ 
		Now that the integer $i$ is fixed, we can consider the map $f_{\gamma,u,v}$ as well as the lattice $\Lambda$ and its basis $(u,v)$ which have been defined in section \ref{mapping} (note that as in this section, we still denote by $(u,v)$ the image of the basis of $\Z^2$ after applying the map $f_{\gamma,u,v}$). Recall that with our definition, we automatically have that $\slope(u+v)$ and $\slope(v-u)$ are of opposite signs. 
		
		Consider $w'$ the subword of $\gamma$ obtained from $w$ by adding two letters on the left and two letters on the right. Thus, we can write $w'=pws$, such that $pws$ is  reduced, $|p|=2, |s|=2$, and $w'$ is a subword of (a cyclic-permutation of) $\gamma$ or its inverse of length $|w'|=l_i(\gamma)-1$. By Lemma \ref{comp-word-interval}, there exists a horizontal segment $I_{w'}$ such that we read $w'$ by following $I_{w'}$, and such that, denoting $l(I_{w'})$ the length of $I_{w'}$, we have : $l(I_{w'})l'_i(\gamma) \leq |w'|+1$. So $l(I_{w'}) \leq \frac{l_i(\gamma)}{l'_i(\gamma)}$. Then we can consider a horizontal segment $I$ of length $\frac{l_i(\gamma)}{l'_i(\gamma)}$ containing $I_{w'} \subset I$. But recall, by the computation \eqref{max-u-v}, that $\max(u_1,v_1)=\frac{l_i(\gamma)}{l'_i(\gamma)}$. Therefore, we can apply Lemma \ref{segment-in-square} and find a point $x \in \Lambda$ such that :
		\begin{enumerate}
			\item Either $I \subset S_*(x,u,v)$ 
			\item Or $I \subset T_*(x,u,v)$. 
		\end{enumerate}
		
		(Recall that $S_*(x,u,v)$ and $T_*(x,u,v)$ have been defined respectively in \eqref{S*} and \eqref{T*}). \\ 
		
		We treat the two cases separately. 
		\begin{enumerate}
			\item Suppose that $I \subset S_*(x,u,v)$. \\ 
			We use Lemma \ref{concat-W} to ensure that we can write $W$ in the following way :
			\begin{equation}\label{decomposition1}
				W=\s{W_0}W_1 \hdots W_p \p{W_{p+1}}
			\end{equation}
			with, for all $0 \leq j \leq p+1$, either $W_j=W^*_S(t_j,u,v,h_j)$ or $W_j=W^*_R(t_j,u,v,h_j)$, and $t_j$ and $h_j$ as defined in the Lemma. Recall that $\s{W_0}$ and $\p{W_{p+1}}$ denote respectively a suffix of $W_0$ and a prefix of $W_{p+1}$. Set $\cJ=\{j \in \{1, \hdots,p \} \mid W_j=W^*_S(t,u,v,h_j) \}, \inv{\cJ}=\{j \in \{1, \hdots,p \} \mid W_j=W^*_S(\inv{t},u,v,h_j) \}$ and $\mathcal{K}=\{j \in \{1, \hdots,p \} \mid W_j=W^*_R(t_j,u,v,h_j) \}$. Of course we have~$\{1,\hdots,p\} = \cJ \sqcup \inv{\cJ} \sqcup\mathcal{K}$. \\
			
			\begin{enumerate}
				\item \label{w-subword} Let us show that $w$ is a subword of $W_j$, for all $j \in \cJ$, and that $w^{-1}$ is a subword of $W_j$, for all $j \in \inv{\cJ}$ : \\ 
				
				Recall that by construction of the horizontal segment $I$, we have $I_{w'} \subset I$, so by our hypothesis $I \subset S_*(x,u,v)$, we also have $I_{w'} \subset S_*(x,u,v)$. Then we deduce that $w'$ is a subword of $W^*_S(t,u,v,h)$, for a well chosen $0<h<1$ and $t$ the type of~$x$. In addition, Lemma \ref{W_S} ensures that for all $j \in \cJ$, $W_j$ and $W_S(t,u,v,h)$ are equal up to adding or deleting a letter at the beginning or at the end of $W_j$, and up to changing its first two or last two letters. Then the subword $w'$, up to the previous changes, is always a subword of $W_j$, for $j \in \cJ$. But these changes affect at most the first two letters of $w'$ and the last two letters of $w'$ (in fact, never all four at the same time but this detail doesn't matter here), and since $w'=pws$, with $|p|=|s|=2$ (this decomposition is cyclically reduced), we deduce that these changes can only affect $p$ and $s$. Therefore, we can conclude that $w$ is a subword of $W_j$, for all $j \in \cJ$. \\
				A similar argument applies for $w^{-1}$. Since $w'$ is a subword of $W_S^*(t,u,v,h)$, we deduce by Lemma \ref{inverses} that $w'^{-1}$ is a subword of $W_S(\inv{t},u,v,1-h)$. Thus, as before, using Lemma \ref{W_S}, we deduce that for all $j \in \inv{\cJ}$, $W_j$ and $W_S(\inv{t},u,v,1-h)$ are equal up to some changes of letters affecting at most the first two letters and the last two letters. Since $w'^{-1}=s^{-1}w^{-1}p^{-1}$, with $|s^{-1}|=|p^{-1}|=2$, we deduce that $w^{-1}$ is itself a subword of $W_j$, for all $j \in \inv{\cJ}$. \\
				
				\item \label{dec-point1} Decomposition of $W$ and proof of point \ref{change-letter} of Theorem \ref{magic-len-S_{0,4}} : \\ 
				
				Let $q=\# \cJ + \# \inv{\cJ}$. By letting $u_2,u_4,\hdots,u_{2q}$ by the  occurrences of $w$ and $w^{-1}$ in $W_j$, for $j \in \cJ \cup \inv{\cJ}$, guaranteed by the previous point, we deduce that $u_2,u_4, \hdots,u_{2q}$ are pairwise disjoint. Also let $u_1,u_3,\hdots,u_{2q+1}$ be the remainders in between in the word $W$, then we can write : 
				\begin{equation}
					W=u_1u_2\hdots u_{2q+1}
				\end{equation}
				Moreover, by construction, for all $k \in 2\{1, \hdots,q\}$, $u_k$ is either equal to $w$ or equal to $w^{-1}$. Thus point \ref{change-letter} of Theorem \ref{magic-len-S_{0,4}} is proved, setting $\cI = 2\{1,\hdots,q\}$. \\ 
				
				\item Let us show that $\max(\# \cJ,\# \inv{\cJ}) \geq 1$ : \\
				
				By contradiction, if $\# \cJ= \# \inv{\cJ}=0$, then, since a subword of the form $W^*_R(t_i,u,v,h_i)$ is necessarily followed by a word of the form $W^*_S(t_{i+1},u,v,h_{i+1})$, we must have $\# \cK \leq 1$ and then $p \leq 1$. Therefore $|W| \leq |\s{W_0}| + |W_1| + |\p{W_{p+1}}|$.
				But we also know, by Lemma \ref{length-W_S-W_R}, that for all $0 \leq j \leq p+1$, $|W_j|\leq l'_i(\gamma)+1$. Then we deduce : 
				\begin{equation}\label{|W|-first-case} 
					|W| \leq 3(l'_i(\gamma)+1) <3(l'_i(\gamma)+l_i(\gamma)+1)
				\end{equation}
				and this contradicts our hypothesis on the length of $W$. In particular, we proved :
				\begin{equation} \label{card-J}
					\# \cJ + \# \inv{\cJ} \geq 1
				\end{equation}
				
				\item Proof of point \ref{proportion} of Theorem \ref{magic-len-S_{0,4}} : \\ 
				
				On one hand we have : 
				\begin{equation} \label{sum-uk}
					\underset{k \in \cI} \sum |u_k| = \underset{k \in \cI} \sum |w| = \# \cI |w|=q |w|= (\# \cJ + \# \inv{\cJ})(l_i(\gamma)-5) \geq (\# \cJ + \# \inv{\cJ})\frac{1}{2}l_i(\gamma)
				\end{equation}
				and the last inequality holds because $l_i(\gamma) \geq 10$. \\
				
				On the other hand, using \eqref{decomposition1}, we have :
			$|W|=|\p{W_0}|+|\s{W_{p+1}}|+\underset{j \in \cJ} \sum |W_j| +\underset{j \in \inv{\cJ}}\sum |W_j|+\underset{j \in \mathcal{K}} \sum |W_j|$.
				In addition, we know, by Lemma \ref{length-W_S-W_R}, that for all $0 \leq j \leq p+1$ : $|W_j|\leq l'_i(\gamma)+1$. Then we deduce~:
				\begin{align*}
					|W| & \leq 2(l'_i(\gamma)+1) + (\# \cJ+ \# \inv{\cJ} + \# \mathcal{K})(l'_i(\gamma)+1) \\
					& \leq 4l_i(\gamma)+2(\# \cJ + \# \inv{\cJ} + \# \cK)l_i(\gamma) \quad \text{ using } l'_i(\gamma)+1 \leq 2l_i(\gamma)
				\end{align*}
				Moreover, recall that a subword of the form $W^*_R(t_i,u,v,h_i)$ is necessarily followed by a subword of the form $W^*_S(t_{i+1},u,v,h_{i+1})$, then if $j \in \cK \cap \{1,\hdots,p-1 \}$, then $j+1 \notin \cK$ and so we deduce that $\# \cK \leq \# \cJ+\# \inv{\cJ} +1$. Therefore :
				\begin{equation} \label{prop-first-case}
					\begin{aligned}
						|W| & \leq 4l_i(\gamma)+2(2(\# \cJ+\# \inv{\cJ})+1)l_i(\gamma)=4(\# \cJ+\# \inv{\cJ}) l_i(\gamma) + 6l_i(\gamma) \\
						& \leq 10(\# \cJ+\# \inv{\cJ}) l_i(\gamma) \qquad \text{ by \eqref{card-J}}\\
						& \leq 20 \underset{k \in \cI} \sum |u_k| \qquad \text{ by \eqref{sum-uk}}
					\end{aligned} 
				\end{equation}
				This last inequality finishes the proof of point \ref{proportion} of Theorem \ref{magic-len-S_{0,4}} in the case where $I \subset S_*(x,u,v)$.  
			\end{enumerate}
			
			\item Suppose now that $I \subset T_*(x,u,v)$. In particular, since $T_*(x,u,v) \subset S'_*(x,u,v)$ (see \eqref{T*-in-S'*}), we have $I \subset S'_*(x,u,v)$. Let us denote $
				(u',v') = \left\{ \begin{array}{cc}
					(u,u+v)  & \text{ if } \slope(u+v)\geq 0  \\
					(u+v,v)  &  \text{ if } \slope(u+v) \leq 0
				\end{array} \right. 
			$. \\ 
			We have $S'_*(x,u,v)=S_*(x,u',v')$ and $S'^\pm(x,u,v)=S^\pm(x,u',v')$. \\
			Recall that inequality \eqref{slope(u+v)} ensures that : $\slope(u+v)\neq 0$. Then we have $
				\slope(u') < 0 < \slope(v')$.\\
			
			We now distinguish according to the signs of $\slope(u'+v')$ and $\slope(v'-u')$.
			\begin{itemize}
				\item Suppose that $\slope(u'+v')$ and $\slope(v'-u')$ are of opposite signs. \\ 
				We will then proceed in exactly the same way as in the first case, changing $(u,v)$ to $(u',v')$. The proof is then completely unchanged, except that we no longer have the inequality $|W_j| \leq l'_i(\gamma)+1$, but instead $|W_j| \leq l'_i(\gamma)+l_i(\gamma)+1$ (see point \ref{len-W'_S} and \ref{len-W'_R} in Lemma \ref{length-W_S-W_R}). The inequalities change as follow : 
				\begin{enumerate}
					\setcounter{enumii}{2}
					\item Let us show that $\max(\# \cJ, \# \inv{\cJ}) \geq 1$ : \\  
					Suppose that $\# \cJ = \# \inv{\cJ} = 0$, then, similarly to \eqref{|W|-first-case}, we can bound $|W|$ : 
					\begin{align*}
						|W|\leq 3(l'_i(\gamma)+l_i(\gamma)+1)
					\end{align*}
					and this is a contradiction with our hypothesis on the length of $W$.\\ 
					
					\item Proof of point \ref{proportion} of Theorem \ref{magic-len-S_{0,4}} : \\ 
					Equation \eqref{sum-uk} doesn't change : 
					\begin{equation}\label{sum-uk-2}
						\underset{k \in \cI} \sum |u_k| \geq (\# \cJ + \# \inv{\cJ})\frac{1}{2}l_i(\gamma).
					\end{equation}
					Then, similarly to computation \eqref{prop-first-case}, we can bound 
					$|W|$ :
					\begin{align*}
						|W|& \leq 2(l'_i(\gamma)+l_i(\gamma)+1)+(\# \cJ + \# \inv{\cJ} + \# \cK)(l'_i(\gamma)+l_i(\gamma)+1) \\
						& \leq 6l_i(\gamma)+3(\# \cJ + \# \inv{\cJ} + \# \cK)l_i(\gamma) \qquad \text{ using } l'_i(\gamma)+l_i(\gamma)+1 \leq 3l_i(\gamma) \\ 
						& \leq 6( \# \cJ+\# \inv{\cJ}) l_i(\gamma)+9l_i(\gamma) \qquad \text{since } \# \cK \leq \# \cJ+ \# \inv{\cJ} +1 \\
						& \leq 15( \# \cJ+\# \inv{\cJ}) l_i(\gamma) \qquad \text{ because } \# \cJ+\# \inv{\cJ} \geq 1  \\
						& \leq 30 \underset{k \in \cI} \sum |u_k| \qquad \text{ by \eqref{sum-uk-2}}
					\end{align*}
				\end{enumerate}
				
				\item Suppose now that $\slope(u'+v')$ and $\slope(v'-u')$ are of the same sign. Notice that our choice of $u,v$ and $u',v'$ imposes that $\slope(u',v')\neq 0$ and $\slope(v'-u')\neq \infty$, we deduce in particular that $S'(x,u,v) \setminus T(x,u,v)$ has non-empty interior. \\ We are going to do almost the same procedure as in the previous cases, but we need to be a little more careful since the rectangles $S(y,u',v')$ for different $y$ intersect, hence the occurrences of $w$ might not be disjoint. Denote by $t$ the type of $x$ and $\inv{t}$ its $(u',v')$-opposite type. \\
				Let $I_W$ be an horizontal segment such that we read $W$ by following $I_W$. 
				By Lemma \ref{int-same-sign}, there exist $x_0,\hdots,x_{p+1} \in \Lambda$ and $h_0, \hdots,h_{p+1}$ some heights such that we can write $I_W$ as a union of segments : 
				\begin{equation} \label{int-cas2}
					I_W=I_0\cup I_1 \cup \hdots \cup I_p \cup I_{p+1}
				\end{equation}
				satisfying : 
				\begin{itemize}
					\item $I_0 \subset I_S(x_0,u',v',h_0), I_{p+1} \subset I_S(x_{p+1},u',v',h_{p+1})$
					\item For all $1 \leq j \leq p$, $I_j=I_S(x_j,u',v',h_j)$
					\item For all $0 \leq j \leq p+1$, $x_j$ is of type $t$ or $\inv{t}$ 
					\item $\inf I_j < \inf I_k$ when $j<k$ (this is a consequence of the third point of Lemma \ref{int-same-sign})
				\end{itemize}
				For all $j \in \{0, \hdots, p+1 \}$, let us denote $W_j$ the subword read by following $I_j$. Therefore for all $1 \leq j \leq p$, $W_j=W_S(x_j,u',v',h_j)$ and $W_0$ and $W_{p+1}$ are subwords respectively of $W_S(x_0,u',v',h_0)$ and $W_S(x_{p+1},u',v',h_{p+1})$.
				Notice that the intervals $I_j$ for $0 \leq j \leq p+1$ might have non-empty intersection, hence the subwords $W_j$ are not necessarily disjoint in $W$.
				Set $\cJ=\{j \in \{1, \hdots,p \} \mid W_j=W_S(t,u',v',h_j) \}$, $\inv{\cJ}=\{j \in \{1, \hdots,p \} \mid W_j=W_S(\inv{t},u',v',h_j) \}$. Of course we have~$\{1,\hdots,p\} = \cJ \sqcup \inv{\cJ}$. Also denote $W^*_j$ the subword read by following $I^*_S(x_j,u',v',h_j)$. 
				
				\begin{enumerate}
					
					\item Let us show that $w$ is a subword of $W_j$ for all $j \in \cJ$ and that $w^{-1}$ is a subword of $W_j$ for all $j \in \inv{\cJ}$~: \\
					
					The proof is the same as in the first case \ref{w-subword} changing the basis $(u,v)$ to $(u',v')$. We recall here the main steps : we have $I_{w'} \subset I \subset S'_*(x,u,v)=S_*(x,u',v')$, so $w'$ is a subword of $W_S^*(t,u',v',h)$, for some well chosen $0<h<1$. In addition, Lemma \ref{W_S} ensures that for all $j \in \cJ$, $W_j$ and $W_S(t,u',v',h)$ are equal up to some change of letters which affect at most the first two letters of $w'$ and the last two letters of $w'$. Since $w'=pws$, with $|p|=|s|=2$, we deduce that $w$ is a subword of $W_j$, for all $j \in \cJ$. By noticing, using Lemma \ref{inverses}, that $w'^{-1}$ is a subword of $W_S(\inv{t},u',v',h')$, for some well chosen $h'$, we deduce in the same way that $w^{-1}$ is a subword of $W_j$, for all $j \in \inv{\cJ}$. \\
					
					\item Decomposition of $W$ and proof of point \ref{change-letter} of Theorem \ref{magic-len-S_{0,4}} : \\
					
					Let $q=\# \cJ + \# \inv{\cJ}$ and $u_2,u_4,\hdots,u_{2q}$ be the occurrences of $w$ and $w^{-1}$ in $W_j$, for~$j \in \cJ \cup \inv{\cJ}=\{1,\hdots,p\}$, guaranteed by the previous point. The key point is to justify that the subwords $u_2,u_4,\hdots,u_{2q}$ are disjoint in $W$. For $j \in \{1,\hdots,p\}$, let $I_{u_{2j}} \subset I_j$ be a horizontal segment such that the subword read by following $I_{u_{2j}}$ is $u_{2j}$. Then, since $I_{w'} \subset I \subset T_*(x,u,v)$ by hypothesis, we deduce that $I_{u_{2j}} \subset T_*(x_j,u,v)$. But recall that by Lemma \ref{T-disjoint}, all the rectangles $T_*(x_j,u,v)$ for $j \in \cJ \cup \inv{\cJ}$ are disjoint. Therefore we deduce that the segments $I_{u_{2j}}$ for $j \in \{1,\hdots,p\}$ are disjoint, then so are the subwords $u_{2j}$ in $W$. 
					Now the rest of the proof follows as in \ref{dec-point1} : let $u_1,u_3,\hdots,u_{2q+1}$ be the remainders in between in the word $W$, so we can write : 
					\begin{equation}
						W=u_1u_2\hdots u_{2q+1}
					\end{equation}
					and by construction, for all $k \in 2\{1,\hdots,q\}$, $u_k$ is either equal to $w$ or to $w^{-1}$. Thus point \ref{change-letter} of Theorem \ref{magic-len-S_{0,4}} is proved, setting $\cI=2\{1,\hdots,q\}$. 
					
					\item Let us show that $p \geq 1$ : \\
					
					By contradiction, if $p=0$, then 
					\begin{align*}
						|W| & \leq |W^*_0|+|W^*_1| \leq 2(l'_i(\gamma)+l_i(\gamma)+1) \text{ using Lemma \ref{length-W_S-W_R} } \\
						& < 3(l'_i(\gamma)+l_i(\gamma)+1)
					\end{align*}
					which is a contradiction with our hypothesis on the length of $W$. \\
					
					\item Proof of point \ref{proportion} of Theorem \ref{magic-len-S_{0,4}} : \\
					
					On the one hand we have : 
					\begin{equation}\label{sum-uk-3}
						\sum_{k \in \cI} |u_k|=\sum_{k \in \cI} |w| =p (l_i{\gamma}-5) \geq \frac{p}{2}l_i(\gamma)
					\end{equation}
					and the last inequality holds because $l_i(\gamma) \geq 10$. \\ 
					
					On the other hand, using the equality \eqref{int-cas2}, we have : 
					\begin{align*}
						|W| & \leq |W^*_0|+|W^*_{p+1}|+ \sum_{j=1}^{p} |W^*_j| \leq (p+2)(l'_i(\gamma)+l_i(\gamma)+1) \text{ by point \ref{len-W'_S} in Lemma \ref{length-W_S-W_R}} \\
						& \leq 3(p+2)l_i(\gamma) \qquad \text{ since } l'_i(\gamma)+1 \leq l_i(\gamma) \\
						& \leq 9pl_i(\gamma) \qquad \text{ using } p\geq 1 \\
						& \leq 18 \sum_{k \in \cI} |u_k|  \qquad \text{ by \eqref{sum-uk-3}}.
					\end{align*}
					This last inequality finishes the proof of point \ref{proportion} of Theorem \ref{magic-len-S_{0,4}} in the case where $I \subset T_*(x,u,v)$. 
				\end{enumerate}
			\end{itemize}
		\end{enumerate}
	\end{proof}

	\section{Properties on Bowditch representations and simple-stable representations}
	
	\label{sph-UQG}
	
	Let $(X,d)$ be a $\delta$-hyperbolic, geodesic and visibility space, and $o$ a basepoint in $X$.  Recall that we have already introduce the \emph{displacement length} of an isometry $A$ of $X$ in section \ref{subsec:Bowditch rep}.
	Another quantity that we will use extensively is the \emph{stable length}, defined by $l_S(A)=\underset{n \to \infty}{\lim}\frac{1}{n}d(A^no,o)$. This definition is independent of the choice of the basepoint $o$ in $X$. The stable length satisfies the following usefull properties : $l_S(A^n)=nl_S(A)$ (which is not true in general for the displacement length) and $l_S(A)\leq \frac{1}{n}d(A^no,o)$. Also recall that when $A$ is a hyperbolic isometry of $X$, then the map from $\Z$ to $X$ that sends $n$ to $A^no$ is a quasi-isometry, by definition. It implies that $A$ has two fixed points in the boundary of $X$, one attracting and the other repelling, denoted by $A^+$ and $A^-$ respectively.  \\
		
	\subsection{A uniform quasi-geodesicity setting}
	\label{sph-first-ex-UQG}
	
	Fix $G,D_1$ and $D_2$ three isometries of $X$. We can consider the set $\mathcal{W}(G,D_1,D_2)$ of (finite) words on the alphabet $\mathcal{A}=\{G,G^{-1},D_1,D_1^{-1},D_2,D_2^{-1}\}$.  For $W \in \mathcal{W}(G,D_1,D_2)$, we denote by $|W|$ its word length ($W$ is seen as a word on the alphabet $\mathcal{A})$. We also consider $\mathcal{H}(G,D_1,D_2)$ the set of bi-infinite (reduced) words on the alphabet $\mathcal{A}$, that is, $H=(H_n)_{n\in \Z} \in \mathcal{H}(G,D_1,D_2)$ if and only if for all $n\in \Z$, $H_n \in \mathcal{A}$ and $H_n\neq H_{n+1}^{-1}$. When we have a bi-infinite word $H=(H_n) \in \mathcal{H}(G,D_1,D_2)$, we associate to it a bi-infinite sequence of finite words $W=(W_n)_{n\in \Z}$ in the following way :
	\begin{equation*}
		W_n=\left\{ \begin{array}{ll}
			H_0H_1\hdots H_{n-1} & \text{for } n>0  \\
			I_d  & \text{for } n=0 \\
			H_{-1}^{-1}\hdots H_n^{-1} & \text{for } n<0
		\end{array}\right.
	\end{equation*}
	Hence $W_n \in \mathcal{W}(G,D_1,D_2)$ for all $n\in \Z$. Moreover, the word length of $W_n$ is $|W_n|=|n|$ for all $n\in \Z$ and the following recursive formula holds for all $n \in \Z$ : $W_{n+1}=W_nH_n$. We denote by $\mathcal{G}(G,D_1,D_2)$ the set of bi-infinite sequences of finite words associate to bi-infinite words in $\mathcal{H}(G,D_1,D_2)$. \\ 
	
	In this section, we will study a particular class of bi-infinite words $H=(H_n)_{n\in \Z}$ and their associate bi-infinite sequences of words $W=(W_n)_{n\in \Z}$. Let us fix an integer $N \geq 1$ and define $\mathcal{H}_N(G,D_1,D_2)$ to be the subset of $\mathcal{H}(G,D_1,D_2)$ consisting of the bi-infinite words $H=(H_n)_{n\in \Z}$ which satisfy the following condition : \\ 
	If $n_1<n_2$ are two integers in $\Z$ such that $H_{n_1},H_{n_2} \in \{ D_1,D_1^{- 1},D_2,D_2^{-1}\}$ and for all $n_1<n<n_2$, $H_n \in \{G^{\pm 1} \}$, then :
	\begin{itemize}
		\item If $H_{n_1} \in \{D_1^{\pm 1}\}$, then $H_{n_2} \in  \{D_2^{\pm 1}\}$, for all $n_1<n<n_2$, $H_n=G^{-1}$ and $n_2-n_1-1\geq N$.
		\item If $H_{n_1} \in \{D_2^{\pm 1}\}$, then $H_{n_2} \in  \{D_1^{\pm 1}\}$, for all $n_1<n<n_2$, $H_n=G$ and $n_2-n_1-1\geq N$.
	\end{itemize}
	Thus, the bi-infinite words in $\mathcal{H}_N(G,D_1,D_2)$ are those of the form : 
	\begin{equation*}
		\hdots D_1^{\pm 1}\underbrace{G^{-1}\hdots G^{-1}}_{\geq N}D_2^{\pm 1}\underbrace{G \hdots G}_{\geq N} D_1^{\pm 1}\underbrace{G^{-1}\hdots G^{-1}}_{\geq N}D_2^{\pm 1}\underbrace{G \hdots G}_{\geq N} D_1^{\pm 1} \hdots
	\end{equation*}
	We denote by $\mathcal{G}_N(G,D_1,D_2)$ the set of bi-infinite sequences $W=(W_n)_{n\in \Z} \in \mathcal{G}(G,D_1,D_2)$ associate to bi-infinite words in $\mathcal{H}_N(G,D_1,D_2)$. \\ 
	
	Fix $o$ a basepoint in $X$. Starting from a bi-infinite sequence $W=(W_n)_{n \in \mathbb{Z}} \in \mathcal{G}(\mathcal{A}(G,D_1,D_2))$, we define the sequence of points in $X$~: $x_n=W_no, \forall n \in \Z$. We will prove in this section that the sequences of points defined by the elements of $\mathcal{G}_N(G,D_1,D_2)$ are uniform quasi-geodesics, that is the existence of two reals $\lambda >0$ and $k \geq 0$ such that for all $n,m \in \mathbb{Z}$, we have : $\frac{1}{\lambda}|n-m|-k \leq d(x_n,x_m)\leq \lambda |n-m| + k$. We also say that $(x_n)_{n \in \Z}$ is a \emph{$(\lambda,k,L)$-local-quasi-geodesic} if we have $\frac{1}{\lambda}|n-m|-k \leq d(x_n,x_m)\leq \lambda |n-m| + k$ whenever $|n-m|\leq L$.
	
	\begin{Proposition} \label{Sphere-unif-quasi-geod}
		Let $X$ be a $\delta$-hyperbolic, geodesic and visibility space, and $o \in X$ any basepoint. Pick $G$ a hyperbolic isometry and $D_1,D_2$ two isometries of $X$. Suppose that $D_1(G^-)\neq G^-$ and $D_2(G^+)\neq G^+$. Then, there exists $\lambda > 0, k\geq0$ and $N \in \mathbb{N}^*$, such that for all bi-infinite sequence $ W=(W_n)_{n \in \mathbb{Z}} \in \mathcal{G}_N(G,D_1,D_2)$, the sequence of points $x_n=W_no$ is a $(\lambda,k)$-quasi-geodesic. 
	\end{Proposition}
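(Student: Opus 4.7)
The plan is to first establish that $(x_n)_{n\in\Z}$ is a uniform \emph{local} quasi-geodesic at a large enough scale, and then invoke the classical local-to-global principle for quasi-geodesics in $\delta$-hyperbolic spaces. The upper bound $d(x_n,x_m)\leq M|n-m|$, with $M=\max(d(Go,o),d(D_1o,o),d(D_2o,o))$, is immediate from the triangle inequality, so the real work is the lower bound. Structurally, any bi-infinite word in $\mathcal{H}_N(G,D_1,D_2)$ has the form
\[
\ldots D_1^{\pm 1}(G^{-1})^{\geq N}D_2^{\pm 1}G^{\geq N}D_1^{\pm 1}\ldots,
\]
so the sequence $(x_n)$ decomposes naturally into maximal \emph{$G$-blocks} separated by single transition letters in $\{D_1^{\pm 1},D_2^{\pm 1}\}$. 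Along each $G$-block, the relevant sub-sequence of $(x_n)$ is a translate (by some isometry $W_{n_0}$) of a finite sub-orbit $\{G^k o\}$, which, since $G$ is hyperbolic, is a $(\lambda_G,k_G)$-quasi-geodesic with constants depending only on $G$ and $o$.

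The key step is to control the behaviour at each transition. Consider an index $n$ with $H_n=D_j^{\epsilon}$. After translating by $W_n^{-1}$, the sub-sequence near $n$ becomes the points $G^{-a}o$ (if $j=1$) or $G^{+a}o$ (if $j=2$) for $a\geq 0$ going backward, and $D_j^{\epsilon}G^{-b}o$ or $D_j^{\epsilon}G^{+b}o$ respectively for $b\geq 0$ going forward. As $a,b\to\infty$, these two half-orbits converge in $\partial X$ to $G^-$ and $D_1^{\epsilon}G^-$ at a $D_1$-transition, or to $G^+$ and $D_2^{\epsilon}G^+$ at a $D_2$-transition. The hypotheses $D_1 G^-\neq G^-$ and $D_2 G^+\neq G^+$ (which, by applying $D_j^{-1}$, also give $D_1^{-1}G^-\neq G^-$ and $D_2^{-1}G^+\neq G^+$) ensure that the two boundary points in each pair are distinct. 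By visibility, they are joined by a bi-infinite geodesic, so the Gromov products $(G^-\,|\,D_1^{\epsilon}G^-)_o$ and $(G^+\,|\,D_2^{\epsilon}G^+)_o$ are finite. Since only four possibilities arise (two choices of $j$ times two of $\epsilon$), we obtain a uniform upper bound $B=B(G,D_1,D_2,o,\delta)$ on all transition Gromov products.

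To conclude, I would fix $N$ large enough that $N\cdot l_S(G)$ dominates $B$, $\delta$, and $k_G$, and pick a local scale $L_0\leq N$. Any subword of length at most $L_0$ then crosses at most one transition. If it crosses none, it is a translated finite sub-orbit of $G$, hence a $(\lambda_G,k_G)$-quasi-geodesic. If it crosses exactly one transition, it has the form $G^{\alpha}D_j^{\epsilon}G^{\beta}$ with signs prescribed by $\mathcal{H}_N$; combining the quasi-geodesic behaviour of each $G$-piece with the Gromov-product bound at the transition (via the identity $(p|q)_o=\tfrac{1}{2}(d(o,p)+d(o,q)-d(p,q))$ and the approximate-tree structure of $\delta$-hyperbolic spaces) yields the required linear lower estimate. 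This establishes that $(x_n)$ is a uniform $(\lambda_0,k_0)$-local-quasi-geodesic at scale $L_0$, with $\lambda_0,k_0$ depending only on $G,D_1,D_2,o,\delta$. The classical local-to-global principle for quasi-geodesics in $\delta$-hyperbolic spaces then promotes this, provided $L_0$ is taken large enough, to a global $(\lambda,k)$-quasi-geodesic property. The main obstacle is precisely this quantitative single-transition step: one must pick $N$ so large that the definite displacement $N\cdot l_S(G)$ along each $G$-block strictly outweighs the constant backtracking permitted by the Gromov-product bound $B$, uniformly over all possible subwords and both transition types, which forces $N$ to grow with $B$, $\delta$, and $k_G$.
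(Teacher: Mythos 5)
Your proposal is correct and follows essentially the same strategy as the paper: reduce to a uniform linear lower bound on $d(G^aD_1^{\pm 1}G^{-b}o,o)$ and $d(G^{-a}D_2^{\pm 1}G^{b}o,o)$ (the single-transition subwords, which are the only windows of length at most $L$ once $N\geq L$), then invoke the local-to-global lemma for quasi-geodesics in $\delta$-hyperbolic spaces. The only difference is in the technical tool for the transition estimate --- the paper projects the two half-orbits onto a visibility geodesic joining $G^-$ to $D_1(G^-)$ and uses stability of quasi-geodesics, whereas you bound the Gromov product $(G^-\mid D_1^{\pm 1}G^-)_o$ directly --- and the two are interchangeable here.
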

	
	\begin{proof}
		$\bullet$ \textbf{Step 1 : Quasi-isometry on a half-period} \\
		We first show that there exist two constants $\lambda > 0$ and $k \geq 0$, only depending on $\delta, G, \cF_1,\cF_2$  and $o$, such that the two following inequalities are satisfied : 
		\begin{equation}\label{step1-1}
			\frac{1}{\lambda}|G^nD_1G^{-m}|-k \leq d(G^nD_1G^{-m}o,o) \qquad \text{ for all } n,m \geq 0
		\end{equation}
		\begin{equation}\label{step1-2}
			\frac{1}{\lambda}|G^{-n}D_2G^{m}|-k \leq d(G^{-n}D_2G^{m}o,o) \qquad \text{ for all } n,m \geq 0
		\end{equation}
		
		By assumption, the two points at infinity $D_1(G^-)$ and $G^-$ are distinct, so by visibility of the space $X$ we can consider a geodesic, called $\Lambda$, with endpoints $D_1(G^-)$ and $G^-$. Now consider $p$ a projection map on $\Lambda$, that is $p : X \to \Lambda$ satisfying $\forall x \in X, d(x,p(x))=d(x,\Lambda)=\underset{y \in \Lambda} \inf \, d(x,y)$ (such a map exists but is not necessarily unique). Since the (half) quasi-geodesic $(G^{-n}o)_{n \in \N}$ and the geodesic  $\Lambda$ have $G^-$ as a common endpoint, we have, by stability of quasi-geodesics in $\delta$ hyperbolic spaces, the existence of a constant $K_1 >0$ (only depending on $\delta,G,D_1$ and $o$) such that $\{G^{-n}o\}_{n \in \N}$ and the half geodesic $[p(o),G^{-})$ remain in the $K_1$-neighborhood of each other. We deduce the following :
		\begin{equation} \label{K1-S}
			d(G^{-n}o,p(G^{-n}o)) \leq K_1, \text{ for all } n \in \N
		\end{equation}
		With the same argument, namely that the (half) geodesic $(D_1G^{-m}o)_{m \in \N}$ and $\Lambda$ share the same endpoint $D_1(G^-)$, we deduce the existence of a constant $K_2 >0$ (only depending on $\delta, G, D_1$ and $o$) such that 
		\begin{equation} \label{K2-S}
			d(D_1G^{-m}o,p(D_1G^{-m}o)) \leq K_2,  \text{ for all } m \in \N.
		\end{equation}
		
		Then we can write the following inequalities :
		\begin{align*}
			d(G^nD_1G^{-m}o,o)& =d(D_1G^{-m}o,G^{-n}o) \text{ because $G^{-n}$ is an isometry } \\
			& \geq d(p(D_1G^{-m}o),p(G^{-n}o)) -d(p(D_1G^{-m}o),D_1G^{-m}o)-d(p(G^{-n}o),G^{-n}o) \\
			&  \geq d(p(D_1G^{-m}o),p(G^{-n}o)) -K_1 -K_2 \text{ by inequalities \ref{K1-S} and \ref{K2-S}}. \\
		\end{align*}
		
		But since $G^{-n}o \underset{n\to \infty} \longrightarrow G^-$, we also deduce $p(G^{-n}o) \underset{n\to \infty} \longrightarrow G^-$ and, again, since $D_1G^{-m}o \underset{m\to \infty} \longrightarrow D_1(G^-)$, we have $p(D_1G^{-m}o) \underset{m\to \infty} \longrightarrow D_1(G^-)$. Then, for $n$ and $m$ sufficiently large, $p(G^{-n}o)$ belongs to $[p(o),G^-) \cap [p(D_1o),G^-)$ and $p(D_1G^{-m}o)$ belongs to $[p(o),D_1(G^-)) \cap [p(D_1o),D_1(G^-))$. This implies that for $n$ and $m$ sufficiently large, the four points $p(G^{-n}o), p(D_1G^{-m}o),p(D_1o)$ and $p(o)$ are aligned in one of the two following orders on the geodesic $\Lambda$ : $p(G^{-n}o),p(o),p(D_1o), p(D_1G^{-m}o)$ or $p(G^{-n}o),p(D_1o),p(o), p(D_1G^{-m}o)$. In both cases, for $n$ and $m$ sufficiently large :
		\begin{equation} \label{min-G-n-DG-m}
			d(p(G^{-n}o),p(D_1G^{-m}o)) \geq d(p(G^{-n}o),p(o))-d(p(o),p(D_1o))+d(p(D_1o),p(D_1G^{-m}o)). 
		\end{equation}
		We also have, 
		\begin{align} \label{half1}
			d(p(G^{-n}o),p(o)) & \geq d(G^{-n}o,o) - d(G^{-n}o,p(G^{-n}o))-d(p(o),o) \nonumber \\ 
			& \geq d(G^{-n}o,o)-2K_1 \text{ by inequality \eqref{K1-S}}
		\end{align}
		and similarly :
		\begin{align} \label{half2}
			d(p(D_1G^{-m}o),p(D_1o)) & \geq d(D_1G^{-m}o,D_1o) - 2K_2 \text{ by inequality \eqref{K2-S}}.
		\end{align}
		We can now prove inequality \eqref{step1-1} :
		\begin{align*}
			d(G^nD_1G^{-m}o,o)& \geq d(p(D_1G^{-m}o),p(G^{-n}o)) -K_1 -K_2 \\
			& \geq d(p(G^{-n}o),p(o))-d(p(o),p(D_1o))+d(p(D_1o),p(D_1G^{-m}o)) -K_1-K_2 \text{ by \eqref{min-G-n-DG-m}}\\
			& \geq d(G^{-n}o,o)+d(D_1G^{-m}o,D_1o)-d(p(o),p(D_1o))-3K_1-3K_2 \text{ by \eqref{half1} and \eqref{half2} } \\
			& = d(G^no,o)+d(G^mo,o)-d(p(o),p(D_1o))-3K_1-3K_2 \\
			& \geq (n+m)l_S(G)-d(p(o),p(D_1o))-3K_1-3K_2
		\end{align*}
		In the last inequality, we used that $d(G^no,o) \geq nl_S(G)$, as recalled in the beginning of this section. 
		Since $(n+m)l_S(G)=(n+m+1)l_S(G)-l_S(G)=|G^nD_1G^{-m}|l_S(G)-l_S(G)$, we have proved the inequality \eqref{step1-1} for $n$ and $m$ sufficiently large, prescribing $\lambda=\frac{1}{l_S(G)}$ (recall $l_S(G)>0$ when $G$ is hyperbolic), and $k=l_S(G)+d(p(o),p(D_1o))+3K_1+3K_2$. Since there is only a finite number of values of $G^nD_1G^{-m}$, for $n$ and $m$ smaller than a fixed constant, the inequality \eqref{step1-1} is still true for all $n,m \in \N$, after possibly changing the values of $\lambda$ and $k$.  \\
		
		In order to prove the inequality \eqref{step1-2}, we change $D_1$ to $D_2$ and $G$ to $G^{-1}$ and use the hypothesis $D_2(G^+)\neq G^+$. \\ 
		
		$\bullet$ \textbf{Step 2 : From local to global quasi-isometry}\\
		
		In the step, in order to conclude the proof of Proposition \ref{Sphere-unif-quasi-geod}, we use the local-global lemma, which enables us to pass from local-quasi-geodesicity to global quasi-geodesicity, under the assumption of hyperbolicity. This classical lemma from hyperbolic geometry can be found for example in \cite{coornaert_geometrie_1990}, we recall it hereafter :
		
		\begin{Lemma}[Local-Global, \cite{coornaert_geometrie_1990}, Chapter 3, Theorem 1.4] \label{local-global}
			Let $X$ be a geodesic $\delta$-hyperbolic space. For all pairs $(\lambda,k)$, with $\lambda \geq 1$ and $k \geq 0$, there exists a real number $L$ and a pair $(\lambda',k')$ such that every $(\lambda,k,L)$-local-quasi-geodesic is a $(\lambda',k')$-quasi-geodesic (global). Moreover, $\lambda',k'$ and $L$ only depend on $\delta, \lambda$ and $k$.
		\end{Lemma}

		The sequence $(G^no)_{n\in \Z}$ is a quasi-isometry (since $G$ is hyperbolic), so there exist two constants $\lambda_1 >0,k_1>0$ such that $\displaystyle \frac{1}{\lambda_1}|G^n|-k_1 \leq d(G^no,o)=d(G^{-n}o,o)$ for all $n \geq 0$. Moreover, we have just proven in step 1 that there exists two constants $\lambda_2 >0, k_2>0$, such that $\displaystyle \frac{1}{\lambda_2}|G^nD_1G^{-m}|-k_2 \leq d(G^{-n}D_1G^mo,o)$ and $\displaystyle \frac{1}{\lambda_2}|G^nD_2G^{-m}|-k_2 \leq d(G^{-n}D_2G^mo,o)$, for all $n \geq0, m \geq 0$. Now apply Lemma \ref{local-global} to the pair  $(\lambda,k)=(\max(\lambda_1,\lambda_2),\max(k_1,k_2))$ and introduce $L>0, (\lambda',k')$ as defined in the Lemma. Fix $N=\lfloor L \rfloor +1$ and choose $W$ any sequence in $\mathcal{G}_N(G,\cF_1,\cF_2)$ which is associate to a bi-infinite words $H=(H_n)_{n\in \Z} \in \mathcal{H}_N(G,D_1,D_2)$. Thus, the subwords of $H$ of length smaller than $L$ are all either of the form $G^n$, $G^{-n}$, $G^nD_1G^{-m}$, $G^nD_1^{-1}G^{-m}$, $G^nD_2G^{-m}$, or $G^nD_2^{-1}G^{-m}$, with $n \geq 0,m \geq 0$. Therefore, the sequence of points $(x_n=W_no)_{n\in \Z}$ is a  $(\lambda,k,L)$-local-quasi-geodesic. So, by the local-global lemma \ref{local-global}, there exists $\lambda' \geq 1,k'\geq 0$ (only depending on $\lambda$ and $k$, that is on $\delta, G, \cF_1, \cF_2$ and $o$), such that $(x_n)_{n\in \Z}$ is a  $(\lambda',k')$-quasi-geodesic (global).  
	\end{proof}
	
	\subsection{Properties on Bowditch representations of $\groupe$}
	\label{sph-prop-Bowditch}
	
	We establish the useful fact that all the images of simple closed curves by a Bowditch representation are hyperbolic isometries. We prove a slightly stronger result, namely that given a Bowditch representation, we can forget the additive constant in the definition and replace the displacement length by the stable length.  
	
	\begin{Lemma} \label{simple->hyp}
		Let $\rho : \groupe \to \mathrm{Isom}(X)$ be a Bowditch representation with constant $C,D$ and $\gamma \in \simple$. Then $\displaystyle \frac{1}{C}\Vert\gamma\Vert \leq l_S(\rho(\gamma))$ and $\rho(\gamma)$ is hyperbolic. 
	\end{Lemma}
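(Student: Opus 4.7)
The strategy is to use Corollary \ref{gamma^n-simple} to bootstrap the Bowditch inequality on a single simple curve $\gamma$ into an inequality involving $\gamma^n$, and then let $n \to \infty$ to eliminate the additive constant $D$ and replace the displacement length by the stable length.

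Fix $\gamma \in \simple$, which we may assume cyclically reduced (both $\Vert\cdot\Vert$ and $l_S$ are conjugacy invariants). By Corollary \ref{gamma^n-simple}, there exist $\delta_1, \delta_2 \in \groupe$, depending only on $\gamma$, such that for every $n \in \mathbb{N}$ the element
\begin{equation*}
\gamma_n := \gamma^n \delta_1 \gamma^{-n} \delta_2
\end{equation*}
represents a simple closed curve. The first step is to observe that since $\gamma$ is cyclically reduced, the cyclically reduced length satisfies
\begin{equation*}
\Vert \gamma_n \Vert \;\geq\; 2n \Vert \gamma \Vert - K_1,
\end{equation*}
where $K_1$ depends only on $|\delta_1|$ and $|\delta_2|$ (the loss $K_1$ accounts for possible cancellations at the junctions with $\delta_1, \delta_2$ and for cyclic reduction).

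The second step is to apply the Bowditch hypothesis to each $\gamma_n$, giving
\begin{equation*}
\frac{1}{C}\Vert \gamma_n \Vert - D \;\leq\; l(\rho(\gamma_n)).
\end{equation*}
Since $l(A) \leq d(Ao, o)$ for any isometry $A$ and any basepoint $o$, the triangle inequality yields
\begin{equation*}
l(\rho(\gamma_n)) \;\leq\; d(\rho(\gamma_n) o, o) \;\leq\; 2\, d(\rho(\gamma^n) o, o) + K_2,
\end{equation*}
with $K_2 = d(\rho(\delta_1) o, o) + d(\rho(\delta_2) o, o)$, independent of $n$. Combining the two previous displays and the lower bound on $\Vert\gamma_n\Vert$:
\begin{equation*}
\frac{2n \Vert \gamma \Vert - K_1}{C} - D \;\leq\; 2\, d(\rho(\gamma^n) o, o) + K_2.
\end{equation*}

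The final step is to divide by $2n$ and let $n \to \infty$. The left-hand side tends to $\Vert \gamma \Vert / C$, while by definition of the stable length the right-hand side tends to $l_S(\rho(\gamma))$. This gives the desired inequality
\begin{equation*}
\frac{1}{C}\Vert \gamma \Vert \;\leq\; l_S(\rho(\gamma)).
\end{equation*}
Since $\gamma \in \simple$ is nontrivial we have $\Vert \gamma \Vert \geq 2 > 0$, so $l_S(\rho(\gamma)) > 0$. A standard fact in Gromov-hyperbolic geometry asserts that an isometry of a $\delta$-hyperbolic geodesic space with positive stable length is hyperbolic, so $\rho(\gamma)$ is hyperbolic. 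The main subtlety in the argument is simply verifying that the word $\gamma^n \delta_1 \gamma^{-n} \delta_2$ really is (up to bounded error) of length $2n\Vert\gamma\Vert$ after cyclic reduction, which follows from the fact that $\gamma$ itself is cyclically reduced and $\delta_1, \delta_2$ are fixed.
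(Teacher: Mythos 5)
Your proof is correct and follows essentially the same route as the paper: apply the Bowditch inequality to the simple elements $\gamma^n\delta_1\gamma^{-n}\delta_2$ supplied by Corollary \ref{gamma^n-simple}, control their cyclically reduced length, bound the displacement of $o$ by the triangle inequality, and divide by $n$ to pass to the stable length. The one point treated more carefully in the paper is your bound $\Vert\gamma^n\delta_1\gamma^{-n}\delta_2\Vert\geq 2n\Vert\gamma\Vert-K_1$: the assertion that the cancellation is bounded merely because $\gamma$ is cyclically reduced and $\delta_1,\delta_2$ are fixed is not automatic (it would fail, for instance, if $\delta_1$ were a power of $\gamma$, in which case $\gamma^n\delta_1\gamma^{-n}\delta_2$ collapses to a word of bounded length), so the paper instead replaces $\delta_1,\delta_2$ by the conjugates $\gamma^{n_1}\delta_1\gamma^{-n_1}$ and $\gamma^{-n_2}\delta_2\gamma^{n_2}$ and tracks their first and last letters so that the resulting word is exactly cyclically reduced of length $2n\Vert\gamma\Vert+|\delta'_1|+|\delta'_2|$.
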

	
	\begin{proof} 
		We assume (without loss of generality) that $\gamma$ is cyclically reduced. We introduce $\delta_1, \delta_2 \in \groupe$ given by Corollary \ref{gamma^n-simple}. Therefore, by this corollary, for all $n \in \N$, the word $\gamma^n\delta_1 \gamma^{-n}\delta_2$ is simple. In order to apply the Bowditch hypothesis to $\gamma^n\delta_1 \gamma^{-n}\delta_2$, we want to study the cyclically reduced word length of $\gamma^n\delta_1 \gamma^{-n}\delta_2$. The word $\gamma^n\delta_1 \gamma^{-n}\delta_2$ is not necessarily reduced, but the key point is that $\delta_1$ and $\delta_2$ only depend on $\gamma$ and not on $n$, so the possible simplifications in the word $\gamma^n\delta_1 \gamma^{-n}\delta_2$ are independent of $n$, for $n$ large enough. For completeness, let us write the details : \\
		Consider two integers $n_1$ and $n_2$ satisfying $n_1|\gamma|>|\delta_1|$ and $n_2|\gamma|>|\delta_2|$. Denote $\delta'_1=\gamma^{n_1}\delta_1\gamma^{-n_1}$ and $\delta'_2=\gamma^{-n_2}\delta_2\gamma^{n_2}$. The elements $\delta'_1=\gamma^{n_1}\delta_1\gamma^{-n_1}$ and $\delta'_2=\gamma^{-n_2}\delta_2\gamma^{n_2}$ may have some simplifications, but because of the assumptions on $n_1$ and $n_2$ (and the fact that $\gamma$ is cyclically reduced), the simplifications are "bounded" in the sense that $\delta'_1$ and $\delta'_2$ are non empty (in fact they have at least two letters) and the following hold :
		\begin{itemize}
			\item the first letter of $\delta'_1$ is equal to  the first letter of $\gamma$,
			\item the last letter of $\delta'_1$ is equal to the inverse of the first letter of $\gamma$,
			\item the first letter of $\delta'_2$ is equal to the inverse of the last letter of $\gamma$,
			\item the last letter of $\delta'_2$ is equal to the last letter of $\gamma$.
		\end{itemize}
		Moreover, for all $n \in \N$ we have :
		\begin{align*}
			\gamma^n\delta_1\gamma^{-n}\delta_2 & =\gamma^{n_2}\gamma^{n-n_1-n_2}\gamma^{n_1}\delta_1\gamma^{-n_1}\gamma^{-n+n_1+n_2}\gamma^{-n_2}\delta_2
		\end{align*}
		so the element $\gamma^{n-n_1-n_2}\delta'_1\gamma^{-n+n_1+n_2}\delta'_2$ is a cyclic permutation of $\gamma^n\delta_1\gamma^{-n}\delta_2$. Now notice that the observations made above on the first and last letters of $\delta'_1$ and $\delta'_2$ imply that for all $n\geq n_1+n_2+1$ the word $\gamma^{n-n_1-n_2}\delta'_1\gamma^{-n+n_1+n_2}\delta'_2$ is cyclically reduced (here we write $\delta'_1$ and $\delta'_2$ as reduced word). In other words, for all $n\in \N^*$, $\gamma^n\delta'_1\gamma^{-n}\delta'_2$ is simple and cyclically reduced. \\ 
		
		Then, we use the Bowditch hypothesis on $\gamma^n\delta'_1\gamma^{-n}\delta'_2$ to write the following inequalities : 
		\begin{align*}
			\frac{1}{C}\Vert\gamma^n\delta'_1 \gamma^{-n}\delta'_2\Vert-D & \leq d(\rho(\gamma^n\delta'_1 \gamma^{-n}\delta'_2)o,o) \\
			& \leq d(\rho(\gamma^n)o,o)+d(\rho(\delta'_1)o,o)+d(\rho(\gamma^{-n})o,o)+d(\rho(\delta'_2)o,o)
		\end{align*}
		
		But since $\gamma^n\delta'_1\gamma^{-n}\delta'_2$ is cyclically reduced, we have $\Vert\gamma^n\delta'_1 \gamma^{-n}\delta'_2\Vert=2n|\gamma|+|\delta'_1|+|\delta'_2|=2n\Vert\gamma\Vert+|\delta'_1|+|\delta'_2|$ (the last equality holds since $\gamma$ is supposed cyclically reduced), so after dividing by $n$ :
		\begin{align*}
			\frac{2}{C}\Vert\gamma\Vert+\frac{|\delta'_1|+|\delta'_2|}{nC}-\frac{D}{n} & \leq \frac{2}{n}d(\rho(\gamma^n)o,o)+\frac{1}{n}(d(\rho(\delta'_1)o,o)+d(\rho(\delta'_2)o,o))
		\end{align*}
		And then by taking the limit when $n \to \infty$ (and dividing by 2):
		\begin{align*}
			\frac{1}{C}\Vert\gamma\Vert \leq l_S(\rho(\gamma)) \qquad \text{ by definition of the stable length},  
		\end{align*}
		which is the desired inequality. Hence, for every simple word $\gamma$, $l_S(\rho(\gamma))>0$, thus $\rho(\gamma)$ is hyperbolic. 
	\end{proof}
	
	Now we establish the fact that the hypothesis required by Lemma \ref{Sphere-unif-quasi-geod} is satisfied when the isometries $D_1$, $D_2$ and $G$ comes from a Bowditch representation.
	
	\begin{Lemma} \label{D(G)neqG}
		Let $\rho : \groupe \to \mathrm{Isom}(X)$ be a Bowditch representation with constants $C,D$ and $\gamma \in \simple$. Let $\delta_1, \delta_2 \in \groupe$ such that $\gamma^n\delta_1 \gamma^{-n}\delta_2$ is simple for an infinite number of $n \in \N$. Denote $D_1=\rho(\delta_1), D_2=\rho(\delta_2)$ and $G=\rho(\gamma)$. Then $D_1(G^-)\neq G^{-}$ and $D_2(G^+)\neq G^+$.
	\end{Lemma}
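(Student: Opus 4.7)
The plan is to argue by contradiction. Suppose $D_1(G^-) = G^-$; I will show that the displacement length $l(A_n)$, where $A_n := G^n D_1 G^{-n} D_2 = \rho(\gamma^n\delta_1\gamma^{-n}\delta_2)$, remains uniformly bounded in $n$, which will contradict the Bowditch hypothesis along the infinite subsequence of integers $n$ for which $\gamma^n \delta_1 \gamma^{-n} \delta_2 \in \simple$. For those $n$, the Bowditch inequality combined with the observation made in the proof of Lemma \ref{simple->hyp} that $\Vert \gamma^n \delta_1 \gamma^{-n} \delta_2 \Vert$ grows linearly in $n$ yields
\begin{equation*}
l(A_n) \geq \frac{1}{C}\Vert \gamma^n\delta_1\gamma^{-n}\delta_2\Vert - D \xrightarrow[n\to\infty]{} +\infty.
\end{equation*}

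For the upper bound, I use $l(A_n) \leq d(A_n o, o)$ together with the triangle estimate
\begin{equation*}
d(A_no, o) \;=\; d(D_1 G^{-n} D_2 o, G^{-n} o) \;\leq\; d(D_1 y_n, y_n) + d(D_2 o, o), \qquad y_n := G^{-n} D_2 o.
\end{equation*}
By Lemma \ref{simple->hyp}, $G = \rho(\gamma)$ is hyperbolic, so $y_n \to G^-$ in $\partial X$. The key ingredient is then the general fact that an isometry $\phi$ of a $\delta$-hyperbolic, geodesic, visibility space that fixes a point $\xi \in \partial X$ has uniformly bounded displacement $d(\phi x, x)$ along any quasi-geodesic ray converging to $\xi$. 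Applied to $\phi = D_1$, $\xi = G^-$, and the orbit $n \mapsto y_n$, this yields the required bound on $d(D_1 y_n, y_n)$ and hence on $d(A_n o, o)$.

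To establish this ancillary fact I would fix a bi-infinite geodesic $\eta \colon \R \to X$ from $G^-$ to $G^+$ (which exists by the visibility hypothesis) and note that the quasi-geodesic orbit $(y_n)_n$ lies in a uniform neighborhood of $\eta$ by stability of quasi-geodesics. The image $D_1 \eta$ is a geodesic from $D_1(G^-)=G^-$ to $D_1(G^+)$, sharing the endpoint $G^-$ with $\eta$; by $\delta$-hyperbolicity, two geodesics with a common endpoint at infinity fellow-travel on their asymptotic half up to a finite Busemann shift $c = \beta_{G^-}(o, D_1 o)$. This produces a uniform bound on $d(D_1 \eta(t), \eta(t))$ for $t$ sufficiently negative, hence on $d(D_1 y_n, y_n)$ for $n$ large. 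Combining with the estimates above, $l(A_n)$ is uniformly bounded, contradicting $l(A_n) \to \infty$. The argument for $D_2(G^+) \neq G^+$ is symmetric: replace $G$ by $G^{-1}$, swap the roles of $D_1$ and $D_2$, and replace $G^-$ by $G^+$, then rerun the same argument.

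The main obstacle is the fellow-traveling claim for two geodesic rays with a common endpoint at infinity in a $\delta$-hyperbolic visibility space: one must use the visibility hypothesis to guarantee existence of the reference geodesic $\eta$, then combine $\delta$-thin triangle estimates with the Busemann shift between $o$ and $D_1 o$ to upgrade mere asymptoticity at $G^-$ to a uniform bound valid along the entire tail of the orbit. Once this is in place, the rest of the proof reduces to a direct application of the Bowditch hypothesis and the triangle inequality.
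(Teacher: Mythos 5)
Your proposal is correct and follows essentially the same route as the paper: assume $D_1(G^-)=G^-$, deduce that $d(\rho(\gamma^n\delta_1\gamma^{-n}\delta_2)o,o)$ stays bounded because the two asymptotic (quasi-)geodesics $(G^{-n}o)$ and $(D_1G^{-n}o)$ fellow-travel with a shift controlled by $d(o,D_1o)$, and contradict the linear growth of the Bowditch lower bound; the paper packages the fellow-traveling step as a projection onto a fixed geodesic ending at $G^-$ rather than via a Busemann shift, but the content is identical. The only point to make explicit (as the paper does) is that one may first conjugate $\delta_1,\delta_2$ by fixed powers of $\gamma$ to guarantee $\Vert\gamma^n\delta_1\gamma^{-n}\delta_2\Vert$ really grows linearly, which is harmless since $G^\pm$ are fixed by $G$.
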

	
	Before starting the proof, recall that we have shown that $G=\rho(\gamma)$ is an hyperbolic isometry (since $\gamma$ is simple, see Lemma \ref{simple->hyp}), therefore $G^-$ and $G^+$ are well-defined. 
	
	\begin{proof}
		First notice that $\gamma^n\delta_1\gamma^{-n}\delta_2$ is not necessarily supposed cyclically reduced. However, in the same way as in the beginning of the proof of the previous Lemma (\ref{simple->hyp}), we can assume that $\gamma^n\delta_1\gamma^{-n}\delta_2$ is cyclically reduced after possibly conjugating $\delta_1$ and $\delta_2$ by a power of $\gamma$ (independent of $n$). This change does not affect the conclusion of the lemma : indeed, if $D_1(G^-)\neq G^-$, the same non-equality holds when changing $D_1$ to a conjugate of $D_1$ by any power of $G$ (because $G^-$ is a fixed point of $G$), and the same is true for the non-equality $D_2(G^+)\neq G^+$. Thus in the following, we will assume that $\gamma^n\delta_1\gamma^{-n}\delta_2$ is cyclically reduced. \\ 
		
		Consider the two sequences of points in $X$ define by $x_n={G}^{-n}o$ and $y_n=D_1G^{-n}o$ for all $n \in \N$. By contradiction, assume that $D_1(G^-) = G^-$. Under this assumption, we want to show that the distance $d(x_n,y_n)$ is bounded. Let $l_{G^-}$ be any geodesic with $G^-$ as an endpoint. Because $G$ is a hyperbolic isometry, the sequence $(G^{-n}o)_{n\in \Z}=(x_n)_{n \in \Z}$ is a quasi-isometry with attracting fixpoint $G^{-}$. Furthermore, $(D_1G^{-n}o)_{n\in \Z}=(y_n)_{n\in \Z}$ is also a quasi-isometry with attracting fixpoint $D_1(G^-)$. Thus, under this assumption $D_1(G^-)=G^{-}$, the sequences $(x_n)_{n\in \Z}$ and $(y_n)_{n \in \Z}$ are both quasi-geodesics with the same attracting fixpoint, $G^-$. Hence, the stability of quasi-geodesic in $\delta$-hyperbolic spaces gives the existence of a constant $K>0$ such that $(x_n)_{n\in \N}$ and $(y_n)_{n \in \N}$ both stay at a distance at most $K$ of $l_{G^-}$. Now consider a projection $p : X \to l_{G^-}$ on the geodesic $l_{G^-}$. By definition of the projection we have that for all $n \in \N$, $d(x_n,p(x_n)) \leq K, d(y_n,p(y_n))\leq K$. Then we deduce :
		\begin{equation} \label{maj-x_n,y_n}
			d(x_n,y_n)\leq d(p(x_n),p(y_n))+2K
		\end{equation}
		Moreover, because $p(x_n),p(y_n)$ and $p(o)$ all belong to the same geodesic $l_{G^-}$, we can write :
		\begin{equation} \label{egal-p(x_n),p(y_n)}
			d(p(x_n),p(y_n))=|d(p(x_n),p(o))-d(p(y_n),p(o))|.
		\end{equation}
		Using again that $(x_n)_{n\in \N}$ and $(y_n)_{n \in \N}$ both stay at a distance at most $K$ of $l_{G^-}$, we obtain the two following inequalities :
		\begin{align*}
			d(x_n,o)-2K \leq d(p(x_n),p(o)) \leq d(x_n,o)+2K \\
			d(y_n,o)-2K \leq d(p(y_n),p(o)) \leq d(y_n,o)+2K 
		\end{align*}
		from which we deduce : 
		\begin{equation} \label{maj-|d-d|}
			|d(p(x_n),p(o))-d(p(y_n),p(o))| \leq |d(x_n,o)-d(y_n,o)|+4K.
		\end{equation}
		Finally, we bound $|d(x_n,o)-d(y_n,o)|$ using the triangle inequality :
		\begin{align*}
			|d(x_n,o)-d(y_n,o)|& =|d(G^{-n}o,o)-d(D_1G^{-n}o,o)| \\ 
			& = |d(D_1G^{-n}o,D_1o)-d(D_1G^{-n}o,o)| \\ 
			& \leq d(D_1o,o)
		\end{align*}
		and then, together with \eqref{maj-x_n,y_n}, \eqref{egal-p(x_n),p(y_n)} and \eqref{maj-|d-d|}, we conclude that $d(x_n,y_n)$ is bounded. 
		But by hypothesis, $\gamma^n\delta_1 \gamma^{-n}\delta_2$ is simple for an infinite number of $n \in \N$, so we can conclude using the Bowditch inequality that, for an infinite number of $n$ :
		\begin{align*}
			\frac{1}{C}\Vert\gamma^n\delta_1 \gamma^{-n}\delta_2\Vert-D & \leq d(\rho(\gamma^n\delta_1 \gamma^{-n}\delta_2)o,o) = d(G^{-n}o,D_1G^{-n}D_2o) \\
			& \leq d(G^{-n}o,D_1G^{-n}o) + d(D_1G^{-n}o,D_1G^{-n}D_2o)\\ 
			& =d(x_n,y_n)+d(D_2o,o)
		\end{align*}
		Hence the right hand side of this inequality is bounded in $n$ whereas the left hand side is not (recall that $\gamma^n\delta_1\gamma^{-n}\delta_2$ is supposed cyclically reduced), this is a contradiction. From this contradiction we deduce that $D_1(G^-)\neq G^{-}$. \\
		Finally notice that if $\gamma^n\delta_1 \gamma^{-n}\delta_2$ is simple, so is $\gamma^{-n}\delta_2\gamma^n\delta_1$. Thus using what has been previously done we also have that $D_2((G^{-1})^{-})\neq (G^{-1})^{-}$. Since $(G^{-1})^{-}=G^+$, the lemma is proved.
	\end{proof}
	
	\section[Uniform tubular neighborhoods for Bowditch   representations of $\groupe$]{Uniform tubular neighborhoods for Bowditch representations of $\groupe$} 
	\sectionmark{Uniform tubular neighborhoods}
	\label{sph-first-step}
	
	This section is dedicated to proving Proposition \ref{BQ=>CTU-Sph}, which says that the orbit map restricted to simple leaves stays in a uniform tubular neighborhood of the axes of simple elements in $X$. This is the core of the proof of the simple-stability of Bowditch representations of the four-punctured sphere group, and the remaining part will be done in section \ref{second-step-proof-sph}. The proof will follow the same strategy as the proof of the corresponding proposition in the case of the free group of rank two $\F_2$, already done by the author in \cite{schlich_equivalence_2024} (Proposition 5.2). Some of the lemmas have a proof that needs to be adapted, because the combinatorics of the words are different in this case. On the other hand, for some lemmas the proof will be identical, and in this case we will refer in the proofs to the corresponding lemmas in \cite{schlich_equivalence_2024}. \\
	
	Let $X$ be a $\delta$-hyperbolic, geodesic and visibility space and $o \in X$ a basepoint. When $A$ is a hyperbolic isometry of $X$, it defines two fixed point $A^+$ and $A^-$ in the boundary of $X$, respectively the attracting and repelling one. Thus we can define $\mathrm{Axis}(A)$ to be the union of all the geodesics in $X$ joining the two distinct points $A^+$ and $A^-$ (such a geodesic always exists because $X$ is a visibility space but is not necessarily unique). We easily see that $\mathrm{Axis}(A)$ is $A$-invariant and so follows the $A$-invariance of the map $d(\cdot, \mathrm{Axis}(A))$. For a subset $Y \subset X$ and $K>0$, let us denote $N_K(Y)$ the $K$-neighborhood of $Y$, namely $N_K(Y):=\{x \in X \, : \, d(x,Y) \leq K \}$. \\
	Now recall that $L_\gamma$ denotes the geodesic in the Cayley graph of $\groupe=\F_3$ which is the axis of $\gamma$, and that for any element $\gamma \in \simple$, we proved in Lemma \ref{simple->hyp} that $\rho(\gamma)$ is hyperbolic so $\axis(\rho(\gamma))$ is well-defined. 
	
	\begin{Proposition}[Uniform tubular neighborhoods] \label{BQ=>CTU-Sph} Let $X$ be a $\delta$-hyperbolic, geodesic and visibility space, and $\rho : \groupe \to \mathrm{Isom}(X)$ a Bowditch representation. Then:
		\begin{equation*}
			\exists K>0, \qquad \forall \gamma \in \simple, \qquad \tau_\rho(L_\gamma) \subset N_K(\axis(\rho(\gamma)))
		\end{equation*}
	\end{Proposition}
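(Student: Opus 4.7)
The plan is to argue by contradiction, following the strategy outlined in the introduction. Suppose the conclusion fails, so there exist sequences $(\gamma_n)_{n\in\N}$ in $\simple$ and points $z_n \in L_{\gamma_n}$ such that $d(\tau_\rho(z_n), \axis(\rho(\gamma_n))) \to \infty$. Writing $\slope(\gamma_n) = [N_1^n, \ldots, N_{r(n)}^n]$, I would first prove that each sequence $(N_i^n)_{n\in\N}$ (for $i$ fixed) is bounded. The decomposition of $\gamma_n$ provided by Lemma \ref{general-form-gamma} exhibits $\gamma_n$, at scale $i$, as a bi-infinite pattern on the alphabet $\{G^{\pm 1},D_1^{\pm 1},D_2^{\pm 1}\}$ with $G=\rho(\gamma_i)$ and $D_1,D_2$ the images of the two building blocks $\delta_1,\delta_2$. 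Lemma \ref{D(G)neqG} (applied via Corollary \ref{gamma^n-simple}) guarantees $D_1(G^-)\neq G^-$ and $D_2(G^+)\neq G^+$, so Proposition \ref{Sphere-unif-quasi-geod} applies and produces a uniform $(\lambda,k)$-quasi-geodesic once $N_{i+1}^n\geq N$. If some $(N_i^n)$ were unbounded, the orbit image on a macroscopic piece of $L_{\gamma_n}$ would be a uniform quasi-geodesic and hence stay within a uniform tubular neighborhood of $\axis(\rho(\gamma_n))$, contradicting our choice of $z_n$. Consequently $r(n)\to\infty$.

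Next I would introduce the notion of a $K$\emph{-excursion}: a subword $u$ of $\gamma_n$ such that the image under $\tau_\rho$ of the geodesic segment labelled by $u$ starts and ends near $\axis(\rho(\gamma_n))$ but reaches distance $\geq K$ from it. Because $d(\tau_\rho(z_n),\axis(\rho(\gamma_n)))\to\infty$ and the orbit map is Lipschitz, one can extract from $(\gamma_n)$ a sequence of $K_n$-excursions with $K_n\to\infty$ (this is the analogue of Lemma \ref{excursions-grandes-sph} referenced in the introduction). In a Gromov-hyperbolic space a large excursion forces an $\varepsilon_n$-quasi-loop with $\varepsilon_n\to 0$: a subword $u$ of $\gamma_n$ with $d(\rho(u)o,o)$ small compared to $|u|$. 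Thus each $\gamma_n$ contains at least one quasi-loop, of some length $\ell_n$.

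The heart of the argument, and where the Special-lengths Theorem enters, is to upgrade a single quasi-loop into a positive proportion of $\gamma_n$ covered by disjoint quasi-loops. Given the initial quasi-loop $u$, I choose the unique index $i=i(n)$ with $l_i(\gamma_n)\leq |u|+5 < l_{i+1}(\gamma_n)$; the fact that the ratios $l_{i+1}/l_i$ are controlled by the (bounded) integers $N_{i+1}^n$ from the first step ensures that such $i$ sits at a usable scale. Truncating $u$ to a subword $w$ of length exactly $l_i(\gamma_n)-5$, Theorem \ref{magic-len-S_{0,4}} decomposes $\gamma_n$ (or any sufficiently long subword thereof) so that $w$ and $w^{-1}$ occupy at least $\alpha=\frac{1}{30}$ of the total length. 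Since $w$ and $w^{-1}$ are themselves quasi-loops (up to a bounded additive error coming from the 5 trimmed letters, which is harmless because $\rho$ is Lipschitz), this produces many disjoint quasi-loops covering a uniform proportion of $\gamma_n$ (Lemma \ref{decoupe-restes-c-sph} in the introduction's outline). Then I iterate the same construction on the remainders in between, via a recursive argument (Lemma \ref{trouve-gamma-sph}), to cover a proportion of $\gamma_n$ arbitrarily close to $1$.

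Finally, since each disjoint quasi-loop contributes only an $\varepsilon_n$ fraction of its length to $d(\rho(\gamma_n)o,o)$, the covering yields $d(\rho(\gamma_n)o,o)=o(\Vert\gamma_n\Vert)$ as $n\to\infty$. Combined with Lemma \ref{simple->hyp}, which gives $l_S(\rho(\gamma_n))\geq \frac{1}{C}\Vert\gamma_n\Vert$, and with the fact that $d(\rho(\gamma_n)o,o)\geq l_S(\rho(\gamma_n))$, this is a contradiction for $n$ large, which completes the proof. I expect the main obstacle to be the scale-matching step: calibrating the excursion length $\ell_n$, the index $i(n)$, and the additive constants so that both Theorem \ref{magic-len-S_{0,4}} applies and the trimmed word $w$ remains a genuine quasi-loop of controlled quality — this is precisely where the bounds on the $(N_i^n)$ obtained in the first step, and the inequalities \eqref{rec-li}--\eqref{i<li} controlling the $l_i$, are indispensable.
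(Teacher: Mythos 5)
Your proposal follows the paper's overall strategy faithfully (contradiction, boundedness of the continued-fraction coefficients via Proposition \ref{Sphere-unif-quasi-geod} and Lemma \ref{D(G)neqG}, excursions, quasi-loops, the Special-lengths Theorem, recursion on the remainders), but there is a genuine gap at the step where you calibrate the quasi-loop to the special length. You propose to take the quasi-loop $u$ produced by a large excursion, pick $i=i(n)$ with $l_i(\gamma_n)\leq |u|+5 < l_{i+1}(\gamma_n)$, and then \emph{truncate} $u$ to a prefix $w$ of length $l_i(\gamma_n)-5$, claiming the error is ``a bounded additive error coming from the 5 trimmed letters.'' This is not so: the number of trimmed letters is $|u|-(l_i(\gamma_n)-5)$, which can be as large as $l_{i+1}(\gamma_n)-l_i(\gamma_n)-1\approx N_{i+1}^n\, l_i(\gamma_n)$, i.e.\ a definite multiple of $|w|$, not a bounded quantity. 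More fundamentally, a prefix of an $\eps$-quasi-loop need not be a quasi-loop at all: if $d(\rho(u)o,o)\leq \eps|u|$ and $w$ is a prefix with $|u|=|w|+T$, the best you can say is $d(\rho(w)o,o)\leq \eps|u|+C'T$, which is only of the form $\eps'|w|$ when $T$ is a small fraction of $|w|$. (Think of a word whose orbit path goes far from $o$ and comes back: the whole word is a quasi-loop, its first half is not.)

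The paper avoids this by never truncating a quasi-loop. Instead it fixes $i$ \emph{in advance} (large enough in terms of $\eps$ and the remainder threshold $r$, so that $l_i(\gamma_n)\geq i$ is at the right scale and $l_i(\gamma_n)\leq L_i$ uniformly), and then shrinks the \emph{excursion}, not the quasi-loop: Lemma \ref{temps-excursion}, iterated, produces inside the $K_n$-excursion a sub-excursion of length $l'_n$ with $\frac{l_i(\gamma_n)-5}{2}\leq l'_n< l_i(\gamma_n)-5$ and depth $K'_n\geq K_n$, and only then does Lemma \ref{excursion->QB-sph} convert it into an $\eps$-quasi-loop $v$. Being a deep excursion is stable under passing to sub-excursions, which is exactly the property that survives the rescaling and that the quasi-loop property lacks. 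The word $w$ fed to Theorem \ref{magic-len-S_{0,4}} is then taken to be a subword of length exactly $l_i(\gamma_n)-5$ admitting $v$ as a prefix; the disjoint occurrences of $w^{\pm1}$ yield disjoint occurrences of $v^{\pm1}$, which are the actual quasi-loops and which occupy at least roughly half of each occurrence of $w$ (whence the constant $\beta=\alpha/16$ in Lemma \ref{decoupe-restes-c-sph} rather than $\alpha$). Fixing $i$ beforehand also guarantees the remainders have length at least $\frac{i-7}{2}\geq r$, which is what allows the recursion of Lemma \ref{trouve-gamma-sph} to be re-applied to them; with your $n$-dependent choice of $i(n)$ this lower bound is not controlled, since the bounds $N_i$ on the partial quotients are not uniform in $i$.
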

	
	\begin{proof}
		Pick $\rho : \groupe \to \mathrm{Isom}(X)$ a Bowditch representation of $\groupe$, $o \in X$ a basepoint and let $C,C'>0$ be two constants such that :
		\begin{equation} \label{BQ-constant-sph}
			\forall \gamma \in \simple, \qquad \frac{1}{C}\Vert\gamma\Vert \leq l(\rho(\gamma)) \quad \text{ and } \quad \forall u \in \groupe, \quad d(\rho(u)o,o) \leq C'\vert u\vert.
		\end{equation}
		Recall that the existence of the constant $C$ comes from the hypothesis that the representation is Bowditch and Lemma \ref{simple->hyp} and the existence of the constant $C'$ is true for any representation of a finitely generated group. Note that such constants automatically satisfy $CC'\geq 1$. \\
		
		By contradiction, let us suppose that there exists a sequence $(\gamma_n)_{n\in \N}$ of cyclically reduced simple elements in $\simple$ satisfying the following hypothesis : 
		
		\begin{equation*}   \tag{$H_2$} \label{hyp-absurde-sph} 
			\sup \{d(x,\axis(\rho(\gamma_n))) \mid x \in \tau_\rho(L_{\gamma_n}) \} \underset{n \to \infty} \longrightarrow +\infty
		\end{equation*}
		
		Such a sequence in now fixed for all the section \ref{sph-first-step}. Up to extracting, we can assume that the elements $\gamma_n$ are pairwise distinct and that $\Vert \gamma_n \Vert \to \infty$, otherwise it would contradict the hypothesis \eqref{hyp-absurde-sph}.
		
		\subsection{Boundedness of the integers of the continued fraction expansion of $\gamma_n$} ~\\ 
		For all integer $n$, the element $\gamma_n$ is simple, thus by Proposition \ref{essential->Q} corresponds to a rational, and then we can consider the continued fraction expansion of its slope : 
		\begin{equation*}   
			\slope(\gamma_n)=[N_1^n,\hdots,N_{r(n)}^n].
		\end{equation*}
		Now we will prove that we can restrict our study to the case where the integers $N_i^n$ are bounded in $n$ : 
		\begin{Lemma} \label{forme-frac-continue-sph}
			Up to subsequence, $r(n) \to \infty$ and for all $i \in \N$, $(N_i^n)_{n\in \N \hspace{0,1cm} \vert \hspace{0,1cm} r(n) \geq i}$ is bounded. 
		\end{Lemma}
		
		\begin{proof}
			Suppose there exists $k \in \N$ such that $(N_k^n)_{n}$ is defined for infinitely many values of $n$ and is not bounded. Then consider the integer $0 \leq i $ such that $(N_{i+1}^n)_n \underset{n \to \infty} \longrightarrow \infty$ (after passing to subsequence) and for all $1\leq j \leq i$, the sequence $(N_j^n)_n$ is bounded. Therefore, again after passing to subsequence we assume that for all $1 \leq j \leq i$ there exists an integer $N_j$ such that for all $n \in \N$ such that $r(n) \geq j$, $N_j^n=N_j$. Thus $\slope(\gamma_n)=[N_1,\hdots,N_i,N_{i+1}^n,\hdots,N_{r(n)}^n]$. Now, use Lemma \ref{general-form-gamma} to deduce the existence of a simple word $\gamma_i \in \simple$ of slope $[N_1,\hdots,N_i]$ and two words $\delta_1,\delta_2 \in \groupe$, such that for all $n \in N$, $\gamma_n$ can be written as a (cyclic-permutation of a) concatenation of subwords of the form : $$(\gamma_i)^{m_1(n)}\t{\delta_1}(\gamma_i)^{-m_2(n)}\t{\delta_2},$$ with $m_1(n),m_2(n) \geq \frac{N_{i+1}^n-1}{2}$, 
			$\t{\delta_1}\in \{\delta_1,\delta_1^{-1}\}$ and $\t{\delta_2}\in \{\delta_2,\delta_2^{-1}\}$. \\
			Now set $G=\rho(\gamma_i),D_1=\rho(\delta_1)$ and $D_2=\rho(\delta_2)$. Then, by considering the bi-infinite word obtained by concatenating infinitely many copies of $\gamma_n$, or equivalently the bi-infinite word obtained by following the geodesic $L_{\gamma_n}$ in the Cayley graph, we can see $\rho_{| L_{\gamma_n}}$ as an element of $\mathcal{G}(G,D_1,D_2)$ (the definition is given at the beginning of section \ref{sph-UQG}). Let us introduce the constants $\lambda>0, k\geq 0$ and $N \in \N^*$ as defined in Lemma \ref{Sphere-unif-quasi-geod}. For $n$ sufficiently large, $m_1(n) \geq N$ and $m_2(n) \geq N$ because $m_1(n),m_2(n) \geq \frac{N_{i+1}^n-1}{2}$, and by hypothesis $N_{i+1}^n \underset{n \to \infty} \longrightarrow \infty$. Hence, $\rho_{L_{\gamma_n}}$ is an element of $\mathcal{G}_N(G,D_1,D_2)$ for $n$ sufficiently large. Let us now use the second part of Lemma \ref{general-form-gamma} to justify that for all $n \in \N$, the word $(\gamma_i)^n\delta_1(\gamma_i)^{-n}\delta_2$ is simple. Then, by Lemma \ref{D(G)neqG}, we obtain that $D_1(G^-)\neq G^-$ and $D_2(G^+)\neq G^+$. Now we can apply Proposition \ref{Sphere-unif-quasi-geod} to justify that $\tau_\rho(\gamma_n)$ are uniform (in $n$) quasi-geodesic in $X$. \\
			This contradicts our hypothesis \eqref{hyp-absurde-sph} on $\rho$ for $n$ sufficiently large. Hence, for all $i \in \mathbb{N}$, $(N_i^n)_n$ is bounded. \\
			Let us now justify that $r(n) \to + \infty$. If $r(n)$ stays bounded, $r(n)\leq R$, then for all $1 \leq i \leq R$, $(N_i^n)_n$ is bounded by what has been previously done and so the word length of $\gamma_n$ is also bounded, which is false. Thus $r(n) \to +\infty$. \\
			In particular, we deduce that under the assumption \eqref{hyp-absurde-sph}, the sequence  $(N_i^n)_n$ is always well-defined for $n$ sufficiently large ($n$ such that $r(n) \geq i$).
		\end{proof}
		
		\subsection{Uniform bound on the lengths $l_i(\gamma_n)$}~\\ 
		\label{bornes-li-sph}
		From Lemma \ref{forme-frac-continue-sph}, we deduce that for all integer $i$, there exists a constant $N_i$ such that $N_i^n \leq N_i$ for all integer $n$.  Then using inequality \eqref{rec-li} and \eqref{l'i<2li} on the lengths $l_i(\gamma_n)$ and $l_{i-1}(\gamma_n)$ of section \ref{subsec:simple closed curves}, we obtain the following uniform bound on the lengths $l_i(\gamma_n)$ :
		
		\begin{align}
			\forall n \in \mathbb{N}, \forall 1 \leq i\leq r(n), \qquad  \frac{l_i(\gamma_n)}{l_{i-1}(\gamma_n)} \leq N_i+1. \\
			\intertext{We deduce for every integer $i$ the existence of constants $L_i>0$ such that :}
			\forall n\in \mathbb{N}, \forall 0 \leq i \leq r(n), \qquad i \leq l_i(\gamma_n) \leq L_i. \label{L_i}
		\end{align}
		
		\subsection{Excursions of the orbit map} 	\label{excursion-orbit-map-sph} ~\\ 
		Let $\gamma$ be a cyclically reduced simple element in $\simple$. Recall that $L_\gamma$ is the axis of $\gamma$ in the Cayley graph of $\groupe=\F_3$.
		We define the map : $E_\gamma : L_\gamma \longrightarrow \R_+ $ by $E_\gamma(u)=d(\tau_\rho(u),\axis(\rho(\gamma)))$. The map $E_\gamma$ is $\gamma$-invariant because of the $\rho$-equivariance of the orbit map $\tau_\rho$  and  the $\rho(\gamma)$-invariance of $\mathrm{Axis}(\rho(\gamma))$. Moreover, the map $E_\gamma$ is Lipschitz-continuous (hence continuous) because the orbit map $\tau_\rho$ is Lipschitz-continuous and the distance map to any subspace in a metric space is $1$-Lipschitz-continuous. 
		
		\begin{Definition} \label{excursion-orbit-map-sph-def} Let $\gamma$ be a simple element in $\simple$. Let $[u,v] \subset L_\gamma$ be a segment in the geodesic~$L_\gamma$. We say that $[u,v]$ is an \emph{excursion} if the map $E_{\gamma}$ satisfies $E_{\gamma}(u)=E_{\gamma}(v)$ and for all $t \in [u,v], E_{\gamma}(t) \geq E_{\gamma}(u)$. In this case, we say that the non-negative real $d(u,v)$, which is the length of the segment $[u,v]$ in $L_\gamma$, is the \emph{length of the excursion $[u,v]$}. Let $K \geq 0$. We say that $[u,v]$ is a \emph{$K$-excursion} if $[u,v]$ is an excursion such that $E_\gamma(u)=K$. \\ 
		At last, we say that $\gamma$ has an excursion (respectively a $K$-excursion) if there exists $[u,v] \in L_\gamma$ such that $[u,v]$ is an excursion (respectively a $K$-excursion). 
		\end{Definition}
		
		The following lemma gives the existence of excursions as large and as long as we want. 
		
		\begin{Lemma} \label{excursions-grandes-sph}
			There exist two sequences of positive reals $(K_n)_{n\in \mathbb{N}}$ and $(l_n)_{n \in \mathbb{N}}$, such that $K_n \to \infty$, $l_n \to \infty$ and, up to subsequence,  for all $n \in \mathbb{N}$, $\gamma_n$ has a $K_n$-excursion of length~$l_n$. 
		\end{Lemma}
		
		\begin{proof}
			The proof is exactly the same as when studying Bowditch representations of $\F_2$, see Lemma 5.11 in \cite{schlich_equivalence_2024}.
		\end{proof}
		
		\subsection{Quasi-loops}~\\
		\label{quasi-loop-sph}
		We now introduce quasi-loops, which are elements in the group  that do not displace the basepoint much, compared to their word-length. 
		\begin{Definition}\label{sph-QL}
			Let $\varepsilon > 0$ and $w \in \groupe=\F_3$ (not necessarily a simple element). We say that $w$ is an $\emph{$\varepsilon$-quasi-loop}$ if it satisfies the following inequality :
			$ d(\rho(w)o,o) \leq \varepsilon ~ |w|$.
		\end{Definition}
		
		Let $\gamma$ be a simple element in $\simple$ and $u \in L_\gamma$. The notation  $\lfloor u \rfloor$ stands for the integer point in $L_\gamma$ just before $u$ (if $u$ is an integer point in $L_\gamma$, $\lfloor u \rfloor=u$) and $\lceil u \rceil$ for the integer point of $L_\gamma$ strictly just after $u$ (thus $\lfloor u \rfloor$ and $\lceil u \rceil$ are the endpoints of an edge of length $1$ in the Cayley graph and $u$ belongs to this edge). \\ 
		
		The following lemma says that by considering very large and long excursions we obtain quasi-loops. 
		\begin{Lemma} \label{excursion->QB-sph}
			Let $\eps > 0$. There exist $l_\eps > 0$ and $K_\eps >0$ such that for all simple elements $\gamma \in \simple$, for all $K \geq K_\eps, l \geq l_\eps$, if $[u,v]$ is a $K$-excursion of length $l$, then the element  $w=\lf u \rf^{-1}\lf v \rf$ (which is a subword of $\gamma$) is an $\eps$-quasi-loop. 
		\end{Lemma}
		\begin{proof}
			The proof is exactly the same as when studying Bowditch representations of $\F_2$, see Lemma 5.12 in \cite{schlich_equivalence_2024}. It relies mainly on the geometry of large uniform neighborhoods of geodesics in $X$, studied in the appendix A in \cite{schlich_equivalence_2024}. 
		\end{proof}
		
		\subsection{Finding many quasi-loops}~\\
	 	Let $\gamma$ be a simple element in $\simple$. In the following lemma, we show that we can find, in any sufficiently long subword $u$ of $\gamma$, many disjoint quasi-loops (conditions \eqref{concat} and \eqref{are-ql}) which cover a uniformly bounded proportion of the subword $u$ (condition \eqref{unif-prop}). We also require that the "remainders" in the word $u$ are not too small (condition \eqref{remainders}). 
		
		\begin{Lemma} \label{decoupe-restes-c-sph} Let $\beta=\frac{1}{480}(=\frac{\alpha}{16}$, where $\alpha$ is the constant introduced in Theorem \ref{magic-len-S_{0,4}}). \\
			Let $\displaystyle 0<\varepsilon < \frac{1}{C}$ and $r >0$. There exists a constant $R>0$ and an integer $n_0 \in \N$, such that, given any  integer $n \geq n_0$ and subword $u$ of $\gamma_n$ such that $|u| \geq R$, there exists a positive integer $q \in \N^*$, a subset $QL \subset \{1,\hdots,q\}$ and $q$ words $u_1, \hdots,u_q \in \F_2$ such that :
			\begin{enumerate}
				\item \label{concat} $u=u_1\hdots u_q$
				\item \label{are-ql} For all $k \in QL$, $u_k$ is an $\varepsilon$-quasi-loop
				\item \label{unif-prop} $\displaystyle \sum_{k \in QL} |u_k| \geq \beta|u|$
				\item \label{remainders} For all $k \notin QL$, $|u_k| \geq  r$
			\end{enumerate}
		\end{Lemma}
		In order to prove Lemma \ref{decoupe-restes-c-sph}, we will use the following result on excursions, which allows us to find many other smaller excursions in a given one. This lemma and its proof can be found in Lemma 6.10 in \cite{schlich_equivalence_2024}. 
		
		\begin{Lemma} \label{temps-excursion}
			Let $l>0,K>0$ and $[u,v]$ be a $K$-excursion of length $l$. Then there exists $[u',v']\subset [u,v]$ such that $[u',v']$ is a $K'$-excursion of length $l'$, with $\frac{l}{2} \leq l' < l$ and $K' \geq K$. 
		\end{Lemma}
		
		\begin{proof}[Proof of Lemma \ref{decoupe-restes-c-sph}]
			Let us first introduce the constants $K_\eps$ and $l_\eps$ defined by Lemma \ref{excursion->QB-sph}. Now we choose and fix for the rest of the proof an integer $i$ satisfying $i \geq \max(10,2r+7,2l_\eps+5)$. Recall that the constant $L_i$ has been introduced in \eqref{L_i}. Let us also introduce the sequences $(K_n)_{n\in \N}$ and $(l_n)_{n\in \N}$ of Lemma \ref{excursions-grandes-sph}. Since $K_n \to + \infty$ and $l_n \to +\infty$, we can find an integer $n_0$ such that for all $n\geq n_0$, we have $K_n \geq K_\eps, l_n\geq l_\eps$ and $L_i-5< l_n$. We also set $\alpha$ to be the constant defined in Theorem \ref{magic-len-S_{0,4}} ($\alpha=\frac{1}{30}$, but its precise value does not matter, as long as it is a universal constant). Finally, we fix $R$ such that $R \geq \frac{3L_i}{\alpha}$. Note that due to the value of the constant $\alpha$, we also automatically have $R \geq 9L_i$. Now let us show that with this choices for $R$ and $n_0$, the lemma is true. \\
			
			Let $n \geq n_0$ and $u$ be a subword of (a cyclic-permutation of) $\gamma_n$ (or its inverse) such that $|u|\geq R$. \\ 
			By Lemma \ref{excursions-grandes-sph}, $\gamma_n$ has a $K_n$-excursion of length $l_n$. Moreover, we have $l_i(\gamma_n) \leq L_i$ by \eqref{L_i} so in particular $l_i(\gamma_n)-5 <l_n$. Therefore, we can use Lemma \ref{temps-excursion} to ensure the existence of a $K'_n$-excursion of length $l'_n$, with $\frac{l_i(\gamma_n)-5}{2}\leq l'_n < l_i(\gamma_n)-5$ and $K'_n \geq K_n$. Denote it by $[x,y] \subset L_{\gamma_n}$. But then we have $K'_n \geq K_n \geq K_\eps$ and 
			\begin{align*}
				l'_n & \geq \frac{l_i(\gamma_n)-5}{2} \geq \frac{i-5}{2} \qquad \text{ by \eqref{L_i}} \\
				& \geq l_\eps \qquad \text{ by the definition of $i$ } 
			\end{align*}
			so we can apply Lemma \ref{excursion->QB-sph} to ensure that the element $v=\lf x \rf^{-1}\lf y \rf $ (which is a subword of $\gamma$) is an $\eps$-quasi-loop. Let us compute the length of $v$ :
			\begin{align}
				\text{ We have } & |v| =d(\lf x \rf, \lf y \rf ) = d(\lf x \rf, x)+d(x,y)-d(y,\lf y \rf) \nonumber \\
				\text{so } \qquad & d(x,y)-1 < |v| < d(x,y)+1 \nonumber \\
				\text{and } \qquad & l'_n-1 < |v| < l'_n+1 \qquad \text{ since } d(x,y)=l'_n \nonumber \\
				\text{ then } \qquad & \frac{l_i(\gamma_n)-5}{2}-1 < |v| < l_i(\gamma_n)-5+1 \nonumber \\
				\text{ and finally } \qquad & \frac{l_i(\gamma_n)-7}{2} \leq |v| \leq l_i(\gamma_n)-5 \qquad \text{ because $|v|$ and $l_i(\gamma_n)$ are integers.} \label{encadr-|v|}
			\end{align}
			Now consider $w$ the subword of $\gamma_n$ of length $l_i(\gamma_n)-5$ such that $v$ is a prefix of $w$ and let us write $w=vv'$, with $|w|=l_i(\gamma_n)-5$. \\
			
			We are now going to use Theorem \ref{magic-len-S_{0,4}} for the subword $u$ of $\gamma_n$ and $w$. This is possible because we have :
			\begin{itemize}
				\item chosen $i$ such that $l_i(\gamma_n)$ is sufficiently large : $l_i(\gamma)\geq i \geq 10$,
				\item chosen $w$ of the right length : $|w|=l_i(\gamma_n)-5$,
				\item fix $u$ sufficiently large : 
				\begin{align*}
					|u| & \geq R \geq 9L_i \qquad \text{by definition of $R$} \\
					& \geq 9l_i(\gamma_n) = 3(2l_i(\gamma)+l_i(\gamma)) \geq 3(l'_i(\gamma_n)+1+l_i(\gamma)) \text{ by the inequality \eqref{l'i+1<2li}} 
				\end{align*}
			\end{itemize}
			Then we can write $u=u_1\hdots u_q$ such that there exists a subset $\cI \subset \{1,\hdots,q\}$ satisfying :
			\begin{enumerate}
				\item For all $k\in \cI$, $u_k \in \{w,w^{-1}\}$.
				\item $\displaystyle \sum_{k\in \cI} |u_k| \geq \alpha|u|$.
			\end{enumerate}
			But recall that $w=vv'$, so the inverse is $w^{-1}={v'}^{-1}v^{-1}$. Then for all $k \in \cI$, we can write $u_k=vv'$ or $u_k={v'}^{-1}v^{-1}$. Therefore, the word $u$ can be written as a concatenation of the subwords $u_k$ for $k \notin \cI$ and $v,v',v^{-1},{v'}^{-1}$, with at least $\# \cI$ terms of the form $v$ or $v^{-1}$. Thus, by denoting $v_k$ these $\# \cI$ appearances of $v$ and $v^{-1}$ and by combining together all successive terms that are not $v$ or $v^{-1}$ (that is all successive terms of the form $u_k$ for $k \notin \cI$, $v',{v'}^{-1}$) into factors called $v'_k$, we can write the following concatenation of $u$ : 
			\begin{equation*}
				u=v'_1v_1v'_2v_2\hdots v'_pv_pv'_{p+1}
			\end{equation*}
			satisfying $p=\# \cI$ and for all $k \in \{1,\hdots,p\}$, $v_k \in \{v,v^{-1} \}$. \\
			Now let us show the four points of Lemma \ref{decoupe-restes-c-sph} :
			\begin{enumerate}
				\item  If $p$ is odd, we combine the previous terms together as follows :
				\begin{align}
					u & =\underbrace{v'_1v_1v'_2}v_2 \underbrace{v'_3v_3v'_4}  \hdots v_{p-1}\underbrace{v'_pv_pv'_{p+1}} \nonumber \\ 
					u & = \quad v''_1 \quad v_2 \quad v''_3 \quad   \hdots v_{p-1} \quad \,\, v''_p \label{decomposition-odd}
				\end{align}
				If $p$ is even, we combine the previous terms together as follows : 
				\begin{align}
					u & =\underbrace{v'_1v_1v'_2}v_2 \underbrace{v'_3v_3v'_4}  \hdots v_{p-2} \underbrace{v'_{p-1}v_{p-1}v'_pv_pv'_{p+1}} \nonumber \\ 
					u & = \quad v''_1 \quad v_2 \quad v''_3 \quad   \hdots v_{p-2} \qquad \quad v''_{p-1} \label{decomposition-even}
				\end{align}
				and this will be our decomposition of $u$, with $q=p$ if $p$ is odd and $q=p-1$ if $p$ is even, $u_k=v''_k$ if $k$ is odd and $u_k=v_k$ if $k$ is even, and $QL = \{ 1 \leq k \leq p-1 \mid k \text{  is even} \}$ . \\
				
				\item Let $k \in \{1,\hdots,p\}$, then $v_k \in \{v,v^{-1}\}$. But recall that by construction, $v$ is an $\eps$-quasi-loop, then so is $v^{-1}$, hence $v_k$ is always an $\eps$-quasi-loop. \\
				
				\item Let us start by showing that $p \geq 3$.\begin{align*}
					pL_i & \geq pl_i(\gamma_n) \qquad \text{ by definition of $L_i$} \\
					& \geq \# \cI (l_i(\gamma_n)-5) = \sum_{k \in \cI} |u_k| \qquad \text{ since } p=\# \cI \text{ and } |u_k|=|w|=l_i(\gamma_n)-5 \\ 
					& \geq \alpha|u| \qquad \text{ using point \ref{proportion} of Theorem \ref{magic-len-S_{0,4}}} \\ 
					& \geq \alpha R \qquad \text{by hypothesis on $u$} \\
					& \geq \alpha \frac{3L_i}{\alpha}=3L_i \qquad \text{ since $R$ is fixed such that } R \geq \frac{3L_i}{\alpha
					} \\
					\text{and thus we deduce } \qquad p & \geq 3.
				\end{align*} This ensures that in our previous decomposition \eqref{decomposition-odd} and \eqref{decomposition-even}, there is at least one term $v_k$. 
				Indeed, recall that $QL = \{ 1 \leq j \leq p-1 \mid j \text{  is even} \} $. In particular we have $ p = \left\{ \begin{array}{cc}
					2 \# QL +1  & \text{ if $p$ is odd, } \\
					2 \# QL +2  & \text{ if $p$ is even, }
				\end{array} \right. $
				\begin{align}
					\text{ so in any cases } \qquad & p\leq 2\# QL +2. \label{p<QL} \\
					\text{ and therefore } \qquad & 1 \leq \# QL. \label{1leqQL}
				\end{align}
				Let us show that $\displaystyle \sum_{k \in QL} |v_k| \geq \beta|u|$ : 
				\begin{align*}
					|u| & \leq \frac{1}{\alpha} \sum_{k \in \cI} |u_k| \qquad \text{ by point \ref{proportion} of Theorem \ref{magic-len-S_{0,4}}} \\
					& \leq \frac{1}{\alpha} \# \cI |w| \qquad \text{ by point \ref{change-letter} of Theorem \ref{magic-len-S_{0,4}}} \\
					& \leq \frac{1}{\alpha} p (l_i(\gamma_n)-5) \qquad \text{since } \# \cI =p \text{ and } |w|=l_i(\gamma_n)-5 \\ 
					& \leq \frac{1}{\alpha} (2 \# QL +2) (l_i(\gamma_n)-5) \qquad \text{ by the inequality \eqref{p<QL}} \\
					& \leq \frac{4}{\alpha} \# QL(l_i(\gamma_n)-5) \qquad \text{ by the inequality \eqref{1leqQL}} \\
					& \leq \frac{4}{\alpha} \# QL(2|v|+2) \qquad \text{using \eqref{encadr-|v|}}
					\end{align*}
				In addition :
					$ \displaystyle |v| \geq \frac{l_i(\gamma_n)-7}{20} \geq \frac{i-7}{2} \geq \frac{10-7}{2} \geq 1$. \\
					Then
					$ \displaystyle |u|  \leq \frac{4}{\alpha}\# QL\times 4 |v| \leq \frac{16}{\alpha} \sum_{k \in QL} |v_k| $   since $ v_k \in \{v,v^{-1} \}$, and thus we have the desired inequality with $\displaystyle \beta=\frac{\alpha}{16}$. \\
				
				\item Finally, let $k \notin QL$, and let us compute the length of $v''_k$ : 
				\begin{align*}
					|v''_k| & \geq |v'_kv_kv'_{k+1}|  \qquad\text{ (and there is in fact equality unless $p$ is even and $k=p-1$)} \\
					& \geq |v_k|=|v| \qquad \text{ by definition of $v_k$} \\
					& \geq \frac{l_i(\gamma_n)-7}{2} \qquad \text{ by \eqref{encadr-|v|}} \\
					& \geq \frac{i-7}{2} \qquad \text{ by inequality \eqref{L_i}} \\
					& \geq r \qquad \text{ since $i$ has been chosen such that $i \geq 2r+7$}. 
				\end{align*}
				And this finishes the proof. 
			\end{enumerate}       
		\end{proof}

		\subsection{Finding arbitrarily many quasi-loops}~\\
		Once we have Lemma \ref{decoupe-restes-c-sph}, the end of the proof of Proposition \ref{BQ=>CTU-Sph} is the same as in the case of Bowditch representations of $\F_2$. For completeness, we recall the last steps. \\ 
		
		Next lemma finds a simple word $\gamma$ (from the sequence $(\gamma_n)_{n\in \N}$) which contains disjoint quasi-loops which cover an arbitrarily large proportion of the word $\gamma$. 
		
		\begin{Lemma} \label{trouve-gamma-sph}
			Let $0 < \varepsilon < \frac{1}{C}$ and $1 - \frac{1}{C'}(\frac{1}{C}-\varepsilon) <  \lambda < 1$. There exists a simple word $\gamma$ such that $\gamma$ contains disjoint $\eps$-quasi-loops that occupy at least a proportion $\lambda$ of $\gamma$.
		\end{Lemma}
		
		\begin{proof}
			We refer the reader to the proof of Lemma 5.16  in \cite{schlich_equivalence_2024}, using here Lemma \ref{decoupe-restes-c-sph} instead of Lemma 5.15 in \cite{schlich_equivalence_2024}.\\
			The idea of the proof is to use Lemma \ref{decoupe-restes-c-sph} recursively : first apply Lemma \ref{decoupe-restes-c-sph} to $u=\gamma$ to find some quasi-loops, then apply again Lemma \ref{decoupe-restes-c-sph} to the subwords of $\gamma$ which are not already known to be quasi-loop, so that we cover these remainders by a uniform proportion of new quasi-loops, and then repeat the procedure until we found as many quasi-loops as required. 
		\end{proof}
		
		Finally we show that if such an element exists, we obtain an inequality on the displacement of the basepoint $o$. 
		\begin{Lemma} \label{contrad-sph}
			Let $0 < \varepsilon < \frac{1}{C}$ and $1 - \frac{1}{C'}(\frac{1}{C}-\varepsilon) <  \lambda < 1$. Let $\gamma$ be a simple word in $\mathbb{F}_2$ which contains $\varepsilon$-quasi-loops which occupy at least a proportion $\lambda$ of $\gamma$. Then $$d(\rho(\gamma)o,o) < \frac{1}{C} |\gamma|. $$  
		\end{Lemma}
		
		\begin{proof}
		For details, we refer the reader to the proof of Lemma 5.17 in \cite{schlich_equivalence_2024}.
		\end{proof}
		
		And thus this inequality contradicts the Bowditch hypothesis \eqref{BQ-constant-sph}, so we found a contradiction. Proposition \ref{BQ=>CTU-Sph} is proved. 
	\end{proof}
	
	\section[Simple-stability of Bowditch representations of $\groupe$]{Simple-stability of Bowditch representations of $\groupe$}

	\sectionmark{Towards simple-stability}
	\label{second-step-proof-sph}
	
	Before starting the last step of the proof of Theorem \ref{sph-BQ=PS}, we need to state a few lemmas on continuous map. The goal is to prove Lemma \ref{eps-slope}, which is in the same flavour as Lemma \ref{temps-excursion}, and which will be used in section \ref{end-proof}.
	
	\subsection{A few lemmas on continuous maps}
	
	\begin{Lemma} \label{slope-ferme}
		Let $x<y$ be two reals and $f:[x,y] \to \R$ be a continuous map. Let $\eps >0$. Denote $l=|y-x|$. Let us define :
		\begin{equation*}
			L_{f,\eps}=\{l' \in [0,l] \mid \exists (x',y') \in [x,y]^2, \, \, l'= |y'-x'| \text{ and } |f(y')-f(x')| \leq \eps |y'-x'|\}
		\end{equation*}
		Then the set $L_{f,\eps}$ is a closed subset of $[0,l]$.    
	\end{Lemma}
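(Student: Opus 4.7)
\medskip

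The plan is to prove this via a standard compactness and continuity argument. Let me outline the strategy.

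Take a sequence $(l'_n)_{n \in \N}$ in $L_{f,\eps}$ converging to some $l'_\infty \in [0,l]$; the goal is to show $l'_\infty \in L_{f,\eps}$. For each $n$, by definition of $L_{f,\eps}$ I can pick witnesses $(x'_n, y'_n) \in [x,y]^2$ satisfying $l'_n = |y'_n - x'_n|$ and $|f(y'_n) - f(x'_n)| \leq \eps |y'_n - x'_n|$.

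Since $[x,y]^2$ is compact, the sequence $(x'_n, y'_n)_{n \in \N}$ admits a convergent subsequence, whose limit I denote $(x'_\infty, y'_\infty) \in [x,y]^2$. Passing to this subsequence, continuity of the absolute value yields $|y'_\infty - x'_\infty| = \lim_n |y'_n - x'_n| = \lim_n l'_n = l'_\infty$. Likewise, continuity of $f$ (and of $|\cdot|$) gives
\begin{equation*}
|f(y'_\infty) - f(x'_\infty)| = \lim_n |f(y'_n) - f(x'_n)| \leq \lim_n \eps |y'_n - x'_n| = \eps\, l'_\infty = \eps |y'_\infty - x'_\infty|.
\end{equation*}
Hence the pair $(x'_\infty, y'_\infty)$ is a valid witness for $l'_\infty$, so $l'_\infty \in L_{f,\eps}$.

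There is no serious obstacle here: the proof is a textbook application of sequential compactness of $[x,y]^2$ together with the continuity of $f$ and of the Euclidean norm. The only thing to be a little careful about is that the inequality defining $L_{f,\eps}$ is non-strict ($\leq$), which is exactly what allows it to pass to the limit — a strict inequality would in general fail to be preserved.
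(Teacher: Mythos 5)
Your proof is correct and follows essentially the same argument as the paper: extract a convergent subsequence of witnesses by compactness of $[x,y]^2$, then pass to the limit in the defining (non-strict) inequality using continuity of $f$. Nothing to add.
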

	
	\begin{proof}
		Let $(l_n)_{n \in \N}$ be a sequence of $L_{f,\eps}$ such that $l_n \to l_\infty \in [0,l]$. Let $(x_n)_{n\in \N}$ and  $(y_n)_{n\in \N}$ be two sequences of $[x,y]$ such that $|f(y_n)-f(x_n)|\leq \eps(y_n-x_n)$ and $y_n-x_n=l_n$. Up to subsequence, since $[x,y]$ is compact, we can assume that $x_n \to x_\infty \in [x,y]$ and $y_n \to y_\infty \in [x,y]$. Then by continuity, we obtain $|f(y_\infty)-f(x_\infty)| \leq \eps(y_\infty-x_\infty)$ and thus $l_\infty \in L_{f,\eps}$.
	\end{proof}
	
	\begin{Lemma} \label{exc-plus-petite}
		Let $g : [x,y] \to \R$ a continuous map such that $g(x)=g(y)$ and $g(z)\geq g(x)$ for all $z \in [x,y]$. Denote $l=\vert x-y \vert$. Then there exists two reals $x',y' \in [x,y]$ such that $g(x')=g(y')$, $g(z)\geq g(x')$ for all $z \in [x',y']$ and $ \displaystyle \frac{l}{2} \leq \vert x'-y' \vert < l$.
	\end{Lemma}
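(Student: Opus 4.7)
The plan is to split the proof into two cases according to whether $g$ attains the value $g(x)$ somewhere in the open interval $(x,y)$.

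Suppose first that there exists $z_0 \in (x,y)$ with $g(z_0) = g(x)$. Then both $[x, z_0]$ and $[z_0, y]$ inherit the excursion property (equal endpoint values $g(x)$, and $g \geq g(x)$ throughout, since these intervals are contained in $[x,y]$). Their lengths sum to $l$ and both are strictly less than $l$ because $z_0 \in (x,y)$. At least one of them has length $\geq l/2$, and we take that one as $[x', y']$.

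Otherwise, $g(z) > g(x)$ for all $z \in (x, y)$. Set $m := (x+y)/2$; then $g(m) > g(x)$. For each $a \in (g(x), g(m)]$, let $[x_a, y_a]$ denote the connected component of $m$ in the compact set $\{z \in [x,y] : g(z) \geq a\}$. Since the complement $\{g < a\}$ is open and the component $[x_a, y_a]$ is strictly separated from any other component, $g$ must drop below $a$ immediately to the left of $x_a$ (and symmetrically to the right of $y_a$); combined with $g(x_a) \geq a$ and continuity this forces $g(x_a) = g(y_a) = a$, while $g \geq a$ on $[x_a, y_a]$ by construction. Since $g(x) = g(y) < a$, we moreover have $x_a > x$ and $y_a < y$, so $y_a - x_a < l$ strictly, and $[x_a, y_a]$ is an excursion in the sense of the lemma.

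The main step, and the main obstacle, is to show that $y_a - x_a$ can be made at least $l/2$ by a suitable choice of $a$. Fix $\eps \in (0, l/4)$. The compact interval $[x+\eps, y-\eps]$ lies inside $(x,y)$, so by the Case 2 hypothesis and continuity of $g$, the minimum
\begin{equation*}
c_\eps := \min_{z \in [x+\eps, y-\eps]} g(z)
\end{equation*}
is strictly greater than $g(x)$. For any $a \in (g(x), c_\eps)$, we then have $[x+\eps, y-\eps] \subset \{g \geq a\}$; since $m$ belongs to this interval, it is contained in the connected component of $m$, yielding $x_a \leq x + \eps$ and $y_a \geq y - \eps$, hence $y_a - x_a \geq l - 2\eps > l/2$. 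Combined with $y_a - x_a < l$, we obtain a valid $[x', y'] := [x_a, y_a]$, completing Case 2.
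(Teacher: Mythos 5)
Your proof is correct. Note that the paper itself does not reprove this lemma; it simply cites Lemma~5.8 of \cite{schlich_equivalence_2024}, so there is no in-text argument to compare against. Your two-case argument is self-contained and sound: Case~1 (the value $g(x)$ is re-attained in the interior) is the easy splitting argument, and Case~2 correctly extracts the excursion as the connected component of the midpoint $m$ in the superlevel set $\{g\geq a\}$, with the choice $a\in(g(x),c_\eps)$ forcing that component to contain $[x+\eps,y-\eps]$ and hence to have length at least $l-2\eps>l/2$, while $x_a>x$ and $y_a<y$ give the strict upper bound $l$. One small point of hygiene: the phrase ``the component $[x_a,y_a]$ is strictly separated from any other component'' is not the right justification (components of a compact set need not be at positive distance from one another); what you actually need, and what does hold, is simply the maximality of the component: if $g\geq a$ on some $[x_a-\delta,x_a]$ then $[x_a-\delta,y_a]$ would be a larger connected subset of $\{g\geq a\}$ containing $m$, so there are points arbitrarily close to the left of $x_a$ where $g<a$, and continuity then gives $g(x_a)=a$ (and symmetrically $g(y_a)=a$). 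With that justification substituted, the argument is complete.
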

	
	\begin{proof}
	This lemma and its proof can be found in Lemma 5.8 in \cite{schlich_equivalence_2024}. 
	\end{proof}
	
	\begin{Lemma} \label{slope-plus-petite}
		Let $x<y$ be two reals and $f:[x,y] \to \R$ be a continuous map. Let $\eps >0$ such that $|f(y)-f(x)| \leq \eps |y-x|$. Denote $l=|y-x|$.
		There exists two reals $x',y'\in [x,y]$ such that $\displaystyle \frac{l}{2}\leq |y'-x'| <l$ and $|f(y')-f(x')|\leq \eps |y'-x'|$.
	\end{Lemma}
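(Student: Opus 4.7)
\textbf{Proof proposal for Lemma \ref{slope-plus-petite}.} The plan is to look at the two halves of the interval $[x,y]$ and show that the midpoint subdivision already produces a subinterval of length exactly $l/2$ on which the $\varepsilon$-slope condition is inherited from the whole interval. Set $m = x+\tfrac{l}{2} = y-\tfrac{l}{2}$ and introduce the continuous auxiliary function
\begin{equation*}
    \phi : [x,m] \longrightarrow \R, \qquad \phi(t) = f(t+\tfrac{l}{2}) - f(t).
\end{equation*}
The point is that $\phi(x) + \phi(m) = (f(m)-f(x)) + (f(y)-f(m)) = f(y) - f(x)$, so by hypothesis $|\phi(x) + \phi(m)| \leq \varepsilon l$. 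Our goal is to produce some $t \in [x,m]$ with $|\phi(t)| \leq \varepsilon \tfrac{l}{2}$; then $x' = t$ and $y' = t+\tfrac{l}{2}$ will satisfy $|y'-x'| = \tfrac{l}{2}$, which lies in $[\tfrac{l}{2}, l)$ because $l > 0$, and $|f(y')-f(x')| = |\phi(t)| \leq \varepsilon(y'-x')$.

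It remains to find such a $t$, which I will handle by a short case analysis on the signs of $\phi(x)$ and $\phi(m)$. If $\phi(x) \cdot \phi(m) \leq 0$ then by continuity of $\phi$ and the intermediate value theorem there exists $t \in [x,m]$ with $\phi(t) = 0$, in which case $|\phi(t)| = 0 \leq \varepsilon\tfrac{l}{2}$ trivially. If on the contrary $\phi(x)$ and $\phi(m)$ are both strictly positive or both strictly negative, then $|\phi(x)| + |\phi(m)| = |\phi(x) + \phi(m)| \leq \varepsilon l$, so at least one of them is $\leq \varepsilon \tfrac{l}{2}$, and we pick the corresponding $t \in \{x,m\}$. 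In either case the required $t$ exists, completing the proof.

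There is essentially no hard step here: the argument is a two-line case split combined with a telescoping identity, and the only mild subtlety is to make sure the resulting length $\tfrac{l}{2}$ lies strictly below $l$, which is guaranteed by the assumption $x<y$.
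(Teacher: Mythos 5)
Your proof is correct, but it takes a genuinely different route from the paper. The paper introduces the secant slope $s=\frac{f(y)-f(x)}{y-x}$, passes to the normalized function $g(z)=f(z)-s(z-x)$, and splits on whether $f$ meets the secant line at an interior point: if it does, one of the two resulting subintervals has length at least $l/2$ and carries slope exactly $s$; if it does not, $g$ (or $-g$) is an excursion and the paper invokes Lemma \ref{exc-plus-petite} to extract a sub-excursion of length in $[\frac{l}{2},l)$. Your argument instead bisects at the midpoint $m$ and applies the intermediate value theorem to the difference function $\phi(t)=f(t+\frac{l}{2})-f(t)$, using the telescoping identity $\phi(x)+\phi(m)=f(y)-f(x)$; this is entirely self-contained, avoids the auxiliary excursion lemma, and always produces a pair at distance exactly $\frac{l}{2}$, which satisfies the required constraint $\frac{l}{2}\leq |y'-x'|<l$ since $l>0$. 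What the paper's approach buys is the slightly stronger conclusion that the selected pair realizes the exact secant slope $s$ (not just a slope bounded by $\eps$), but that extra information is never used downstream: the only consumer is Lemma \ref{eps-slope}, whose iterated-halving argument works just as well with your version. Your proof is arguably the cleaner one for the statement as written.
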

	\begin{proof}
		Let us define $\displaystyle s=\frac{f(y)-f(x)}{y-x}$. Then by hypothesis on $f$ we have $|s| \leq \eps$. 
		\begin{itemize}
			\item Suppose that there exists $x<z<y$ such that $f(z)=f(x)+s(z-x)$. Then both $f(z)-f(x)=s(z-x)$ and $f(y)-f(z)=s(y-z)$. Moreover either $ \displaystyle z-x \geq \frac{y-x}{2}=\frac{l}{2}$ or $\displaystyle y-z \geq \frac{y-x}{2}=\frac{l}{2}$, and thus the claim is proved by setting $l'=z-x$ or $l'=y-z$.
			\item Now suppose that for all $x<z<y$, $f(z) \neq f(x)+s(z-x)$. Then by continuity, we have that for all $x \leq z \leq y$, $f(z) \geq f(x)+s(z-x)$ or for all $x \leq z \leq y$, $f(z) \leq f(x)+s(z-x)$. We set $g(z)=f(z)-s(z-x)$. Thus either the map $g$ or the map $-g$ satisfies the hypothesis of Lemma \ref{exc-plus-petite}. From this lemma we deduce the existence of $x'<y'$ such that $\displaystyle \frac{l}{2}\leq |x'-y'|<l$ and $g(x')=g(y')$, that is $f(y')-f(x')=s(y'-x')$, hence $\vert f(y')-f(x')\vert  \leq \eps \vert y'-x'\vert $.
		\end{itemize}
	\end{proof}
	
	\begin{Lemma} \label{eps-slope}
		Let $x<y$ be two reals and $f:[x,y] \to \R$ a continuous map. Let $\eps >0$ and assume that $|f(x)-f(y)|\leq \eps |x-y|$. \\
		Denote $l=|x-y|$. Then, for all $0<a<l$, there exists two reals $x'<y'$ such that $x \leq x'<y'\leq y$ satisfying :
		\begin{itemize}
			\item $\displaystyle \frac{a}{2} \leq |x'-y'| < a$
			\item $|f(x')-f(y')| \leq \eps |x'-y'|$
		\end{itemize}
	\end{Lemma}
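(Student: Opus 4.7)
The plan is to combine Lemma \ref{slope-ferme} (closedness of the set of admissible lengths) with Lemma \ref{slope-plus-petite} (ability to halve while preserving the slope bound). Concretely, with $L_{f,\varepsilon} \subset [0,l]$ defined as in Lemma \ref{slope-ferme}, the hypothesis $|f(y)-f(x)|\leq \varepsilon|y-x|$ tells us that $l \in L_{f,\varepsilon}$, so $L_{f,\varepsilon}$ is a nonempty closed subset of $[0,l]$.

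Fix $0 < a < l$ and consider the set $A = L_{f,\varepsilon} \cap [a/2,\, l]$. This is nonempty (it contains $l$), closed (as the intersection of two closed sets), and bounded below, so it attains its infimum: let $m = \inf A = \min A \in L_{f,\varepsilon}$, with $m \geq a/2$. I claim $m < a$, which would immediately produce the desired pair $x' < y'$ in $[x,y]$ with $|y'-x'| = m \in [a/2,\, a)$ and $|f(y')-f(x')| \leq \varepsilon|y'-x'|$.

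Suppose, for contradiction, that $m \geq a$. Since $m \in L_{f,\varepsilon}$, there exist $x_0 < y_0$ in $[x,y]$ with $|y_0-x_0| = m$ and $|f(y_0)-f(x_0)| \leq \varepsilon|y_0-x_0|$. Applying Lemma \ref{slope-plus-petite} to the restriction of $f$ to $[x_0,y_0]$ yields $x'',y'' \in [x_0,y_0]$ with $m/2 \leq |y''-x''| < m$ and $|f(y'')-f(x'')| \leq \varepsilon|y''-x''|$. Then $|y''-x''| \in L_{f,\varepsilon}$ and $|y''-x''| \geq m/2 \geq a/2$, so $|y''-x''| \in A$, contradicting the minimality of $m$. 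Hence $m < a$.

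The only genuinely nontrivial input is Lemma \ref{slope-plus-petite}, which is already established; the crucial qualitative point it provides — that one cannot skip more than a factor of $2$ when descending in length within $L_{f,\varepsilon}$ — is exactly what prevents the infimum $m$ from exceeding $a$. The main subtle step is therefore verifying that the halving cannot overshoot the window $[a/2,a)$, which is handled automatically by the two-sided bound $m/2 \leq |y''-x''| < m$ in Lemma \ref{slope-plus-petite}.
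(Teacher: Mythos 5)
Your proof is correct and follows essentially the same approach as the paper: both take the minimum of the closed set $L_{f,\eps}$ intersected with a suitable subinterval and invoke Lemma \ref{slope-plus-petite} to halve the length, the only difference being that you minimize over $[a/2,l]$ and argue by contradiction while the paper minimizes over $[a,l]$ and produces the witness $l''$ directly.
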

	
	\begin{proof}
		Define $L_{f,\eps}$ as in Lemma \ref{slope-ferme} : 
		\begin{equation*}
			L_{f,\eps}=\{l' \in [0,l] \mid \exists (x',y') \in [x,y]^2, \, \, l'= |y'-x'| \text{ and } |f(y')-f(x')| \leq \eps |y'-x'|\}.
		\end{equation*}
		Then $L_{f,\eps}\cap [a,l]$ is closed (by Lemma \ref{slope-ferme}) and non-empty (because $l \in L_{f,\eps}\cap [a,l]$). Denote $l'=\min \, L_{f,\eps}\cap [a,l]$. By Lemma \ref{slope-plus-petite}, there exists $l'' \in L_{f,\eps}$ such that $\frac{l'}{2}\leq l'' <l'$. Then $l''<a$ because $l''<l'=\min \, L_{f,\eps}\cap [a,l]$ and $l'' \geq \frac{l'}{2}\geq \frac{a}{2}$. Hence the Lemma.
	\end{proof}
	
	\subsection{End of the proof of Theorem \ref{sph-BQ=PS}} \label{end-proof}
	Using the result of Proposition \ref{BQ=>CTU-Sph} in section \ref{sph-first-step}, we will now finish the proof of Theorem \ref{sph-BQ=PS}, which states that a Bowditch representation is simple-stable. 
	
	\begin{proof}[Proof of Theorem \ref{sph-BQ=PS}]
	Let $X$ be a $\delta$-hyperbolic, geodesic, visibility space, and $o \in X$ a basepoint. \\ 
	Pick once and for all a Bowditch representation $\rho$, with constants $(C,D)$. In Proposition \ref{BQ=>CTU-Sph} of section \ref{sph-first-step}, we prove the existence of a constant $K >0$ such that for all simple elements $\gamma \in \simple$, we have the inclusion $\tau_\rho(L_\gamma) \subset N_K(\mathrm{Axis}(\rho(\gamma)))$. (Recall that $L_\gamma$ is the axis of the element $\gamma$ in the Cayley graph of $\F_3=\groupe$ and $\mathrm{Axis}(\rho(\gamma))$ the axis of the hyperbolic isometry $\rho(\gamma)$ in $X$.) For every $\gamma$ in $\simple$, pick~$\ell_\gamma \subset X$ some geodesic joining the two attracting and repelling points of $\rho(\gamma)$, $\rho(\gamma)^+$ and $\rho(\gamma)^-$. Then~$\ell_\gamma \subset \mathrm{Axis}(\rho(\gamma))$ be definition of the axis (see beginning of section \ref{sph-first-step}). Moreover, using the stability of (quasi)-geodesic in $\delta$-hyperbolic spaces (see for example Theorem 3.1 in \cite{coornaert_geometrie_1990}), we deduce the existence of a constant $C(\delta)$ only depending on $\delta$ such that  $N_K(\mathrm{Axis}(\rho(\gamma))) \subset N_{K+C(\delta)}(\ell_\gamma)$. Then, noting $K_\delta=K+C(\delta)$, we obtain that for all simple word $\gamma \in \groupe$, we have $\tau_\rho(L_\gamma) \subset N_{K_\delta}(\ell_\gamma)$.   \\ 
	Let us choose a projection map $p_\gamma$ on $\ell_\gamma$, that is a map $p_\gamma : X \to \ell_\gamma$ such that for all $x \in X$, $p_\gamma(x) \in \ell_\gamma$ and for all $y \in \ell_\gamma$, we have $d(x,p_\gamma(x)) \leq d(x,y)$. For a point $p$ on the geodesic $\ell_\gamma$, we define the real  $H_\gamma(p)=\pm d(p,p_\gamma(o))$. The sign plus or minus is determined according to which side of $p(o)$ the point $p$ is located on. Thus $H_\gamma$ is an isometry between $\ell_\gamma$ and $\R$ sending $p(o)$ to 0. We also define $E_\gamma : L_\gamma \to \R$ such that for all $x \in L_\gamma$, we have $E_\gamma(x)=H_\gamma(p_\gamma(\tau_\rho(x)))$. 
	
	\begin{Lemma} \label{lien-E-d}
		Let $\gamma \in \simple$ and $x,y \in L_\gamma$. Then we have :
		\begin{equation*}
			d(\tau_\rho(x),\tau_\rho(y))-2K_\delta \leq |E_\gamma(x)-E_\gamma(y)|\leq d(\tau_\rho(x),\tau_\rho(y))+2K_\delta
		\end{equation*}
	\end{Lemma}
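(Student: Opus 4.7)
The plan is to observe that this lemma is a direct application of the triangle inequality, combined with the two key facts packaged into the definitions of $E_\gamma$, $p_\gamma$ and $H_\gamma$: first, that $\tau_\rho(L_\gamma)\subset N_{K_\delta}(\ell_\gamma)$ (hence $d(\tau_\rho(z),p_\gamma(\tau_\rho(z)))\leq K_\delta$ for every $z\in L_\gamma$), and second, that $H_\gamma\colon \ell_\gamma\to \R$ is an isometry, so $|E_\gamma(x)-E_\gamma(y)|=d(p_\gamma(\tau_\rho(x)),p_\gamma(\tau_\rho(y)))$.

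First I would rewrite $|E_\gamma(x)-E_\gamma(y)|$ as the distance between the projections using the isometry property of $H_\gamma$. Then, for the upper bound, I would insert $\tau_\rho(x)$ and $\tau_\rho(y)$ along the geodesic chain going from $p_\gamma(\tau_\rho(x))$ to $p_\gamma(\tau_\rho(y))$, and apply the triangle inequality together with the two estimates $d(\tau_\rho(x),p_\gamma(\tau_\rho(x)))\leq K_\delta$ and $d(\tau_\rho(y),p_\gamma(\tau_\rho(y)))\leq K_\delta$. For the lower bound, I would do the symmetric insertion: bound $d(\tau_\rho(x),\tau_\rho(y))$ by going $\tau_\rho(x)\to p_\gamma(\tau_\rho(x))\to p_\gamma(\tau_\rho(y))\to\tau_\rho(y)$, again using the same two $K_\delta$ estimates.

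There is essentially no obstacle here: this is a bookkeeping lemma that transfers the distance between points on $\tau_\rho(L_\gamma)$ to the (one-dimensional, and hence much easier to work with) ``height'' function $E_\gamma$ along $\ell_\gamma$, with an additive error controlled by the uniform tubular neighborhood constant $K_\delta$ coming from Proposition \ref{BQ=>CTU-Sph}. The only mild care needed is to remember that $H_\gamma$ being an isometry makes $|E_\gamma(x)-E_\gamma(y)|$ exactly the $\ell_\gamma$-distance between $p_\gamma(\tau_\rho(x))$ and $p_\gamma(\tau_\rho(y))$, which in turn coincides with their ambient $X$-distance since $\ell_\gamma$ is a geodesic of $X$. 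The whole argument fits in a couple of lines once these identifications are made explicit.
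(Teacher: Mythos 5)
Your proposal is correct and matches the paper's proof essentially verbatim: the paper likewise rewrites $|E_\gamma(x)-E_\gamma(y)|$ as $d(p_\gamma(\tau_\rho(x)),p_\gamma(\tau_\rho(y)))$ and then applies the triangle inequality in both directions together with the bound $d(\tau_\rho(z),p_\gamma(\tau_\rho(z)))\leq K_\delta$ coming from $\tau_\rho(L_\gamma)\subset N_{K_\delta}(\ell_\gamma)$. Nothing is missing.
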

	
	\begin{proof}
		We have $|E_\gamma(x)-E_\gamma(y)|=d(p_\gamma(\tau_\rho(x)),p_\gamma(\tau_\rho(y)))$, then, by the triangle inequality :
		
		\begin{align*}
			|E_\gamma(x)-E_\gamma(y)|& \leq d(p_\gamma(\tau_\rho(x)),\tau_\rho(x))+d(\tau_\rho(x),\tau_\rho(y))+d(\tau_\rho(y),p_\gamma(\tau_\rho(y))) \\
			& \leq 2K_\delta + d(\tau_\rho(x),\tau_\rho(y)) \qquad \text{ because } \tau_\rho(L_\gamma) \subset N_{K_\delta}(\ell_\gamma)\\
			\intertext{and on the other hand :}
			d(\tau_\rho(x),\tau_\rho(y)) & \leq d(\tau_\rho(x),p_\gamma(\tau_\rho(x)))+d(p_\gamma(\tau_\rho(x)),p_\gamma(\tau_\rho(y)))+d(\tau_\rho(y),p_\gamma(\tau_\rho(y))) \\ 
			& \leq 2K_\delta+|E_\gamma(x)-E_\gamma(y)| \qquad \text{ because } \tau_\rho(L_\gamma) \subset N_{K_\delta}(\ell_\gamma)
		\end{align*}
	\end{proof}
	
	Note that $E_\gamma$ is not necessarily continuous. (This comes from the fact that the projection map $p_\gamma$ is itself not necessarily continuous. Indeed, such a projection may not be unique). Since we will need to use the results of the previous section on continuous map (and in particular Lemma \ref{eps-slope}), we are going to consider a continuous approximation of $E_\gamma$. Thus we define $\t{E}_\gamma$ to be the map from the Cayley geodesic $L_\gamma$ to $\R$ such that, on every integer point $x \in L_\gamma$, $\t{E}_\gamma(x)=E_\gamma(x)$ and between two integer points, we do a linear interpolation, that is : $\t{E}_\gamma(x)=tE_\gamma(\lf x \rf)+(1-t)E_\gamma(\lc x \rc)$, with $t \in [0,1]$ such that $d(\lf x \rf,x)=1-t$ and $d(\lc x \rc,x)=t$. The map $\t{E}$ is continuous. The next Lemma aims to compare $E_\gamma$ and $\t{E}_\gamma$. 
	
	\begin{Lemma} \label{lien-E-tE}
		For all $x,y \in L_\gamma$, we have : 
		\begin{equation*}
			|E_\gamma(x)-E_\gamma(y)|-(4C'+8K_\delta) \leq |\t{E}_\gamma(x)-\t{E}_\gamma(y)|\leq |E_\gamma(x)-E_\gamma(y)|+4C'+8K_\delta
		\end{equation*}
	\end{Lemma}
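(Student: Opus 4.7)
The plan is to exploit the fact that $\tilde{E}_\gamma$ is obtained from $E_\gamma$ by linear interpolation between consecutive integer points of $L_\gamma$, and hence the difference $|\tilde{E}_\gamma(x)-E_\gamma(\lfloor x\rfloor)|$ is controlled uniformly at every point $x$. Once such a uniform pointwise bound is established, a four-term triangle inequality yields both required inequalities simultaneously.

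First I would establish the key pointwise estimate: for every $x \in L_\gamma$,
\begin{equation*}
|\tilde{E}_\gamma(x)-E_\gamma(\lfloor x\rfloor)| \leq C'+2K_\delta \quad \text{and} \quad |E_\gamma(x)-E_\gamma(\lfloor x\rfloor)| \leq C'+2K_\delta.
\end{equation*}
The first inequality comes from the interpolation formula $\tilde{E}_\gamma(x)=tE_\gamma(\lfloor x\rfloor)+(1-t)E_\gamma(\lceil x\rceil)$, which gives $|\tilde{E}_\gamma(x)-E_\gamma(\lfloor x\rfloor)|\leq |E_\gamma(\lceil x\rceil)-E_\gamma(\lfloor x\rfloor)|$, combined with Lemma \ref{lien-E-d} applied to the pair $(\lfloor x\rfloor,\lceil x\rceil)$ and the fact that $d(\tau_\rho(\lfloor x\rfloor),\tau_\rho(\lceil x\rceil))\leq C'$ because the orbit map is $C'$-Lipschitz and $d(\lfloor x\rfloor,\lceil x\rceil)\leq 1$. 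The second inequality is obtained in the same way, applying Lemma \ref{lien-E-d} to the pair $(x,\lfloor x\rfloor)$ and again using that $d(x,\lfloor x\rfloor)\leq 1$ so that $d(\tau_\rho(x),\tau_\rho(\lfloor x\rfloor))\leq C'$.

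Next I would conclude by two triangle inequalities. For the upper bound, write
\begin{equation*}
|\tilde{E}_\gamma(x)-\tilde{E}_\gamma(y)| \leq |\tilde{E}_\gamma(x)-E_\gamma(\lfloor x\rfloor)| + |E_\gamma(\lfloor x\rfloor)-E_\gamma(\lfloor y\rfloor)| + |E_\gamma(\lfloor y\rfloor)-\tilde{E}_\gamma(y)|,
\end{equation*}
bound the first and third terms by $C'+2K_\delta$ using the first pointwise estimate, and bound the middle term by $|E_\gamma(x)-E_\gamma(y)|+2(C'+2K_\delta)$ using another triangle inequality together with the second pointwise estimate. Summing gives the constant $4C'+8K_\delta$. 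For the lower bound, the same chain run in the other direction (inserting $\tilde{E}_\gamma(x),\tilde{E}_\gamma(y)$ between $E_\gamma(x)$ and $E_\gamma(y)$ via the two interpolation corners) produces the symmetric inequality with the same constant.

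There is no real obstacle here: the proof is a straightforward bookkeeping of triangle inequalities and the argument simply propagates the $C'$-Lipschitz bound on $\tau_\rho$ and the comparison from Lemma \ref{lien-E-d} through the interpolation. The mild care required is to ensure that the same constant $4C'+8K_\delta$ works on both sides, which forces the chain to involve exactly four "unit-length" corrections (two to pass from $x,y$ to $\lfloor x\rfloor,\lfloor y\rfloor$ and two to pass between $E_\gamma$ and $\tilde{E}_\gamma$ at these integer neighbours).
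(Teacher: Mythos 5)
Your proposal is correct and follows essentially the same route as the paper: both establish the pointwise bounds $|E_\gamma(x)-E_\gamma(\lfloor x\rfloor)|\leq C'+2K_\delta$ and $|\t{E}_\gamma(x)-E_\gamma(\lfloor x\rfloor)|\leq C'+2K_\delta$ from the interpolation formula, Lemma \ref{lien-E-d}, and the $C'$-Lipschitz property of $\tau_\rho$, and then conclude by triangle inequalities through the integer points $\lfloor x\rfloor,\lfloor y\rfloor$, yielding the same constant $4C'+8K_\delta$.
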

	
	\begin{proof}
		\begin{itemize}
			\item We first prove that for all $x \in L_\gamma$ :
			\begin{equation} \label{E-x-floorx}
				|E_\gamma(x)-E_\gamma(\lf x \rf)| \leq C'+2K_\delta
			\end{equation}
			Indeed, by Lemma \ref{lien-E-d}, we have $|E_\gamma(x)-E_\gamma(\lf x \rf )|\leq d(\tau_\rho(x),\tau_\rho(\lf x \rf))+2K_\delta$. But $d(x,\lf x \rf) \leq 1$ and $\tau_\rho$ is $C'$-Lipschitz, hence the inequality is true. \\ 
			
			\item We secondly prove that for all $x \in L_\gamma$ :
			
			\begin{equation}\label{tE-x-floorx}
				|\t{E}_\gamma(x)-\t{E}_\gamma(\lf x \rf)| \leq C'+2K_\delta
			\end{equation}
			Indeed :
			\begin{align*}
				|\t{E}_\gamma(x)-\t{E}_\gamma(\lf x \rf)| & =|tE_\gamma(\lf x \rf)+(1-t)E_\gamma(\lc x \rc)-E_\gamma(\lf x \rf)| \text{ by definition of $\t{E}_\gamma$}  \\
				& = |1-t||E_\gamma(\lf x \rf)-E_\gamma(\lc x \rc) | \\ 
				& \leq d(\tau_\rho(\lf x\rf),\tau_\rho (\lc x \rc))+2K_\delta \qquad \text{by Lemma \ref{lien-E-d}}\\
				& \leq C'+2K_\delta \qquad \text{because } d(\lf x \rf, \lc x \rc)\leq 1 
			\end{align*}
			
			\item We deduce that for all $x,y \in L_\gamma$, we have :
			\begin{equation} \label{Exy,floorxy}
				|E_\gamma(\lf x \rf )-E_\gamma(\lf y \rf)| - (2C'+4K_\delta) \leq  |E_\gamma(x)-E_\gamma(y)| \leq 2C'+4K_\delta+|E_\gamma(\lf x \rf )-E_\gamma(\lf y \rf)|
			\end{equation}
			Indeed, by the triangle inequality we have :
			\begin{align*}
				|E_\gamma(x)-E_\gamma(y)| & \leq |E_\gamma(x)-E_\gamma(\lf x \rf )|+|E_\gamma(\lf x \rf )-E_\gamma(\lf y \rf)|+|E_\gamma(\lf y \rf )-E_\gamma(y)| \\
				& \leq 2C'+4K_\delta+|E_\gamma(\lf x \rf )-E_\gamma(\lf y \rf)| \text{ by \eqref{E-x-floorx}}
				\intertext{and on the other hand}
				|E_\gamma(\lf x \rf )-E_\gamma(\lf y \rf)|&\leq    |E_\gamma(x)-E_\gamma(\lf x \rf )|+|E_\gamma(x)-E_\gamma(y)|+|E_\gamma(\lf y \rf )-E_\gamma(y)| \\ 
				& \leq 2C'+4K_\delta+|E_\gamma(x)-E_\gamma(y)| \qquad \text{ again by \eqref{E-x-floorx}}
			\end{align*}
			
			\item Similarly, we deduce using \eqref{tE-x-floorx} and the fact that $\t{E}_\gamma(\lf x \rf)=E_\gamma(\lf x \rf), \t{E}_\gamma(\lf y \rf)=E_\gamma((\lf y \rf)$, that for all $x,y \in L_\gamma$, we have : 
			\begin{equation} \label{tExy,floorxy}
				|E_\gamma(\lf x \rf )-E_\gamma(\lf y \rf)| - (2C'+4K_\delta) \leq  |\t{E}_\gamma(x)-\t{E}_\gamma(y)| \leq 2C'+4K_\delta+|E_\gamma(\lf x \rf )-E_\gamma(\lf y \rf)|
			\end{equation}
			
			\item Finally, the desired inequality follows from \eqref{Exy,floorxy} and \eqref{tExy,floorxy}.
		\end{itemize}
	\end{proof}

	Let us now prove that $\rho$ is simple-stable. By contradiction, suppose that it is not. Then for all $n \in \N$, we can find a cyclically reduced simple element $\gamma_n \in \simple$ together with two integer points $x_n$ and $y_n$ on $L_{\gamma_n}$ such that \begin{equation} \label{eq-QB-sph}
		d(\tau_\rho(x_n),\tau_\rho(y_n)) \leq \frac{1}{n}d(x_n,y_n)-1 
	\end{equation}
	In particular, we have that $d(x_n,y_n) \geq n$. We can make the assumption that the elements $\gamma_n$ are pairwise distinct. Indeed, if the sequence ${(\gamma_n)}_n$ only takes finitely many values, then, up to subsequence, we can suppose that $\gamma_n=\gamma$ for some simple word $\gamma$. But $\rho(\gamma)$ is a hyperbolic isometry so there exist two constants $C_\gamma$ and $D_\gamma$ (depending on $\gamma$) such that $\tau_\rho(L_\gamma)$ is a $(C_\gamma,D_\gamma)$-quasi-geodesic. Then, since $x_n$ and $y_n$ belong to $L_\gamma$, we have : 
	\begin{align*}
		\frac{1}{C_\gamma}d(x_n,y_n)-D_\gamma & \leq d(\rho(x_n)o,\rho(y_n)o) \leq \frac{1}{n}d(x_n,y_n)-1 \\
		\text{so} \qquad \frac{1}{C_\gamma}-\frac{D_\gamma}{d(x_n,y_n)} & \leq \frac{1}{n}-\frac{1}{d(x_n,y_n)}, \nonumber \\
		\text{then, taking the limit when $n \to \infty$,} \quad \frac{1}{C_\gamma} & \leq 0, \quad \text{which is absurd}. \nonumber
	\end{align*}
	Thus we can suppose that the elements $\gamma_n$ are pairwise distinct and therefore $\Vert \gamma_n \Vert  \to \infty$.\\
	 Denote by $[N_1(\gamma_n),\hdots,N_{r(\gamma_n)}(\gamma_n)]$ the continued fraction expansion of the slope of $\gamma_n$. As in the proof of the previous section (section \ref{sph-first-step}), we can prove the following lemma. 
	
	\begin{Lemma} \label{sph-Ni-borné}
		For all $i \in \N^*$, there exists a constant $N_i >0$ such that for all $n \in \N^*$, whenever $N_i(\gamma_n)$ is well defined (that is $r(\gamma_n) \geq i$), we have $N_i(\gamma_n) \leq N_i$. Moreover, up to subsequence, $r(\gamma_n) \to \infty$. 
	\end{Lemma}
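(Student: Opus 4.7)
The plan is to mirror essentially verbatim the proof of Lemma \ref{forme-frac-continue-sph} from the previous section, replacing the contradiction with hypothesis \eqref{hyp-absurde-sph} by a contradiction with hypothesis \eqref{eq-QB-sph}. I proceed by contradiction: assume that there exists some $k \in \mathbb{N}^*$ such that $(N_k(\gamma_n))_{n \, : \, r(\gamma_n) \geq k}$ is unbounded, and let $i$ be the smallest non-negative integer for which $(N_{i+1}(\gamma_n))_n$ is unbounded; after extracting, $N_{i+1}(\gamma_n) \to \infty$ while for each $1 \leq j \leq i$ the sequence $(N_j(\gamma_n))_n$ stabilises to some integer $N_j$, so that $\slope(\gamma_n) = [N_1, \hdots, N_i, N_{i+1}(\gamma_n), \hdots, N_{r(\gamma_n)}(\gamma_n)]$.

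Next I would invoke Lemma \ref{general-form-gamma} to produce a fixed simple word $\gamma_i \in \simple$ of slope $[N_1, \hdots, N_i]$ together with fixed words $\delta_1, \delta_2 \in \groupe$ (depending only on $N_1, \hdots, N_i$) such that $\gamma_n$ is a concatenation of blocks of the form $(\gamma_i)^{m_1(n)}\tilde{\delta_1}(\gamma_i)^{-m_2(n)}\tilde{\delta_2}$ with $m_1(n), m_2(n) \geq \tfrac{N_{i+1}(\gamma_n)-1}{2}$. Setting $G = \rho(\gamma_i)$, $D_1 = \rho(\delta_1)$, $D_2 = \rho(\delta_2)$, the second part of Lemma \ref{general-form-gamma} guarantees that $(\gamma_i)^m \delta_1 (\gamma_i)^{-m}\delta_2$ is simple for all $m \in \mathbb{N}$, so Lemma \ref{D(G)neqG} applies and yields $D_1(G^-) \neq G^-$ and $D_2(G^+) \neq G^+$. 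This unlocks Proposition \ref{Sphere-unif-quasi-geod}, producing constants $\lambda > 0$, $k \geq 0$, $N \in \mathbb{N}^*$ such that every sequence in $\mathcal{G}_N(G,D_1,D_2)$ maps to a uniform $(\lambda,k)$-quasi-geodesic orbit in $X$.

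The point is then that, once $n$ is large enough that $m_1(n), m_2(n) \geq N$, the bi-infinite word obtained by following the geodesic $L_{\gamma_n}$ in the Cayley graph and iterating lies in $\mathcal{G}_N(G,D_1,D_2)$; hence the orbit map $\tau_\rho$ restricted to $L_{\gamma_n}$ is a $(\lambda, k)$-quasi-isometric embedding with constants independent of $n$. Combining this with the assumption \eqref{eq-QB-sph} on the integer points $x_n, y_n \in L_{\gamma_n}$ gives
\begin{equation*}
\frac{1}{\lambda} d(x_n, y_n) - k \leq d(\tau_\rho(x_n), \tau_\rho(y_n)) \leq \frac{1}{n} d(x_n, y_n) - 1,
\end{equation*}
which rearranges to $\bigl(\tfrac{1}{\lambda} - \tfrac{1}{n}\bigr) d(x_n,y_n) \leq k - 1$. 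Since $d(x_n,y_n) \geq n \to \infty$ and $\tfrac{1}{\lambda} - \tfrac{1}{n} \to \tfrac{1}{\lambda} > 0$, the left-hand side blows up, contradicting the bounded right-hand side. This establishes the uniform bound $N_i$ on each $(N_i(\gamma_n))_n$.

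For the concluding claim that $r(\gamma_n) \to \infty$ (after subsequence), observe that if $r(\gamma_n)$ were bounded by some integer $R$, then the first part combined with the recursive length formula \eqref{rec-li} would uniformly bound $l_R(\gamma_n)$, hence the cyclically reduced length $\Vert \gamma_n \Vert = 2 l_{r(\gamma_n)}(\gamma_n)$, contradicting $\Vert \gamma_n \Vert \to \infty$ established just before the lemma. The main subtlety (not a genuine obstacle, but the one point that needs care) is verifying that the integer points $x_n, y_n$ of assumption \eqref{eq-QB-sph} can be placed in $L_{\gamma_n}$ in a way compatible with the bi-infinite word structure used to apply Proposition \ref{Sphere-unif-quasi-geod}, which is immediate since $L_{\gamma_n}$ is literally this bi-infinite word read off in the Cayley graph.
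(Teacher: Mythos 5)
Your proposal is correct and follows exactly the route the paper intends: the paper's own proof of this lemma is a one-line reference back to Lemma \ref{forme-frac-continue-sph}, with the contradiction now drawn from inequality \eqref{eq-QB-sph} rather than \eqref{hyp-absurde-sph}, which is precisely what you carry out (including the correct handling of the final claim that $r(\gamma_n)\to\infty$ via the boundedness of the word length). No gaps.
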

	
	\begin{proof}
		The proof is the same as the proof of Lemma \ref{forme-frac-continue-sph}, and the contradiction is this time on inequality \eqref{eq-QB-sph} (in both Lemmas, the sequence $(\gamma_n)_{n\in \N}$ is chosen in order to contradict simple-stability).
	\end{proof}
	
	As in section \ref{bornes-li-sph}, we deduce for all integer $i$ the existence of constants $L_i>0$ such that :
	\begin{equation} \label{L_i-final}
		\forall n\in \mathbb{N}, \forall 0 \leq i \leq r(n), \qquad i \leq l_i(\gamma_n) \leq L_i
	\end{equation}
	
	Then, as in the proof of the existence of uniform tubular neighborhoods (Proposition \ref{BQ=>CTU-Sph}), we are able to find a lot of quasi-loops inside a sufficiently large subword of $\gamma$ :
	
	\begin{Lemma} \label{decoupe-restes-final-sph} Let $\beta=\frac{1}{168}(=\frac{\alpha}{8}$, where $\alpha$ is the constant introduced in Theorem \ref{magic-len-S_{0,4}}). \\
		Let $\displaystyle 0<\varepsilon < \frac{1}{C}$ and $r >0$. There exists a constant $R>0$ and an integer $n_0 \in \N$, such that, given any integer $n \geq n_0$ and  any subword $u$ of $\gamma_n$ such that $|u| \geq R$, there exists a positive integer $q \in \N^*$, a subset $QL \subset \{1,\hdots,q\}$ and $q$ words $u_1, \hdots,u_q \in \F_2$ such that :
		\begin{enumerate}
			\item $u=u_1\hdots u_q$
			\item For all $k \in QL$, $u_k$ is an $\varepsilon$-quasi-loop
			\item $\displaystyle \sum_{k \in QL} |u_k| \geq \beta|u|$
			\item For all $k \notin QL$, $|u_k| \geq  r$
		\end{enumerate}
	\end{Lemma}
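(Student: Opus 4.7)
The plan is to follow closely the strategy of Lemma \ref{decoupe-restes-c-sph} from section \ref{sph-first-step}, replacing the role of ``excursions'' of $E_{\gamma_n}$ by intervals of small slope of the continuous surrogate $\tilde E_{\gamma_n}$, and replacing Lemma \ref{temps-excursion} (which produces shorter sub-excursions) by Lemma \ref{eps-slope} (which produces shorter subintervals of the same slope bound). The starting input is now the failure of simple-stability: inequality \eqref{eq-QB-sph} together with Lemmas \ref{lien-E-d} and \ref{lien-E-tE} implies that on the arbitrarily long segment $[x_n,y_n] \subset L_{\gamma_n}$, the continuous map $\tilde E_{\gamma_n}$ satisfies $|\tilde E_{\gamma_n}(x_n)-\tilde E_{\gamma_n}(y_n)| \leq \eps' d(x_n,y_n)$ for any $\eps'>0$ once $n$ is large, up to an additive error that becomes negligible.

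First, given the target $\eps < 1/C$ and the lower bound $r$ on remainders, I would pick an auxiliary $\eps' < \eps$ and an index $i$ large enough so that $l_i(\gamma_n)-5 \geq \max(10, 2r+7, \text{threshold for }\eps)$, using the bound $l_i(\gamma_n)\leq L_i$ from \eqref{L_i-final}. Then for $n \geq n_0$ (where $n_0$ is chosen so that $d(x_n,y_n) \geq $ some multiple of $L_i$ and the slope error from Lemmas \ref{lien-E-d}--\ref{lien-E-tE} is smaller than $(\eps-\eps')$ times the length scale $l_i(\gamma_n)$), I apply Lemma \ref{eps-slope} to $\tilde E_{\gamma_n}$ on $[x_n,y_n]$ with target length $a = l_i(\gamma_n)-5$, obtaining a subinterval $[x',y']$ of length between $(l_i(\gamma_n)-5)/2$ and $l_i(\gamma_n)-5$ still satisfying the small-slope estimate. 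Going back through Lemmas \ref{lien-E-tE} and \ref{lien-E-d}, this means that $v := \lfloor x'\rfloor^{-1}\lfloor y'\rfloor$, a subword of $\gamma_n$, satisfies $d(\rho(v)o,o) \leq \eps |v|$, i.e.\ $v$ is an $\eps$-quasi-loop, with length bounds analogous to \eqref{encadr-|v|}.

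Next, I extend $v$ on the right to a subword $w = v v'$ of $\gamma_n$ of length exactly $l_i(\gamma_n)-5$ and apply Theorem \ref{magic-len-S_{0,4}} to the subword $u$ (which by choice of $R$ satisfies $|u|\geq 3(l'_i(\gamma_n)+l_i(\gamma_n)+1)$) and to $w$. This yields $u = u_1\cdots u_q$ with a subset $\mathcal{I}$ such that every $u_k$, $k \in \mathcal{I}$, equals $w$ or $w^{-1}$, and $\sum_{k \in \mathcal{I}} |u_k| \geq \alpha |u|$. Inside each such $u_k$ we can isolate the quasi-loop factor $v$ or $v^{-1}$ (of length at least $(l_i(\gamma_n)-5)/2$) and lump the tail $v'$, $v'^{-1}$ together with the neighboring remainders into new factors $u'_k$ satisfying $|u'_k| \geq r$ by the choice of $i$. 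Regrouping as in the decompositions \eqref{decomposition-odd}-\eqref{decomposition-even} produces the final decomposition of $u$, with $QL$ indexing the quasi-loop factors; the proportion $\beta$ is then obtained from $\alpha$ by a constant factor arising from the various rounding and regrouping steps.

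The main obstacle is the careful bookkeeping of constants in Step 2: one must simultaneously arrange that the additive defect $4C'+8K_\delta$ of Lemma \ref{lien-E-tE} plus the $2K_\delta$ of Lemma \ref{lien-E-d} is negligible compared to $(\eps-\eps')\cdot l_i(\gamma_n)/2$, that the slope bound $1/n$ coming from \eqref{eq-QB-sph} is itself dominated by $\eps'/2$, and that the scale $a = l_i(\gamma_n)-5$ in Lemma \ref{eps-slope} is much smaller than $d(x_n,y_n)$; all three constraints are simultaneously satisfiable only after fixing $i$ (hence $L_i$) first and then choosing $n_0$ depending on $i$. Once that is done, the combinatorial regrouping is routine and essentially identical to the proof of Lemma \ref{decoupe-restes-c-sph}, so the bulk of the novelty lies in replacing the ``excursion'' language by the slope-based language adapted to the continuous map $\tilde E_{\gamma_n}$.
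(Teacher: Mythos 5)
Your proposal is correct and follows essentially the same route as the paper's proof: derive the small-slope estimate for $\t{E}_{\gamma_n}$ on $[x_n,y_n]$ from \eqref{eq-QB-sph} via Lemmas \ref{lien-E-d} and \ref{lien-E-tE}, localize with Lemma \ref{eps-slope} at scale $a=l_i(\gamma_n)-5$ to produce the $\eps$-quasi-loop $v$, extend $v$ to a prefix of a word $w$ of length $l_i(\gamma_n)-5$, and then apply Theorem \ref{magic-len-S_{0,4}} and regroup exactly as in Lemma \ref{decoupe-restes-c-sph}. The only cosmetic difference is your auxiliary $\eps'<\eps$, where the paper instead works directly with $\eps/4$ and $\eps/2$ and absorbs the additive defects by taking $n_0$ (and $i$) large.
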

	
	The proof of this lemma is similar to the proof of Lemma \ref{decoupe-restes-c-sph}, except that is this context we find quasi-loops directly using our hypothesis on the sequence $(\gamma_n)_{n \in \N}$ (equation \eqref{eq-QB-sph}), and that this time we need to consider the map $E_{\gamma_n}$ and its continuous approximation $\t{E}_{\gamma_n}$ to control the length of the quasi-loops we consider. 
	\begin{proof}
		We start by fixing an integer $i$ such that $i\geq \max(10,2r+7)$. Then, we choose an integer $n_0$ satisfying $\displaystyle n_0 \geq \max \left( \frac{4}{\eps},\frac{8K_\delta}{\eps},\frac{2}{\eps}(6C'+10K_\delta)+2,L_i \right)$ (recall that the constant $L_i$ are defined in \ref{L_i-final}). Finally, let $\displaystyle R \geq \max\left(\frac{3L_i}{\alpha}, 9L_i\right)$. Now we fix $n\geq n_0$ and $u$ a subword of $\gamma_n$ such that $|u| \geq R$. \\ 
		
		The first step is to find an $\eps$-quasi-loop of length comprise between $\displaystyle \frac{l_i(\gamma_n)-7}{2}$ and $l_i(\gamma_n)-5$.\\
		First observe that, by \eqref{eq-QB-sph}, and since $n \geq n_0 \geq \frac{4}{\eps}$, we have :
		\begin{equation} \label{qy-QB}
			d(\rho(x_n)o,\rho(y_n)o) \leq \frac{\eps}{4} d(x_n,y_n)
		\end{equation}
		so we deduce that :
		\begin{align*}
			|E_{\gamma_n}(x_n)-E_{\gamma_n}(y_n)|& \leq 2K_\delta + d(\rho(x_n)o,\rho(y_n)o) \qquad \text{ by inequality \eqref{lien-E-d}} \\
			& \leq 2K_\delta + \frac{\eps}{4} d(x_n,y_n) \qquad \text{ by inequality \eqref{qy-QB}} \\
			\intertext{ but on another hand we have }  
			d(x_n,y_n) & \geq n \geq n_0 \geq \frac{8K_\delta}{\eps} \qquad \text{ by our initial choice on $n_0$}, \\
			\text{hence } \qquad |E_{\gamma_n}(x_n)-E_{\gamma_n}(y_n)| & \leq \frac{\eps}{2} d(x_n,y_n).
		\end{align*}
		Recall that $x_n$ and $y_n$ were taken to be integer points in $L_{\gamma_n}$. So $\t{E}_{\gamma_n}(x_n)=E_{\gamma_n}(x_n)$ and $\t{E}_{\gamma_n}(y_n)=E_{\gamma_n}(y_n)$, therefore :
		\begin{equation*}
			|\t{E}_{\gamma_n}(x_n)-\t{E}_{\gamma_n}(y_n)| \leq \frac{\eps}{2} d(x_n,y_n).
		\end{equation*}
		Now notice that we have $0<l_i(\gamma_n)-5 \leq L_i-5<n_0 \leq n \leq d(x_n,y_n)$, 
		so, by continuity of $\t{E}_{\gamma_n}$, we can use Lemma \ref{eps-slope} to find $x'_n,y'_n \in L_{\gamma_n}$ such that the two following hold :
		\begin{align}
			\frac{l_i(\gamma_n)-5}{2} \leq d(x'_n,y'_n) < l_i(\gamma_n)-5 \label{encadr-xy},\\
			|\t{E}_{\gamma_n}(x'_n)-\t{E}_{\gamma_n}(y'_n)| \leq \frac{\eps}{2} d(x'_n,y'_n) \label{Ex'}.
		\end{align}
		Now let $v=\lf x'_n \rf ^{-1} \lf y'_n \rf $.     
		
		Let us show that $v$ is an $\eps$-quasi-loop. 
		\begin{align*}
			d(\rho(v)o,o) & = d(\rho(\lf x'_n \rf)o,\rho(\lf y'_n \rf)o) \qquad \text{ by definition of $v$}\\ 
			& \leq  2C'+d(\rho(x'_n)o,\rho(y'_n)o) \qquad \text{ since $d(x'_n,\lf x'_n \rf)<1,d(y'_n,\lf y'_n \rf)<1 $}\\ 
			& \leq 2C'+2K_\delta + |E_{\gamma_n}(x'_n)-E_{\gamma_n}(y'_n)| \qquad \text{ by inequality \eqref{lien-E-d}} \\ 
			& \leq 2C'+2K_\delta + |\t{E}_{\gamma_n}(x'_n)-\t{E}_{\gamma_n}(y'_n)| + 4C'+8K_\delta  \text{ by Lemma \eqref{lien-E-tE}} \\
			& \leq 6C' + 10K_\delta + \frac{\eps}{2} d(x'_n,y'_n) \qquad \text{ by \eqref{Ex'}} \\ 
			& \leq 6C'+10K_\delta+\frac{\eps}{2} (|v|+1) \\ 
			& \leq 6C'+10K_\delta+\frac{\eps}{2}+\frac{\eps}{2}|v| 
		\end{align*}
		
		But on an other hand, since $\displaystyle n_0 \geq \frac{2}{\eps}(6C'+10K_\delta)+2$, we have $
		|v|=d(\lf x'_n \rf,\lf y'_n \rf) \geq d(x'_n,y'_n)-1 \geq n-1 \geq n_0-1 \geq \frac{2}{\eps}(6C'+10K_\delta)+1$, and then 
		$d(\rho(v)o,o) \leq \eps|v|$, which ensures that $v$ is an $\eps$-quasi-loop. \\ 
		
		Note that we have the following inequality on the length of $v$ :
		\begin{align*}
			d(x'_n,y'_n)-1&<|v|<d(x'_n,y'_n)+1 \\ 
			\text{ so } \qquad \frac{l_i(\gamma_n)-7}{2}&< |v| <l_i(\gamma_n)-4 \qquad \text{ by \eqref{encadr-xy}} \\ 
			\text{then } \qquad \frac{l_i(\gamma_n)-7}{2} &\leq |v| \leq l_i(\gamma_n)-5 \qquad \text{ since $|v|$ is an integer}
		\end{align*}
		
		The rest of the proof is exactly the same as for Lemma \ref{decoupe-restes-c-sph}. We consider $w$ the subword of $\gamma_n$ of length $l_i(\gamma_n)-5$ such that $v$ is a prefix of $w$. Then we can apply Theorem \ref{magic-len-S_{0,4}} to $u$ and $w$ (since we have imposed $R \geq 9L_i$) and we verify the four points of Lemma \ref{decoupe-restes-final-sph}. Indeed :
		
		\begin{enumerate}
			\item comes directly from Theorem \ref{magic-len-S_{0,4}}.
			\item is true since we proved that $v$ is an $\eps$-quasi-loop. 
			\item is true since we fixed $R$ such that $R \geq \frac{3L_i}{\alpha}$.
			\item is true since we fixed $i$ such that $i \geq 2r+7$.
		\end{enumerate}
	\end{proof}
	
	Once we have Lemma \ref{decoupe-restes-final-sph}, the procedure for the end of the proof of simple-stability is the same as for Proposition \ref{BQ=>CTU-Sph}. For completeness, we recall the last steps. \\ 
	
	We are now able to find a simple element $\gamma$ (from the sequence $(\gamma_n)_{n\in \N}$) which contains a very large proportion of quasi-loops.
	
	\begin{Lemma} \label{trouve-gamma-final}
		Let $0 < \varepsilon < \frac{1}{C}$ and $1 - \frac{1}{C'}(\frac{1}{C}-\varepsilon) <  \lambda < 1$. There exists a simple element $\gamma$ such that $\gamma$ contains  $\eps$-quasi-loops that occupy at least a proportion $\lambda$ of $\gamma$.
	\end{Lemma}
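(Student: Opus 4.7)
The plan is to prove Lemma \ref{trouve-gamma-final} by a finite recursive application of Lemma \ref{decoupe-restes-final-sph}. At each step, we take a word and chop it into pieces that are either $\varepsilon$-quasi-loops or sufficiently long ``remainders''; the quasi-loops always occupy a proportion at least $\beta$ of whatever word we feed into the lemma. So at step $1$ we apply the lemma to $u = \gamma_n$ (for some large $n$) and obtain a proportion $\geq \beta$ of $\gamma_n$ covered by quasi-loops; the uncovered part consists of remainders. At step $2$ we apply the lemma separately to each remainder, obtaining a further proportion $\geq \beta$ of each remainder covered by new quasi-loops. Iterating $m$ times, the proportion of $\gamma_n$ \emph{not} yet covered is at most $(1-\beta)^m$, so choosing $m$ with $(1-\beta)^m \leq 1-\lambda$ yields the required proportion $\lambda$.

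To make this work, parameters must be chosen in reverse order. Fix $m \in \mathbb{N}$ with $(1-\beta)^m \leq 1-\lambda$. Let $R_m$ denote the threshold length one must feed into Lemma \ref{decoupe-restes-final-sph} (for a fixed $\varepsilon$ and an arbitrary auxiliary $r$ that we can choose at this last step, say $r = 1$); let $n_m$ be the corresponding integer $n_0$. Then set $r_{m-1} := R_m$ and let $R_{m-1}, n_{m-1}$ be the values returned by Lemma \ref{decoupe-restes-final-sph} for this choice of $r$. Continue descending: $r_{k-1} := R_k$ determines $R_{k-1}, n_{k-1}$. Finally let $N := \max(n_1, \ldots, n_m)$, and choose $n \geq N$ large enough that $\lVert \gamma_n \rVert \geq R_1$ (this is possible because $\lVert \gamma_n \rVert \to \infty$).

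With these parameters fixed, the recursion runs cleanly: at step $k$, every remainder produced by step $k-1$ has length at least $r_{k-1} = R_k$, so Lemma \ref{decoupe-restes-final-sph} applies. Writing $p_k$ for the total length of $\gamma_n$ covered by quasi-loops after step $k$ and $q_k := |\gamma_n| - p_k$ for the uncovered length, Lemma \ref{decoupe-restes-final-sph} gives $p_k - p_{k-1} \geq \beta q_{k-1}$, hence $q_k \leq (1-\beta) q_{k-1}$. Therefore $q_m \leq (1-\beta)^m |\gamma_n| \leq (1-\lambda)|\gamma_n|$, which means the final collection of $\varepsilon$-quasi-loops covers at least a proportion $\lambda$ of $\gamma_n$. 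These quasi-loops are disjoint by construction, and $\gamma := \gamma_n$ is the desired simple element.

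The only subtle point, which is the real content of the argument, is the backward parameter choice ensuring compatibility between successive applications of Lemma \ref{decoupe-restes-final-sph}: the remainders of each round must be long enough to be valid inputs for the next round. Because Lemma \ref{decoupe-restes-final-sph} lets us prescribe the minimal remainder length $r$ independently of the input length threshold $R$, this scheme terminates after a uniformly bounded number of steps $m$, depending only on $\beta$ and $\lambda$, and the total loss of length from combining the quasi-loops of all rounds into disjoint ones is automatically accounted for by the geometric bound $q_m \leq (1-\beta)^m|\gamma_n|$. This concludes the proof, and as the paper indicates, the whole argument is structurally identical to Lemma $5.16$ of \cite{schlich_equivalence_2024}.
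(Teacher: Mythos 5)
Your proof is correct and follows exactly the strategy the paper intends: a recursive application of Lemma \ref{decoupe-restes-final-sph} to the remainders of each round, with the thresholds $r_k := R_{k+1}$ chosen backwards so that every remainder is a valid input for the next round, yielding the geometric decay $q_m \leq (1-\beta)^m|\gamma_n|$. This matches the argument of Lemma 5.16 of \cite{schlich_equivalence_2024} to which the paper defers, and which is sketched in the proof of Lemma \ref{trouve-gamma-sph}.
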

	
	\begin{proof}
		The details of the proof can be found in Lemma 5.16 in \cite{schlich_equivalence_2024}, using now Lemma \ref{decoupe-restes-final-sph} instead of Lemma 5.15 in \cite{schlich_equivalence_2024} ; an idea of the proof is given in Lemma \ref{trouve-gamma-sph}.
	\end{proof}
	
	The existence of such an element implies an inequality on the displacement of the basepoint $o$. 
	\begin{Lemma} \label{contrad-final}
		Let $0 < \varepsilon < \frac{1}{C}$ and $1 - \frac{1}{C'}(\frac{1}{C}-\varepsilon) <  \lambda < 1$. Let $\gamma$ be a simple word of $\pi_1(\sph)$ which contains $\varepsilon$-quasi-loops which occupy at least a proportion $\lambda$ of $\gamma$. Then $$d(\rho(\gamma)o,o) < \frac{1}{C} |\gamma|. $$  
	\end{Lemma}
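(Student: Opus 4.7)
The plan is to decompose $\gamma$ as a concatenation $\gamma = w_1 \cdots w_q$ (provided by Lemma \ref{trouve-gamma-final}, or more precisely by the recursive application of Lemma \ref{decoupe-restes-final-sph}), together with a subset $QL \subset \{1,\dots,q\}$ such that for every $k \in QL$, the word $w_k$ is an $\varepsilon$-quasi-loop, and $\sum_{k \in QL} |w_k| \geq \lambda |\gamma|$. Writing $\rho(\gamma) = \rho(w_1)\rho(w_2)\cdots \rho(w_q)$ as a product of isometries, the triangle inequality (applied step by step along this factorisation, using that each $\rho(w_1\cdots w_{k-1})$ is an isometry) yields
\begin{equation*}
d(\rho(\gamma)o,o) \;\leq\; \sum_{k=1}^q d(\rho(w_k)o,o).
\end{equation*}

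Next, I split this sum according to $QL$. For $k \in QL$, Definition~\ref{sph-QL} gives $d(\rho(w_k)o,o) \leq \varepsilon |w_k|$. For $k \notin QL$, the basic Lipschitz estimate from \eqref{BQ-constant-sph} (valid for any element of $\groupe$) gives $d(\rho(w_k)o,o) \leq C' |w_k|$. Therefore
\begin{equation*}
d(\rho(\gamma)o,o) \;\leq\; \varepsilon \sum_{k \in QL} |w_k| \;+\; C' \sum_{k \notin QL} |w_k|.
\end{equation*}
The key observation—also the whole ``trick'' of the proof—is that to obtain the sharp constant appearing in the hypothesis one does \emph{not} use the lower bound $\sum_{k \in QL} |w_k| \geq \lambda |\gamma|$ to control the first sum, but instead the cruder bound $\sum_{k \in QL} |w_k| \leq |\gamma|$. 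Combined with $\sum_{k \notin QL} |w_k| \leq (1-\lambda)|\gamma|$, this gives
\begin{equation*}
d(\rho(\gamma)o,o) \;\leq\; \varepsilon |\gamma| + C'(1-\lambda)|\gamma| \;=\; \bigl(\varepsilon + C'(1-\lambda)\bigr) |\gamma|.
\end{equation*}

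It then remains only to translate the hypothesis $\lambda > 1 - \frac{1}{C'}\bigl(\frac{1}{C}-\varepsilon\bigr)$ into the desired bound. Multiplying through by $C'$ (which is positive) and rearranging yields $C'(1-\lambda) < \frac{1}{C} - \varepsilon$, i.e. $\varepsilon + C'(1-\lambda) < \frac{1}{C}$, so the previous display gives $d(\rho(\gamma)o,o) < \frac{1}{C}|\gamma|$, as wanted. There is no genuine obstacle here: the lemma is a routine bookkeeping argument, and all the combinatorial work has already been done in Lemma \ref{decoupe-restes-final-sph} and Lemma \ref{trouve-gamma-final}; the mild subtlety is only to make the right choice between the two available bounds on $\sum_{k \in QL}|w_k|$ so that the threshold on $\lambda$ matches the one prescribed in the statement.
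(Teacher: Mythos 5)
Your argument is correct and is essentially the proof the paper intends (it defers the details to Lemma 5.17 of \cite{schlich_equivalence_2024}, which is exactly this computation): decompose $\gamma$ into the quasi-loops and the remainders, bound the former by $\varepsilon\sum_{k\in QL}|w_k|\leq\varepsilon|\gamma|$ and the latter by $C'(1-\lambda)|\gamma|$, and observe that the hypothesis on $\lambda$ is precisely $\varepsilon+C'(1-\lambda)<\frac{1}{C}$. Nothing further is needed.
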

	
	\begin{proof}
		The details are in the proof of Lemma 5.17 in \cite{schlich_equivalence_2024}.
	\end{proof}
	
	This last inequality contradicts the Bowditch hypothesis, and then Theorem \ref{sph-BQ=PS} is proved. 
	
	\end{proof}


	\printbibliography  
	
\end{document}